\documentclass[english]{article} 
\usepackage[utf8]{inputenc}
\usepackage[T1]{fontenc}
\usepackage{lmodern}
\usepackage[a4paper]{geometry}
\usepackage{enumitem}
\usepackage{verbatim}
\usepackage{tikz,pgfplots}
\usepackage{graphicx}
\usepackage{mathtools}
\usepackage{amsmath}
\usepackage{amsthm}
\usepackage{amsfonts}
\usepackage{amssymb}
\usepackage{amsmath}
\usepackage{dsfont}
\usepackage{mathrsfs}
\usepackage{stmaryrd}
\usepackage{bbm}
\usepackage{framed}
\usepackage{xcolor}
\usepackage[english]{babel}
\usepackage[citecolor=blue,colorlinks=true]{hyperref}
\hypersetup{pdfnewwindow}
%
%
%
%
%
%
%
%
%
%
%
%
%
%
%
%
%


\usepackage{amsthm}
\newtheorem{thm}{Theorem}[section]
\newtheorem{definition}[thm]{Definition}
\newtheorem{proposition}[thm]{Proposition}
\newtheorem{lem}[thm]{Lemma}
\newtheorem{pro}[thm]{Proposition}
\newtheorem{corollary}[thm]{Corollary}

\theoremstyle{remark}
\newtheorem{remark}{Remark}[section]




\newcommand{\ud}{\mathrm{d}}




\newcommand{\half}{{\textstyle{1\over2}}}
\newcommand{\third}{{\textstyle{1\over3}}}

\newcommand{\fourth}{{\textstyle{1\over4}}}


\newcommand{\R}{\mathds{R}}

\newcommand{\T}{\mathds{T}}
\newcommand{\Pro}{\mathds{P}}
\newcommand{\E}{\mathds{E}}
\newcommand{\Z}{\mathds{Z}}
\newcommand{\N}{\mathds{N}}


\usepackage{enumitem}
\newlist{steps}{enumerate}{1}
\setlist[steps, 1]{label = Step \arabic*:}
\newcommand{\eqdef}{\stackrel{\text{\tiny{def}}}{=}}



\numberwithin{equation}{section}
\pgfplotsset{compat=1.14}

%
\newcommand{\eps}{\varepsilon}
\newcommand{\dual}[2]{\langle #1, #2\rangle}

  
\newcommand{\footremember}[2]{%
    \footnote{#2}
    \newcounter{#1}
    \setcounter{#1}{\value{footnote}}%
}


\begin{document}


\title{\bf Global dissipative martingale solutions to the variational wave equation with stochastic forcing
}

\author{Billel Guelmame\footremember{alley}{UMPA, CNRS, ENS de Lyon, Université de Lyon, billel.guelmame@ens-lyon.fr}
  and Julien Vovelle\footremember{trailer}{UMPA, CNRS, ENS de Lyon, julien.vovelle@ens-lyon.fr}
  }


\newcommand{\nfont}{\fontshape{n}\selectfont}

%
%
%

%
%
%
%
%
%


\maketitle

\begin{abstract} We consider the variational wave equation in one-dimensional space with stochastic forcing by an additive noise. Blow-up of local smooth solutions is established, and global existence is proved in the class of weak martingale solutions.
\end{abstract} 

\medskip

 {\bf AMS Classification :} 35R60, 60H15, 35L70, 35A01
\medskip

{\bf Key words :} Stochastic PDEs, Hyperbolic equations, Wave equation, Blow-up, Young measure, Skorokhod--Jakubowski's representation theorem.

\tableofcontents


\section{Introduction}

We consider in this paper the variational wave equation with a stochastic forcing 
\begin{gather}\label{SVW0}
\ud u_t\ -\ c(u) \left( c(u)\, u_x \right)_x \ud t\ =\ \Phi\, \ud W, \quad (t,x) \in (0,T) \times \T,
\end{gather}
where $u_t$ denotes $\partial u/\partial t$ (\textit{cf.} Remark~\ref{rk:notations-intro} below), $\T = \R/\Z$ denotes the one-dimensional torus, and, over the  filtered probability space $(\Omega, \mathcal{F}, \Pro, (\mathcal{F}_t)_{t\geqslant 0})$, $W$ is a cylindrical Wiener process (see \eqref{defW}). 
The equation \eqref{SVW0} is the Euler--Lagrange equation associated to the Lagrangian 
\begin{equation*}
\mathscr{L}\ \eqdef\ \iint \left[ \half \left[ u_t^2\, -\, c(u)^2\, u_x^2 \right] \ud t\, +\, u\, \Phi\, \ud W(t) \right] \ud x.
\end{equation*}
Our aim is to study the well-posedness of \eqref{SVW0} up to an explosion time. We prove then that regular solutions may blow-up in finite time. Finally we establish the existence of global-in-time weak martingale solutions to \eqref{SVW0}. 

The deterministic variational wave equation is recovered taking $\Phi \equiv 0$. It appears in several physical contexts. For example, nematic liquid crystals \cite{Saxton89,HunterSaxton1991,GlasseyHunterZheng1997}, long waves on a dipole chain \cite{GlasseyHunterZheng1997,GI92,ZI92} and also in classical field theories and general relativity \cite{GlasseyHunterZheng1997}.
An asymptotic (and simpler) equation has been derived by Hunter and Saxton \cite{HunterSaxton1991}
\begin{equation}\label{HS}
\left[u_t\, +\, u\, u_x \right]_x\, =\ \half\, u_x^2.
\end{equation}
Both the Hunter--Saxton equation \eqref{HS} and the deterministic variational wave equation have been widely studied in the literature \cite{HunterZheng95a,HunterZheng95b,ZhangZheng1998,
ZhangZheng2000,ZhangZheng2001,ZhangZheng2003,
ZhangZheng2005,BressanConstantin05,BressanZheng2006,BressanZhangZheng07,
Dafermos,BressanChenZhang2015}.

Considering the deterministic variational wave equation on the real line ($x \in \R$), the local (in time) well-posedness can be obtained using Kato's theorem for the quasi-linear equations \cite{Kato}. Singularities may appear in finite time in the non-linear case $c'(\cdot) \not\equiv 0$ \cite{GlasseyHunterZheng1996}. 
However, if the Riemann invariants $u_t \pm c(u) u_x$ are non-positive initially, then rarefactive solutions exist globally in time \cite{ZhangZheng1998}.
Global weak solutions to the variational wave equation are not unique, indeed, at least two types of solutions exist, dissipative and conservative ones.
The conservative solutions were obtained in \cite{BressanZheng2006} using an equivalent system in the Lagrangian coordinates.
The total energy of the conservative solutions is equal to the initial energy for almost all $t>0$.
The uniqueness of the conservative solutions is established in \cite{BressanChenZhang2015}.
Dissipative solutions are obtained in \cite{ZhangZheng2003,ZhangZheng2005} by studying an approximated system and then passing to the limit.
The dissipative solutions satisfy an energy inequality and a one-sided entropy inequality.
To the author's knowledge, the uniqueness of the dissipative solutions remains an open problem. 
However, for the simpler Hunter--Saxton equation \eqref{HS}, the dissipative solutions are unique \cite{Dafermos}.

In the stochastic case, the Hunter--Saxton equation with noise has been studied in \cite{HoldenKarlsenPang2021}. In a recent work \cite{Pang24}, Pang studied the viscous variational wave equation with transport noise and established its global well-posedness. 

We consider the problem \eqref{SVW0} under study in the class of systems of stochastic non-linear hyperbolic equations. There are numerous works related to stochastic scalar conservation laws, see for instance \cite{ChenPang2021} and references therein.  In hyperbolic systems of conservation laws with stochastic forcing, which is our framework, the problematic and techniques are relatively different from the scalar case. 

Let us first make some comments on our results and the way they are established. The problem \eqref{SVW0} admits an equivalent formulation as a $2\times 2$ system (see \eqref{SVWE2} below). The existence of local regular solutions to this $2\times 2$ system is essentially a pathwise variation on the deterministic result and certainly classical in spirit, but we prefer to give the proof in full details. Global martingale solutions are obtained by the probabilistic compactness method. We adapt the method of P. Zhang and Y. Zheng, \cite{ZhangZheng2001,ZhangZheng2003,ZhangZheng2005} (see also \cite{Guelmame2023} in the context of the Green--Naghdi equations with surface tension) for the deterministic treatment of the problem.
If the stochastic aspects are not present in \cite{ZhangZheng2003,ZhangZheng2005}, our framework is also different, insofar as we consider periodic solutions and not solutions with certain localization properties. As a consequence, our approximated system involves some additional ``correction terms'' (see \eqref{SVWEep} below) to ensure the periodic character of the the solutions.
One of the central technical tool to recover the desired equation at the limit are Young measures, as introduced by DiPerna in the context of systems of conservation laws, \cite{Diperna83a}. Similar techniques have been used recently for the Hunter--Saxton equation \eqref{HS}, the Camassa--Holm equation and the Degasperis--Procesi equation with noise  \cite{CC24,HoldenKarlsenPang2021,HoldenKarlsenPang2023,GHKP22}. In addition, we exploit \cite{BerthelinVovelle19} (see also \cite{FengNualart08}), where Young measures for the study of the stochastic isentropic Euler equations play a central role too.

%
%

The paper is organized as follows. In Section \ref{sec:mainresults} we introduce the kind of noise (white in time, coloured in space) that we consider. We present the equivalent $2\times 2$ system \eqref{SVWE2} and we state the main results of the paper. 
In Section \ref{sec:localsol}, devoted to local-in-time solutions, we prove the existence of regular solutions up to an explosion time, and also give some criterion for blow-up (which amounts to the explosion of the Lipschitz norm of $u$) in finite time. 
Section \ref{sec:gloabsol} is devoted to the approximated system obtained by truncation and correction of \eqref{SVWE2} (see \eqref{SVWEep}). We prove that the approximate solutions exist globally in time, and that they satisfy some uniform estimates. In Section \ref{sec:Tightness} we prove that the sequence of approximated solutions is tight (in a suitable space), and we use the Skorokhod--Jakubowski representation theorem to pass to the limit and to derive the limit equation with some defect measures. 
Finally, it is proved in Section \ref{sec:Young} that the defect measures are trivial and that the limit is a global weak martingale solution to \eqref{SVW0}.

\section{The equations and main results}\label{sec:mainresults}

\subsection{The stochastic variational wave equation}

Let $\mathfrak{U}$ be a Hilbert space with an orthonormal basis $(g_k)_{k \geqslant 1}$ and let $\mathfrak{U}_{-1}$ be another Hilbert space such that the injection $\mathfrak{U}\hookrightarrow\mathfrak{U}_{-1}$ is Hilbert--Schmidt. Let $W$ be the cylindrical Wiener process defined by
\begin{equation}\label{defW}
W(t)\ \eqdef\ \sum_{k\geqslant 1} g_k\, \beta_k(t),\, t\geqslant 0,
\end{equation}
where $(\beta_1(t),\beta_2(t),\dotsc)$ are independent one-dimensional Wiener processes,  see Section~4.1.2 in \cite{DaPratoZabczyk14}.
Let $\Phi : \mathfrak{U} \to L^2(\T)$ such that for any $k\geqslant 1$ we have $\sigma_k \eqdef \Phi g_k \in C(\T)$ and 
\begin{equation}\label{defq}
q_0\ \eqdef\ \sum_k \|\sigma_k\|_{W^{1,\infty}(\T)}^2\ <\ \infty, \qquad q(x)\ \eqdef\ \sum_k \sigma_k(x)^2.
\end{equation}
By \eqref{defq} and the injection $L^\infty(\T)\hookrightarrow L^2(\T)$, the map $\Phi$ is Hilbert--Schmidt. Let us assume that $c\in C^\infty(\R)$ satisfies
\begin{gather}\label{coeff-c}
0\ <\ c_1\ \leqslant\ c(u)\ \leqslant\ c_2,\\ \label{coeff-c'}  0\ \leqslant\ c'(u)\ \leqslant\ c_3 .
\end{gather}
for some constants $c_1,c_2,c_3\in(0,\infty)$. We consider the stochastic variational wave equation with additive noise in $(0,T)\times\T$  
\begin{subequations}\label{SVWE1}
\begin{gather}
\ud u_t\ -\ c(u) \left( c(u)\, u_x \right)_x \ud t =\ \Phi\, \ud W,  \\
u(0,\cdot)\ =\ u_0, \qquad u_t(t=0,\cdot)\ =\ v_0.
\end{gather}
\end{subequations}
We can also consider the equivalent form
\begin{subequations}\label{SVWE2}
\begin{gather}\label{Req}
\ud R\ +\ c(u)\, R_x\, \ud t\ =\ \tilde{c}'(u) \left[R^2\, -\, S^2 \right] \ud t\ +\ \Phi\,  \ud W, \\ \label{Seq}
\ud S\ -\ c(u)\, S_x\, \ud t\ =\ \tilde{c}'(u) \left[S^2\, -\, R^2 \right] \ud t\ +\ \Phi\,  \ud W, 
\end{gather}
\end{subequations}
completed with the equation
\begin{equation}\label{udef}
u(t,x)\ =\ \mathcal{C}^{-1}\left\{ \mathcal{C} \left\{ \int_0^t {\textstyle \left(\frac{R\, +\, S}{2}  \right) (s,0)}\, \ud s\ +\ u_0(0) \right\} +\, \int_0^x {\textstyle \frac{S\, -\, R}{2}}(t,y)\, \ud y \right\},
\end{equation}
where
\begin{equation}\label{Cronde}
\mathcal{C}(r)\ =\ \int_0^r c(\sigma)\, \ud \sigma,
\end{equation}
which expresses $u$ as a non-local function of $(R,S)$. The system \eqref{SVWE2} is deduced from \eqref{SVWE1} by setting
\begin{equation}\label{defRS}
R\ \eqdef\ u_t\ -\ c(u)\, u_x, \qquad S\ \eqdef\ u_t\ +\ c(u)\, u_x, \qquad \tilde{c}(u)\ \eqdef\ {\textstyle \frac{1}{4}}\, \ln c(u).
\end{equation}
The corresponding initial conditions for \eqref{SVWE2} are therefore
\begin{equation}\label{IC}
R(0,\cdot)\ =\ R_0\ \eqdef\ v_0\, -\, c(u_0)\, u_0', \quad S(0,\cdot)\ =\ S_0\ \eqdef\ v_0\, +\, c(u_0)\, u_0'.
\end{equation}
\begin{remark}[Notations]\label{rk:notations-intro} The subscript $t$, as in $u_t$, always denote the partial derivative with respect to $t$, and never the value at the given time $t$ of a stochastic process $X$ (the latter being simply denoted by $X(t)$).
\end{remark}

We will  establish in Theorem~\ref{thm:loc-existRS} the existence of local regular solutions to \eqref{SVWE2}, which are strong in the probabilistic sense. It may happen that some of these solutions have a finite time of existence, \textit{cf.} Theorem~\ref{th:blowup0}. Nevertheless, global-in-time solutions, which are weak in the probabilistic sense (martingale solutions), and defined in an $L^2$-framework, are proved to exist, see Theorem~\ref{thm:global-existR2SS}. These global-in-time solutions are obtained as limits of solutions to the following system

\begin{subequations}\label{SVWEep}
\begin{gather}\label{Reqep}
\ud R^\varepsilon\ +\ c(u^\varepsilon)\, R^\varepsilon_x\, \ud t\ =\ \tilde{c}'(u^\varepsilon) \left[(R^\varepsilon)^2\, -\, (S^\varepsilon)^2\, -\, \chi_\varepsilon(R^\varepsilon)\, +\, 2\, R^\varepsilon\, \Theta^\varepsilon \right] \ud t\ +\ \Phi^\varepsilon\,  \ud W, \\ \label{Seqep}
\ud S^\varepsilon\ -\ c(u^\varepsilon)\, S^\varepsilon_x\, \ud t\ =\ \tilde{c}'(u^\varepsilon) \left[ (S^\varepsilon)^2\, -\, (R^\varepsilon)^2\, -\, \chi_\varepsilon(S^\varepsilon)\, -\, 2\, S^\varepsilon\, \Theta^\varepsilon \right] \ud t\ +\ \Phi^\varepsilon\,  \ud W,\\
R^\varepsilon(0,\cdot)\ =\ R^\varepsilon_0\ \eqdef\  J_\varepsilon R_0, \quad \qquad S^\varepsilon(0,\cdot)\ =\ S^\varepsilon_0\ \eqdef\   J_\varepsilon S_0,
\end{gather}
\end{subequations}
coupled with the equation ($x\in [0,1]$)
\begin{equation}\label{udefep}
u^\varepsilon(t,x)\ =\ \mathcal{C}^{-1}\left\{ \mathcal{C} \left\{ \int_0^t {\textstyle \left(\frac{R^\varepsilon\, +\, S^\varepsilon}{2}  \right) (s,0)}\, \ud s\ +\ u^\varepsilon_0(0) \right\} +\, \int_0^x \left[{\textstyle \frac{S^\varepsilon\, -\, R^\varepsilon}{2}}(t,y)\ -\ \Theta^\eps(t)\right] \ud y\right\},
\end{equation}
 where $J_\varepsilon$ is a Friedrichs mollifier, defined as the convolution operator $R\mapsto R\ast\rho_\varepsilon$, where $(\rho_\eps)$ is an approximation of the unit.
In \eqref{SVWEep}, the cut-off function $\chi_\varepsilon$ is defined by 
\begin{equation}\label{chidef}
\chi_\varepsilon (\xi)\ \eqdef\, \left(\xi\ -\ \frac{1}{\varepsilon} \right)^2 \mathds{1}_{[\frac{1}{\varepsilon}, \infty)} (\xi)\ =\ 
\begin{cases}
\left(\xi\ -\ \frac{1}{\varepsilon} \right)^2, & \xi \geqslant 1/\varepsilon, \\
0, & \xi < 1/\varepsilon.
\end{cases}
\end{equation}
We have also introduced a ``correction term'' 
\begin{equation}\label{psieps}
\Theta^\eps(t)\ \eqdef\  \int_0^1\frac{S^\varepsilon-R^\varepsilon}{2}(t,y)\, \ud y.
\end{equation}
Note that this correction $\Theta^\eps(t)$ is not necessary in the case of a problem set on the whole line $\R$, \cite{ZhangZheng2003,ZhangZheng2005}.
Finally, we define $\Phi^\varepsilon : \mathfrak{U} \to \cap_{s \geqslant 0} H^s(\T)$ such that for any $k\geqslant 1$ and $\varepsilon>0$ we have $ \Phi^\varepsilon g_k \eqdef \sigma_k^\varepsilon \eqdef J_\varepsilon\sigma_k \in C^\infty(\T)$.
Clearly, we have the domination $|\sigma_k^\varepsilon| \leqslant |\sigma_k|$, and thus (see \eqref{defq})
\begin{equation}\label{qeps-q}
\sum_k \|\sigma_k^\varepsilon\|_{C(\T)}^2\ \leqslant\ \sum_k \|\sigma_k\|_{C(\T)}^2\ \leqslant\ q_0, \qquad q^\varepsilon(x)\ \eqdef\ \sum_k \sigma_k(x)^2.
\end{equation}

%
%

\subsection{Local and global existence}

In what follows, if $E$ is a Banach space, the space $C([0,T];E)$ of continuous functions $[0,T]\to E$ is endowed with the norm 
\[
\|u\|_{C([0,T];E)}=\sup_{t\in[0,T]}\|u(t)\|_E.
\]
The local existence theorem~\ref{thm:loc-existRS} below will be proved actually in a slightly more general version, Theorem~\ref{th:loc-exist}. To solve \eqref{SVWE1} in the class $H^{s+1} \times H^s$, we need the noise to be a.s. $H^{s+1}$, which is ensured by the following hypothesis:
\begin{equation}\label{sigmas1}
\sum_{k \geqslant 1}  \|\sigma_k\|_{H^{s+1}(\T)}^2\ <\ \infty.
\end{equation}
Indeed, a martingale inequality and \eqref{defW} give
\begin{equation}\label{WHs}
\E\, \| \Phi W\|_{C([0,T], H^{s+1}(\T))}^2\ \leqslant\ C\, T \sum_{k \geqslant 1}  \|\sigma_k\|_{H^{s+1}(\T)}^2\ <\ \infty.
\end{equation}
Then \eqref{WHs} (considered for $T=1,2,\ldots$) implies that there is a set $\Omega_s$ of probability one such that
\begin{equation}\label{WHsas}
\omega\in\Omega_s\ \Rightarrow\ \forall T>0,\, \Phi W \in C([0,T], H^{s+1}(\T)).
\end{equation}

\begin{definition}[Regular solution to \eqref{SVWE1}, up to an explosion time] Let $(u_0,v_0)\in H^{s+1}(\T) \times H^s (\T)$. Let $\tau>0$ be a stopping time such that $\tau>0$ a.s. Let $\Omega_s$ be defined in \eqref{WHsas}. A process $(u(t))_{0\leqslant t<\tau}$ such that: a.s., 
\begin{equation}
u \in C([0,\tau), H^{s+1}(\T))\cap C^1([0,\tau), H^{s}(\T)),
\end{equation}
is said to be a regular (or $H^{s+1}$) solution to \eqref{SVWE1} up to the explosion time $\tau$ if, 
\begin{enumerate}
\item $u(0,\cdot) = u_0,\ \Pro$-almost surely,
\item for any stopping time $\tau'$ such that $\tau'<\tau$ a.s., for all $x\in\T$, the process $(u(t\wedge\tau' ,x))_{t\geqslant 0}$ is predictable and, for all $\omega\in\Omega_s$ such that $\tau'(\omega)<\tau(\omega)$ (and thus $\Pro$-a.s.), we have the following identity in $H^{s-1}(\T)$
\begin{equation}
u_t(\tau')\ =\ v_0\ + \int_0^{\tau'} c(u) \left( c(u)\, u_x \right)_x \ud s\ +\ \Phi\,  W(\tau'),
\end{equation}
\item the stopping time $\tau$ is an explosion time, in the sense that
\begin{equation}\label{tauntau}
\tau\ =\ \sup_{n\geqslant 1}\tau_n,\qquad\tau_n\ \eqdef\ \inf\left\{t\in[0,\tau)\, ;\, \|(u_t(t),u_x(t) )\|_{L^\infty(\T)}\ >\ n\right\}.
\end{equation}
\end{enumerate}
If $\tau=\infty$ a.s., the solution is said to be global.
\end{definition}
In \eqref{tauntau}, we use the following convention.
\begin{remark}[Convention for the infimum]\label{rk:convention-inf} If $T\geqslant 0$ and $(P_t)_{t\geqslant 0}$ are some properties depending on time, then the value of the infimum in the expressions
\begin{equation}\label{inf}
\inf\left\{t\in[0,T); P_t\mbox{ is true}\right\},\mbox{ or }\inf\left\{t\in[0,T]; P_t\mbox{ is true}\right\}
\end{equation}
is set to the terminal value $T$ if, respectively,
\begin{equation}\label{inffalse}
\forall t\in[0,T), P_t\mbox{ is false, or }\forall t\in[0,T], P_t\mbox{ is false.}
\end{equation}
\end{remark}

\begin{thm}[Local existence of regular solutions]\label{thm:loc-existRS} Let $s>3/2$. Let $c \in C^\infty(\R)$ satisfies \eqref{coeff-c} and $|c'(u)| \leqslant c_3$. Let $\Phi$ satisfy \eqref{sigmas1} and let $(u_0,v_0)\in H^{s+1} \times H^s $. Then \eqref{SVWE1} admits a solution up to an explosion time $\tau^*$ and, for $i=1,2$, two solutions $(u_i,\tau_i^*)$ to \eqref{SVWE1} defined up to an explosion time $\tau_i^*$ coincide, in the sense that $\tau_1^*=\tau_2^*$ and $u_1=u_2$ on $[0,\tau_1^*)$.
\end{thm}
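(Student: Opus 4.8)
The plan is to reduce the local existence problem for \eqref{SVWE1} to a pathwise quasilinear problem by subtracting off the stochastic convolution. Fix $\omega \in \Omega_s$ and write $v \eqdef \Phi W(\cdot)$, which by \eqref{WHsas} belongs to $C([0,T], H^{s+1}(\T))$ for every $T$. Set $w \eqdef u - v$ (equivalently $w \eqdef u_t - v$ at the level of time derivatives, with the appropriate bookkeeping of the two unknowns $u, u_t$). Then $w$ should solve, pathwise, a deterministic quasilinear wave equation of the form $w_{tt} - c(w+v)\big(c(w+v)(w+v)_x\big)_x = 0$ with initial data $(u_0, v_0)$, where the "coefficients'' now depend on the given Hölder-in-time, $H^{s+1}$-in-space function $v$. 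This is a non-autonomous symmetric hyperbolic (first-order $2\times 2$) system with coefficients of regularity $H^{s}$ in space and continuous in time; Kato's theory for quasilinear hyperbolic systems \cite{Kato} (as already invoked in the deterministic case in the introduction) applies and gives, for each $\omega$, a unique maximal solution $w(\omega) \in C([0,\tau^*(\omega)), H^{s+1}) \cap C^1([0,\tau^*(\omega)), H^s)$, with the blow-up alternative that $\tau^*(\omega) < \infty$ forces $\limsup_{t \uparrow \tau^*(\omega)} \|(w_t(t), w_x(t))\|_{L^\infty} = \infty$, hence the same for $(u_t, u_x)$ since $v$ is continuous with values in $H^{s+1} \hookrightarrow W^{1,\infty}$. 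Undoing the substitution, $u \eqdef w + v$ is the desired solution; the integral identity in $H^{s-1}$ in the definition follows by integrating the equation in time and re-adding $\Phi W(\tau')$.

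The measurability/adaptedness requirements are handled as follows. First I would carry out the construction above not just pathwise but as a fixed-point argument in a Banach space of functions of $(\omega, t, x)$ — e.g. by Picard iteration on the Duhamel/characteristics formulation — so that each iterate is adapted (it is a continuous functional of $(u_0, v_0, (\Phi W(s))_{s \le t})$) and the limit inherits predictability of $(u(t \wedge \tau', x))_t$. Alternatively one invokes a measurable-selection argument: Kato's solution map is continuous in the data, so $\omega \mapsto (w(\omega), \tau^*(\omega))$ is measurable. The stopping times $\tau_n$ of \eqref{tauntau} are then stopping times because $\|(u_t, u_x)\|_{L^\infty}$ is a continuous adapted process, and $\tau^* = \sup_n \tau_n$ by the blow-up alternative, which is exactly condition (3) in the definition.

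For uniqueness, suppose $(u_1, \tau_1^*)$ and $(u_2, \tau_2^*)$ are two solutions. On any common stopping time $\tau' < \tau_1^* \wedge \tau_2^*$ both $w_i \eqdef u_i - \Phi W$ solve the same pathwise deterministic system with the same data, so pathwise uniqueness for Kato's system (energy estimate in $H^s$ for the difference, using $c \in C^\infty$ and the Lipschitz bounds \eqref{coeff-c}, \eqref{coeff-c'}) gives $w_1 = w_2$, hence $u_1 = u_2$, on $[0, \tau')$; letting $\tau' \uparrow \tau_1^* \wedge \tau_2^*$ gives agreement on $[0, \tau_1^* \wedge \tau_2^*)$. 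Finally $\tau_1^* = \tau_2^*$: if, say, $\tau_1^*(\omega) < \tau_2^*(\omega)$ on a positive-probability set, then on that set $u_1 = u_2$ on $[0,\tau_1^*(\omega))$ while $(u_2)$ stays bounded in $W^{1,\infty}$ up to and past $\tau_1^*(\omega)$ (since $\tau_1^*(\omega) < \tau_2^*(\omega)$), contradicting that $\tau_1^*$ is a genuine explosion time for $u_1$ in the sense of \eqref{tauntau}.

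The main obstacle I expect is \emph{not} the hyperbolic PDE theory, which is essentially Kato's theorem applied pathwise, but the careful reconciliation of the pathwise construction with the probabilistic structure: ensuring the solution is a bona fide predictable process (so that $(u(t\wedge\tau',x))_t$ is predictable for every $x$), that $\tau^*$ defined via Kato's maximal existence time coincides with the explosion time \eqref{tauntau} built from the $L^\infty$-norm, and that the convention of Remark~\ref{rk:convention-inf} is respected in degenerate cases. A secondary technical point is checking that subtracting $\Phi W$ genuinely produces a deterministic problem whose coefficients have exactly the regularity ($C^0$ in $t$, $H^{s+1}$ in $x$) required by Kato's framework — this uses \eqref{sigmas1} and the embedding $H^{s+1} \hookrightarrow C^1$ for $s > 3/2$ in an essential way, and is why the hypothesis on the noise is stated as it is. I would present the reduction and the invocation of Kato's theorem in full detail (as the authors announce they will), and keep the measurability discussion to a clearly signposted lemma.
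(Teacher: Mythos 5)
Your overall strategy — subtract the stochastic convolution, solve the resulting system pathwise as a deterministic quasilinear problem, then address measurability via the iterative (Picard) construction — is exactly the paper's approach. But the displayed reduction is miswritten in a way that matters. The noise in \eqref{SVWE0} is additive for $u_t$, not for $u$; the substitution that cancels it is $z \eqdef u_t - \Phi W$, not $w \eqdef u - \Phi W$. Writing $w = u - v$ with $v = \Phi W$ and then forming $w_{tt}$ requires $v$ to be twice differentiable in time, which a Brownian path is not, and the resulting object would be $\Phi \ddot{W}$ rather than the $\Phi \dot{W}$ that actually appears in $u_{tt}$. So the equation $w_{tt} - c(w+v)\bigl(c(w+v)(w+v)_x\bigr)_x = 0$ is not what $w$ solves — it does not make sense. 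Your parenthetical remark ``equivalently $w = u_t - v$'' is the correct substitution, but then one gets a first-order (in time) system with $u$ reconstructed from $z$ by integration, not a clean autonomous second-order wave equation with shifted argument. This should be made explicit, since it is the heart of the pathwise reduction and the displayed formula as written would lead nowhere.

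Once the reduction is corrected, the remaining steps are sound and run parallel to the paper's, with two notable divergences worth flagging. First, the paper does not reduce the second-order equation directly; it diagonalizes into Riemann invariants $(R,S)$ in \eqref{SVWE2}, subtracts the noise to get $(P,Q) = (R,S) - \Phi W$, and accepts the resulting non-local dependence $u = u[P,Q]$ in \eqref{udefepthetaPQ}. Your route avoids that non-locality by keeping $u$ as a state variable, which is perfectly viable for the local theory — but the paper wants the $(R,S)$ variables anyway for the blow-up result and for the whole global-existence program, so the Riemann-invariant setup is not optional there. Second, the paper does not cite Kato's theorem: it proves the quasilinear local existence from scratch via a Friedrichs mollification and the Kato–Ponce and DiPerna–Lions commutator estimates (Propositions~\ref{prop:Hsestimate}–\ref{prop:MaxSolNlin}), precisely because the right-hand side $F_\theta$ in \eqref{defFeps} is non-local and involves the cut-off $\chi_\eps$, which is only $W^{2,\infty}_{\mathrm{loc}}$; a bespoke iterative scheme is also what gives predictability of the limit for free. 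Invoking Kato's theorem as a black box is a reasonable shortcut for the local theorem alone, provided one verifies that the framework tolerates coefficients that are merely continuous (not Lipschitz) in time and handles the non-locality if one uses the $(R,S)$ reduction; but a few lines of justification would be needed where you currently assert applicability.
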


\begin{remark}
The local existence of regular solutions will be established for the equivalent system \eqref{SVWE2}.
\end{remark}

We then observe that singularity's formation can indeed occur in finite time.

\begin{thm}[Blow-up in finite time]\label{th:blowup0} Let $s>3/2$. Assume that the conditions of Theorem \ref{thm:loc-existRS} are satisfied, and that $c'(u^\star) \neq 0$ for some $u^\star \in \R$. For any $\gamma>1/3$ and  $\eps>0$, there is an initial datum $u_0^\eps\in C^\infty(\T)$ such that the regular solution $(u,\tau^*)$ to \eqref{SVWE1} defined up to the explosion time $\tau^*$, with initial data $(u_0^\eps,0)$, satisfies  
\begin{equation}\label{PBlowup0}
\Pro(\tau^*\leqslant \eps^{\gamma})\geqslant 1-\eps.
\end{equation}
\end{thm}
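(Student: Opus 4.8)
The plan is to engineer an initial datum that forces one of the Riemann invariants to develop a singularity quickly, using the characteristic ODE satisfied by $R$ (or $S$) along characteristics, combined with a pathwise control of the noise term on a short time interval. Recall that along the backward characteristic $\dot x = -c(u)$ one has, from \eqref{Seq}, $\ud S = \tilde c'(u)[S^2 - R^2]\,\ud t + \Phi\,\ud W$; the quadratic term $\tilde c'(u) S^2$ (with $\tilde c'(u) = c'(u)/(4c(u)) > 0$ near $u^\star$, since $c'(u^\star)\neq 0$ and $c'\geqslant 0$, $c\geqslant c_1$) drives a Riccati-type blow-up. First I would choose $u_0^\eps$ so that $u_0^\eps$ is close to the constant $u^\star$ in $C^1$ (so that $\tilde c'(u)$ stays bounded below by a positive constant $\kappa$ on the relevant region for a short time), while arranging $S_0 = v_0 + c(u_0^\eps)(u_0^\eps)' = c(u^\star)(u_0^\eps)'$ to be very large and negative (or $R_0$ very large positive) at some point — specifically of size $\sim \eps^{-\gamma}$ up to constants — by taking $u_0^\eps$ to have a steep monotone portion of the appropriate sign. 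Since $(u_0^\eps, 0) \in C^\infty$, this is compatible with the hypotheses of Theorem~\ref{thm:loc-existRS}.

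The core estimate is a stopping-time/Riccati comparison argument. On the event that the solution has not yet blown up and that $u$ has stayed within a fixed neighbourhood of $u^\star$, write $Y(t) = S(t, x(t))$ along the characteristic through the chosen point; then $\ud Y = \tilde c'(u)[Y^2 - R^2]\,\ud t + \Phi\,\ud W \leqslant \kappa Y^2\,\ud t - (\text{nonneg.})\,\ud t + \Phi\,\ud W$ is not quite right in sign, so I would instead track the invariant of the correct sign: if $S_0 \ll 0$, then $S^2 - R^2$ could be positive or negative, so the clean approach is to exploit monotonicity of $c$ and pick the datum so that one invariant, say $R$, is identically zero-ish and $S$ carries the steepness — more robustly, I would follow the deterministic mechanism of \cite{GlasseyHunterZheng1996}: along a characteristic, the quantity controlling $u_x$ blow-up satisfies a Riccati inequality $\dot w \geqslant \kappa w^2 - C$ once $w$ is large, and the noise contributes an additive martingale perturbation bounded, on $[0,\eps^\gamma]$, by $\sup_{t\leqslant \eps^\gamma}|\Phi W(t)|$ and its spatial derivative, which by \eqref{WHs}-type Doob/Burkholder estimates is $O(\eps^{\gamma/2})$ in $L^2(\Omega)$, hence $O(\eps^{\gamma/2 - \delta})$ with probability $\geqslant 1 - \eps$ by Markov's inequality. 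So, off an event of probability $\leqslant \eps$, on $[0,\eps^\gamma]$ the perturbed Riccati inequality $\dot w \geqslant \kappa w^2 - C$ with $w(0) \sim \eps^{-\gamma}$ forces $w \to \infty$ before time $\sim \kappa^{-1} w(0)^{-1} \sim \eps^{\gamma}/\kappa$. Choosing the constant in $w(0)$ large enough (depending only on $\kappa$, $C$, $\gamma$) makes the blow-up time $< \eps^\gamma$ on this event, which by the blow-up criterion \eqref{tauntau} gives $\tau^* \leqslant \eps^\gamma$ there, i.e. \eqref{PBlowup0}. The condition $\gamma > 1/3$ should enter precisely in balancing: the window length is $\eps^\gamma$, the datum size is $\sim \eps^{-\gamma}$, and the a priori bound needed to keep $u$ near $u^\star$ and to absorb the noise (which scales like the square root of the time window, times factors) requires $\eps^{-\gamma}\cdot \eps^{\gamma} \gg \eps^{\gamma/2}$ together with $\eps^\gamma \cdot \eps^{-2\gamma}$-type corrections from the duration of validity of the neighbourhood, which typically yields a threshold of the form $3\gamma > 1$.

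Concretely the key steps, in order, are: (i) reduce, via \eqref{SVWE2} and \eqref{defRS}, to the blow-up of $\|(R,S)\|_{L^\infty}$, which by \eqref{tauntau} is equivalent to blow-up of $\|(u_t,u_x)\|_{L^\infty}$; (ii) fix $\kappa > 0$ and a neighbourhood $I$ of $u^\star$ on which $\tilde c'(u) \geqslant \kappa$; (iii) construct $u_0^\eps \in C^\infty(\T)$ with $u_0^\eps(\T) \subset I$, $\|u_0^\eps - u^\star\|_{C^0}$ small, and with a point $x_0$ where $(u_0^\eps)'(x_0) \sim -A\eps^{-\gamma}$ for a large constant $A$ (the periodicity is harmless — a single steep descent compensated elsewhere by mild slopes); (iv) define the characteristic $x(t)$ and $Y(t) = S(t,x(t))$ (or the relevant combination), derive the SDE for $Y$, and isolate the Riccati term; (v) introduce the stopping time $\theta = \inf\{t : u(t,\cdot)\notin I \text{ or } |Y(t)| \geqslant \text{threshold}\}\wedge \tau^*$ and show, using a deterministic comparison lemma for $\dot w \geqslant \kappa w^2 - C + (\text{noise})$, that on the good noise event $G_\eps = \{\sup_{[0,\eps^\gamma]}(\|\Phi W\|_{C^1} ) \leqslant \eps^{\gamma/2-\delta}\}$ one has $\theta < \eps^\gamma$ and the exit is through blow-up of $Y$, not through $u$ leaving $I$ (this requires checking $u$ moves by at most $O(\eps^{\gamma})\cdot(\text{bounded})$ in the short window, which is $\ll \mathrm{dist}(u^\star,\partial I)$); (vi) estimate $\Pro(G_\eps^c) \leqslant \eps$ via Burkholder–Davis–Gundy and \eqref{WHs}, and conclude \eqref{PBlowup0}. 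The main obstacle I anticipate is step (v): controlling the interplay between the two invariants $R$ and $S$ — the cross term $-\tilde c'(u)R^2$ in the $S$-equation has the \emph{wrong} sign for a clean Riccati lower bound, so one must either choose the datum so that the conjugate invariant stays small on the relevant characteristic (which needs a short-time propagation estimate showing $R$ cannot grow to cancel the $S^2$ term before $S$ blows up), or work with a quantity that decouples the two, as in the deterministic analyses of \cite{GlasseyHunterZheng1996, ZhangZheng2000}. Getting the bookkeeping of constants sharp enough to land the exponent at exactly $\gamma > 1/3$ — rather than some larger threshold — is the delicate quantitative point.
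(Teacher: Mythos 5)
You have the right global strategy: seed a Riccati-type blow-up of a Riemann invariant along a characteristic with a steep initial profile near $u^\star$, and control the noise on a short time window of length $\eps^\gamma$ by a maximal inequality so that the blow-up survives except on an event of probability $\leqslant\eps$. You also correctly anticipate, and flag as the ``main obstacle'', the wrong-signed cross term $\tilde c'(u)S^2$ in the Riccati inequality for the invariant you try to blow up. But you leave this obstacle unresolved, and the two alternatives you sketch --- pointwise smallness of the conjugate invariant along the characteristic, or ``a quantity that decouples the two'' --- are not what works here. Pointwise smallness of $S$ along the $X^+$ characteristic is simply not available, because with $v_0=0$ one has $S_0=-R_0$ (see \eqref{IC}), so $S_0$ is itself of size $\eps^{-\gamma}$ at the chosen point.

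The missing ingredient is Proposition~\ref{prop:controlEnergies}: an \emph{integral} (not pointwise) control of $\int_0^t S^2(s,X^+(s,x))\,\ud s$, obtained by integrating the conservation law coming from \eqref{sumsquare-theta} over the wedge bounded by a forward and a backward characteristic and applying the divergence theorem. The bound is by the initial energy $\mathcal{E}(0)$ plus a small martingale, and it is absorbed into the Riccati via the Markov inequality, yielding \eqref{RplusblowupOK}. For this to close, the initial datum \eqref{u0eps} is deliberately \emph{thin}: amplitude $\eps^\alpha$, width $\eps^{\alpha+\nu+\gamma}$, so slope $\sim\eps^{-\nu-\gamma}$ but $\mathcal{E}(0)\sim\eps^{\alpha-\nu-\gamma}$. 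This three-parameter scaling --- which your datum, with slope $\sim\eps^{-\gamma}$ and unspecified support width, lacks --- is what keeps $\eps^\gamma\mathcal{E}(0)$ small and in turn controls both the cross term and the excursion of $u$ away from $u^\star$, via $u(s,X^+(s,x_\eps))-u_0(x_\eps)=\int_0^sS_{\chi^+}(r)\,\ud r$ and Cauchy--Schwarz; not by the $O(\eps^\gamma)\cdot\text{(bounded)}$ argument you propose, since $u_t$ is not bounded. The extra exponent $\nu>0$ also makes the Riccati collapse at time $\sim\eps^{\nu+\gamma}\ll\eps^\gamma$ without any large-constant adjustment. Finally, your noise bookkeeping is off: Chebyshev with $\E\sup_{[0,\eps^\gamma]}|\Phi W|^2\lesssim\eps^\gamma$ gives a threshold $\sim\eps^{(\gamma-1)/2}$ with probability $1-\eps$, not $\eps^{\gamma/2-\delta}$ for arbitrarily small $\delta$; the condition $3\gamma>1$ appears precisely in the comparison of this Markov bound with the size $\eps^{-\nu-\gamma}$ of the datum, as in \eqref{PM1}.
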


Eventually, we can state the existence of global-in-time weak solutions.

\begin{definition}[Weak martingale solution]\label{def:WeakSol} Assume \eqref{defq}, \eqref{coeff-c} and \eqref{coeff-c'} . Let $u_0 \in H^1(\T)$ and $v_0 \in L^2(\T)$. We say that the problem \eqref{SVWE1} admits a weak martingale solution if there exists first a stochastic basis 
\begin{equation}\label{StochasticBasis}
\left(\tilde{\Omega},\tilde{\mathcal{F}},\tilde{\Pro},\left(\tilde{\mathcal{F}}_t\right),\left(\tilde{W}(t)\right)\right),
\end{equation}
where $\left(\tilde{W}(t)\right)$ is a cylindrical Wiener process on $\mathfrak{U}$, and, second, an adapted stochastic process $(u(t))$ with values in $H^1(\T)$ such that, for all $T>0$,
\begin{enumerate}
\item $u_t,u_x\in L^2(\tilde{\Omega};L^\infty([0,T]; L^2(\T)))$ and $u_t,u_x\in C([0,T]; L^2(\T)-\mathrm{weak}))$ $\tilde{\Pro}$-a.s., 
\item $\tilde{\Pro}$-a.s., $u(0,\cdot)=u_0$ and for all $\varphi\in C^1(\T)$ and $t \in [0,T]$,
\begin{multline}\label{weaku}
\int_\T u_t(t)\, \varphi\, \ud x\ -\ \int_\T v_0\, \varphi\, \ud x
\ +\ \int_0^t \int_\T \left( c(u(s))\, \varphi \right)_x c(u(s))\, u_x\, \ud x\, \ud s\ =\ \int_\T \varphi\, \Phi\, \tilde{W}(t)\, \ud x 
\end{multline}
\item $\tilde{\Pro}$-a.s, the solution $u$ satisfies the energy ``dissipation'' inequality
\begin{align}\nonumber
\int_\T \left[u_t^2\, +\, c(u)^2\, u_x^2 \right] (t_2)\, \ud x\ &\leqslant\ \int_\T \left[u_t^2\, +\, c(u)^2\, u_x^2 \right] (t_1)\, \ud x\ +\ \|q\|_{L^1(\T)}\, (t_2-t_1)\\ \label{energydissipation}
&\quad +\ 2 \int_{t_1}^{t2}\int_\T u_t\, \Phi\, \ud x\, \ud \tilde{W}(s),
\end{align}
for almost all $t_1\in[0,\infty)$ and any $t_2 \geqslant t_1$, where $q$ is the variance of the noise defined in \eqref{defq}.
\item $\tilde{\Pro}$-almost surely, for almost all $t_0 \in [0,\infty)$ we have 
\begin{equation}\label{rightcontinuity}
\lim_{t \downarrow t_0} \left\| \left( u_t(t)-u_t(t_0), u_x(t)-u_x(t_0) \right) \right\|_{L^2}\ =\ 0.
\end{equation} 
\end{enumerate}
\end{definition}

\begin{remark}
The right-continuity condition \eqref{rightcontinuity} can be interpreted as a dissipation condition. Indeed, Dafermos \cite{Dafermos} proved that, in the case of the Hunter--Saxton equation \eqref{HS}, the right-continuity condition is equivalent to the dissipation of the energy. This plays a crucial role in the uniqueness of solutions \cite{Dafermos}.
\end{remark}

\begin{thm}[Global existence of weak martingale solutions]\label{thm:global-existR2SS} Let $u_0 \in H^1(\T)$ and $v_0 \in L^2(\T)$. Assume \eqref{defq}, \eqref{coeff-c}, \eqref{coeff-c'} and suppose that
\begin{equation}\label{Positivecprime00}
c_0\ \eqdef\ \inf_{x \in \T} c'(u_0(x))\ >\ 0.
\end{equation}
Then the problem \eqref{SVWE1} admits a weak martingale solution. Moreover, the solution satisfies 
\begin{itemize}
\item for all $p \in [1,3)$ we have 
\begin{equation}\label{L3-estimates}
\tilde{\E} \int_{[0,t] \times \T} c'(u) \left[|u_t|^p\, +\, |u_x|^p \right] \ud x\, \ud t\ \leqslant\ C(T,\alpha),
\end{equation} 
\item for all $p \in [1,2]$, there exists $C(p)>0$ such that for all $t \in (0,T]$, we have the entropy inequality 
\begin{equation}\label{Entropy}
\tilde{\E} \left\| \left[ u_t\, \pm\, c(u)\, u_x \right]^- \right\|_{L^\infty}^p\! (t)\ \leqslant\ C(p,T) \left( 1\, +\, t^{-p} \right).
\end{equation}
\end{itemize}
\end{thm}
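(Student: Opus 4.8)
\textbf{Proof proposal for Theorem~\ref{thm:global-existR2SS}.}

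The plan is to obtain the global weak martingale solution as a limit of the approximate solutions $(R^\eps,S^\eps,u^\eps)$ to the truncated and corrected system~\eqref{SVWEep}, exactly along the lines announced in the introduction. First I would establish, for fixed $\eps>0$, global-in-time existence of the approximate solutions: the truncation $\chi_\eps$ together with the correction term $\Theta^\eps$ turns the quadratic right-hand side into something with at most linear growth in the relevant norm once one controls $\|(R^\eps,S^\eps)\|_{L^2}$, so a pathwise local existence argument (a minor variant of Theorem~\ref{thm:loc-existRS}, using the mollified data $J_\eps R_0,J_\eps S_0\in C^\infty$) combined with an a priori $L^2$-energy estimate closes the loop and prevents blow-up. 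The role of $\Theta^\eps$, defined in~\eqref{psieps}, is to subtract the spatial mean of $\tfrac{S^\eps-R^\eps}{2}$ so that $\int_0^1\big(\tfrac{S^\eps-R^\eps}{2}-\Theta^\eps\big)\,\ud y=0$, making the representation~\eqref{udefep} consistent with periodicity; one checks that $u^\eps$ so defined is genuinely $\T$-periodic and solves the corresponding second-order equation.

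The second block of the argument is the collection of $\eps$-uniform estimates. Applying Itô's formula to $\int_\T[(R^\eps)^2+(S^\eps)^2]\,\ud x$ and using~\eqref{coeff-c}, \eqref{coeff-c'}, \eqref{qeps-q} one gets $\tilde\E\sup_{[0,T]}\|(R^\eps,S^\eps)(t)\|_{L^2}^2\leqslant C(T)$, since the quartic terms $\tilde c'(u^\eps)[(R^\eps)^2-(S^\eps)^2]$ cancel in the sum $R^\eps\,\ud R^\eps+S^\eps\,\ud S^\eps$ up to the (sign-favourable) truncation terms $-\tilde c'(u^\eps)[R^\eps\chi_\eps(R^\eps)+S^\eps\chi_\eps(S^\eps)]$ and the bounded correction terms. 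Next one needs the one-sided (entropy/Oleinik-type) bound: because $c_0=\inf c'(u_0)>0$ and $c'\geqslant0$, a comparison/Riccati argument along characteristics for $R^\eps$ and $S^\eps$ — of the form $\tfrac{\ud}{\ud t}(\text{something like }e^{-2\tilde c'(u^\eps)\int}\cdot)$ — yields $\tilde\E\|[R^\eps]^-\|_{L^\infty}^p(t)+\tilde\E\|[S^\eps]^-\|_{L^\infty}^p(t)\leqslant C(p,T)(1+t^{-p})$, which will become~\eqref{Entropy}. Finally, integrating the weighted quantity $c'(u^\eps)(|R^\eps|^p+|S^\eps|^p)$ and exploiting the coercive dissipative term produced by $\chi_\eps$ gives the $L^p$ ($p<3$) bound~\eqref{L3-estimates}, uniformly in $\eps$.

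The third block is tightness and passage to the limit. Using the uniform bounds one shows the laws of $(R^\eps,S^\eps,u^\eps,W)$ are tight in, e.g., $C([0,T];H^{-1})\cap(L^2-\mathrm{weak})$ for $R^\eps,S^\eps$, $C([0,T];C(\T))$ for $u^\eps$ (here~\eqref{udefep} gives equicontinuity of $u^\eps$ in space and time from the $L^2$-bounds on $R^\eps,S^\eps$), together with the laws of the Young measures $\nu^\eps_{t,x}=\delta_{(R^\eps(t,x),S^\eps(t,x))}$ viewed in the space of measure-valued maps with its weak-$\star$ topology (this is where \cite{Diperna83a,BerthelinVovelle19} enter). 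By the Skorokhod--Jakubowski representation theorem one gets a new probability space carrying copies $(\bar R^\eps,\bar S^\eps,\bar u^\eps,\bar W^\eps,\bar\nu^\eps)$ converging a.s. to a limit $(R,S,u,\tilde W,\nu)$. One then passes to the limit in the weak formulation of~\eqref{SVWEep}: the linear transport terms, the stochastic integral (via a standard martingale-characterisation argument, the truncation $\Phi^\eps\to\Phi$ by~\eqref{qeps-q}), and the correction terms $\Theta^\eps\to 0$ pass readily, while the nonlinear terms $\tilde c'(u^\eps)[(R^\eps)^2-(S^\eps)^2]$ only converge to $\tilde c'(u)\langle\nu,(\xi_1)^2-(\xi_2)^2\rangle$ — i.e. with a Young-measure defect. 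The equation~\eqref{weaku}, the energy inequality~\eqref{energydissipation} (with the dissipation coming from lower semicontinuity of the energy plus the favourable truncation sign) and the right-continuity~\eqref{rightcontinuity} are then obtained modulo these defect measures, and the bounds~\eqref{L3-estimates}, \eqref{Entropy} pass to the limit by Fatou.

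\textbf{Main obstacle.} The genuine difficulty — deferred to Section~\ref{sec:Young} — is to prove that the Young measure $\nu_{t,x}$ is in fact a Dirac mass $\delta_{(R(t,x),S(t,x))}$, so that all the nonlinear terms are the expected ones and $(u,R,S)$ is a bona fide solution of~\eqref{SVWE1}. This reduction-of-the-Young-measure step is the heart of the DiPerna-type compensated-compactness machinery: one must show that weak-$\star$ limits of $(R^\eps)^2$, $(S^\eps)^2$, $R^\eps S^\eps$ equal the squares/products of the limits, i.e. that there is no oscillation in $(R^\eps,S^\eps)$. Here the one-sided entropy bound $[R^\eps]^-,[S^\eps]^-$ controlled in $L^\infty$ (hence $R^\eps,S^\eps$ bounded below, with only the positive part possibly large) plays the decisive role, as in \cite{ZhangZheng2003,ZhangZheng2005,BerthelinVovelle19}; one propagates suitable entropy--entropy flux pairs and uses the div--curl lemma (or its kinetic/averaging analogue) adapted to the stochastic setting to kill the defect. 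Handling this argument in the presence of the noise — in particular making sense of the relevant entropies as semimartingales and controlling the Itô corrections — is the technically heaviest part and the place where the present paper departs most from the deterministic treatments.
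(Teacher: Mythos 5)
Your outer skeleton is exactly the paper's: truncated/corrected system~\eqref{SVWEep}, global existence of the approximation, $\eps$-uniform energy/Oleinik/$L^{3^-}$ bounds, tightness, Skorokhod--Jakubowski, and then reduction of the Young measure. Two points, one minor and one substantial.

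Minor: the $L^{3^-}$ estimate does not come from ``the coercive dissipative term produced by $\chi_\eps$''. In the paper's Proposition~\ref{prop:L3estimate} the $\chi_\eps$ contributions are merely sign-favourable and dropped; the actual coercivity comes from the cubic structure of the nonlinearity. Writing the two equations in conservative form with the function $h(R)=\int_0^R(r^2+1)^{\alpha/2}\,\ud r$ and summing, the source terms produce the quantities $\Delta_1(R,S)=(R-S)\bigl(B_h(R,S)-B_h(S,R)\bigr)$ and $\Delta_2(R,S)=(R+S)\bigl(F_h(R,S)+F_h(S,R)\bigr)$, and the elementary inequalities \eqref{belowPsiL3} bound $(R-S)^2(|R|^\alpha+|S|^\alpha)$ and $(R+S)^2(|R|^\alpha+|S|^\alpha)$ below by these. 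That is the engine; the truncation plays no constructive role there.

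Substantial: your description of the reduction step would not close as stated. You write that one ``propagates suitable entropy--entropy flux pairs and uses the div--curl lemma ... to kill the defect'', but in the paper the div--curl lemma is used \emph{only} to prove that the Young measure factorizes as a tensor product of its two marginals, $\tilde{\mu}_{t,x}=\tilde{\nu}^1_{t,x}\otimes\tilde{\nu}^2_{t,x}$ (Lemma~\ref{lem:productmu}). This factorization by itself says nothing about oscillations of $R^\eps$ (or $S^\eps$) separately, so the ``defect'' is not yet killed. The actual mechanism is a \emph{separate} argument specific to this nearly decoupled $2\times 2$ structure: one passes to the limit in the renormalized equations for $h=Q_\kappa$ (truncations of $\xi^2/2$), compares the equation for $\langle Q_\kappa(R)\rangle$ with the one for $Q_\kappa(\langle R\rangle)$, and after a careful renormalization in the Kru\v{z}kov/DiPerna--Lions style arrives at the transport inequality
\[
\partial_t\bigl(\sqrt{\Delta/c(\tilde u)}\bigr)\ +\ \partial_x\bigl(\sqrt{c(\tilde u)\,\Delta}\bigr)\ \leqslant\ 0,\qquad \Delta\ \eqdef\ \tfrac12\bigl(\langle R^2\rangle-\langle R\rangle^2\bigr),
\]
in $\mathcal{D}'((t_0,T)\times\T)$ (Proposition~\ref{prop:EvolDefect}). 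Combined with a no-initial-boundary-layer statement ($\Delta(t_0)\to 0$ in $L^1(\T)$ as $t_0\to 0$, Proposition~\ref{prop:CVDelta0}, which uses the convergence of the energy and the strong convergence of the mollified data), this forces $\Delta\equiv 0$, i.e.\ $\tilde{\nu}^1_{t,x}=\delta_{\langle R\rangle(t,x)}$. This is \emph{not} a generic DiPerna compensated-compactness argument for hyperbolic systems; it is a transport inequality for the variance of the defect. The one-sided Oleinik bound enters not as a direct rigidity mechanism but to relax the admissible growth of $h$ (Remark~\ref{rk:growthh}), which is needed to plug in $Q_\kappa$ and to bound the terms $\tilde{c}'(\tilde u)Q_\kappa'(\langle R\rangle)\Delta$ before passing $\kappa\to\infty$. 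If you tried to do what your text literally proposes — manufacture a large family of entropy pairs and use div--curl to force the measure to be Dirac — you would be missing the key observation that lets this particular system be handled by a scalar transport inequality for $\Delta$.

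One small technical omission worth flagging: the tightness cannot be run directly on $(R^\eps,S^\eps)$, since at this stage one deliberately does not know $R^\eps$ converges strongly. The paper instead proves tightness of the countable families $X_R^\eps=(f(R^\eps))_{f\in\mathcal{S}}$, $X_S^\eps=(f(S^\eps))_{f\in\mathcal{S}}$ with $\mathcal{S}\subset C^\infty_c(\R)$ dense in $C_0(\R)$, in the quasi-Polish space $C([0,T];L^2_w(\T))^{\aleph_0}$, and separately proves tightness of the random Young measures in the space $\mathscr{Y}$. These are what survive the Skorokhod--Jakubowski step.
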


%

\section{Local-in-time regular solutions}\label{sec:localsol}

In this section we will consider the question of existence of local-in-time solutions in $H^s$ to the system \eqref{SVWEeptheta} below, which covers both systems \eqref{SVWE2} and \eqref{SVWEep}.

\subsection{Generalized system}

For some given function $\theta\in C^\infty(\R)$, we consider the system 
\begin{subequations}\label{SVWEeptheta}
\begin{gather}\label{Reqeptheta}
\ud R\ +\ c(u)\, R_x\, \ud t\ =\ \tilde{c}'(u) \left[R^2\, -\, S^2\, -\, \theta(R)\, +2\, R\, \Theta \right] \ud t\ +\ \Phi\, \ud W, \\ \label{Seqeptheta}
\ud S\ -\ c(u)\, S_x\, \ud t\ =\ \tilde{c}'(u) \left[ S^2\, -\, R^2\, -\, \theta(S)\, -2\, S\, \Theta  \right] \ud t\ +\ \Phi\, \ud W,\\
R(0,\cdot)\ =\ R_0, \quad \qquad S(0,\cdot)\ =\ S_0,
\end{gather}
\end{subequations}
where $u$ is defined by
\begin{equation}\label{udefeptheta}
u(t,x)\ =\ \mathcal{C}^{-1}\left\{ \mathcal{C} \left\{ \int_0^t {\textstyle \left(\frac{R\, +\, S}{2}  \right) (s,0)}\, \ud s\ +\ u_0(0) \right\} +\, \int_0^x \left[{\textstyle \frac{S\, -\, R}{2}}(t,y)\, -\, \Theta (t)\right] \ud y \right\},
\end{equation}
with
\begin{equation}\label{psieptheta}
\Theta (t)\ \eqdef\ \int_0^1 \frac{S-R}{2}(t,y)\ \ud y.
\end{equation}
The original system \eqref{SVWE2} simply corresponds to the case $\theta\equiv 0$ (one can prove that $\Theta (t)\equiv 0$ in that case\footnote{indeed, $\Theta $ satisfies $\Theta '=\Theta  \int_\T  \tilde{c}'(u) (R+S)\, \ud x$ and $\Theta (0)=0$.}), while the approximate system \eqref{SVWEep} corresponds to the choice $\theta=\chi_\eps$ (note that $\chi_\eps$ is not of class $C^\infty$ though, but in the class $W^{2,\infty}_\mathrm{loc}$, this is why Theorem~\ref{thm:globalsolep} will be stated in the space $H^2(\T)$).
In essential, the system \eqref{SVWEeptheta} will be solved pathwise, with the help of the following set of unknown functions:
\begin{equation}\label{introPQ}
P\ \eqdef\ R\ -\ \Phi\, W, \qquad Q\ \eqdef\ S\ -\ \Phi\, W.
\end{equation}
In terms of the unknown $(P,Q)$, the system \eqref{SVWEeptheta} can be rewritten as the following system of PDEs with random coefficients:
\begin{subequations}\label{SVWEepthetaPQ}
\begin{gather}\label{Peq}
P_t\ +\ c(u)\, P_x\ =\ \tilde{c}'(u) \left[P^2\, -\, Q^2 \, -\, \theta(P\, +\, \Phi\, W)\, +\, 2\, (P\, +\, \Phi\, W)\, \Theta \right]\, -\, \left( c(u)\, \Phi\, W \right)_x, \\
Q_t\ -\ c(u)\, Q_x\ =\ \tilde{c}'(u) \left[Q^2\, -\, P^2 \, -\, \theta(Q\, +\, \Phi\, W)\, -\, 2\, (Q\, +\, \Phi\, W)\, \Theta \right]\, +\,  \left( c(u)\, \Phi\, W \right)_x, \\ 
P(0,\cdot)\ =\ R_0, \qquad Q(0,\cdot)\ =\ S_0,
\end{gather}
\end{subequations}
where
\begin{equation}\label{udefepthetaPQ}
u(t,x)\, =\, \mathcal{C}^{-1}\left\{ \mathcal{C} \left\{ \int_0^t {\textstyle \left(\frac{P\, +\, Q}{2}\, +\, \Phi\, W  \right) (s,0)}\, \ud s\ +\ u_0(0) \right\} +\, \int_0^x \left[{\textstyle \frac{Q\, -\, P}{2}}(t,y)\, -\, \Theta (t)\right] \ud y \right\},
\end{equation}
with
\begin{equation}\label{psiepthetaPQ}
\Theta (t)\ \eqdef\ \int_0^1 \frac{Q-P}{2}(t,y)\, \ud y.
\end{equation}
The system \eqref{SVWEepthetaPQ}-\eqref{udefepthetaPQ} has the form of the quasilinear system 
\begin{subequations}\label{SVWE_Qlinear-eps}
\begin{align}
V_t\ +\ A(V)\, V_x\ &=\ F_{\theta}(V), \mbox{ in }(0,T) \times \T,\\
V(0,\cdot)\ &=\ V_0, \mbox{ in }\T.
\end{align}
\end{subequations}
where 
\begin{equation}
V\ \eqdef\ 
\begin{pmatrix}
P \\
Q
\end{pmatrix},
\end{equation}
and, $(u,\Theta )$ being given as a functions of $V$ by \eqref{udefepthetaPQ}-\eqref{psiepthetaPQ}, we set
\begin{equation}\label{defAeps}
A(V)\ \eqdef 
\begin{pmatrix}
c(u) & 0\\
0 & -c(u)
\end{pmatrix}, 
\end{equation}
and
\begin{equation}\label{defFeps}
F_{\theta}(V)\ \eqdef 
\begin{pmatrix}
\tilde{c}'(u) \left[P^2\, -\, Q^2 \, -\, \theta(P\, +\, \Phi\, W)\,  +\, 2\, (P\, +\, \Phi\, W)\, \Theta \right]\, -\, \left( c(u)\, \Phi\, W \right)_x\\
\tilde{c}'(u) \left[Q^2\, -\, P^2 \, -\, \theta(Q\, +\, \Phi\, W)\, -\, 2\, (Q\, +\, \Phi\, W)\, \Theta \right]\, +\,  \left( c(u)\, \Phi\, W \right)_x
\end{pmatrix}.
\end{equation}

\subsection{Local regular solutions}\label{subsec:localregsol}

Let $s>3/2$. We introduce the following definition.

\begin{definition}[Regular solution to \eqref{SVWE_Qlinear-eps}, up to an explosion time]\label{def:regsolPQeps} Let $V_0\in H^s(\T;\R^2)$. Let $\tau>0$ be a stopping time such that $\tau>0$ a.s. Let $\Omega_s$ be defined in \eqref{WHsas}. A process $(V(t))_{0\leqslant t<\tau}\ =\ (P(t),Q(t))_{0\leqslant t<\tau}$ such that, we have: a.s., 
\begin{equation}\label{def-sol-V}
\left(t\mapsto V(t)\right)\in C([0,\tau), H^{s}(\T;\R^2))\cap C^1([0,\tau), H^{s-1}(\T;\R^2)),
\end{equation}
is said to be a regular (or $H^s$) solution to \eqref{SVWE_Qlinear-eps} up to the explosion time $\tau$ if, 
\begin{enumerate}
\item for any stopping time $\tau'$ such that $\tau'<\tau$ a.s., for all $x\in\T$, the process $(V(t\wedge\tau' ,x))_{t\geqslant 0}$ is predictable and, for all $\omega\in\Omega_s$ such that $\tau'(\omega)<\tau(\omega)$ (and thus $\Pro$-a.s.), we have the following identity in $H^{s-1}(\T)$
\begin{equation}\label{eq-def-sol-V-eps}
V(\tau')\ =\ V_0\ -\ \int_0^{\tau'} (A(V)\,V_x\ -\ F_{\theta}(V))(s)\, \ud s,
\end{equation}
\item the stopping time $\tau$ is an explosion time, in the sense that
\begin{equation}\label{blowuptauVeps}
\tau\ =\ \sup_{n\geqslant 1}\tau_n,\qquad\tau_n\ \eqdef\ \inf\left\{t\in[0,\tau)\, ;\, \|V(t)\|_{L^\infty(\T;\R^2)}\ >\ n\right\}.
\end{equation}
\end{enumerate}
If $\tau=\infty$ a.s., the solution is said to be global.
\end{definition}

The integral identity \eqref{eq-def-sol-V-eps} can be also expressed by integration along the characteristic curves. We will see several instances of this formulation: to solve the linear system (with frozen non-linear coefficients) associated to \eqref{SVWE_Qlinear-eps}, and also to get some estimates on the solutions to \eqref{SVWE_Qlinear-eps}, with either $\theta=0$ or $\theta=\chi_\eps$, so we give a rather general form to the statement.

\begin{proposition}[Integration along characteristic curves]\label{prop:galchi} Let $\tau>0$ be a stopping time such that $\tau>0$ a.s. Let $(v(t),\bar{c}(t),f(t))_{0\leqslant t<\tau}$ be a stochastic process such that, we have: a.s., 
\begin{equation}\label{def-sol-vvv}
\left[t\mapsto v(t)\right]\in C([0,\tau), H^{s}(\T))\cap C^1([0,\tau),H^{s-1}(\T)),
\end{equation}
and
\begin{equation}\label{def-sol-cf}
\left[t\mapsto (\bar{c},f)(t)\right]\in C^1([0,\tau),H^{s-1}(\T))\times C([0,\tau),H^{s-1}(\T)).
\end{equation}
Assume also: a.s., for all $t\geqslant 0$, $x\in\T$, $\bar{c}(t,x)\in [c_1,c_2]$ (where $c_1,c_2$ are the constants in \eqref{coeff-c}). Let $(X(t,x))$ denote the flow associated to $\bar{c}$, and let $y\mapsto Y(t,y)$ denote the inverse of the map $x\mapsto X(t,x)$. Then, there is equivalence between:
\begin{enumerate}
\item for any stopping time $\tau'$ such that $\tau'<\tau$ a.s.,
\begin{equation}\label{eq-def-sol-vvv}
v(\tau')\ =\ v(0)\ -\ \int_0^{\tau'} (\bar{c}\,v_x\ -\ f)(s)\, \ud s,
\end{equation}
and 
\item a.s., for all $t\in[0,\tau)$ and all $x\in\T$,
\begin{equation}\label{vvv-char}
v(t,x)=v(0,Y(t,x))+\int_0^t f(s,X(s,Y(t,x)))\, \ud s.
\end{equation}
\end{enumerate}
\end{proposition}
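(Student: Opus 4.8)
The plan is to prove the equivalence by passing between the PDE formulation \eqref{eq-def-sol-vvv} and the transported formulation \eqref{vvv-char} via the method of characteristics, the key point being that along the flow $X(\cdot,x)$ associated to the (time-dependent, random, but pathwise smooth) velocity field $\bar c$, the transport operator $\partial_t + \bar c\,\partial_x$ becomes a total derivative $\tfrac{d}{dt}$. All the analysis is pathwise: fix $\omega$ in the probability-one set on which the regularity assumptions \eqref{def-sol-vvv}, \eqref{def-sol-cf} hold, so that $\bar c(t,\cdot)\in C^1$ in $t$ with values in $H^{s-1}\subset C^0$ (by $s>3/2$ and Sobolev embedding, actually $s-1>1/2$ so $H^{s-1}\hookrightarrow C^0$), and $\bar c$ is bounded in $[c_1,c_2]$.

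First I would set up the flow carefully. Since $\bar c$ is continuous in $(t,x)$ and Lipschitz in $x$ (values in $H^{s-1}$ with $s-1>1/2$; if one needs genuine Lipschitz-in-$x$ one either assumes $s>3/2$ gives $H^{s-1}\hookrightarrow C^{0,\alpha}$ and invokes the stated $C^1$ regularity in time together with elliptic/Sobolev bounds, or simply notes $v,\bar c\in C^1_t H^{s-1}_x$ and works with the stated regularity), the ODE $\dot X(t,x) = \bar c(t,X(t,x))$, $X(0,x)=x$, has a unique global flow on $\T$, which is a diffeomorphism of $\T$ for each $t$; denote its inverse $Y(t,\cdot)$. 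One records that $t\mapsto X(t,x)$ is $C^1$ and $x\mapsto X(t,x)$ is a bi-Lipschitz homeomorphism, and that $Y$ inherits joint continuity and the needed one-sided regularity. This is the standard Cauchy–Lipschitz/Carathéodory construction and I would not belabor it.

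Next, the implication \eqref{eq-def-sol-vvv} $\Rightarrow$ \eqref{vvv-char}. From \eqref{eq-def-sol-vvv} applied with deterministic times $\tau'=t$ (allowed since it holds for all stopping times $\tau'<\tau$, in particular constant ones, on the full-probability set), and using that $v\in C^1([0,\tau),H^{s-1})$, one gets the pointwise-in-$x$ identity $v_t(t,x) = -\bar c(t,x)v_x(t,x) + f(t,x)$ for a.e. (indeed every, by continuity) $x$. Then I would compute, for fixed $x\in\T$ and $t<\tau$, the derivative $\tfrac{d}{dt}\big[v(t,X(t,x))\big] = v_t(t,X(t,x)) + v_x(t,X(t,x))\,\dot X(t,x) = v_t(t,X(t,x)) + \bar c(t,X(t,x))\,v_x(t,X(t,x)) = f(t,X(t,x))$, the middle term cancelling. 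Integrating from $0$ to $t$ gives $v(t,X(t,x)) = v(0,x) + \int_0^t f(s,X(s,x))\,ds$; substituting $x = Y(t,x')$ and using $X(s,Y(t,x')) $ as written yields \eqref{vvv-char}. The only care needed is justifying the chain rule for $t\mapsto v(t,X(t,x))$: $v$ is $C^1$ in $t$ with values in $H^{s-1}\hookrightarrow C^0$, $v$ is also continuous with values in $H^s\hookrightarrow C^1$ so $v_x$ is continuous in $(t,x)$, and $X$ is $C^1$ in $t$ and continuous in $x$ — this suffices for the composition to be $C^1$ in $t$ with the stated derivative.

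Conversely, for \eqref{vvv-char} $\Rightarrow$ \eqref{eq-def-sol-vvv}, I would differentiate \eqref{vvv-char}. Writing $g(t,x) \eqdef v(t,X(t,x)) = v(0,x) + \int_0^t f(s,X(s,x))\,ds$, the right-hand side is manifestly $C^1$ in $t$ with $\partial_t g(t,x) = f(t,X(t,x))$; on the other hand $g(t,x) = v(t,X(t,x))$ and by the chain rule (same justification as above) $\partial_t g(t,x) = v_t(t,X(t,x)) + \bar c(t,X(t,x))v_x(t,X(t,x))$. Equating and composing with $Y(t,\cdot)$ (i.e. replacing $x$ by $Y(t,x)$, which ranges over all of $\T$) gives $v_t(t,x) + \bar c(t,x)v_x(t,x) = f(t,x)$ for all $x\in\T$, $t<\tau$; since all three terms are continuous in $t$ with values in $H^{s-1}$, this is an identity in $C([0,\tau),H^{s-1})$, and integrating in time from $0$ to any $\tau'<\tau$ — legitimate because $v\in C^1([0,\tau),H^{s-1})$ so $v(\tau')-v(0) = \int_0^{\tau'} v_t(s)\,ds$ in $H^{s-1}$ — produces \eqref{eq-def-sol-vvv}. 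For the predictability/measurability aspect (the process $v(t\wedge\tau',x)$), one notes the flow $X$, hence $Y$, is built by a pathwise ODE from the adapted process $\bar c$ and is therefore adapted, so composition preserves the required measurability; this is routine and I would state it briefly.

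The main obstacle is the regularity bookkeeping in the chain rule for $t\mapsto v(t,X(t,x))$ and in identifying in which function space the various identities hold: one must keep straight that $v$ lives in $C_t H^s \cap C^1_t H^{s-1}$, that $\bar c$ lives in $C^1_t H^{s-1}$, that $H^{s-1}\hookrightarrow C^0$ and $H^s\hookrightarrow C^1$ for $s>3/2$, and that the flow map is only as smooth in $x$ as the Lipschitz regularity of $\bar c$ allows — so the cleanest route is to fix $x$ and treat everything as a function of $t$ alone, using the $C^0_x$ (not $C^1_x$) regularity of $\bar c$ for the flow and the $C^1_x$ regularity of $v$ (from $H^s$) for the chain rule, thereby never needing second $x$-derivatives of anything. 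Once that is organized, both implications are short computations.
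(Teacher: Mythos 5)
Your overall strategy — reduce to the pathwise, purely deterministic method of characteristics and carefully bookkeep the regularity so that the chain rule for $t\mapsto v(t,X(t,x))$ is justified — is exactly the route the paper takes, and your regularity discussion (using $H^{s}\hookrightarrow C^1$, $H^{s-1}\hookrightarrow C^0$, and the $C^1$-in-$t$-with-values-in-$H^{s-1}$ convergence to make the chain rule work) is sound and in fact more detailed than what the paper writes. The converse direction is also fine as you present it.

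There is, however, a genuine gap in the first step of the forward implication. You assert that \eqref{eq-def-sol-vvv} may be applied with the constant stopping time $\tau'=t$, ``allowed since it holds for all stopping times $\tau'<\tau$, in particular constant ones.'' But a constant $\tau'=t$ satisfies $\tau'<\tau$ a.s.\ only if $\Pro(\tau>t)=1$, which fails in general because $\tau$ is a random explosion time that may be finite with positive probability before time $t$. So $\tau'=t$ is simply not an admissible test stopping time in \eqref{eq-def-sol-vvv}, and your justification does not go through as written. The correct reduction — which is exactly what the paper does before invoking the deterministic argument — is to take an arbitrary stopping time $\tau''$ with $\tau''<\tau$ a.s., apply \eqref{eq-def-sol-vvv} with $\tau'=t\wedge\tau''$ (which \emph{is} a.s.\ strictly below $\tau$), conclude that the integral identity holds a.s.\ on $\{t<\tau''\}$ for all $t$, and then exhaust $[0,\tau(\omega))$ by letting $\tau''$ range over stopping times increasing to $\tau$ (such as the $\tau_n$ of \eqref{blowuptauVeps}). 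Once this bridge from the stopping-time formulation to the pathwise-in-$t$ formulation \eqref{eq-def-sol-vvv-t} is in place, the remainder of your argument is complete.
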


\begin{proof}[Proof of Proposition~\ref{prop:galchi}] Note first that \eqref{eq-def-sol-vvv} is equivalent to: a.s., for all $t\in[0,\tau)$,
\begin{equation}\label{eq-def-sol-vvv-t}
v(t)\ =\ v(0)\ -\ \int_0^t (\bar{c}\,v_x\ -\ f)(s)\, \ud s.
\end{equation}
Indeed, \eqref{eq-def-sol-vvv} can be applied to the stopping time $t\wedge\tau'$, from which we deduce \eqref{eq-def-sol-vvv-t} for all $t\in[0,\tau'(\omega))$, and thus for all $t\in[0,\tau(\omega))$ since $\tau'<\tau$ is arbitrary. Conversely, \eqref{eq-def-sol-vvv-t} yields \eqref{eq-def-sol-vvv} when $\tau'$ is simple, and the general case follows by approximation. This being said, Proposition~\ref{prop:galchi} is essentially a classical deterministic statement. It is not difficult to check that all the terms are sufficiently regular to justify the equivalence through differentiation. In particular, the equation
\begin{equation}\label{vvv-PDE}
\partial_t v(t,x)+\bar{c}(t,x)\partial_x v(t,x)=f(t,x)
\end{equation}
is satisfied a.s., for all $t\in[0,\tau)$ and all $x\in\T$.
\end{proof}

Energy estimates in $H^s(\T)$ are one of the main ingredient in the resolution of \eqref{SVWE_Qlinear-eps} locally in time. A basic issue in the justification of such estimates for solutions a priori in $H^s(\T)$ is that the term $A(V)\partial_x$ involves an additional space derivative and necessitates the better $H^{s+1}$-regularity to be safely manipulated. There are several ways to circumvent this problem, for instance by working first on an approximate problem (by Friedrichs regularization of the term $A(V)\partial_x$, or by direct resolution of an iterative scheme in a class of sufficiently smooth solutions). We choose a different method, by a direct regularization of the equation and commutator estimates (based on Kato--Ponce's estimates \cite{KatoPonce1988} and DiPerna--Lions estimates \cite{DiPernaLions89}). We give the result on the general equation \eqref{vvv-PDE}. 
\begin{proposition}[$H^s$-estimate, linear equation]\label{prop:Hsestimate} Let $(v(t),\bar{c}(t),f(t))_{0\leqslant t<\tau}$ be as in Proposition~\ref{prop:galchi}. There is a constant $B_s$ depending on $s$ only such that, a.s., for all $T\in[0,\tau)$ satisfying 
\begin{equation}\label{smallT}
2TB_s\|\partial_x\bar{c}\|_{C([0,T];L^\infty(\T))}\leqslant 1,
\end{equation}
one has
\begin{multline}\label{vvvHs}
\|v\|_{C([0,T];H^s(\T))}\leqslant 2\|v(0)\|_{H^s(\T)}+2T\|f\|_{C([0,T];H^s(\T))}\\
+2TB_s\|v\|_{C([0,T];W^{1,\infty}(\T)}\, \|\bar{c}\|_{C([0,T];H^s(\T))}.
\end{multline}
In particular, there is a constant $C_s$ depending only on $s$ such that, if $T$ additionally satisfies
\begin{equation}\label{smallT2}
4T C_s\|\bar{c}\|_{C([0,T];H^s(\T))}\leqslant 1,
\end{equation}
then
\begin{equation}\label{vvvHs2}
\|v\|_{C([0,T];H^s(\T))}\leqslant 4\|v(0)\|_{H^s(\T)}+4T\|f\|_{C([0,T];H^s(\T))}.
\end{equation}
\end{proposition}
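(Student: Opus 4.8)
The plan is to derive the estimate via a regularization-and-commutator argument, using the classical energy method on the mollified equation. First, I would apply the Friedrichs mollifier $J_\delta$ to the PDE \eqref{vvv-PDE} (which holds a.s.\ for all $t\in[0,\tau)$ by Proposition~\ref{prop:galchi}), obtaining
\[
\partial_t (J_\delta v)+\bar{c}\,\partial_x(J_\delta v)=J_\delta f+\bigl[\bar{c}\,\partial_x, J_\delta\bigr]v,
\]
where the commutator is controlled by the DiPerna--Lions estimate: $\|[\bar{c}\,\partial_x,J_\delta]v\|_{L^2}\lesssim \|\partial_x\bar{c}\|_{L^\infty}\|v\|_{L^2}$, uniformly in $\delta$, so this term is harmless and vanishes in an appropriate sense as $\delta\to 0$. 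Then I would apply $\Lambda^s=(1-\partial_x^2)^{s/2}$ and mollify, or equivalently work directly with $\Lambda^s J_\delta v$, and compute $\frac{d}{dt}\|\Lambda^s J_\delta v\|_{L^2}^2$. The transport term produces, after integration by parts, the two contributions $\int (\partial_x\bar{c})\,(\Lambda^s J_\delta v)^2\,dx$ and the Kato--Ponce commutator $\int \Lambda^s J_\delta v\cdot\bigl[\Lambda^s,\bar{c}\bigr]\partial_x v\,dx$; the latter is bounded by $C_s\bigl(\|\partial_x\bar{c}\|_{L^\infty}\|v\|_{H^s}+\|\bar{c}\|_{H^s}\|\partial_x v\|_{L^\infty}\bigr)\|v\|_{H^s}$ via Kato--Ponce \cite{KatoPonce1988}. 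Collecting terms, letting $\delta\to 0$, and writing $y(t)\eqdef\|v(t)\|_{H^s(\T)}$, I obtain the differential inequality
\[
\frac{d}{dt}\,y(t)\ \leqslant\ \|f(t)\|_{H^s(\T)}\ +\ B_s\|\partial_x\bar{c}(t)\|_{L^\infty(\T)}\,y(t)\ +\ B_s\|v(t)\|_{W^{1,\infty}(\T)}\,\|\bar{c}(t)\|_{H^s(\T)},
\]
for a suitable $B_s$ depending only on $s$ (here I absorb the $\|\partial_x v\|_{L^\infty}$ factor into the $W^{1,\infty}$ norm).

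Next I would integrate this Gr\"onwall-type inequality on $[0,T]$. Using the smallness hypothesis \eqref{smallT}, namely $2TB_s\|\partial_x\bar{c}\|_{C([0,T];L^\infty)}\leqslant 1$, the linear term $B_s\int_0^t\|\partial_x\bar{c}\|_{L^\infty}y\,ds\leqslant \tfrac{1}{2T}\int_0^T y\,ds\leqslant\tfrac12\|v\|_{C([0,T];H^s)}$ can be moved to the left-hand side after taking the supremum over $t\in[0,T]$; the sourcing terms contribute $\|v(0)\|_{H^s}+T\|f\|_{C([0,T];H^s)}+TB_s\|v\|_{C([0,T];W^{1,\infty})}\|\bar c\|_{C([0,T];H^s)}$. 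Rearranging (multiplying through by $2$) yields exactly \eqref{vvvHs}. For the second part, one uses the Sobolev embedding $W^{1,\infty}(\T)\hookrightarrow$ is \emph{not} valid, but since $s>3/2$ we have $H^s(\T)\hookrightarrow W^{1,\infty}(\T)$ with $\|v\|_{W^{1,\infty}}\leqslant C_s'\|v\|_{H^s}$; substituting this into the last term of \eqref{vvvHs}, the coefficient becomes $2TB_sC_s'\|\bar c\|_{C([0,T];H^s)}$, which under the additional smallness \eqref{smallT2} (with $C_s$ chosen as $B_sC_s'$, up to constants) is $\leqslant\tfrac12$, so this term too can be absorbed into the left-hand side; a final rearrangement gives \eqref{vvvHs2}.

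The main technical obstacle is the justification of the energy identity at the level of $H^s$ regularity only: the term $\bar{c}\,\partial_x v$ formally loses a derivative, so one cannot naively pair the equation with $\Lambda^{2s}v$ without knowing $v\in H^{s+1}$. This is precisely why the mollification $J_\delta$ is introduced first: at fixed $\delta$ everything is smooth enough to differentiate in time rigorously, the DiPerna--Lions and Kato--Ponce bounds provide $\delta$-uniform control, and one passes to the limit $\delta\to 0$ using that $J_\delta v\to v$ in $C([0,T];H^s)$ (from the assumed regularity \eqref{def-sol-vvv}) together with dominated convergence on the time integrals. Everything else is a routine Gr\"onwall argument together with the elementary absorption trick enabled by the smallness conditions \eqref{smallT} and \eqref{smallT2}; I expect no genuine difficulty there, only bookkeeping of the $s$-dependent constants.
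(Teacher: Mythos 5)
Your proof follows the same strategy as the paper: mollify by $J_\delta$, apply $\Lambda^s$, estimate the commutators via Kato--Ponce and DiPerna--Lions, integrate the resulting inequality, and absorb the linear-in-$\|v\|_{C([0,T];H^s)}$ contributions using the smallness conditions \eqref{smallT} and \eqref{smallT2}. The absorption at the end is correct and matches the paper's passage from \eqref{vvv-Hs-estimate-2} to \eqref{vvvHs}--\eqref{vvvHs2}.

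However, one step is glossed over, and it is precisely the ``technical obstacle'' you mention. After mollifying by $J_\delta$ and applying $\Lambda^s$, the remainder entering the $H^s$-energy identity is
\[
R^\delta\ =\ \Lambda^s\bigl[\bar c, J_\delta\bigr]\partial_x v\ +\ \bigl[\bar c,\Lambda^s\bigr]\partial_x v^\delta .
\]
The second piece is controlled by Kato--Ponce \eqref{Commutator}, but the first is not covered by the DiPerna--Lions bound you quote: that bound controls $[\bar c, J_\delta]\partial_x v$ in $L^2$, uniformly in $\delta$, but says nothing about $\Lambda^s$ of that quantity, which is a nested commutator to which neither Kato--Ponce nor DiPerna--Lions applies as stated. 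The paper resolves this by a Jacobi-identity regrouping, exploiting $[J_\delta,\Lambda^s]=0$ to rewrite
\[
R^\delta\ =\ J_\delta\bigl[\bar c,\Lambda^s\bigr]\partial_x v\ +\ \bigl[\bar c, J_\delta\bigr]\partial_x\bigl(\Lambda^s v\bigr),
\]
so that the first term is a mollified Kato--Ponce commutator (uniformly bounded in $\delta$ by \eqref{Commutator}) and the second is DiPerna--Lions applied to $\Lambda^s v\in L^2$ (uniformly bounded in $\delta$). Without this regrouping, your assertion that ``the DiPerna--Lions and Kato--Ponce bounds provide $\delta$-uniform control'' is not justified; once it is supplied, your argument and the paper's coincide.
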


We adopt the following notations: $\Lambda^s$ denotes the operator $(I-\partial^2_x)^{s/2}$, $[A,B]=AB-BA$ the commutator of two operators $A$ and $B$. In order to prove Proposition~\ref{prop:Hsestimate}, we recall the following Kato--Ponce estimates \cite{KatoPonce1988}
\begin{align}
\|f\, g\|_{H^r}\ &\leqslant\ C\, \left( \|f\|_{L^\infty}\, \|g\|_{H^r}\ +\ \|f\|_{H^r}\, \|g\|_{L^\infty}\right), \label{Algebra} \\
\left\| \left[ \Lambda^r,\, f \right]\, g  \right\|_{L^2}\ &\leqslant\ C\, \left( \|f_x\|_{L^\infty}\, \|g\|_{H^{r-1}}\ +\ \|f\|_{H^r}\, \|g\|_{L^\infty} \right), \label{Commutator}
\end{align}
where $r\, \geqslant\, 0$, and $C>0$ is a constant depending only on $r$. 
We recall also the following estimate \cite{ConstantinMolinet2002}
\begin{equation}\label{Composition}
\left\|F(f)-F(0) \right\|_{H^s}\, \leqslant\ C \left( \|f\|_{L^{\infty}} \right) \|f\|_{H^s},
\end{equation}
for any $f \in H^s$ with $s >1/2$ and $F \in C^\infty(\R)$.

\begin{proof}[Proof of Proposition~\ref{prop:Hsestimate}] The second estimate \eqref{vvvHs2} follows from \eqref{vvvHs} and the continuous injection of $H^s(\T)$ into $W^{1,\infty}(\T)$ (the exponent $s$ satisfying $s>3/2$).
To prove \eqref{vvvHs}, we introduce $(\rho_\delta)$, a standard approximation of the unit, and let $J_\delta$ denote the Friedrichs operator  $f\mapsto f\ast \rho_\delta$, obtained by convolution by  $\rho_\delta$ with respect to the variable $x$. Denote also by $\bar{c}$ the operator of multiplication by $\bar{c}$. We apply $J_\delta$ to \eqref{vvv-PDE} to obtain
\begin{equation}\label{vvv-PDE-delta}
\partial_t v^\delta(t,x)+\bar{c}(t,x)\partial_x v^\delta(t,x)=f^\delta(t,x)+r^\delta(t,x),
\end{equation}
where
\[
v^\delta=J_\delta v,\quad f^\delta=J_\delta f,\quad r^\delta=\bar{c}\partial_x v^\delta-J_\delta(\bar{c}\partial_x v)=[\bar{c},J_\delta]\partial_x v.
\]
We have 
\[
v^\delta\in C^1([0,\tau);H^s(\T)),\quad \partial_x v^\delta, f^\delta,r^\delta\in C^0([0,\tau);H^s(\T)),
\]
so we can apply $\Lambda^s$ to \eqref{vvv-PDE-delta} to get 
\begin{equation}\label{vvv-PDE-delta-Lambda}
\partial_t \Lambda^s v^\delta(t,x)+\bar{c}(t,x)\partial_x \Lambda^s v^\delta(t,x)=\Lambda^s f^\delta(t,x)+R^\delta(t,x),
\end{equation}
where
\begin{equation}\label{defRdelta}
R^\delta=\Lambda^s r^\delta+[\bar{c},\Lambda^s]\partial_x v^\delta.
\end{equation}
We can then multiply \eqref{vvv-PDE-delta-Lambda} by $\Lambda^s v^\delta$, integrate on $\T$ and integrate by parts to get
\begin{multline}\label{vvv-Hs-estimate-1}
\|\Lambda^s v^\delta(t)\|_{L^2(\T)}\leqslant
\|\Lambda^s v^\delta(0)\|_{L^2(\T)}\\
+\int_0^t \left\{
\half\|\partial_x\bar{c}(\sigma)\|_{L^\infty(\T)}\|\Lambda^s v^\delta(\sigma)\|_{L^2(\T)}+\|\Lambda^s f^\delta(\sigma)\|_{L^2(\T)}+\|R^\delta(\sigma)\|_{L^2(\T)}\right\} \ud \sigma,
\end{multline}
which yields, for $T<\tau(\omega)$,
\begin{multline}\label{vvv-Hs-estimate-2}
\|v^\delta\|_{C([0,T];H^s(\T))}\leqslant
\|v^\delta(0)\|_{H^s(\T)}\\
+T \left\{
\half \|\partial_x\bar{c}\|_{C([0,T];L^\infty(\T))}\|v^\delta\|_{C([0,T];H^s(\T))}+\|f^\delta\|_{C([0,T];H^s(\T))}+\|R^\delta\|_{C([0,T]L^2(\T))}\right\}.
\end{multline}
Assume 
\begin{multline}\label{RdeltaFinal}
\|R^\delta\|_{C([0,T];L^2(\T))}\\
\leqslant B_s\left[
\|\partial_x\bar{c}\|_{C([0,T];L^\infty(\T))}\, \|v\|_{C([0,T];H^{s}(\T))}\ +\ \|v\|_{C([0,T];W^{1,\infty}(\T)}\, \|\bar{c}\|_{C([0,T];H^s(\T))}
\right],
\end{multline}
for a certain constant $B_s$.
Then we can insert \eqref{RdeltaFinal} in \eqref{vvv-Hs-estimate-2}, pass to the limit $\delta\to 0$ and, after replacing $B_s$ by $B_s+1/2$, conclude to \eqref{vvvHs}. To establish \eqref{RdeltaFinal}, we write
\begin{equation}\label{Rdelta1}
R^\delta= \Lambda^s [\bar{c},J_\delta]\partial_x v+[\bar{c},\Lambda^s]\partial_x v^\delta=[\Lambda^s, [\bar{c},J_\delta]]\partial_x v+[\bar{c},J_\delta]\Lambda^s\partial_x v+[\bar{c},\Lambda^s]\partial_x v^\delta,
\end{equation}
and use the Jacobi identity
\[
[\Lambda^s, [\bar{c},J_\delta]]+[[\bar{c},[J_ \delta,\Lambda^s]]+[J_\delta, [\Lambda^s,\bar{c}]]=0,
\]
and the fact that $[J_ \delta,\Lambda^s]=0$ to get
\begin{equation}
R^\delta=[J_\delta, [\bar{c},\Lambda^s]]\partial_x v+[\bar{c},J_\delta]\Lambda^s\partial_x v+[\bar{c},\Lambda^s]\partial_x v^\delta
=J_\delta [\bar{c},\Lambda^s]\partial_x v+[\bar{c},J_\delta]\partial_x(\Lambda^s v).\label{Rdelta3}
\end{equation}
The intermediate identities between \eqref{defRdelta} and \eqref{Rdelta3} can be understood in the sense of distribution, but we must check that each term in \eqref{Rdelta3} is in $L^2(\T)$ and satisfies the expected estimate. We have $[\bar{c},\Lambda^s]\partial_x v\in L^2(\T)$ by Kato-Ponce's commutator estimate \eqref{Commutator}, with
\begin{multline*}
\left\|[\bar{c}(t),\Lambda^s]\partial_x v(t)\right\|_{L^2(\T)}\\
\leqslant C_{\mathrm{KP}}\left[
\|\partial_x\bar{c}(t)\|_{L^\infty(\T)}\, \|\partial_x v(t)\|_{H^{s-1}(\T)}\ +\ \|\bar{c}(t)\|_{H^s(\T)}\, \|\partial_x v(t)\|_{L^\infty(\T)}
\right],
\end{multline*}
and thus 
\begin{multline*}
\left\|[\bar{c}(t),\Lambda^s]\partial_x v(t)\right\|_{L^2(\T)}\\
\leqslant C_{\mathrm{KP}}
\left[
\|\partial_x\bar{c}\|_{C([0,T];L^\infty(\T))}\, \|v\|_{C([0,T];H^{s}(\T))}\ +\ \|v\|_{C([0,T];W^{1,\infty}(\T)}\, \|\bar{c}\|_{C([0,T];H^s(\T))}
\right],
\end{multline*}
for $t\in[0,T]$. By the commutator estimate Lemma~II-1. in \cite{DiPernaLions89} (and more exactly, by the proof of Lemma~II-1), we have $[\bar{c},J_\delta]\partial_x(\Lambda^s v)\in L^2(\T)$ with 
\[
\left\|[\bar{c}(t),J_\delta]\partial_x(\Lambda^s v(t))\right\|_{L^2(\T)}
\leqslant C_{\mathrm{DPL}}\|\partial_x \bar{c}(t)\|_{L^2(\T)}\|\Lambda^s v(t)\|_{L^2(\T)},
\]
which yields $R^\delta(t)\in L^2(\T)$ for $t\in[0,T]$, and the estimate \eqref{RdeltaFinal}.
\end{proof}

To go on our analysis, we consider now the deterministic quasilinear (but non-local) equation 
\begin{equation}\label{Qlinv}
\partial_t v+\bar{c}[v]\partial_x v=F[v],
\end{equation}
on $(0,T)\times\T$, where $c,F\colon C([0,T];H^s(\T))\to C([0,T];H^s(\T))$ satisfy 
\begin{equation}\label{propQbarc1}
\|\partial_x \bar{c}[v]\|_{C([0,T];L^\infty(\T))}\leqslant \mathfrak{C}\left(\|v\|_{C([0,T];L^\infty(\T))}\right),
\end{equation}
and
\begin{equation}\label{propQbarcF10}
\|\partial_x F[v]\|_{C([0,T];L^\infty(\T))}\leqslant \mathfrak{C}\left(\|v\|_{C([0,T];L^\infty(\T))}\right) \left(1+\|\partial_x v\|_{C([0,T];L^\infty(\T))} \right),
\end{equation}
as well as the bounds
\begin{equation}\label{propQbarcF1}
\|\bar{c}[v]\|_{C([0,T];H^s(\T))}+\|F[v]\|_{C([0,T];H^s(\T))}\leqslant \mathfrak{C}\left(\|v\|_{C([0,T];L^\infty(\T))}\right)\left(1+\|v\|_{C([0,T];H^s(\T))}\right),
\end{equation}
for all $v\in C([0,T];H^s(\T))$, for a given function $\mathfrak{C}\colon\R_+\to\R_+$. We will also consider the Lipschitz condition
\begin{multline}\label{LipbarcF}
\|\bar{c}[v_1]-\bar{c}[v_2]\|_{C([0,T];L^2(\T))}
+\|F[v_1]-F[v_2]\|_{C([0,T];L^2(\T))}\\
\leqslant 
\mathfrak{C}\left(\|v_1\|_{C([0,T];W^{1,\infty}(\T))}+\|v_2\|_{C([0,T];W^{1,\infty}(\T))}\right)\|v_1-v_2\|_{C([0,T];L^2(\T))}.
\end{multline}

\begin{proposition}[$H^s$-estimate, linear equation]\label{prop:HsestimateNLIN} Assume $\bar{c}, F$ in \eqref{Qlinv} satisfy \eqref{propQbarc1} and \eqref{propQbarcF1}. Then, given $R>0$, there is a constant $\nu_R>0$ such that, if $T\nu_R\leqslant 1$, then all solutions $v\in C([0,T];H^s(\T))$ to the equation
\begin{equation}\label{QlinvFrozen}
\partial_t v+\bar{c}[w]\partial_x v=F[w],
\end{equation}
with initial datum $v_0\in H^s(\T)$ and entry $w\in C([0,T];H^s(\T))$ of size
\[
\|v_0\|_{H^s(T)}\leqslant R/3,\quad \|w\|_{C([0,T];H^s(\T))}\leqslant R,
\]
satisfy the bound 
\begin{equation}\label{vvvHsR}
\|v\|_{C([0,T];H^s(\T))}\leqslant R.
\end{equation}
\end{proposition}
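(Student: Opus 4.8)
The plan is to read off \eqref{vvvHsR} directly from the linear $H^s$-estimate of Proposition~\ref{prop:Hsestimate}, applied to the triple $(v,\bar c[w],F[w])$. The smallness of $T$ will be used twice: once to fulfil the hypothesis \eqref{smallT} of that proposition, and once to absorb into the left-hand side a term proportional to $\|v\|_{C([0,T];H^s(\T))}$.

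First I would record the bounds that the structural assumptions provide on the ``frozen'' coefficients. Since $s>3/2$, the embedding $H^s(\T)\hookrightarrow W^{1,\infty}(\T)$ holds with some constant $C_{\mathrm{Sob}}$, so that $\|w\|_{C([0,T];L^\infty(\T))}\leqslant C_{\mathrm{Sob}}R$. Assuming, as one may (replace $\mathfrak C(r)$ by $\sup_{[0,r]}\mathfrak C$), that $\mathfrak C$ is nondecreasing, \eqref{propQbarc1} and \eqref{propQbarcF1} give
\[
\|\partial_x\bar c[w]\|_{C([0,T];L^\infty(\T))}\leqslant M_1(R),\qquad
\|\bar c[w]\|_{C([0,T];H^s(\T))}+\|F[w]\|_{C([0,T];H^s(\T))}\leqslant M_2(R),
\]
with $M_1(R):=\mathfrak C(C_{\mathrm{Sob}}R)$ and $M_2(R):=\mathfrak C(C_{\mathrm{Sob}}R)(1+R)$, both depending on $R$ only.

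Next, if $T$ is small enough that $2TB_sM_1(R)\leqslant 1$, the hypothesis \eqref{smallT} is met for $(v,\bar c[w],F[w])$ and Proposition~\ref{prop:Hsestimate} yields, via \eqref{vvvHs} together with $\|v\|_{C([0,T];W^{1,\infty}(\T))}\leqslant C_{\mathrm{Sob}}\|v\|_{C([0,T];H^s(\T))}$ and the bounds above,
\[
\|v\|_{C([0,T];H^s(\T))}\leqslant 2\|v_0\|_{H^s(\T)}+2TM_2(R)+2TB_sC_{\mathrm{Sob}}M_2(R)\,\|v\|_{C([0,T];H^s(\T))}.
\]
Since $v\in C([0,T];H^s(\T))$ and $[0,T]$ is compact, $\|v\|_{C([0,T];H^s(\T))}<\infty$, so imposing in addition $2TB_sC_{\mathrm{Sob}}M_2(R)\leqslant\tfrac16$ we may absorb the last term and get $\|v\|_{C([0,T];H^s(\T))}\leqslant \tfrac{12}{5}\bigl(\|v_0\|_{H^s(\T)}+TM_2(R)\bigr)$. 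As $\|v_0\|_{H^s(\T)}\leqslant R/3$ the first contribution is at most $\tfrac{4R}{5}$, and a final requirement $\tfrac{12}{5}TM_2(R)\leqslant R/5$ yields $\|v\|_{C([0,T];H^s(\T))}\leqslant R$. It therefore suffices to set $\nu_R:=\max\{2B_sM_1(R),\,12B_sC_{\mathrm{Sob}}M_2(R),\,12M_2(R)/R\}$, so that $T\nu_R\leqslant1$ forces the three smallness conditions simultaneously.

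There is no genuine obstacle here; the points requiring care are (i) the presence in \eqref{vvvHs} of a term linear in $\|v\|_{H^s(\T)}$, whose absorption is precisely what makes $\nu_R$ depend on $R$ (through $M_2(R)$), together with the bookkeeping of numerical constants — the hypothesis $\|v_0\|_{H^s(\T)}\leqslant R/3$ leaves exactly the room needed for the factor produced by the estimate and the absorption; and (ii) a minor regularity check, namely that the triple $(v,\bar c[w],F[w])$ fits the framework of Proposition~\ref{prop:galchi}: in the applications $\bar c[w]$ takes values in $[c_1,c_2]$ (it is of the form $c(u)$ with $c$ satisfying \eqref{coeff-c}) and has the required time regularity, while $v\in C^1$ in time with values in $H^{s-1}(\T)$ follows from the equation \eqref{QlinvFrozen} itself; if one prefers, the estimate may first be established for smooth $w$ and then passed to the limit.
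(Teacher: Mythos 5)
Correct, and essentially the same approach as the paper: apply the linear $H^s$ estimate \eqref{vvvHs} with $\bar c=\bar c[w]$, $f=F[w]$, bound the coefficients via \eqref{propQbarc1}--\eqref{propQbarcF1} and the embeddings $H^s(\T)\hookrightarrow L^\infty(\T)$, $H^s(\T)\hookrightarrow W^{1,\infty}(\T)$, then absorb the term linear in $\|v\|_{C([0,T];H^s(\T))}$ for $T$ small and tune $\nu_R$. Your explicit bookkeeping (absorption with margin $1/6$, factor $12/5$) makes the absorption step cleaner than the paper's terse \eqref{vvvHsR0}, and your remark that the uniform bound $\bar c[w]\in[c_1,c_2]$ required by Proposition~\ref{prop:galchi} holds in the intended applications is a fair observation.
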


\begin{proof}[Proof of Proposition~\ref{prop:HsestimateNLIN}] This is a straightforward consequence of the estimate \eqref{vvvHs}, using \eqref{propQbarc1}-\eqref{propQbarcF1} and the injection $H^s(\T)\hookrightarrow L^\infty(\T)$, which yields
\begin{equation}\label{vvvHsR0}
\|v\|_{C([0,T];H^s(\T))}\leqslant 2\|v_0\|_{H^s(T)}+C_R T\leqslant {\textstyle \frac23 } R+C_R T,
\end{equation}
for $T\nu_R\leqslant 1$, where $\nu_R>0$, $C_R\geqslant 0$. We obtain then \eqref{vvvHsR} by adjusting the vale of $\nu_R$ if necessary.
\end{proof}

Next, we state the contraction property in the low norm $L^2$.

\begin{proposition}[$L^2$-contraction, linear equation]\label{prop:L2contractNLIN} Assume $\bar{c}, F$ in \eqref{Qlinv} satisfy \eqref{propQbarc1}, \eqref{propQbarcF1} and also \eqref{LipbarcF}. Given $R>0$, there is a constant $\nu_R>0$ such that, if $T\nu_R\leqslant 1$, and $v_1,v_2\in C([0,T];H^s(\T))$ are solutions of the respective equations
\begin{equation}\label{QlinvFrozeni}
\partial_t v_i+\bar{c}[w_i]\partial_x v_i=F[w_i],\quad i\in\{1,2\},
\end{equation}
with respective initial datum $v_{1,0},v_{2,0}\in H^s(\T)$ and entries $w_1,w_2\in C([0,T];H^s(\T))$ of size
\[
\|v_0\|_{H^s(\T)}\leqslant R/3,\quad \|w_i\|_{C([0,T];L^\infty(\T))}\leqslant R,\quad i\in\{1,2\},
\]
then
\begin{equation}\label{vvvL2contract}
\|v_1-v_2\|_{C([0,T];L^2(\T))}\leqslant 4\|v_{1,0}-v_{2,0}\|_{L^2(\T)}+\half\|w_1-w_2\|_{C([0,T];L^2(\T))}.
\end{equation}
\end{proposition}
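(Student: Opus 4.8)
The plan is to run a direct energy estimate on the difference $v\eqdef v_1-v_2$, closed by Gronwall's lemma on a short time interval $T\nu_R\leqslant 1$. Writing $w\eqdef w_1-w_2$ and subtracting the two equations in \eqref{QlinvFrozeni} while isolating the transport operator built on $w_1$, one sees that $v$ solves
\[
\partial_t v\ +\ \bar{c}[w_1]\,\partial_x v\ =\ \big(F[w_1]-F[w_2]\big)\ -\ \big(\bar{c}[w_1]-\bar{c}[w_2]\big)\,\partial_x v_2,\qquad v(0)=v_{1,0}-v_{2,0}.
\]
Because each $v_i\in C([0,T];H^s(\T))\cap C^1([0,T];H^{s-1}(\T))$ with $s>3/2$, all the terms above lie in $C([0,T];H^{s-1}(\T))\subset C([0,T];L^2(\T))$, so $t\mapsto\|v(t)\|_{L^2(\T)}^2$ is $C^1$; pairing the equation with $v$ in $L^2(\T)$ and integrating by parts in the transport term (legitimate since $\partial_x\bar{c}[w_1]\in C([0,T];L^\infty(\T))$ by \eqref{propQbarc1}) gives
\[
\half\frac{\ud}{\ud t}\|v\|_{L^2}^2\ =\ \half\int_\T\big(\partial_x\bar{c}[w_1]\big)v^2\,\ud x\ -\ \int_\T\big(\bar{c}[w_1]-\bar{c}[w_2]\big)(\partial_x v_2)\,v\,\ud x\ +\ \int_\T\big(F[w_1]-F[w_2]\big)v\,\ud x.
\]

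Next I would estimate the three terms. By \eqref{propQbarc1} and $\|w_1\|_{C([0,T];L^\infty)}\leqslant R$, the first is at most $\half\,\mathfrak{C}(R)\|v\|_{L^2}^2$. For the last two, the Lipschitz hypothesis \eqref{LipbarcF} yields $\|\bar{c}[w_1]-\bar{c}[w_2]\|_{L^2}+\|F[w_1]-F[w_2]\|_{L^2}\leqslant C_R\|w\|_{C([0,T];L^2)}$, with $C_R$ depending only on $R$ (using $H^s(\T)\hookrightarrow W^{1,\infty}(\T)$ to control the argument of $\mathfrak{C}$ in \eqref{LipbarcF}). The one point that is not pure routine -- and the only place where it matters that $v_2$ is a genuine $H^s$-solution and not merely an $L^2$-function -- is the middle term: Cauchy--Schwarz there produces a factor $\|\partial_x v_2\|_{L^\infty}$, which I control by first invoking Proposition~\ref{prop:HsestimateNLIN} (so that $\|v_2\|_{C([0,T];H^s)}\leqslant R$) and then $H^s(\T)\hookrightarrow W^{1,\infty}(\T)$, whence $\|\partial_x v_2\|_{C([0,T];L^\infty)}\leqslant C_sR$. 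Collecting the bounds, there are constants $A_R,B_R\geqslant 0$, depending only on $R$ (and $s$, $\mathfrak{C}$), such that $\frac{\ud}{\ud t}\|v(t)\|_{L^2}^2\leqslant A_R\|v(t)\|_{L^2}^2+2B_R\|w\|_{C([0,T];L^2)}\|v(t)\|_{L^2}$ on $[0,T]$.

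Finally, a standard square-root trick (regularize $\|v\|_{L^2}^2$ by $\|v\|_{L^2}^2+\delta$, bound, then let $\delta\downarrow 0$) turns this into $\frac{\ud}{\ud t}\|v(t)\|_{L^2}\leqslant\half A_R\|v(t)\|_{L^2}+B_R\|w\|_{C([0,T];L^2)}$ in the integrated sense, and Gronwall's lemma gives, for $t\in[0,T]$,
\[
\|v(t)\|_{L^2}\ \leqslant\ e^{A_R t/2}\Big(\|v_{1,0}-v_{2,0}\|_{L^2}\ +\ B_R\,t\,\|w\|_{C([0,T];L^2)}\Big).
\]
It then suffices to pick $\nu_R$ large enough that the constraint $T\nu_R\leqslant 1$ forces both $e^{A_R T/2}\leqslant 2$ and $e^{A_R T/2}B_R T\leqslant\half$; taking the supremum over $t\in[0,T]$ yields $\|v_1-v_2\|_{C([0,T];L^2)}\leqslant 2\|v_{1,0}-v_{2,0}\|_{L^2}+\half\|w_1-w_2\|_{C([0,T];L^2)}$, which implies the asserted \eqref{vvvL2contract} (with room to spare in the first constant). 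The main -- and quite mild -- obstacle is therefore the $\partial_x v_2$ term, i.e. the need to feed in the $H^s$-bound on $v_2$ from Proposition~\ref{prop:HsestimateNLIN}; everything else is bookkeeping of constants and a routine Gronwall argument.
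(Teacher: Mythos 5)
Your proof is correct and follows essentially the same approach as the paper's: you form the same difference equation for $v_1-v_2$ with the error term $(\bar c[w_2]-\bar c[w_1])\partial_x v_2$, control $\|\partial_x v_2\|_{L^\infty}$ by invoking Proposition~\ref{prop:HsestimateNLIN} together with $H^s(\T)\hookrightarrow W^{1,\infty}(\T)$, apply the Lipschitz hypothesis~\eqref{LipbarcF}, and close by smallness of $T$. The only cosmetic difference is that you run a pointwise-in-time $L^2$ energy estimate followed by a square-root trick and Gr\"onwall, whereas the paper invokes the integrated $L^2$ estimate directly; both yield~\eqref{vvvL2contract} for a suitable $\nu_R$.
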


\begin{proof}[Proof of Proposition~\ref{prop:L2contractNLIN}] The difference $\bar{v}:=v_1-v_2$ satisfies the equation
\begin{equation}\label{Qlindetlasol}
\partial_t\bar{v}+\bar{c}[w_1]\partial_x\bar{v}=\bar{f}:=F[w_1]-F[w_2]+(\bar{c}[w_2]-\bar{c}[w_1])\partial_x v_2,
\end{equation}
starting from $\bar{v}_0:=v_{1,0}-v_{2,0}$, and thus the $L^2$-estimate
\begin{multline}\label{vvvL2contract1}
\|\bar{v}\|_{C([0,T];L^2(\T))}\leqslant \|v_{1,0}-v_{2,0}\|_{L^2(\T)}\\
+T\|\partial_x\bar{c}[w_1]\|_{C([0,T];L^\infty(\T))}\|\bar{v}\|_{C([0,T];L^2(\T))}+
2T\|\bar{f}\|_{C([0,T];L^2(\T))}.
\end{multline}
We apply Proposition~\ref{prop:HsestimateNLIN}: if $T$ is sufficiently small, then the bound \eqref{vvvHsR} is satisfied by $v_1$ and $v_2$. As a consequence of \eqref{propQbarc1} and \eqref{LipbarcF} (and of the injection $H^s(\T)\hookrightarrow W^{1,\infty}(\T)$ again), we obtain
\begin{equation}\label{vvvL2contract2}
\|\bar{v}\|_{C([0,T];L^2(\T))}\leqslant 2\|v_{1,0}-v_{2,0}\|_{L^2(\T)}+C'_R T\|\bar{v}\|_{C([0,T];L^2(\T))}+C''_R T\|\bar{w}\|_{C([0,T];L^2(\T))},
\end{equation}
with $\bar{w}:=w_1-w_2$, from which \eqref{vvvL2contract} follows for $T$ sufficiently small.
\end{proof}

We can then give a result of existence of solutions to \eqref{Qlinv} up to a time of explosion of the Lipschitz norm of the solution.

\begin{proposition}[Existence up to explosion of the sup-norm]\label{prop:MaxSolNlin} Assume $\bar{c}, F$ in \eqref{Qlinv} satisfy \eqref{propQbarc1}-\eqref{propQbarcF10}-\eqref{propQbarcF1} and also \eqref{LipbarcF}. Let $v_0\in H^s(\T)$ and let $T>0$. There exists a solution $v\in C([0,T^*);H^s(\T))\cap C^1([0,T^*);H^{s-1}(\T))$ to \eqref{Qlinv} with initial datum $v_0$ defined up to a time $T^*$ which satisfies
\begin{equation}\label{explosionTstar}
T^*\ =\ \sup_{n\geqslant 1}T^n,\qquad T^n\ \eqdef\ \inf\left\{t\in[0,T^*);\|v(t)\|_{L^\infty(\T)}> n\right\}.
\end{equation}
\end{proposition}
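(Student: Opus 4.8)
The plan is to construct the maximal solution by a standard iteration/contraction argument on small time intervals, using Propositions~\ref{prop:HsestimateNLIN} and \ref{prop:L2contractNLIN} as the two pillars, and then to glue the local pieces together along the blow-up criterion. First I would fix $R>0$ with $\|v_0\|_{H^s(\T)}\leqslant R/3$ and choose $T_0>0$ so small that $T_0\nu_R\leqslant 1$ for the constant $\nu_R$ coming from both propositions. On the ball $\mathcal{B}_R=\{w\in C([0,T_0];H^s(\T));\ \|w\|_{C([0,T_0];H^s(\T))}\leqslant R,\ w(0)=v_0\}$ I define the map $\mathscr{T}\colon w\mapsto v$, where $v$ is the unique solution of the linear frozen-coefficient equation \eqref{QlinvFrozen} with entry $w$ — existence and uniqueness of $v$ for each fixed $w$ is a classical linear transport statement (this is essentially the content of Proposition~\ref{prop:galchi} together with the estimate \eqref{vvvHs}, once $\bar{c}[w],F[w]$ are known to lie in the right spaces by \eqref{propQbarc1}-\eqref{propQbarcF1}). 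Proposition~\ref{prop:HsestimateNLIN} shows $\mathscr{T}$ maps $\mathcal{B}_R$ into itself. For contraction I would \emph{not} try to contract in the $H^s$ norm (which fails in general) but instead equip $\mathcal{B}_R$ with the weaker $C([0,T_0];L^2(\T))$ metric: on this metric $\mathcal{B}_R$ is a complete metric space (it is closed for the $L^2$ topology by weak-$*$ compactness in $H^s$, or by a standard interpolation/lower-semicontinuity argument), and Proposition~\ref{prop:L2contractNLIN} gives $\|\mathscr{T}w_1-\mathscr{T}w_2\|_{C([0,T_0];L^2)}\leqslant\tfrac12\|w_1-w_2\|_{C([0,T_0];L^2)}$ when $w_1,w_2$ start from the same datum $v_0$. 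Banach's fixed point theorem then yields a unique fixed point $v\in\mathcal{B}_R$, which solves \eqref{Qlinv} on $[0,T_0]$; the regularity $v\in C^1([0,T_0];H^{s-1}(\T))$ follows by reading off $\partial_t v=F[v]-\bar{c}[v]\partial_x v$ from the equation.

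Next I would upgrade to a maximal solution. Run the local construction starting from any time $t_1$ and datum $v(t_1)\in H^s(\T)$; crucially the existence time depends only on $\|v(t_1)\|_{H^s(\T)}$ through $R$ and $\nu_R$. Define $T^*$ as the supremum of times $T$ for which a solution $v\in C([0,T);H^s(\T))\cap C^1([0,T);H^{s-1}(\T))$ to \eqref{Qlinv} with datum $v_0$ exists; uniqueness on overlaps (again from Proposition~\ref{prop:L2contractNLIN}, applied with $w_i=v_i$ so that the inequality becomes $\|v_1-v_2\|_{C([0,T];L^2)}\leqslant 8\|v_{1,0}-v_{2,0}\|_{L^2}$, hence $v_1\equiv v_2$ when the data agree) lets us patch all these solutions into one solution defined on $[0,T^*)$. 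It remains to prove the blow-up alternative \eqref{explosionTstar}. Suppose $T^*<\infty$ and, for contradiction, that $\sup_{t<T^*}\|v(t)\|_{L^\infty(\T)}=:M<\infty$. Then by \eqref{propQbarcF1} and Gronwall applied to the $H^s$-estimate \eqref{vvvHs} (with the coefficient $\mathfrak{C}$ evaluated at $M$, and controlling $\|\partial_x v\|_{L^\infty}\lesssim\|v\|_{H^s}$ via the Sobolev injection), one gets $\sup_{t<T^*}\|v(t)\|_{H^s(\T)}<\infty$; applying the local existence result from $v(t_1)$ for $t_1$ close to $T^*$ then extends the solution past $T^*$, a contradiction. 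Hence $\sup_{t<T^*}\|v(t)\|_{L^\infty(\T)}=\infty$ whenever $T^*<\infty$, which is exactly the statement that $T^*=\sup_n T^n$ with $T^n$ as in \eqref{explosionTstar} (using the convention of Remark~\ref{rk:convention-inf}: if the $L^\infty$ norm never exceeds $n$ on $[0,T^*)$ then $T^n=T^*$, but this can happen for all $n$ only if $T^*=\infty$).

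I expect the main obstacle to be the contraction step, more precisely the interplay between the two norms: one must run the fixed-point argument in the low $L^2$ norm to gain contractivity, yet simultaneously keep the iterates inside the high-regularity ball $\mathcal{B}_R$ and argue that the $L^2$-limit inherits the $H^s$ bound. This requires checking that $\mathcal{B}_R$ with the $C([0,T_0];L^2)$ metric is complete — i.e.\ that an $L^2$-Cauchy sequence uniformly bounded in $H^s$ has its limit in $H^s$ with the same bound — which uses the weak lower semicontinuity of the $H^s$ norm together with $H^s$-weak continuity in time. A secondary technical point is verifying that the fixed point actually satisfies the \emph{nonlinear} equation and not merely a frozen one; since the entry $w$ and the output $v$ coincide at the fixed point, and both $\bar{c}[\cdot],F[\cdot]$ are evaluated at $w=v$, equation \eqref{QlinvFrozen} becomes \eqref{Qlinv} directly, so this is immediate once the fixed point is identified. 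The Gronwall continuation argument for the blow-up criterion is routine given \eqref{propQbarcF1} and \eqref{vvvHs}, though one should be a little careful that \eqref{vvvHs} is stated only for $T$ small relative to $\|\partial_x\bar c\|_\infty$, so the continuation must be done in finitely many steps of controlled length rather than in one shot.
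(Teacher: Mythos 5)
Your local existence step is correct and essentially equivalent to the paper's: the paper runs the iterative scheme $v^k\mapsto v^{k+1}$ defined by \eqref{Qlinvk}, shows boundedness in $H^s$ via Proposition~\ref{prop:HsestimateNLIN} and contraction in $C([0,T_\ell];L^2)$ via Proposition~\ref{prop:L2contractNLIN}, and concludes convergence in $C([0,T_\ell];H^s)$. Your Banach-fixed-point framing, with contraction measured in the low $L^2$ norm and the iterates kept in the high-regularity ball via weak lower semicontinuity of the $H^s$ norm, is the same two-norm argument written differently. The patching into a maximal solution and the $L^2$-uniqueness on overlaps are also fine.

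The blow-up alternative, however, has a genuine gap. You claim that, assuming $\sup_{t<T^*}\|v(t)\|_{L^\infty}\leqslant M<\infty$, Gr\"onwall applied to \eqref{vvvHs} (with $\mathfrak{C}$ evaluated at $M$ and $\|\partial_x v\|_{L^\infty}\lesssim\|v\|_{H^s}$ by Sobolev embedding) yields an $H^s$ bound on $[0,T^*)$. But substituting the Sobolev bound together with $\|\bar c[v]\|_{H^s}+\|F[v]\|_{H^s}\leqslant\mathfrak{C}(M)(1+\|v\|_{H^s})$ from \eqref{propQbarcF1} into \eqref{vvvHs} produces an inequality of the form
\[
\|v\|_{C([0,T];H^s)}\ \leqslant\ 2\|v_0\|_{H^s}\ +\ C(M)\, T\,\left(1+\|v\|_{C([0,T];H^s)}\right)^2,
\]
which is \emph{quadratic} in the unknown and cannot be closed by Gr\"onwall: it is perfectly compatible with $\|v\|_{H^s}\to\infty$ in finite time while $M$ stays bounded. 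Iterating on finitely many short intervals does not repair this, since the admissible length of each interval shrinks as $\|v\|_{H^s}$ grows, exactly as for the ODE $\dot y\leqslant C(1+y)^2$.

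This is why the paper proceeds in two stages around the intermediate $W^{1,\infty}$-explosion time $\hat T^*$. First it shows $\hat T^*=S^*$ (the $H^s$-explosion time): on an interval where $\|v\|_{W^{1,\infty}}\leqslant n$, one replaces the factor $\|v\|_{W^{1,\infty}}$ in \eqref{vvvHs} by the fixed number $n$, yielding an inequality that is \emph{linear} in $\|v\|_{H^s}$ and hence closable by iteration in time, as in \eqref{vvvHsn}--\eqref{vvvHsn3}. Second it shows $T^*=\hat T^*$: differentiating \eqref{Qlinv} in $x$ gives the transport equation $\partial_t w+\bar c[v]\partial_x w=\partial_x F[v]-w\,\partial_x\bar c[v]$ for $w=\partial_x v$, whose right-hand side is controlled via \eqref{propQbarc1} and \eqref{propQbarcF10} by $\mathfrak{C}(\|v\|_{L^\infty})(1+\|w\|_{L^\infty})$; comparison and Gr\"onwall then bound $\|w\|_{L^\infty}$. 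This second step is precisely where hypothesis \eqref{propQbarcF10} is used, and your proposal never invokes it --- a clear sign that the $L^\infty\Rightarrow W^{1,\infty}$ bridge is missing. Without it, the explosion criterion \eqref{explosionTstar}, which is stated in terms of the $L^\infty$ norm, is not established.
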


\begin{proof}[Proof of Proposition~\ref{prop:MaxSolNlin}] We consider the iterative scheme $v^k\mapsto v^{k+1}$, where $v^{k+1}$ is the solution (given by the method of characteristics, \textit{cf.} Proposition~\ref{prop:galchi}) to the equation
\begin{equation}\label{Qlinvk}
\partial_t v^{k+1}+\bar{c}[v^k]\partial_x v^{k+1}=F[v^k],
\end{equation}
with initial datum $v_0$. By Proposition~\ref{prop:HsestimateNLIN} and Proposition~\ref{prop:L2contractNLIN}, there is a time $T_\ell$ depending on $\|v_0\|_{H^s(\T)}$ only such that the sequence $(v^k)$ converges in $C([0,T_\ell];H^s(\T))$ to a solution $v$ to \eqref{Qlinv} with initial datum $v_0$ defined on $[0,T_\ell]$. This gives the local existence. Local uniqueness follows from \eqref{vvvL2contract} (possibly applied repeatedly). So  the problem \eqref{Qlinv} with initial datum $v_0$ admits a maximal solution defined up to the time $S^*$ defined by
\begin{equation}\label{explosionSstar}
S^*\ \eqdef\ \sup_{n\geqslant 1}S^n,\qquad S^n\ \eqdef\ \inf\left\{t\in[0,T^*);\|v(t)\|_{H^s(\T)}> n\right\}.
\end{equation}
Let us first introduce the explosion time of the Lipschitz norm:
\begin{equation}\label{explosionT1star}
\hat{T}^*\ \eqdef\ \sup_{n\geqslant 1}\hat{T}^n,\qquad\hat{T}^n\ \eqdef\ \inf\left\{t\in[0,T^*);\|v(t)\|_{W^{1,\infty}(\T)}> n\right\}.
\end{equation}
The norm in $H^s(\T)$ dominates the norm in $W^{1,\infty}(\T)$ so $S^*\leqslant \hat{T}^*$. On the other hand, if $t<\hat{T}_n\wedge S^*$, so that $v$ solves \eqref{Qlinv} on $[0,t]$ and 
\[
\|v\|_{C([0,t];W^{1,\infty}(\T))}\leqslant n,
\]
we observe that \eqref{vvvHs} gives (for arbitrary times $0\leqslant r<r+\sigma\leqslant t$)
\begin{multline}\label{vvvHsn}
\|v\|_{C([r,r+\sigma];H^s(\T))}\leqslant 2\|v(r)\|_{H^s(\T)}+2\sigma \left( \|F[v]\|_{C([r,r+\sigma];H^s(\T))}
+ B_s n\, \|\bar{c}[v]\|_{C([r,r+\sigma];H^s(\T))} \right).
\end{multline}
We deduce then from \eqref{propQbarcF1} that
\begin{equation}\label{vvvHsn2}
\|v\|_{C([r,r+\sigma];H^s(\T))}\leqslant 2\|v(r)\|_{H^s(\T)}+2\sigma\mathfrak{C}(n) \left(1+ B_s n \right)\! \left( \|v\|_{C([r,r+\sigma];H^s(\T))} +1 \right),
\end{equation}
and thus 
\begin{equation}\label{vvvHsn3}
\|v\|_{C([r,r+\sigma];H^s(\T))}\leqslant K(n)\left(\|v(r)\|_{H^s(\T)} + 1 \right),
\end{equation}
if $\sigma<\sigma(n)$ for a $\sigma(n)$ sufficiently small depending on $n$ (and $s$) only. By iterating \eqref{vvvHsn3}, we obtain the bound $\|v\|_{C([0,t];H^s(\T))}\leqslant H(n)$ for a given function $H\colon\N\to\N$, and thus $t<S_{H(n)}$. Letting $t\uparrow \hat{T}_n\wedge S^*$ gives $\hat{T}_n\wedge S^*\leqslant S_{H(n)}\leqslant S^*$ and thus $\hat{T}_n\leqslant S^*$. We conclude that $\hat{T} ^*\leqslant S^*$ and finally that $\hat{T}^*=S^*$. 

Let us now prove that $T^*=\hat{T}^*$. Clearly $\hat{T}^*\leqslant T^*$. We consider a time $t< T_n\wedge\hat{T}^*$. By differentiation in $x$ in \eqref{Qlinv} on $[0,t]$, we obtain the equation
\begin{equation}\label{Qlinvx}
\partial_t w+c[v]\partial_x w=\partial_x F[v]-w\partial_x c[v],
\end{equation}
where $w := \partial_x v$. By the comparison principle, and for $s\in[0,t]$,
\begin{equation}\label{xQlinPmax}
\|w(s)\|_{L^\infty(\T)}\leqslant \|\partial_x v(0)\|_{L^\infty(\T)}+\int_0^s\|\partial_x F[v]-w\partial_x c[v]\|_{L^\infty(\T)}(\sigma) \, \ud \sigma.
\end{equation}
We use \eqref{propQbarc1}, \eqref{propQbarcF10} and the Gr\"onwall lemma, to deduce from \eqref{xQlinPmax} that
\begin{equation}\label{xQlinPmax2}
\sup_{0\leqslant s \leqslant t}\|w(s)\|_{L^\infty(\T)}\ \leqslant\, \left( \|\partial_x v(0)\|_{L^\infty(\T)} + C(n) t \right) \exp(C(n)t),
\end{equation}
for a certain constant $C(n)$. This implies $t<\hat{T}_{\tilde{H}(n)}$ for a certain function $\tilde{H}\colon\N\to\N$, which, as above, gives $T^*\leqslant\hat{T}^*$, and so finally $T^*=\hat{T}^*$.
\end{proof}

\begin{thm}[Local existence of regular solutions]\label{th:loc-exist} Let $s>3/2$. Assume \eqref{coeff-c},  \eqref{sigmas1} and $|c'(u)| \leqslant c_3$. Let $V_0\in H^s(\T;\R^2)$ such that $\int_\T (Q_0-P_0)\, \ud x = 0$. Then \eqref{SVWE_Qlinear-eps} admits a solution up to an explosion time $\tau$ and two solutions $(V_i,\tau_i)$, $i=1,2$ to \eqref{SVWE_Qlinear-eps} coincide, in the sense that $\tau_1=\tau_2$ and $V_1=V_2$ on $[0,\tau_1)$.
\end{thm}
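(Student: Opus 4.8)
The plan is to deduce the theorem from the abstract local existence result, Proposition~\ref{prop:MaxSolNlin}, applied to the system \eqref{SVWEepthetaPQ}--\eqref{udefepthetaPQ} rewritten in the quasilinear form \eqref{SVWE_Qlinear-eps}, working pathwise on the set $\Omega_s$ of full probability defined in \eqref{WHsas}. First I would fix $\omega\in\Omega_s$, so that $t\mapsto\Phi W(t)$ is a fixed element of $C([0,T];H^{s+1}(\T))$ for every $T$, and check that the data of \eqref{SVWE_Qlinear-eps} fit the framework of \eqref{Qlinv}: that is, I would verify that $V\mapsto A(V)$ (via $u=u[V]$) and $V\mapsto F_\theta(V)$ satisfy the structural hypotheses \eqref{propQbarc1}, \eqref{propQbarcF10}, \eqref{propQbarcF1} and the Lipschitz bound \eqref{LipbarcF}. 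The key observations here are: $u$ is given by \eqref{udefepthetaPQ} as a nonlocal but very regular function of $V$ — antiderivatives in $x$ and $t$ plus the smooth map $\mathcal{C}^{-1}$ — so $\partial_x u = \tfrac12(Q-P)-\Theta$ lies in $H^s$ with norm controlled by $\|V\|_{H^s}$, and $\|u\|_{L^\infty}$ is controlled by $\|V\|_{C([0,T];L^\infty)}$ (using that $\Theta(t)$ is an average of $\tfrac{Q-P}{2}$, hence bounded by $\|V\|_{L^\infty}$); then $c(u),\tilde c'(u)=\tfrac14 c'(u)/c(u)$ and their $x$-derivatives are estimated by the composition estimate \eqref{Composition} together with the algebra property \eqml{Algebra}; the terms $\bigl(c(u)\Phi W\bigr)_x$ in \eqref{defFeps} are in $H^s$ because $\Phi W\in H^{s+1}$ for $\omega\in\Omega_s$; and the quadratic terms $P^2-Q^2$ etc.\ are handled again by \eqref{Algebra}. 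The cut-off $\theta$ is either $0$ or $\chi_\eps\in W^{2,\infty}_{\mathrm{loc}}$; in the latter case one only gets $H^2$-regularity of $\theta(R)$, which is why the statement is at the level $s$ of the smoothness actually available (and Theorem~\ref{thm:globalsolep} is stated in $H^2$).

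Second, I would invoke Proposition~\ref{prop:MaxSolNlin} to produce, for each $\omega\in\Omega_s$, a maximal pathwise solution $V(\cdot,\omega)\in C([0,T^*(\omega));H^s)\cap C^1([0,T^*(\omega));H^{s-1})$ with $T^*(\omega)$ characterized by the explosion of $\|V(t)\|_{L^\infty}$ — exactly the explosion condition \eqref{blowuptauVeps}. Setting $\tau:=T^*$, I would then need to check the measurability and adaptedness statements of Definition~\ref{def:regsolPQeps}: that $\tau$ is a stopping time and that $(V(t\wedge\tau',x))_t$ is predictable for any stopping time $\tau'<\tau$. This follows because, on $\Omega_s$, the whole construction (iterative scheme, method of characteristics) is a deterministic continuous functional of the path $s\mapsto\Phi W(s)$ up to time $t$, hence adapted to $(\mathcal F_t)$; continuity of this functional in the appropriate topology also gives that $\tau_n$ in \eqref{blowuptauVeps} are stopping times and hence so is their supremum $\tau$. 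The integral identity \eqref{eq-def-sol-V-eps} in $H^{s-1}$ holds because Proposition~\ref{prop:MaxSolNlin} produces a genuine solution of \eqref{Qlinv}, i.e.\ of \eqref{SVWEepthetaPQ}, which is \eqref{SVWE_Qlinear-eps} with the identifications \eqref{defAeps}--\eqref{defFeps}.

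Third, I would address the constraint $\int_\T(Q_0-P_0)\,\ud x=0$ and consistency of the nonlocal definition \eqref{udefepthetaPQ}: one must check that $x\mapsto u(t,x)$ is well defined as a function on $\T$ (i.e.\ $1$-periodic), which by \eqref{udefepthetaPQ} requires $\int_0^1\bigl[\tfrac{Q-P}{2}-\Theta\bigr]\,\ud y=0$; but this is automatic from the very definition \eqref{psiepthetaPQ} of $\Theta$, for all $t$, regardless of the initial condition — so in fact the mean-zero hypothesis on $V_0$ is only needed to identify this generalized system with the original \eqref{SVWE2} (where $\Theta\equiv0$) and does not enter the abstract local theory. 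Uniqueness is immediate: two solutions $(V_i,\tau_i)$ agree on $[0,\tau_1\wedge\tau_2)$ by the $L^2$-contraction Proposition~\ref{prop:L2contractNLIN} (applied on small successive time intervals and bootstrapped via the $H^s$-bound \eqref{vvvHsR}), and then the maximality/explosion characterization forces $\tau_1=\tau_2$.

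The main obstacle I expect is the verification of hypotheses \eqref{propQbarcF1} and especially the Lipschitz estimate \eqref{LipbarcF} for the nonlocal coefficient map $V\mapsto(c(u[V]),F_\theta(V))$: one has to track carefully how the double antiderivative and the $\mathcal{C}^{-1}$ in \eqref{udefepthetaPQ} propagate $L^2$- and $W^{1,\infty}$-differences of $V$ into $L^2$-differences of $c(u),\tilde c'(u)$ and of $(c(u)\Phi W)_x$, and to make sure the $\theta=\chi_\eps$ case is covered at the regularity level being claimed. Everything else is a routine, if somewhat lengthy, application of Kato--Ponce-type estimates and the earlier propositions of this section.
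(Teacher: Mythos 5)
Your plan matches the paper's proof essentially step for step: both apply Proposition~\ref{prop:MaxSolNlin} pathwise on $\Omega_s$, reduce the problem to verifying \eqref{propQbarc1}, \eqref{propQbarcF10}, \eqref{propQbarcF1}, \eqref{LipbarcF} for $V\mapsto c(u[V])$ and $V\mapsto F_\theta(V)$ via the identity $c(u)u_x=\tfrac{S-R}{2}-\Theta$, the composition estimate \eqref{Composition} and the noise regularity \eqref{WHsas}, deduce uniqueness from the $L^2$-contraction \eqref{vvvL2contract} together with the explosion-time characterization, and settle predictability through the iterative scheme. The only minor inaccuracy is the claim that the mean-zero hypothesis on $V_0$ only matters for identifying the system with \eqref{SVWE2}: it is in fact also invoked for the approximate system $\theta=\chi_\eps$ (Theorem~\ref{thm:glob-exist-ep}), where it guarantees $\Theta^\eps(0)=0$ and hence the bound \eqref{eq:boundpsi}, though this does not affect the local-existence argument itself.
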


\begin{proof}[Proof of Theorem~\ref{th:loc-exist}] We use Proposition~\ref{prop:MaxSolNlin} (the results can be extended to the case of two equations in a straightforward way) to solve \eqref{SVWE_Qlinear-eps} pathwise. Therefore it will be sufficient to prove that, a.s., $u=u[V]$ being defined by \eqref{udefepthetaPQ}, and $F_\theta$ given by \eqref{defFeps}, the functions
\[
V\mapsto c[V]:=c(u),\quad V\mapsto F_{\theta}(V)
\]
satisfy \eqref{propQbarc1}-\eqref{propQbarcF10}-\eqref{propQbarcF1}-\eqref{LipbarcF}. If we take into account the regularity \eqref{WHsas} and the definition of $(P,Q)$ as the shift of $(R,S)$ by $ \Phi W$, then the property \eqref{propQbarc1} follows from the identity
\begin{equation}\label{cuxthetaeps}
c(u) u_x
=\frac{S-R}{2}-\Theta ,
\end{equation}
and the bounds in \eqref{coeff-c}. Similarly, we deduce \eqref{propQbarcF1} and \eqref{LipbarcF} from \eqref{coeff-c}, \eqref{WHsas}, \eqref{udefepthetaPQ}, \eqref{cuxthetaeps} and \eqref{Composition}. The property \eqref{propQbarcF10} is a straightforward consequence of \eqref{WHsas} and the expression \eqref{defFeps} of $F_\theta$. The local uniqueness is a consequence of \eqref{vvvL2contract}, which gives $V_1=V_2$ on $[0,\tau_1^*\wedge\tau_2^*)$, but then also yields $\tau_1^*=\tau_2^*$ since both are explosion times. Finally, the predictable character of the stochastic processes can be deduced from the predictable character of each element of the iteration \eqref{Qlinvk}.
\end{proof}

\subsection{Conservative form, integration along the characteristics, It\^o's formula}\label{subsec:ItoANDchar}

The identity \eqref{eq-def-sol-V-eps} should be understood in $H^{s-1}(\T)$. In particular, \eqref{eq-def-sol-V-eps} is satisfied point-wise for every $x\in \T$ and so is the system \eqref{SVWEeptheta} (once interpreted in the standard integral form of SDEs). The It\^o formula can therefore be applied for every $x\in\T$ to the real-valued processes $(R(t,x))_{t<\tau}$ and $(S(t,x))_{t<\tau}$ and gives, for all smooth function $h\in C^\infty(\R)$,
\begin{align}\nonumber
\ud h(R)\ +\ c(u)\, h(R)_x\, \ud t\ &=\ \tilde{c}'(u) \, h'(R)\, (R^2\, -\, S^2\, -\, \theta(R)\, +2\, R\, \Theta )\,\ud t\ +\ \half\, q\, h''(R)\, \ud t\\ \label{ReqepthetaIto}
&\quad   +\ h'(R)\, \Phi\, \ud W, 
\end{align}
and
\begin{align}\nonumber
\ud h(S)\ -\ c(u)\, h(S)_x\, \ud t\ &=\ \tilde{c}'(u) \, h'(S)\, (S^2\, -\, R^2\, -\, \theta(S)\, -2\, S\, \Theta ) \, \ud t\ +\ \half\, q\, h''(S)\, \ud t\\ \label{SeqepthetaIto}
& \quad +\ h'(S)\, \Phi\, \ud W.
\end{align}
We can then use the identity \eqref{cuxthetaeps} to obtain the conservative form
\begin{gather}\nonumber
\ud h(R)\ +\ (c(u)\, h(R))_x\, \ud t\ =\ h'(R)\, \Phi\, \ud W\\ \label{ReqepthetaItoConservative}
+\ \tilde{c}'(u) \left[(S\, -\, R)\, B_h(R,S)\, -\, h'(R)\, \theta(R)\, +\,
2\, \Theta \, (R\, h'(R)\, -\, 2\, h(R))\right] \ud t\ +\ \half\, q\, h''(R)\, \ud t,
\end{gather}
and
\begin{gather}\nonumber
\ud h(S)\ -\ (c(u)\, h(S))_x\, \ud t\ =\ h'(S)\, \Phi\, \ud W\\  \label{SeqepthetaItoConservative}
+\ \tilde{c}'(u) \left[(R\, -\, S)\, B_h(S,R)\, -\, h'(S)\, \theta(S)\, -\,
2\, \Theta \, (S\, h'(S)\, -\, 2\, h(S))\right] \ud t\ +\ \half\, q\, h''(S)\, \ud t,
\end{gather}
where
\begin{equation}\label{defBh}
B_h(R,S)\ \eqdef\ 2\, h(R)\ -\ (R+S)\, h'(R).
\end{equation}
By taking $h(R)=R^2$, we obtain 
\begin{equation}
B_h(R,S)=-2RS=B_h(S,R),\quad R\, h'(R)\, -\, 2\, h(R)=0,
\end{equation}
and, by adding \eqref{ReqepthetaItoConservative} to \eqref{SeqepthetaItoConservative},
\begin{align} \nonumber
(R^2+S^2)(\tau')\ &=\ R^2(0)\ +\ S^2(0)\ +  \int_0^{\tau'} 2\, (R\ +\ S)\, \Phi\, \ud W
\\ \label{sumsquare-theta}
& \quad +\int_0^{\tau'} \left[ (c(u)\, (S^2-R^2))_x\ -\ 2\, \tilde{c}'(u) \left( R\, \theta(R)\, +\,  S\, \theta(S)\right)
\, +\ 2\, q\right] \ud t,
\end{align}
for any stopping time $\tau'<\tau$.\medskip

We will also need the evolution equation of $\frac{h(R)}{c(u)}$ (and similarly for $S$, see below). To that purpose, we compute the derivative of $u$.

\begin{proposition}[Computation of $u_t$]\label{prop:ut} Let $(V,\tau)$ be a regular solution to \eqref{SVWE_Qlinear-eps} up to the explosion time $\tau$ and let $(u,\Theta )$ being given as a functions of $V$ by \eqref{udefepthetaPQ}-\eqref{psiepthetaPQ}. We have then
\begin{equation}\label{psiprime}
\Theta '(t)=\alpha(t)+\beta(t)\Theta (t),
\end{equation}
where
\begin{equation}\label{psiprimealphabeta}
\alpha(t)=\frac12\int_0^1 \tilde{c}'(u)\left(\theta(R)-\theta(S)\right) \ud y,
\quad\beta(t)=\int_0^1 \tilde{c}'(u)\left(R+S\right) \ud y,
\end{equation}
and
\begin{equation}\label{utOK}
u_t=\frac{R+S}{2}+\frac{1}{c(u)}\int_0^x \left[\zeta(t,y)-\bar{\zeta}(t)\right]\ud y,	
\end{equation}
where
\begin{equation}\label{zeta}
\zeta=\tilde{c}'(u)\left[\frac{\theta(R)-\theta(S)}{2}+(R+S)\Theta \right],\quad \bar{\zeta}(t)=\int_0^1\zeta(t,y)\, \ud y.
\end{equation}
\end{proposition}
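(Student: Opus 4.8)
The plan is to derive the three identities \eqref{psiprime}, \eqref{psiprimealphabeta}, \eqref{utOK}, \eqref{zeta} by differentiating the non-local definition \eqref{udefepthetaPQ} of $u$, using the equations \eqref{Reqeptheta}--\eqref{Seqeptheta} (in integral form, which we may do thanks to the regularity \eqref{def-sol-V} of a regular solution) together with the constraint that $c(u)u_x = \frac{S-R}{2}-\Theta$ (this is \eqref{cuxthetaeps}, valid for a regular solution). First I would compute $\Theta'$: since $\Theta(t)=\int_0^1\frac{S-R}{2}(t,y)\,\ud y$, I differentiate under the integral sign and substitute the evolution equations \eqref{Reqeptheta}--\eqref{Seqeptheta} for $\ud(S-R)$. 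The transport terms $c(u)\,R_x$ and $c(u)\,S_x$ integrate against $\ud y$ over the torus $[0,1]$; using periodicity they combine — $\int_0^1 c(u)(R_x+S_x)\,\ud y = \int_0^1 (c(u)(R+S))_x\,\ud y - \int_0^1 c(u)_x(R+S)\,\ud y = -\int_0^1 c(u)_x(R+S)\,\ud y$ — and since $c(u)_x = c'(u)u_x$ while $c'(u)u_x = c(u)\cdot 4\tilde c'(u) u_x$ (from $\tilde c = \frac14\ln c$), one rewrites everything in terms of $\tilde c'(u)$. Also the stochastic terms $\Phi\,\ud W$ appearing in both $\ud R$ and $\ud S$ have opposite signs in $S-R$ and cancel. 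Collecting the remaining drift terms — the quadratic terms $S^2-R^2$, the $\theta$ terms, and the $\Theta$ terms — and separating out the part proportional to $\Theta$ from the rest yields exactly $\Theta' = \alpha + \beta\Theta$ with $\alpha,\beta$ as in \eqref{psiprimealphabeta}.

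Next I would compute $u_t$. Differentiating \eqref{udefepthetaPQ} in $t$ and applying the chain rule through $\mathcal{C}^{-1}$ (with $(\mathcal{C}^{-1})'(\cdot) = 1/c(\cdot)$ and $\mathcal{C}'(r)=c(r)$), the first (spatially constant) block contributes $\frac{R+S}{2}(t,0)$ times $c(u(t,0))/c(u(t,x))$, and the integral block $\int_0^x[\frac{S-R}{2}(t,y)-\Theta(t)]\,\ud y$ contributes $\frac{1}{c(u)}\partial_t\!\int_0^x[\frac{S-R}{2}(t,y)-\Theta(t)]\,\ud y$. For the latter I again substitute $\ud(S-R)$ from \eqref{Reqeptheta}--\eqref{Seqeptheta}. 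The stochastic terms cancel as before. The transport contribution $-\frac12\int_0^x c(u)(R_x+S_x)\,\ud y$ is integrated by parts: $\int_0^x c(u)(R_x+S_x)\,\ud y = [c(u)(R+S)]_0^x - \int_0^x c(u)_x(R+S)\,\ud y$, and the boundary term at $x$ gives $c(u(t,x))\cdot\frac{R+S}{2}(t,x)$ which, after division by $c(u)$, produces the leading $\frac{R+S}{2}$ in \eqref{utOK}; the boundary term at $0$ cancels against the contribution from the first block once one also uses that $\partial_t$ of the inner argument of $\mathcal{C}$ is $\frac{R+S}{2}(t,0)$. What remains is $\frac{1}{c(u)}$ times an integral of $\tilde c'(u)$ against the quadratic/$\theta$/$\Theta$ terms plus the $-\Theta'$ term; after using the formula for $\Theta'$ just derived and the definition of $\alpha$, the $R^2-S^2$ contributions telescope (since $\int_0^x c(u)_x(R+S)\,\ud y$ matches the cross terms, using again $c(u)_x = 4\tilde c'(u)c(u)u_x$ and $c(u)u_x=\frac{S-R}{2}-\Theta$) leaving precisely $\frac{1}{c(u)}\int_0^x[\zeta(t,y)-\bar\zeta(t)]\,\ud y$ with $\zeta,\bar\zeta$ as in \eqref{zeta}.

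The main obstacle I anticipate is bookkeeping in the cancellation of the $S^2-R^2$ (and $\Theta$-linear) terms: the quadratic terms come both from the explicit right-hand sides of \eqref{Reqeptheta}--\eqref{Seqeptheta} and, after the integration by parts of the transport term, from $\int_0^x c(u)_x(R+S)\,\ud y$; matching them requires the key algebraic identity $c(u)_x = c'(u)u_x = 4\,\tilde c'(u)\,c(u)\,u_x = 4\,\tilde c'(u)\,c(u)\big(\tfrac{S-R}{2c(u)}-\tfrac{\Theta}{c(u)}\cdot c(u)\big)$ — more carefully, using \eqref{cuxthetaeps} in the form $c(u)u_x=\frac{S-R}{2}-\Theta$ one gets $c(u)_x=2\tilde c'(u)(S-R)-4\tilde c'(u)\Theta$. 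Feeding this in and expanding $(S-R)(R+S)=S^2-R^2$ is what makes the non-$\zeta$ part of the integrand collapse; one must track the factors of $2$ and the sign conventions carefully. A secondary technical point is justifying differentiation under the integral and the use of the fundamental theorem of calculus in $x$, but this is licensed by \eqref{def-sol-V}: for a regular solution $R,S\in C([0,\tau),H^s)$ with $s>3/2$, hence $R,S$ and $R_x,S_x$ are continuous in $(t,x)$, and $u$ is correspondingly $C^1$ in $x$, so all manipulations are classical.
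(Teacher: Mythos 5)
Your approach is essentially the same as the paper's: it differentiates the non-local formula \eqref{udefepthetaPQ} for $u$, substitutes the evolution equations for $S-R$, and uses integration by parts together with the identity $c(u)u_x=\frac{S-R}{2}-\Theta$ (equivalently $c(u)_x=2\tilde c'(u)(S-R)-4\tilde c'(u)\Theta$) to cancel the $S^2-R^2$ terms; the paper merely packages this same algebra by first isolating the conservative equation \eqref{SRconservative} (via \eqref{ReqepthetaItoConservative}-\eqref{SeqepthetaItoConservative} with $h(R)=R$) and then integrating it. One small slip: the transport contribution to $\tfrac12\partial_t(S-R)$ is $+\tfrac12\int_0^x c(u)(R_x+S_x)\,\ud y$, not $-\tfrac12$, but your subsequent bookkeeping (boundary term at $x$ yielding $\tfrac{R+S}{2}$, boundary term at $0$ cancelling the first block) is consistent with the correct sign, so it is only a typo.
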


\begin{proof}[Proof of Proposition~\ref{prop:ut}] We consider the equations \eqref{ReqepthetaItoConservative}-\eqref{SeqepthetaItoConservative} with $h(R)=R$, and subtract the first equation from the second one to obtain
\begin{equation}\label{SRconservative}
\partial_t(S-R)-(c(u)(R+S))_x=\tilde{c}'(u)\left[(\theta(R)-\theta(S))+2\, (R+S)\, \Theta \right].	
\end{equation}
By integration on $(0,1)$ in \eqref{SRconservative}, we obtain \eqref{psiprime}. Then we observe that, taking $x=0$ in \eqref{udefeptheta} gives
\begin{equation}\label{ut0}
u(t,0)=\int_0^t  \left(\frac{R\, +\, S}{2}  \right) (s,0)\, \ud s\ +\ u_0(0),
\end{equation}
so that
\begin{equation}\label{CCu}
\mathcal{C}(u(t,x))=\mathcal{C}(u(t,0))+\int_0^x \left[\frac{S-R}{2}(t,y)-\Theta (t)\right]\ud y.	
\end{equation}
By differentiation in \eqref{ut0} and \eqref{CCu} with respect to $t$, we obtain
\begin{equation}\label{cut1}
c(u)u_t=c(u(t,0))\left(\frac{R+S}{2} \right) (t,0)+\int_0^x \left[\frac{\partial_t(S-R)}{2}(t,y)-\Theta '(t)\right]\ud y.	
\end{equation}
We use \eqref{SRconservative} and \eqref{psiprime} to get
\begin{equation}\label{cut2}
c(u)u_t=c(u)\frac{R+S}{2}+\int_0^x \left[\zeta(t,y)-\bar{\zeta}(t)\right]\ud y,	
\end{equation}
where $\zeta$ and $\bar{\zeta}$ are defined in \eqref{zeta}. Dividing by $c(u)$ yields \eqref{utOK}.
\end{proof}

Set 
\begin{equation}\label{xiut}
\Xi\ \eqdef\ u_t\, -\, \frac{R+S}{2}\ =\ \frac{1}{c(u)}\int_0^x \left[\zeta(t,y)-\bar{\zeta}(t)\right]\ud y.	
\end{equation}
It follows from \eqref{ReqepthetaIto}-\eqref{SeqepthetaIto} that 
\begin{gather}\nonumber
\ud \left[{\textstyle \frac{h(R)}{c(u)}}\right] +\ [h(R)]_x\, \ud t\ =\ h'(R)\, \Phi\, \ud W\\ \label{RCeqepthetaIto}
+\ {\textstyle\frac{\tilde{c}'(u)}{c(u)}} \left[4\, h(R)\, \Xi\, +\, (S\, +\, R)\, F_h(R,S)\, -\, h'(R)\, \theta(R)\, +\,
2\, \Theta \, R\, h'(R)\right] \ud t\  
+\ \tfrac{q\, h''(R)}{2\, c(u)}\, \ud t,
\end{gather} 
and
\begin{gather}\nonumber
\ud \left[ {\textstyle \frac{h(S)}{c(u)}}\right] -\ [h(S)]_x\, \ud t\ =\ h'(S)\, \Phi\, \ud W\\  \label{SCeqepthetaIto}
+\ {\textstyle\frac{\tilde{c}'(u)}{c(u)}} \left[4\, h(S)\, \Xi\, +\,(R\, +\, S)\, F_h(S,R)\, -\, h'(S)\, \theta(S)\, -\,
2\, \Theta \, S\, h'(S)\right] \ud t\
+\ \tfrac{q\, h''(S)}{2\, c(u)}\, \ud t,
\end{gather}
where
\begin{equation}\label{defFh}
F_h(R,S)\ :=\ 2\, h(R)\ -\ (R-S)\, h'(R).
\end{equation}


We complete this section with the the expression of \eqref{ReqepthetaIto}-\eqref{SeqepthetaIto} after integration along the characteristic curves (\textit{cf.} Proposition~\ref{prop:galchi}).

\begin{proposition}[Integration along the characteristic curves]\label{prop:characteristics} Let $(V(t))_{t<\tau}$ be a regular solution to \eqref{SVWE_Qlinear-eps} up to the explosion time $\tau$. Let $h\in C^\infty(\R)$. Let $X^\pm$ denote the flow associated to the vector field $\pm c(u(t,\cdot))$: 
\begin{equation}\label{flowXpm}
\frac{\ud\;}{\ud t} X^\pm(t,x)\ =\ \pm c(u(t,X^\pm(t,x))), \qquad X^\pm(0,x)\ =\ x,\qquad x\in\T,\quad t\in [0,\tau).
\end{equation}
Let also $Y^\pm(t,\cdot)$ denote the inverse of the map $x\mapsto X^\pm(t,x)$. We have then
\begin{enumerate}
\item $X^\pm$ and $Y^\pm$ are well-defined, $C^1$-maps on $[0,\tau)\times\T$; for all $x\in\T$, $(X^\pm(t,x))$ and $(Y^\pm(t,x))$ are predictable processes, and the stochastic integrals
\begin{equation}\label{Mh}
\mathcal{M}_h(t,x)=\sum_k\int_0^t \left\{h'(R) \sigma_k\right\}\left(s,X^+(s,x)\right)\, \ud \beta_k(s),
\end{equation}
and
\begin{equation}\label{Nh}
\mathcal{N}_h(t,x)=\sum_k\int_0^t \left\{h'(S) \sigma_k\right\}\left(s,X^-(s,x)\right)\, \ud \beta_k(s),
\end{equation}
define martingales,
\item we have
\begin{gather} \nonumber
h(R)(t,x)\ =\ h(R_0)(Y^+(t,x))\ +\ \mathcal{M}_h(t,Y^+(t,x))\\ \label{hRalongchar}
+ \int_0^t \left\{\tilde{c}'(u)\, h'(R)\, (R^2\, -\, S^2\, -\, \theta(R)\, +2\, R\, \Theta )\, +\, \half\, q\, h''(R)\right\}\left(s,X^+(s,Y^+(t,x))\right) \ud s,
\end{gather}
and
\begin{gather}\nonumber
h(S)(t,x)\ =\ h(S_0)(Y^-(t,x))\ +\ \mathcal{N}_h(t,Y^-(t,x)) \\ \label{hSalongchar}
+ \int_0^t \left\{\tilde{c}'(u)\, h'(S)\, (S^2\, -\, R^2\, -\, \theta(S)\, -2\, S\, \Theta ) \, +\, \half\, q\, h''(S)\right\}\left(s,X^-(s,Y^-(t,x))\right) \ud s ,
\end{gather}
for any stopping time $t\in[0,\tau)$ and all $x\in\T$.
\end{enumerate}
\end{proposition}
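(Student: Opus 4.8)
The plan is to treat the characteristic flow pathwise, to deduce the martingale property of $\mathcal{M}_h$ and $\mathcal{N}_h$ by localizing at the explosion stopping times, and then to obtain \eqref{hRalongchar}--\eqref{hSalongchar} by a stochastic version of the method of characteristics --- applying Itô's formula \eqref{ReqepthetaIto}--\eqref{SeqepthetaIto} along each characteristic issued from a \emph{deterministic} footpoint, noticing that the transport term is cancelled by the convective derivative, and only then letting the footpoint become random. First, since $(V,\tau)$ is a regular solution, $(P,Q)(t,\cdot)\in H^s(\T)$ continuously in $t$, and as $\mathcal{C}$ in \eqref{Cronde} is a smooth diffeomorphism of $\R$ with $\mathcal{C}'=c\geqslant c_1>0$, formula \eqref{udefepthetaPQ} gives $u(t,\cdot)\in H^{s+1}(\T)$, so $c(u)$ is $C^1$ in $x$, continuous in $(t,x)$, and valued in $[c_1,c_2]$ by \eqref{coeff-c}. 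The Cauchy--Lipschitz theorem then provides unique $C^1$ solutions of \eqref{flowXpm}, globally on $[0,\tau)$; differentiating \eqref{flowXpm} in $x$ shows that $\partial_x X^\pm$ solves a scalar linear ODE and stays positive, so $X^\pm(t,\cdot)$ is an orientation-preserving $C^1$-diffeomorphism of $\T$ with $C^1$ inverse $Y^\pm(t,\cdot)$. For each fixed $x\in\T$, writing $X^\pm$ as the uniform limit of the Picard iterates $X^\pm_{m+1}(t,x)=x\pm\int_0^t c\bigl(u(r,X^\pm_m(r,x))\bigr)\,\ud r$, each adapted with continuous paths because $c(u(\cdot,\cdot))$ is, shows that $(X^\pm(t,x))_{t<\tau}$, and then $(Y^\pm(t,x))_{t<\tau}$, are predictable.

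Next, for the stochastic integrals, I would localize at the stopping times $\tau_n$ of \eqref{blowuptauVeps}: on $[0,\tau_n]$ one has $\|V\|_{L^\infty}\leqslant n$, hence $|h'(R)|\leqslant C_{h,n}:=\sup_{|\xi|\leqslant n}|h'(\xi)|$ there, so that for each fixed $x$
\[
\sum_k\E\int_0^{t\wedge\tau_n}\bigl|\{h'(R)\,\sigma_k\}(s,X^+(s,x))\bigr|^2\,\ud s\ \leqslant\ C_{h,n}^2\,q_0\,\E[t\wedge\tau_n]\ <\ \infty
\]
by \eqref{defq}; thus $\mathcal{M}_h(\cdot\wedge\tau_n,x)$ is a square-integrable martingale for every $n$, and since $\tau=\sup_n\tau_n$ the process $\mathcal{M}_h(\cdot,x)$ of \eqref{Mh} is a (local) martingale on $[0,\tau)$, and likewise for $\mathcal{N}_h$ of \eqref{Nh} with $X^-$ and $S$. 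Moreover, for fixed $t$, a Kolmogorov continuity estimate in the parameter $x$ --- using \eqref{defq}, the bound on $R$ on $[0,\tau_n]$ and the Lipschitz dependence of $X^+(s,\cdot)$ on its second argument --- provides continuous-in-$x$ modifications of $x\mapsto\mathcal{M}_h(t,x)$ and $x\mapsto\mathcal{N}_h(t,x)$, so that the quantities $\mathcal{M}_h(t,Y^+(t,x))$ and $\mathcal{N}_h(t,Y^-(t,x))$ in \eqref{hRalongchar}--\eqref{hSalongchar} make sense, with the understanding ``first evaluate the stochastic integral at a deterministic footpoint, then substitute the random footpoint''.

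Finally, to obtain \eqref{hRalongchar}, fix a deterministic $a\in\T$ and a stopping time $\tau'<\tau$. By \eqref{ReqepthetaIto}, for each frozen $y$ the process $s\mapsto h(R)(s,y)$ is an Itô process with drift $-c(u)\,h(R)_x+\tilde{c}'(u)\,h'(R)\,(R^2-S^2-\theta(R)+2R\Theta)+\half\,q\,h''(R)$ and diffusion coefficients $h'(R)\,\sigma_k$; since $s\mapsto X^+(s,a)$ is absolutely continuous in $s$ with $\tfrac{\ud}{\ud s}X^+(s,a)=c\bigl(u(s,X^+(s,a))\bigr)$ and has no martingale part, the Itô--Wentzell formula --- in which the quadratic-variation corrections drop out --- gives, after the cancellation $-c(u)\,h(R)_x+h(R)_x\cdot\tfrac{\ud}{\ud s}X^+(s,a)=0$,
\begin{multline*}
h(R)\bigl(\tau',X^+(\tau',a)\bigr)=h(R_0)(a)+\mathcal{M}_h(\tau',a)\\
+\int_0^{\tau'}\!\left\{\tilde{c}'(u)\,h'(R)\,\bigl(R^2-S^2-\theta(R)+2R\Theta\bigr)+\half\,q\,h''(R)\right\}\!\bigl(s,X^+(s,a)\bigr)\,\ud s.
\end{multline*}
Every term here is continuous in $a$ (for the stochastic integral $\mathcal{M}_h(\tau',a)$ by the previous paragraph), so the identity holds a.s.\ simultaneously for all $a\in\T$; taking $a=Y^+(\tau',x)$, so that $X^+(\tau',a)=x$, yields \eqref{hRalongchar} with $t=\tau'$, and the symmetric computation along $X^-$ using \eqref{SeqepthetaIto} yields \eqref{hSalongchar}.

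The step I expect to be the genuine obstacle is the Itô--Wentzell composition above: the field $h(R)(s,\cdot)$ is only $C^1$ in $x$, so one must justify that composing it with the curve $X^+(\cdot,a)$ (which, for fixed $a$, is adapted and absolutely continuous) produces no extra correction term, and that the resulting identity survives the substitution $a=Y^+(\tau',x)$ --- note that $s\mapsto X^+(s,Y^+(\tau',x))$ is \emph{not} adapted, which is precisely why the footpoint must be frozen first and the limit passed in the parameter. I would either invoke a standard generalized Itô formula for $C^1$-spatial Itô random fields composed with absolutely continuous flows, or prove the needed special case directly by time-discretization: on a partition $0=s_0<\dots<s_N=\tau'$, decompose $h(R)\bigl(s_{i+1},X^+(s_{i+1},a)\bigr)-h(R)\bigl(s_i,X^+(s_i,a)\bigr)$ into a spatial increment --- controlled by the $C^1$-in-$x$ bound on $h(R)(s_{i+1},\cdot)$ and by $c_1\leqslant c(u)\leqslant c_2$ --- and an Itô increment of $h(R)(\cdot,y)$ at the $\mathcal{F}_{s_i}$-measurable footpoint $y=X^+(s_i,a)$, then pass to the limit, the stochastic error terms vanishing by the $\tau_n$-localization together with the moduli of continuity of $R$ and of $X^+$.
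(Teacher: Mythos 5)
The paper states Proposition~\ref{prop:characteristics} without giving a proof (the text introducing it frames it as a stochastic counterpart of the deterministic Proposition~\ref{prop:galchi}), so there is no paper argument to compare against line by line. Your reconstruction is correct and takes the natural route: derive the pathwise $C^1$-regularity and predictability of $X^\pm, Y^\pm$, obtain the martingale property of $\mathcal{M}_h,\mathcal{N}_h$ by localization at the $\tau_n$ of \eqref{blowuptauVeps}, apply an It\^o--Wentzell (or time-discretization) argument with a \emph{deterministic} footpoint $a$ so that only the transport/convective cancellation survives, and only then pass to the random footpoint $a=Y^+(\tau',x)$ via a continuity-in-parameter argument. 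You have correctly located the two genuine subtleties --- the spatial field $h(R)(s,\cdot)$ is only $C^1$, and $s\mapsto X^+(s,Y^+(\tau',x))$ is \emph{not} adapted --- and addressed both.

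Two small refinements worth making explicit. First, to substitute $a=Y^+(\tau',x)$ you need the identity to hold a.s.\ simultaneously for all $a\in\T$ \emph{and} uniformly over $t\in[0,\tau_n]$; so the Kolmogorov modification should be taken jointly in $(t,a)$ on $[0,\tau_n]\times\T$ (the increment estimate in $a$ you sketch, combined with the increment estimate in $t$ implicit in the localized $L^2$-martingale bound, provides the required two-parameter moment estimate after choosing the exponent $p$ large enough). Second, the localization argument shows that $\mathcal{M}_h(\cdot\wedge\tau_n,x)$ is a square-integrable martingale for each $n$ --- i.e.\ $\mathcal{M}_h(\cdot,x)$ is a local martingale on $[0,\tau)$ --- which is the meaning of ``martingale'' in item~1 of the statement (there is no claim of uniform integrability up to $\tau$); it is worth saying this explicitly so the reader does not expect more. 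With these two clarifications the proof stands.
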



\subsection{Energy estimate}\label{subsec:Eestimates}

We give here an energy estimate for the system~\eqref{SVWE2}. The corresponding estimate for \eqref{SVWEep} will be proved later in Section~\ref{subsec:Energy-eps}.

\begin{proposition}[Energy estimate]\label{prop:GlobalEnergy} Let $(R(t),S(t))_{t<\tau}$ be a regular solution to \eqref{SVWE2} up to a stopping time $\tau$. Let
\begin{equation}\label{TotalEnergy}
\mathcal{E}(t)\eqdef \|(R,S)(t)\|_{L^2(\T)}^2\ =\ \int_\T (R^2\ +\ S^2)(t,x)\, \ud x
\end{equation}
denote the total energy of the system. Then, for any stopping time $t<\tau$ a.s.,
\begin{equation}\label{eq:TotalEnergy}
\mathcal{E}(t)\ =\ \mathcal{E}(0)\ +\ 2\, \|q\|_{L^1}\, t\ +\, 2\, \mathcal{M}(t),
\end{equation}
where $\mathcal{M}(t)$ is a martingale satisfying the bound
\begin{equation} \label{BoungMGlobalEnergy}
\E\left[\mathcal{M}(t\wedge T)^2\right]\, \leqslant\ 2\, q_0\, \mathcal{E}(0)\, T\ +\ 2\, q_0^2\, T^2,
\end{equation}
for all $T>0$. 
\end{proposition}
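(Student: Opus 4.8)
The plan is to derive the energy identity \eqref{eq:TotalEnergy} directly from the conservative computation \eqref{sumsquare-theta} specialized to the original system \eqref{SVWE2}, i.e. with $\theta\equiv 0$ and $\Theta\equiv 0$ (this is the case noted in the text after \eqref{psieptheta}). In that situation \eqref{sumsquare-theta} reads
\begin{equation*}
(R^2+S^2)(\tau')\ =\ R^2(0)\ +\ S^2(0)\ +\int_0^{\tau'}\! \big[(c(u)(S^2-R^2))_x\ +\ 2q\big]\,\ud t\ +\int_0^{\tau'}\! 2(R+S)\,\Phi\,\ud W,
\end{equation*}
pointwise in $x$. First I would integrate this identity over $\T$: the flux term $(c(u)(S^2-R^2))_x$ integrates to zero by periodicity, the term $2q$ gives $2\|q\|_{L^1}\tau'$, and the constant initial term gives $\mathcal{E}(0)$. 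For the stochastic term, I would invoke the stochastic Fubini theorem to exchange $\int_\T$ and $\int_0^{\tau'}\ud W$, which is justified since $(R,S)\in C([0,\tau),L^2(\T))$ a.s. and the integrand is controlled in $L^2(\Omega\times[0,T]\times\T)$ by the energy estimate being established (or, safely, by stopping at $\tau_n$ and then removing the localization). This yields \eqref{eq:TotalEnergy} with
\begin{equation*}
\mathcal{M}(t)\ =\ \int_0^t\!\int_\T (R+S)\,\Phi\,\ud x\,\ud W(s)\ =\ \sum_k\int_0^t\Big(\int_\T (R+S)\sigma_k\,\ud x\Big)\ud\beta_k(s).
\end{equation*}

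Next I would establish that $\mathcal{M}$ is a genuine (square-integrable) martingale together with the bound \eqref{BoungMGlobalEnergy}. By It\^o's isometry,
\begin{equation*}
\E\big[\mathcal{M}(t\wedge T)^2\big]\ =\ \E\int_0^{t\wedge T}\sum_k\Big(\int_\T (R+S)\sigma_k\,\ud x\Big)^2\ud s.
\end{equation*}
By Cauchy--Schwarz in $x$, $\big(\int_\T (R+S)\sigma_k\,\ud x\big)^2\leqslant \|\sigma_k\|_{C(\T)}^2\,\big(\int_\T |R+S|\,\ud x\big)^2\leqslant \|\sigma_k\|_{C(\T)}^2\,\|(R+S)\|_{L^2(\T)}^2$, and summing over $k$ gives the bound $q_0\,\|(R,S)(s)\|_{L^2(\T)}^2$ up to the elementary inequality $\|R+S\|_{L^2}^2\leqslant 2\|(R,S)\|_{L^2}^2=2\mathcal{E}(s)$, hence the $k$-sum is $\leqslant 2q_0\,\mathcal{E}(s)$. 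Therefore $\E[\mathcal{M}(t\wedge T)^2]\leqslant 2q_0\,\E\int_0^{t\wedge T}\mathcal{E}(s)\,\ud s$.

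To close, I need an a priori bound on $\E\,\mathcal{E}(s)$. Taking expectations in \eqref{eq:TotalEnergy} — rigorously, first along the localizing sequence $\tau_n$ so that the stopped stochastic integral has zero mean — gives $\E\,\mathcal{E}(s\wedge\tau_n)\leqslant\mathcal{E}(0)+2\|q\|_{L^1}s$, and letting $n\to\infty$ with Fatou yields $\E\,\mathcal{E}(s)\leqslant\mathcal{E}(0)+2\|q\|_{L^1}s$ for $s<\tau$; note $\|q\|_{L^1}\leqslant q_0$. Plugging this into the previous estimate,
\begin{equation*}
\E\big[\mathcal{M}(t\wedge T)^2\big]\ \leqslant\ 2q_0\int_0^{T}\big(\mathcal{E}(0)+2q_0 s\big)\,\ud s\ =\ 2q_0\,\mathcal{E}(0)\,T\ +\ 2q_0^2\,T^2,
\end{equation*}
which is exactly \eqref{BoungMGlobalEnergy}; in particular $\mathcal{M}(t\wedge T)$ is in $L^2$, confirming it is a true martingale and retroactively justifying the removal of the localization in the stochastic Fubini step. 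The main obstacle is the circularity in using the energy bound to justify the martingale property of the very stochastic integral appearing in the energy identity; the remedy is the standard localization-by-$\tau_n$ argument carried out consistently, after which all the manipulations are routine.
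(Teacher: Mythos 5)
Your proposal is correct and follows essentially the same route as the paper's own proof: apply \eqref{sumsquare-theta} with $\theta\equiv 0$, integrate over $\T$ via the stochastic Fubini theorem to get \eqref{eq:TotalEnergy}, then use It\^o's isometry together with Cauchy--Schwarz and the a priori bound $\E[\mathcal{E}(t\wedge T)]\leqslant \mathcal{E}(0)+2q_0 T$ to deduce \eqref{BoungMGlobalEnergy}. Your explicit localization by $\tau_n$ to break the apparent circularity is left implicit in the paper but is indeed the standard and correct justification.
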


\begin{proof}[Proof of Proposition~\ref{prop:GlobalEnergy}] We apply \eqref{sumsquare-theta} (where $\theta\equiv 0$).
By integration with respect to $x\in\T$ in (and by the stochastic Fubini Theorem), we obtain \eqref{eq:TotalEnergy} where $\mathcal{M}(t)$ is the stochastic integral 
\begin{multline}
\mathcal{M}(t)\ \eqdef\ \int_0^t \int_\T \sum_{k \geqslant 1 }\, (R+S)(s,x)\, \sigma_k(x)\, \ud x\, \ud \beta_k(s)\\
=\ \int_0^\infty \mathds{1}_{s<t}\int_\T \sum_{k \geqslant 1 }\, (R+S)(s,x)\, \sigma_k(x)\, \ud x\, \ud \beta_k(s),
\end{multline}
with variance
\begin{equation}\label{VarM}
\E\left[\mathcal{M}(t)^2\right]\, =\ \E \int_0^t \sum_{k \geqslant 1 } \left| \int_T (R+S)(s,x)\, \sigma_k(x)\, \ud x \right|^2 \ud s.
\end{equation}
From \eqref{eq:TotalEnergy} follows the global control on the energy
\begin{equation}\label{Eene}
\E\left[\mathcal{E}(t\wedge T)\right]\, \leqslant\ \mathcal{E}(0) +\ 2\, q_0\, T,
\end{equation}
which in turns (using the Cauchy--Schwarz inequality and \eqref{defq}) gives \eqref{BoungMGlobalEnergy}.
\end{proof}

\subsection{Solutions with finite explosion time}

\subsubsection{Some preliminary estimates}\label{sec:estimates}

In this section, we consider solely the system \eqref{SVWE2}. We establish a bound on the square $R^2$ (\textit{resp.} $S^2$) integrated along the characteristic curve $X^-$ (\textit{resp.} $X^+$). If applied to the first equation \eqref{Req} for instance, the bound of $S^2$ along $X^+$ can be used to control the negative term $-S^2$ in the right-hand side of \eqref{Req}, which then cannot prevent the blow-up due to the term $R^2$. This is exploited in the following section~\ref{subsec:singularityoccur}, where solutions with a finite blow-up time are exhibited.

\begin{proposition}[Control along the opposite characteristic curve]\label{prop:controlEnergies} Let $(R(t),S(t))_{t<\tau}$ be a regular solution to \eqref{SVWE2} up to a stopping time $\tau$. Let $X^\pm$ denote the characteristic curves \eqref{flowXpm}. There is a constant $C$ depending on the constants $c_1$ and $c_2$ in \eqref{coeff-c} only, such that: for all $x_1\in\T$, 
\begin{align}\nonumber
\int_{0}^{t\wedge T} \left[R^2(s,{X^-(s,x_1+2 c_2 T)})\, +\,  S^2(s,{X^+(s,x_1)}) \right] \ud s\ \leqslant &\, C\, (T+1)\, \|(R_0, S_0)\|_{L^2(\T)}^2\, +\ C\, q_0\, T^2\\ \label{RSalongcharac}
&\ +\ M_{x_1}(t\wedge T),
\end{align} 
for any stopping time $t$ such that $t<\tau$ a.s., and all $T>0$, where $M_{x_1}(t)$ satisfies
\begin{equation}\label{boundMx12}
\E\left[ M_{x_1}(t\wedge T)^2\right]\, \leqslant\ C\, q_0\, \|(R_0,S_0)\|_{L^2(\T)}^2\, T\ +\ C\, q_0^2\, T^2, \qquad  \E\, M_{x_1}(t\wedge T)\ =\ 0,
\end{equation}
for all $T>0$ and all stopping time $t<\tau$ a.s.
\end{proposition}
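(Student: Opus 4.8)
The plan is to use the conservative equation \eqref{ReqepthetaItoConservative} (with $\theta\equiv 0$) for $S$, namely \eqref{SeqepthetaItoConservative}, with the choice $h(S)=S^2$, and integrate it along the characteristic curve $X^+$ issued from $x_1$; and symmetrically for \eqref{ReqepthetaItoConservative} with $h(R)=R^2$ integrated along $X^-$ issued from $x_1+2c_2T$. Recall that $\Theta\equiv 0$ for the system \eqref{SVWE2}, and with $h(\xi)=\xi^2$ we have $B_h(R,S)=-2RS$ and $\xi h'(\xi)-2h(\xi)=0$, so \eqref{SeqepthetaItoConservative} becomes
\begin{equation*}
\ud S^2\ -\ (c(u)\, S^2)_x\, \ud t\ =\ 2\, \tilde{c}'(u)\, (R-S)\, (-RS)\, \ud t\ +\ q\, \ud t\ +\ 2\, S\, \Phi\, \ud W.
\end{equation*}
Integrating along $X^+$ kills the transport term $-(c(u)S^2)_x$ against $S^2_x$... more precisely, the quantity $S^2(t,X^+(t,x_1))$ satisfies, by the It\^o–characteristic formula \eqref{hSalongchar} (with $h(\xi)=\xi^2$, $\theta=0$, $\Theta=0$),
\begin{equation*}
S^2(t,X^+(t,x_1))\ =\ S_0^2(x_1)\ +\ \mathcal{N}_{h}(t,x_1)\ +\ \int_0^t \left\{2\,\tilde{c}'(u)\, S\,(S^2-R^2)\ +\ q\right\}(s,X^+(s,x_1))\, \ud s.
\end{equation*}
Wait — this still contains the \emph{cubic} term $2\tilde{c}'(u)S(S^2-R^2)$, which is not obviously integrable. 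The right way is not to track $S^2$ itself but the quantity $h(S)/c(u)$, using \eqref{SCeqepthetaIto}: with $h(\xi)=\xi^2$ one has $F_h(S,R)=2S^2-(S-R)\cdot 2S = 2RS$, and the cubic terms organize so that $\frac{\tilde c'(u)}{c(u)}(R+S)F_h(S,R)=\frac{2\tilde c'(u)}{c(u)}(R+S)RS$; this is still cubic. So the genuine mechanism must be different: one integrates the equation for $h(S)/c(u)$ \emph{also in $x$ over a moving spatial window}, exploiting that $c(u)u_x=\frac{S-R}{2}$ relates the cubic flux term to a perfect $x$-derivative up to lower order. Concretely, I would integrate \eqref{SCeqepthetaIto} with $h=\mathrm{id}$-type quadratic entropy, or rather use the conservative form and integrate in $(t,x)$ over the region swept between $X^+(\cdot,x_1)$ and $X^-(\cdot,x_1+2c_2T)$ — the two characteristics that, by the bound $c\in[c_1,c_2]$, bound a genuine space-time triangle/parallelogram of the torus. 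The flux terms $(c(u)S^2)_x$ and $(c(u)R^2)_x$ then produce, upon $\ud x$-integration, only boundary values $c(u)S^2$ and $c(u)R^2$ \emph{on the two characteristics themselves}, which are of the good sign and can be absorbed, while the space integral of the remaining cubic terms telescopes against $c(u)u_x=\tfrac{S-R}{2}$ to leave a quantity controlled by $\int_\T(R^2+S^2)$, i.e. by the energy $\mathcal E$.

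More precisely, the key algebraic step is: summing the conservative equations \eqref{ReqepthetaItoConservative}+\eqref{SeqepthetaItoConservative} with $h(\xi)=\xi^2$ gives \eqref{sumsquare-theta} with $\theta=0$,
\begin{equation*}
\ud(R^2+S^2)\ +\ (c(u)(R^2-S^2))_x\, \ud t\ =\ 2\, q\, \ud t\ +\ 2(R+S)\,\Phi\,\ud W,
\end{equation*}
which is \emph{linear} — the cubic terms have cancelled. I would integrate this over the space-time region $\mathcal R_T=\{(s,x): 0\le s\le t\wedge T,\ X^+(s,x_1)\le x\le X^-(s,x_1+2c_2T)\}$ (interpreting the inequality on the torus; the window has width at least $2c_2T - $ (contraction/expansion of the two flows) $\ge 0$ for the stated choice of endpoints, since $|\dot X^\pm|\le c_2$). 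By the divergence theorem in $(s,x)$, $\iint_{\mathcal R_T}\partial_x(c(u)(R^2-S^2))\,\ud x\,\ud s$ contributes boundary terms on the slanted sides equal to $\int_0^{t\wedge T}\big[(c(u)+c(u))S^2\big]_{X^+}\,\ud s$ and $\int_0^{t\wedge T}\big[(c(u)+c(u))R^2\big]_{X^-}\,\ud s$ — here the $\dot X^\pm = \pm c(u)$ from \eqref{flowXpm} combines with the flux coefficient $\mp c(u)$ so the coefficients \emph{add} rather than cancel, yielding $2c(u)\ge 2c_1>0$ times the desired integrands, plus the top/bottom slices $\int R_0^2+S_0^2$ and $\int_{\mathcal R_T\cap\{s=t\wedge T\}}(R^2+S^2)$ which are nonnegative and bounded by $\mathcal E(0)$ and $\mathcal E(t\wedge T)$ respectively. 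Rearranging,
\begin{equation*}
2c_1\!\int_0^{t\wedge T}\!\!\big[R^2(s,X^-(s,x_1+2c_2T))+S^2(s,X^+(s,x_1))\big]\ud s\ \le\ \mathcal E(0)\ +\ 2\,\|q\|_{L^1}\,\mathrm{Area}(\mathcal R_T)\ +\ M_{x_1}(t\wedge T),
\end{equation*}
with $\mathrm{Area}(\mathcal R_T)\le C\,T$ (width $\le 1$) actually $\le C T$, and $M_{x_1}(t)=\iint_{\mathcal R_{t}}2(R+S)\Phi\,\ud x\,\ud W$ a martingale. Then $\mathcal E(0)\le\|(R_0,S_0)\|_{L^2}^2$ and the variance bound \eqref{boundMx12} follows exactly as \eqref{BoungMGlobalEnergy} did from \eqref{eq:TotalEnergy}: by It\^o isometry and Cauchy–Schwarz, $\E[M_{x_1}(t\wedge T)^2]\le C\,q_0\int_0^T\E\,\mathcal E(s\wedge T)\,\ud s\le C q_0 T(\mathcal E(0)+q_0 T)$ using \eqref{Eene}, and $\E M_{x_1}=0$ since $M_{x_1}$ is a genuine (square-integrable) martingale. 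Dividing by $2c_1$ and relabelling constants gives \eqref{RSalongcharac}–\eqref{boundMx12}.

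The main obstacle, and the point I would spend the most care on, is the bookkeeping of the moving spatial window on the torus: verifying that the two characteristics $X^+(\cdot,x_1)$ and $X^-(\cdot,x_1+2c_2T)$ do delimit a region over which the divergence theorem applies with the correct (good-sign) boundary contributions — equivalently, rewriting $\frac{\ud}{\ud t}\int_{X^+(t,x_1)}^{X^-(t,x_1+2c_2T)}(R^2+S^2)(t,x)\,\ud x$ via the Reynolds transport formula, using $\dot X^\pm=\pm c(u)$ and \eqref{sumsquare-theta}, and checking the signs so that the pointwise-along-characteristic terms one wants to control appear with a positive coefficient $\ge 2c_1$. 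One must also check that the window width stays $\ge 0$ (choose $x_1+2c_2T$ precisely so that $X^-(s,x_1+2c_2T)-X^+(s,x_1)\ge 2c_2T-2c_2 s\ge 0$ on $[0,t\wedge T]$), and that all integrations by parts and the stochastic Fubini theorem are legitimate under the regularity \eqref{def-sol-V} of the solution on $[0,\tau)$, which is where Proposition~\ref{prop:galchi} and the $H^s$-regularity ($s>3/2$, so $R,S$ are $C^1$ in $x$) are invoked.
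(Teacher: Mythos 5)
Your plan coincides with the paper's proof: both abandon the direct integration of $S^2$ along a single characteristic (because of the cubic term you correctly identify) and instead integrate the linear summed-square identity \eqref{sumsquare-theta} (with $\theta\equiv 0$) over the space-time region swept between the two characteristics issued from $x_1$ and $x_1+2c_2T$, read off the good-sign boundary contributions $-2c(u)R^2\big|_{X^-}$, $-2c(u)S^2\big|_{X^+}$ from the divergence theorem, and bound the resulting martingale via the energy estimate and It\^o's isometry exactly as you describe. The paper organizes the stochastic term slightly differently — it subtracts the pointwise primitive $\tilde{\mathcal{M}}(t,x)=\int_0^t 2(R+S)\Phi\,\ud W$ from $R^2+S^2$ so that $G=(R^2+S^2-\tilde{\mathcal{M}},\,c(u)(R^2-S^2))$ satisfies the \emph{deterministic} pointwise inequality $\mathrm{div}_{t,x}G\leqslant 2q_0$, and then applies the divergence theorem path-by-path, with $M_{x_1}$ emerging as a boundary integral of $\tilde{\mathcal{M}}$ evaluated at the stopping times $\tau^\pm(y)$; this avoids invoking the stochastic Fubini theorem over a region with random boundary, which your area-integral form $\iint_{\mathcal{R}_t}2(R+S)\Phi\,\ud x\,\ud W$ does require (the two forms coincide, but the boundary form makes the mean-zero and variance claims cleanest).

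One concrete point you should not gloss over: the paper lifts $(R,S)$ to periodic functions on $\R$ precisely because, for $T$ large, the interval $[x_1,x_1+2c_2T]$ wraps the torus several times and the notion of ``the region between $X^+(\cdot,x_1)$ and $X^-(\cdot,x_2)$'' has no meaning on $\T$ itself. After lifting, the initial slice contributes $\int_{x_1}^{x_1+2c_2T}(R_0^2+S_0^2)\,\ud x\leqslant(2c_2T+1)\|(R_0,S_0)\|_{L^2(\T)}^2$, which is exactly the source of the $C(T+1)$ prefactor in \eqref{RSalongcharac}; your bound ``$\mathcal{E}(0)\leqslant\|(R_0,S_0)\|_{L^2}^2$'' drops this factor. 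This is not a flaw of the mechanism, just a constant you would recover once the bookkeeping is done carefully.
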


\begin{proof}[Proof of Proposition~\ref{prop:controlEnergies}] The letter $C$ will denote any constant (possibly varying from line to line) depending solely on $c_1$ and $c_2$.
We consider the solution $(R,S)$ as a periodic function defined on the real line.
To establish \eqref{RSalongcharac}, we define
\begin{equation}\label{distx12}
x_2\ \eqdef\ x_1\, +\, 2\, c_2\, T. 
\end{equation}
The definition \eqref{distx12} and the bound of $c(u)$ by $c_2$ in \eqref{coeff-c} ensure that 
\begin{equation}
X^+(t\wedge T,x_1)\ \leqslant\  X^-(t\wedge T,x_2).
\end{equation}
Since $c(u)$ is bounded from below by $c_1$, the map $s\mapsto X^+(s,x_1)$ is a $C^1$-diffeomorphism from $[0,t]$ onto $[x_1,X^+(t,x_1)]$, we denote by $\tau^+_{x_1}$ its inverse and extend the domain of definition of $\tau^+_{x_1}$ by setting
\begin{equation}\label{deftauplus}
\tau^+_{x_1}(y)\ \eqdef\ \inf\left\{s\in[0,t\wedge T]\, ;\, X^+(s,x_1)\geqslant y \right\}.
\end{equation}
Similarly, we set
\begin{equation}\label{deftaumoins}
\tau^-_{x_2}(y)\ \eqdef\ \inf\left\{s\in[0,t\wedge T]\, ;\, X^-(s,x_2)\leqslant y \right\}.
\end{equation}
\begin{figure}[!ht]
\begin{tikzpicture}[thick, transform canvas={scale=1}, shift={(8,-5)}]
\draw[->] (-6,0) -- (6,0) node[right]{$x$};
\draw[->] (-6,0) -- (-6,5) node[right]{$t$};
 
\draw [black] plot [smooth, tension=1] coordinates { (4.7,0) (3.8,1) (2.5,3) (1,4)};

\draw [black] plot [smooth, tension=1] coordinates { (-5,0) (-4,1) (-3,3) (-1,4)};

\draw[dashed] (-1,4) -- (-6,4) node[left]{$t \wedge T$};
\draw[dashed] (0,4) -- (0,0) node[below]{$\bar{x}$};
\draw[dashed] (-1,4) -- (-1,0);
\draw[dashed] (1,4) -- (1,0) ;

\draw[dashed] (1.5,0) -- (1.5,0) node[below]{{\scriptsize $X^-(x_2,t\wedge T)$}};

\draw[dashed] (-1.2,0) -- (-1.2,0) node[below]{{\scriptsize $X^+(x_1,t\wedge T)$}};

\draw[] (-1,4) -- (1,4) ;

\draw[dashed] (-3,3) -- (-6,3) node[left]{$s$};
\draw[dashed] (-3,3) -- (-3,0) node[below]{{\scriptsize $X^+(x_1,s)$}};

\draw[] (4.7,0) -- (4.7,0) node[below]{$x_2$};
\draw[] (-5,0) -- (-5,0) node[below]{$x_1$};
\end{tikzpicture}
\vspace{5.7cm}
\caption{Characteristics.}
\label{fig:combinedcharac}
\end{figure}
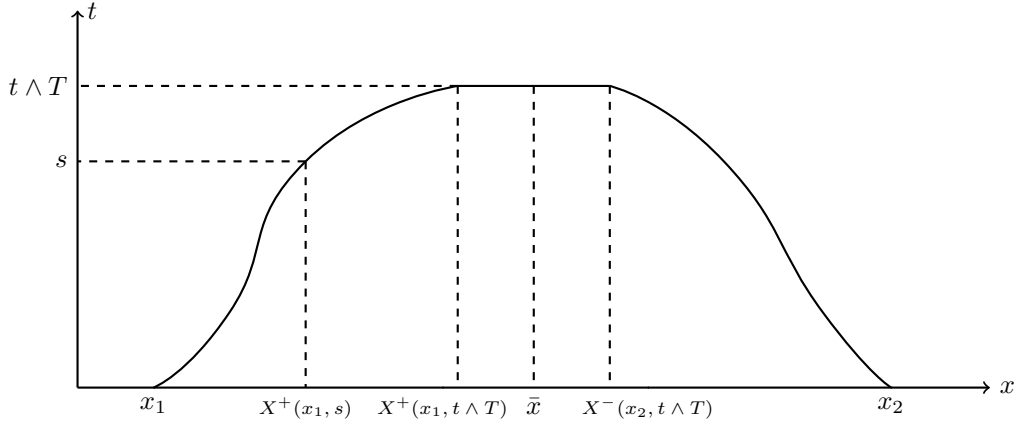
We define $\bar{x} \eqdef (x_1+x_2)/2 \in [X^+(t\wedge T,x_1),  X^-(t\wedge T,x_2)]$  (see Figure~\ref{fig:combinedcharac}), and we use the  two parametrizations
\begin{align*}
A_1\ \eqdef\ \{(s,y), x_1 \leqslant y \leqslant \bar{x},\ 0 \leqslant s \leqslant \tau^+_{x_1}(y) \}\ =\ \{(s,y), 0 \leqslant s \leqslant t\wedge T,\  X^+(x_1,s) \leqslant y \leqslant \bar{x} \},\\
A_2\ \eqdef\ \{(s,y), \bar{x} \leqslant y \leqslant x_2,\ 0 \leqslant s \leqslant \tau^-_{x_2}(y) \}\ =\ \{(s,y), 0 \leqslant s \leqslant t\wedge T,\ \bar{x}  \leqslant y \leqslant X^-(x_2,s) \}.
\end{align*}
Define
\begin{equation}
G\ \eqdef\ 
\begin{pmatrix}
R^2+S^2-\tilde{\mathcal{M}}\\
c(u)\, [R^2-S^2] 
\end{pmatrix}, \qquad \tilde{\mathcal{M}}(t,x)\ \eqdef\ \int_0^{t} 2 \left[R(s,x)\, +\, S(s,x) \right]  \Phi(x)\, \ud W(s).
\end{equation}
Then, using \eqref{sumsquare-theta} (with $\theta \equiv 0$), we obtain 
\begin{equation}\label{div}
\mathrm{div}_{t,x} G\ \leqslant\ 2\, q_0.
\end{equation}
Integrating now \eqref{div} on $A_1 \cup A_2$, using the divergence theorem and some changes of variables we obtain
\begin{multline}\label{appli-stokes}
2 \int_{0}^{t\wedge T} \left[c(u)\,R^2(s,{X^-(s,x_2)})\, +\, c(u)\, S^2(s,{X^+(s,x_1)}) \right] \ud s\\ 
\leqslant\ 2\, q_0\, T\, (x_2-x_1)\, +\, \int_{x_1}^{x_2} (R_0^2 + S_0^2)\, \ud x\, -\, \int_{X^+(t \wedge T,x_1)}^{X^-(t \wedge T,x_2)} (R^2 + S^2)\, \ud x\\
+\, \int_{x_1}^{\bar{x}} \tilde{\mathcal{M}} (\tau_{x_1}^+(y),y)\, \ud y\, +\, \int^{x_2}_{\bar{x}} \tilde{\mathcal{M}} (\tau_{x_2}^-(y),y)\, \ud y .
\end{multline}
This inequality \eqref{appli-stokes} and the lower bound $c(u) \geqslant c_1$ imply \eqref{RSalongcharac}, where
\begin{align*}
M_{x_1}(t \wedge T)\ &\eqdef\  (2\, c_1)^{-1} \left[   \int_{x_1}^{\bar{x}} \tilde{\mathcal{M}} (\tau_{x_1}^+(y),y)\, \ud y\, +\, \int^{x_2}_{\bar{x}} \tilde{\mathcal{M}} (\tau_{x_2}^-(y),y)\, \ud y  \right]. \\
%
\end{align*}
Since $\tilde{\mathcal{M}}$ is a martingale and $\tau^+_{x_1}(y)$ and $\tau^-_{x_2}(y)$ are stopping times, then $\E M_{x_1}(t \wedge T) =0$. 
Using
\begin{equation}\label{MtildeM}
|M_{x_1}(t\wedge T)|^2\ \leqslant\ C(T) \int_\T \sup_{s \in [0,t \wedge T]} |\tilde{\mathcal{M}}(s,y)|^2\, \ud y
\end{equation}
and the energy inequality \eqref{Eene} we obtain  \eqref{boundMx12}.
\end{proof}
\subsubsection{Singularity formation}\label{subsec:singularityoccur}

Let $u^\star \in \R$ be such that $c'(u^\star)>0$. We introduce (using the continuity of $c'(\cdot)$) a length $L>0$ such that 
\begin{equation}\label{c'>0}
\left|u\, -\, u^\star \right|\ \leqslant\ L \quad  \implies \quad  c'(u)\ \geqslant\ c'(u^\star)/2\ >\ 0.
\end{equation}
Let $\varphi \in C^\infty_c(\R)$ be supported in the interval  $[1/4,3/4]$ and non trivial: we assume that $\varphi'(x_0)<0$, for a given $x_0\in(1/4,3/4)$. For 
$\varepsilon \in (0,1)$ and $\alpha, \nu, \gamma$ such that 
\begin{equation}
\alpha\ >\ 1, \qquad \nu\ \in (0,\alpha-1), \qquad \gamma\ >\ 1/3,
\end{equation}
we consider the initial data
\begin{equation}\label{u0eps}
u^\eps_0(x)\ =\ u^\star\ +\ \varepsilon^\alpha\, \varphi \left( \frac{x} {\varepsilon^{\alpha + \nu+\gamma}} \right),\qquad v^\eps_0(x)=0,
\end{equation}
defined for $x\in [0,1)$ and extended by periodicity. 

\begin{thm}[Blow-up in finite time]\label{th:blowup} Let $(u(t))_{t<\tau}$ be a regular solution up to the explosion time $\tau$ of \eqref{SVWE1} with initial data given by \eqref{u0eps}. There exists $\eps_0>0$ depending on the constants $c_1$, $c_2$ in \eqref{coeff-c} and $c_3$, on $L$, on $q_0$ and on the profile $\varphi$ only, such that
\begin{equation}\label{PBlowup}
\Pro(\tau\leqslant \eps^{\gamma})\geqslant 1-\eps,
\end{equation}
for all $\eps<\eps_0$.
\end{thm}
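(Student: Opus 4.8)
The plan is to track the evolution of $R$ along a single forward characteristic $X^{+}$ emanating from a well-chosen base point, and to show that, with high probability, $R$ blows up to $+\infty$ in time at most $\eps^{\gamma}$. First I would compute the relevant quantities for the initial data \eqref{u0eps}: since $v_{0}^{\eps}=0$ we have $R_{0}=-S_{0}=-c(u_{0}^{\eps})(u_{0}^{\eps})'$, so on the support of $\varphi$ the function $R_{0}$ is of order $\eps^{\alpha}\eps^{-(\alpha+\nu+\gamma)}=\eps^{-\nu-\gamma}$ times $-c(u^{\star})\varphi'$, which is large and positive near $x=x_{0}$ (using $\varphi'(x_{0})<0$). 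Also $\|(R_{0},S_{0})\|_{L^{2}(\T)}^{2}$ is of order $\eps^{2\alpha}\eps^{-2(\alpha+\nu+\gamma)}\eps^{\alpha+\nu+\gamma}=\eps^{\alpha-\nu-\gamma}$, which, by the constraints $\alpha>1$, $\nu<\alpha-1$, is small (in fact $\eps^{\alpha-\nu-\gamma}\to 0$ provided $\gamma<\alpha-\nu$; one chooses the parameters so that the energy is $o(1)$, and the case $\gamma$ large is handled by the fact that $\eps^{\gamma}$ is then a very short time). On the interval where $u^{\eps}_{0}$ stays within $L$ of $u^{\star}$, \eqref{c'>0} gives $\tilde c'(u)=\frac{c'(u)}{4c(u)}\geqslant \delta_{0}>0$ for a fixed $\delta_{0}$.

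Next I would write the equation for $R$ along $X^{+}$: from \eqref{ReqepthetaIto} with $\theta\equiv 0$, $\Theta\equiv 0$, $h(R)=R$, one has along the curve $s\mapsto X^{+}(s,x)$
\begin{equation*}
\ud R\ =\ \tilde c'(u)\,(R^{2}-S^{2})\,\ud s\ +\ \Phi\,\ud W.
\end{equation*}
The term $-\tilde c'(u)S^{2}$ is the enemy: it is dissipative for $R$, so I must show it is harmless on the relevant time scale. This is exactly what Proposition~\ref{prop:controlEnergies} provides: choosing the base point $x_{1}$ near $x_{0}$ (adjusted by the amount $2c_{2}\eps^{\gamma}$ so that the geometry of Figure~\ref{fig:combinedcharac} applies with $T=\eps^{\gamma}$), the integral $\int_{0}^{t\wedge \eps^{\gamma}}S^{2}(s,X^{+}(s,x_{1}))\,\ud s$ is bounded by $C(T+1)\|(R_{0},S_{0})\|_{L^{2}}^{2}+Cq_{0}T^{2}+M_{x_{1}}(t\wedge T)$, with $\E[M_{x_{1}}^{2}]\leqslant Cq_{0}\|(R_{0},S_{0})\|_{L^{2}}^{2}T+Cq_{0}^{2}T^{2}$. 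With our parameter choices all deterministic terms here are $o(1)$ as $\eps\to 0$, and by Chebyshev the martingale term is $o(1)$ off an event of probability $\leqslant \eps/3$. Similarly, the stochastic forcing contributes $\int_{0}^{t}\Phi\,\ud W$ evaluated along the characteristic, a martingale whose quadratic variation is $O(q_{0}t)=O(q_{0}\eps^{\gamma})$, hence also $o(1)$ off an event of probability $\leqslant\eps/3$; and the drift $u(t,0)$-type corrections in $u$ stay small on $[0,\eps^{\gamma}]$ so $u$ remains within $L$ of $u^{\star}$ along the curve with high probability (one must verify the characteristic does not leave the good region, which follows since $c(u)\eps^{\gamma}\leqslant c_{2}\eps^{\gamma}$ is a small displacement compared to nothing needed — actually one only needs $u$ to stay near $u^{\star}$, which holds because $u$ changes by $O(\eps^{\alpha})$ plus noise).

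Putting this together on the good event $\Omega_{\eps}$ of probability $\geqslant 1-\eps$: along $X^{+}(\cdot,x_{1})$ we have, writing $a(s)=R(s,X^{+}(s,x_{1}))$,
\begin{equation*}
a(t)\ \geqslant\ a(0)\ +\ \delta_{0}\int_{0}^{t}a(s)^{2}\,\ud s\ -\ \delta_{0}\Big(\!\int_{0}^{t}S^{2}\,\ud s\Big)\ -\ \Big|\!\int_{0}^{t}\Phi\,\ud W\Big|\ \geqslant\ a(0)\ +\ \delta_{0}\int_{0}^{t}a(s)^{2}\,\ud s\ -\ \kappa,
\end{equation*}
for a small constant $\kappa=o(1)$, while $a(0)=R_{0}(x_{1})\geqslant c\,\eps^{-\nu-\gamma}$ is huge. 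A Riccati comparison then forces $a$ to reach $+\infty$ before time $\frac{C}{\delta_{0}a(0)}\leqslant C'\eps^{\nu+\gamma}<\eps^{\gamma}$ for $\eps$ small (since $\eps^{\nu}<1$), which in particular means $\|(u_{t},u_{x})\|_{L^{\infty}}$ explodes before $\eps^{\gamma}$, i.e. $\tau\leqslant\eps^{\gamma}$. Hence $\Pro(\tau\leqslant\eps^{\gamma})\geqslant\Pro(\Omega_{\eps})\geqslant 1-\eps$. I expect the main obstacle to be the bookkeeping that shows the base point $x_{1}$ can be chosen so that simultaneously (i) $R_{0}(x_{1})$ is as large as claimed, (ii) the characteristic $X^{+}(\cdot,x_{1})$ stays inside the region where $\tilde c'(u)\geqslant\delta_{0}$ up to time $\eps^{\gamma}$, and (iii) the geometry needed for Proposition~\ref{prop:controlEnergies} holds — all uniformly in $\eps$ — together with the careful choice of the exponents $\alpha,\nu,\gamma$ so that every error term is genuinely $o(1)$; the Riccati blow-up argument itself is elementary.
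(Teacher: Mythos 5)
Your overall plan coincides with the paper's: track $R$ along the forward characteristic $X^{+}$ issuing from a base point near $x_{0}$, use Proposition~\ref{prop:controlEnergies} to control the dissipative contribution $\int S^{2}$ along that characteristic, estimate the stochastic forcing, confirm that $\tilde{c}'(u)$ stays bounded below along the curve, and conclude by a Riccati comparison. This is precisely the structure of \eqref{Ralongchar-blowup}--\eqref{RplusblowupOK}.

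The gap is quantitative but genuine. You assert that the deterministic error ($\int S^{2}_{\chi^{+}}$ along the opposite characteristic) and the stochastic forcing are $o(1)$ off an event of probability $\leqslant\eps/3$. This cannot be proved for the full range $\gamma>1/3$ allowed by the theorem. Indeed, from Proposition~\ref{prop:controlEnergies} the expectation $\E\int_{0}^{\eps^{\gamma}}S_{\chi^{+}}^{2}$ is bounded by $\mathtt{C}\bigl(\mathcal{E}(0)+\eps^{2\gamma}\bigr)$, so even after making $\mathcal{E}(0)$ negligible by the parameter choice, a Chebyshev bound at threshold $o(1)$ with failure probability $\leqslant\eps$ requires $\eps^{2\gamma}=o(\eps)$, i.e.\ $\gamma>1/2$; similarly the quadratic variation of the forcing over $[0,\eps^{\gamma}]$ is only $O(q_{0}\eps^{\gamma})$, so Doob--Chebyshev at level $o(1)$ with failure probability $\leqslant\eps$ would need $\gamma>1$. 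For $\gamma\in(1/3,1/2]$ the claimed $o(1)$ control is simply unavailable. The correct (and weaker) requirement, which the paper exploits in \eqref{PM1} and \eqref{PS1}, is that these error terms stay below a small fraction of $a(0)\sim\eps^{-\nu-\gamma}$, not below a vanishing threshold: by Chebyshev $\Pro\bigl(|\int\Phi\,\ud W|\geqslant\delta\eps^{-\nu-\gamma}\bigr)\leqslant C\eps^{2\nu+3\gamma}$ and $\Pro\bigl(\int\tilde{c}'(u)S_{\chi^{+}}^{2}\geqslant\delta\eps^{-\nu-\gamma}\bigr)\leqslant C(\eps^{\alpha}+\eps^{\nu+3\gamma})$, both exponents exceeding $1$ precisely because $\gamma>1/3$ and $\nu>0$. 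The Riccati comparison only needs the errors to be dominated by a fixed multiple of $a(0)$, so this is all you need.

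A second, smaller slip: you write that $u$ changes by ``$O(\eps^{\alpha})$ plus noise'' along the characteristic, but this conflates $\|u_{0}-u^{\star}\|_{L^{\infty}}=O(\eps^{\alpha})$ with the drift. In fact $u(t,X^{+}(t,\cdot))-u_{0}=\int_{0}^{t}S_{\chi^{+}}$, and the right tool is Cauchy--Schwarz as in \eqref{ualongcharsquare}: $\sup_{s\leqslant\eps^{\gamma}}|u(s,X^{+}(s,\cdot))-u_{0}|^{2}\leqslant\eps^{\gamma}\int_{0}^{\eps^{\gamma}}S_{\chi^{+}}^{2}$, after which you again invoke the $\int S^{2}$ estimate and Markov. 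Once these two points are fixed, the rest of your argument matches the paper's.
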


\begin{proof}[Proof of Theorem~\ref{th:blowup}] Set $x_\eps=\eps^{\alpha + \nu + \gamma} x_0$ and 
\begin{equation}\label{RplusSplus}
R_{\chi^+}(t)\ =\ R(t,X^+(t,x_\eps)),\qquad S_{\chi^+}(t)\ =\ S(t,X^+(t,x_\eps)),
\end{equation}
where $X^+$ is the characteristic curve defined by \eqref{flowXpm}. We will estimate the possible blow-up of $R_{\chi^+}$ before a time $\eps^{\gamma}$. The letters $\eps_0$ and $C$ will denote any positive constant depending on the constants $c_1$, $c_2$ in \eqref{coeff-c},  on $c_3$, on $L$, on $q_0$ and on the profile $\varphi$ only, with a value possibly changing from line to line. We will however use the specific notation
\begin{equation}
\delta\ \eqdef\ \third\, c_1\, |\varphi'(x_0)|.
\end{equation} 
By \eqref{hRalongchar}, we have 
\begin{subequations}\label{Ralongchar-blowup}
\begin{align}
R_{\chi^+}(t)\ =\ &R_0({x_\varepsilon})\ +\ \int_0^t  \tilde{c}'(u)(s,X^+(s,x_\eps))\, R_{\chi^+}^2(s)\, \ud s\label{Ralongchar-blowup1}\\ 
&-\ \int_0^t \left(\tilde{c}'(u)\, S^2 \right)\! (s,X^+(s,x_\eps))\, \ud s\ +\ \int_0^t \sum_{k \geqslant 1 } \sigma_k(X^+(s,x_\eps))\, \ud \beta_k(s),\label{Ralongchar-blowup2}
\end{align}
\end{subequations}
for any stopping time $t<\tau$ a.s. Using \eqref{coeff-c}, the first term in the right-hand side \eqref{Ralongchar-blowup1} is bounded as follows:
\begin{equation}\label{Reps0}
R_0(x_\varepsilon)\ =\ - c(u_0(x_\varepsilon))\, \varepsilon^{-\nu - \gamma}\, \varphi'(x_0)\ \geqslant\ 3\, \delta\, \varepsilon^{-\nu - \gamma}.
\end{equation}
To bound from below the coefficient $c'(u)$ in factor of $R_{\chi^+}^2(s)$ in \eqref{Ralongchar-blowup1}, we will use \eqref{c'>0}. By \eqref{defRS} we have,
\begin{equation}\label{uplus}
u(s,X^+(s,x_\eps))\, -\, u_0({x_\varepsilon})\ =\ \int_0^s S_{\chi^+}(r)\, \ud r,
\end{equation}
and so 
\begin{equation}\label{ualongcharsquare}
\sup_{s \leqslant \eps^{\gamma} \wedge t } \left| u(s,X^+(s,x_\eps))\, -\, u_0({x_\varepsilon}) \right|^2\, 
\leqslant\ \varepsilon^{\gamma} \int_0^{\eps^{\gamma} \wedge t} S_{\chi^+}^2(r)\, \ud r.
\end{equation}
We use the estimate \eqref{RSalongcharac} to get 
\begin{equation}\label{ESplussquare}
\E \int_0^{\eps^{\gamma} \wedge t} S_{\chi^+}^2(r)\, \ud r\ \leqslant C\, \mathcal{E}(0)
\ +\ C\, \eps^{2 \gamma}.
\end{equation}
The initial energy satisfies 
\begin{equation}\label{ene0}
\mathcal{E}(0)\ =\ \int_\T \left( R_0^2\, +\, S_0^2 \right)(x)\, \ud x\ \leqslant\ 2\, c_2^2\, \varepsilon^{-2 \nu -2\gamma} \int_\T \left|\varphi' \left(\varepsilon^{-\alpha - \nu - \gamma} x  \right)\right|^2 \ud x\ \leqslant\ C\, \varepsilon^{\alpha-\nu-\gamma},
\end{equation}
so 
\begin{equation}\label{ESplussquare2}
\eps^{\gamma}\, \E \int_0^{\eps^{\gamma} \wedge t} S_{\chi^+}^2(r)\, \ud r\ \leqslant C\left( \varepsilon^{\alpha-\nu}\, +\, \varepsilon^{3 \gamma}\right),
\end{equation}
and we infer from \eqref{ualongcharsquare} and the Markov inequality that
\begin{equation}\label{ucloseustar}
\Pro \left( \sup_{s \leqslant \eps^{\gamma} \wedge t } \left| u(s,X^+(s,x_\eps))\, -\, u_0(x_\varepsilon) \right| \geqslant L/2 \right)
\ \leqslant\ C\left( \varepsilon^{\alpha-\nu}\, +\, \varepsilon^{3 \gamma}\right).
\end{equation}
For $\eps<\eps_0$ we have $\|u_0 - u^\star\|_{L^\infty} \leqslant \varepsilon^\alpha \| \varphi \|_{L^\infty} \leqslant L/2$, and then \eqref{c'>0} and \eqref{ucloseustar} imply 
\begin{equation}\label{positivecprimeOK}
\Pro \left( \inf_{s \leqslant \eps^{\gamma} \wedge t }  c'\left(u(s,X^+(s,x_\eps)) \right) \geqslant c'(u^\star)/2 \right)
\ \geqslant\ 1-C\left( \varepsilon^{\alpha-\nu}\, +\, \varepsilon^{3 \gamma}\right).
\end{equation}
Let us now estimate the terms in \eqref{Ralongchar-blowup2}. Using the Doob maximal inequality, Ito's isometry and the definition \eqref{defq} of the variance $q$, we have 
\begin{multline}\label{PM1}
\Pro \left( \sup_{s \leqslant \eps^{\gamma} \wedge t} \left| \int_0^s \sum_{k \geqslant 1 } \sigma_k(X^+(s,x_\eps))\, \ud \beta_k(s) \right| \geqslant \delta\, \varepsilon^{-\nu-\gamma} \right)\\
 \leqslant\ 
C\, \delta^{-2}\, \varepsilon^{2\nu + 2 \gamma}\, \E \left|\int_0^{\eps^{\gamma} \wedge t} \sum_{k \geqslant 1 } \sigma_k(X^+(s,x_\eps))\, \ud \beta_k(s) \right|^2
\ \leqslant\ C\ \varepsilon^{2 \nu+3\gamma}.
\end{multline}
By \eqref{coeff-c} and $|c'(u)| \leqslant c_3$, we also have
\begin{equation}\label{cprimeSplussquare}
\left|\int_0^{\eps^{\gamma} \wedge t} \left(\tilde{c}'(u)\, S^2 \right)\! (s,X^+(s,x_\eps))\, \ud s\right|
\ \leqslant\ C\, \int_0^{\eps^{\gamma} \wedge t} S_{\chi^+}^2(r)\, \ud r,
\end{equation}
so \eqref{ESplussquare2} and the Markov inequality give
\begin{equation}\label{PS1}
\Pro \left( \sup_{s \leqslant \eps^{\gamma} \wedge t}  \left| \int_0^{s} \left(\tilde{c}'(u)\, S^2 \right)\! (r,X^+(r,x_\eps))\, \ud r \right| \geqslant \delta\, \varepsilon^{-\nu - \gamma} \right)\, \leqslant\ C\left( \varepsilon^{\alpha}\, +\, \varepsilon^{\nu + 3 \gamma}\right).
\end{equation}
We gather the estimates \eqref{Reps0}, \eqref{positivecprimeOK}, \eqref{PM1}, \eqref{PS1} to obtain, based on \eqref{Ralongchar-blowup}, and for $\eps<\eps_0$, the inequality 
\begin{equation}\label{RplusblowupOK}
R_{\chi^+}(\eps^{\gamma} \wedge t)\ \geqslant \ \delta\, \varepsilon^{-\nu-\gamma}\ +\ \frac{c'(u^\star)}{2}\int_0^{\eps^{\gamma} \wedge t}   R_{\chi^+}^2(s)\, \ud s,
\end{equation}
with a probability greater than $1-C\eps^{\min \{\alpha- \nu, 3 \gamma \}}$. Since $\min \{\alpha- \nu, 3 \gamma \}>1$ and since the blow-up in \eqref{RplusblowupOK} occurs before the time $\frac{2\eps^{\nu + \gamma}}{c'(u^\star)\delta}$, we can in particular conclude to \eqref{PBlowup}.
\end{proof}

\section{Global-in-time regular solutions to the approximate system}\label{sec:gloabsol}

This section is devoted to the analysis of the approximate system \eqref{SVWEep}. As already explained when introducing the generalized system~\eqref{SVWEeptheta}, the cut-off function $\chi_\eps$ is in the class $W^{2,\infty}_\mathrm{loc}$, but not in $C^\infty$. Nevertheless, the non-linear estimate~\eqref{Composition} holds true for $s = 2$ and $F\in W^{2,\infty}_\mathrm{loc}$, and so the  results of Section~\ref{subsec:localregsol} as well, if we limit ourselves to $s = 2$. By the change of unknown
\begin{equation}\label{introPQeps}
P^\eps\ \eqdef\ R^\eps\ -\ \Phi^\varepsilon\, W, \qquad Q^\eps\ \eqdef\ S^\eps\ - \ \Phi^\varepsilon\, W,
\end{equation}
we transform the stochastic system \eqref{SVWEeptheta} into a system with random coefficients
\begin{subequations}\label{SVWEepthetaPQeps}
\begin{gather}\nonumber
P^\eps_t\ +\ c(u^\eps)\, P^\eps_x\ =\ \tilde{c}'(u^\eps) \left[|P^\eps|^2\, -\, |Q^\eps|^2 \, -\, \chi_\eps(P^\eps\ +\ \Phi^\varepsilon\, W)\, +\, 2\, (P^\eps\, +\,\Phi^\varepsilon\, W)\, \Theta^\eps\right]\\  \label{Peqeps}
-\, \left( c(u^\eps)\, \Phi^\varepsilon\, W \right)_x, \\ \nonumber
Q^\eps_t\ -\ c(u^\eps)\, Q^\eps_x\ =\ \tilde{c}'(u^\eps) \left[|Q^\eps|^2\, -\, |P^\eps|^2 \, -\, \chi_\eps(Q^\eps\ +\ \Phi^\varepsilon\, W)\, -\, 2\, (Q^\eps\, +\, \Phi^\varepsilon\, W)\, \Theta^\eps\right]\\ +\,  \left( c(u^\eps)\, \Phi^\varepsilon\, W \right)_x, \\ 
P^\eps(0,\cdot)\ =\ R_0 \ast \rho_\varepsilon, \qquad Q^\eps(0,\cdot)\ =\ S_0 \ast \rho_\varepsilon,
\end{gather}
\end{subequations}
where
\begin{multline}\label{udefepthetaPQeps}
u^\eps(t,x)\\
 =\ \mathcal{C}^{-1}\left\{ \mathcal{C} \left\{ \int_0^t {\textstyle \left(\frac{P^\eps\, +\, Q^\eps}{2}\, +\, \Phi^\varepsilon\, W  \right) (s,0)}\, \ud s\ +\ u^\eps_0(0) \right\} +\, \int_0^x \left[{\textstyle \frac{ Q^\eps\, -\, P^\eps}{2}}(t,y)\, -\, \Theta^\eps(t)\right] \ud y \right\},
\end{multline}
with (for $x \in [0,1)$)
\begin{equation}\label{psiepthetaPQeps}
\Theta^\eps(t)\ \eqdef\ \int_0^1 \frac{Q^\eps - P^\eps}{2}(t,y)\, \ud y, \qquad u^\eps_0(x)\ =\ \mathcal{C}^{-1} \left( \int_0^x \frac{S^\eps_0 - R^\eps_0}{2}\, \ud x \right).
\end{equation}
Then in the definition~\ref{def:regsolPQeps} of the solution $V$ stands for $V^\eps=(P^\eps,Q^\eps)$.

\begin{thm}[Regular solutions to the approximate system]\label{thm:glob-exist-ep} Let $\varepsilon>0$, $R_0,S_0\in H^2(\T)$ such that $\int_\T (S_0-R_0)\, \ud x = 0$. Then \eqref{SVWEep} admits a regular solution $(V^\varepsilon,\tau^\varepsilon)$ in $H^2(\T)$, associated to the initial data $R_0^\varepsilon,S_0^\varepsilon$, and defined up to the explosion time $\tau^\varepsilon$. 
\end{thm}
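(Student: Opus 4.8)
The plan is to obtain Theorem~\ref{thm:glob-exist-ep} as a particular case of the local existence theory of Section~\ref{subsec:localregsol}. With $\theta = \chi_\eps$ and the noise $\Phi$ replaced by $\Phi^\eps$, the approximate system \eqref{SVWEep} is precisely the generalized system \eqref{SVWEeptheta}, whose $(P,Q)$-formulation \eqref{SVWEepthetaPQeps}--\eqref{udefepthetaPQeps} takes the abstract quasilinear form \eqref{SVWE_Qlinear-eps}. Since Theorem~\ref{th:loc-exist} was established for \eqref{SVWE_Qlinear-eps} with $\theta = \chi_\eps$ allowed, it suffices to check that its hypotheses hold in the present setting, at the regularity level $s = 2$.

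These verifications are routine. First, $\Phi^\eps$ satisfies \eqref{sigmas1} at every regularity level: since $\sigma_k^\eps = \rho_\eps \ast \sigma_k$, one has $\|\sigma_k^\eps\|_{H^{s+1}(\T)} \leqslant C(\eps,s)\, \|\sigma_k\|_{L^2(\T)} \leqslant C(\eps,s)\, \|\sigma_k\|_{L^\infty(\T)}$, so \eqref{defq} gives $\sum_k \|\sigma_k^\eps\|_{H^{s+1}(\T)}^2 < \infty$ and hence the almost-sure regularity \eqref{WHsas} of $\Phi^\eps W$ at every level. Second, the initial data $R_0^\eps = J_\eps R_0$ and $S_0^\eps = J_\eps S_0$ belong to $H^2(\T)$ (mollification does not increase the $H^2$-norm) and satisfy the compatibility condition required in Theorem~\ref{th:loc-exist}: since convolution by $\rho_\eps$ preserves the mean on the torus, $\int_\T (S_0^\eps - R_0^\eps)\, \ud x = \int_\T (S_0 - R_0)\, \ud x = 0$.

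The one point deserving a comment is that $\chi_\eps$ is not of class $C^\infty$, so the composition estimate \eqref{Composition} is not directly applicable. However, $\chi_\eps \in W^{2,\infty}_{\mathrm{loc}}(\R)$ and, as already noted, \eqref{Composition} remains valid for $s = 2$ and $F \in W^{2,\infty}_{\mathrm{loc}}(\R)$, the constant then depending also on the local $W^{2,\infty}$-norm of $F$. Granting this, the proof that $V \mapsto c[V] = c(u^\eps)$ and $V \mapsto F_{\chi_\eps}(V)$ (that is, $F_\theta$ of \eqref{defFeps} with $\theta = \chi_\eps$) satisfy \eqref{propQbarc1}--\eqref{propQbarcF10}--\eqref{propQbarcF1}--\eqref{LipbarcF} is identical to the one given in the proof of Theorem~\ref{th:loc-exist}: \eqref{propQbarc1} follows from the identity $c(u^\eps)\, u^\eps_x = \frac{S^\eps - R^\eps}{2} - \Theta^\eps$ (see \eqref{cuxthetaeps}) and the bounds \eqref{coeff-c}; the bounds \eqref{propQbarcF1} and \eqref{LipbarcF} follow from \eqref{coeff-c}, \eqref{WHsas}, the expression \eqref{udefepthetaPQeps}, and the $W^{2,\infty}_{\mathrm{loc}}$-version of \eqref{Composition} applied to $\chi_\eps$; and \eqref{propQbarcF10} follows from $\partial_x \chi_\eps(P^\eps + \Phi^\eps W) = \chi_\eps'(P^\eps + \Phi^\eps W)\,(\partial_x P^\eps + \partial_x(\Phi^\eps W))$, the local boundedness of $\chi_\eps'$, and \eqref{WHsas}. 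Theorem~\ref{th:loc-exist} then yields a regular $H^2$-solution $(V^\eps,\tau^\eps)$ to \eqref{SVWEep}, unique and with $V^\eps$ predictable, defined up to its explosion time $\tau^\eps$. The main — and quite mild — obstacle is thus the low-regularity adaptation of \eqref{Composition}; everything else is a transcription of the local theory of Section~\ref{subsec:localregsol}.
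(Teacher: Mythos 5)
Your proposal is correct and follows exactly the argument the paper has in mind: the paper itself gives no stand-alone proof of Theorem~\ref{thm:glob-exist-ep}, but relies on the remark at the start of Section~\ref{sec:gloabsol} that $\chi_\eps\in W^{2,\infty}_{\mathrm{loc}}$ suffices to apply the local theory of Section~\ref{subsec:localregsol} at the level $s=2$, since \eqref{Composition} still holds there. Your verifications (that $\Phi^\eps$ satisfies \eqref{sigmas1} at every regularity level after mollification, that $J_\eps$ preserves both $H^2$-regularity and the zero-mean compatibility, and that the local boundedness of $\chi_\eps'$ on bounded sets gives \eqref{propQbarcF10}) are precisely the missing details the paper leaves to the reader.
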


Our purpose is to prove that $\tau^\varepsilon=\infty$ a.s. We will use various estimates on the solution $(R^\eps,S^\eps)$ to show that blow-up cannot happen in finite time. Although we work at fixed $\varepsilon$ here, it will be useful, when possible, to prove some estimates uniform in $\varepsilon$ to prepare the limit $\varepsilon\to0$, established in the next Section~\ref{sec:Tightness} and Section~\ref{sec:Young}.

\paragraph{Notations.}\label{par:notation} We use the following convention: the letter $\mathtt{C}$ will denote any constant independent on $\eps$, that may otherwise depend on the constant $c_0$ in \eqref{Positivecprime00}, on the constant $c_1$ and $c_2$  in \eqref{coeff-c}, on $c_3$, on the value of the covariance $q$ in \eqref{defq}, and on the $L^2$-norms $\|R(0)\|_{L^2(\T)}$ and $\|S(0)\|_{L^2(\T)}$ only. So
\begin{equation}\label{constantC}
\mathtt{C}=\mathtt{C}\left(c_0,c_1,c_2,c_3,q_0,\|R(0)\|_{L^2(\T)},\|S(0)\|_{L^2(\T)}\right).
\end{equation}
The precise value of the constant may vary from lines to lines. An additional dependence on other parameters will be indicated, as $\mathtt{C}(T)$ in particular to indicate the additional dependence on a given final time $T$. 

\subsection{Energy estimates}\label{subsec:Energy-eps}

The aim of this section is to obtain energy estimates of the approximated system as in Proposition \ref{prop:GlobalEnergy} and Proposition \ref{prop:controlEnergies}. 

\begin{proposition}[Energy estimate]\label{prop:GlobalEnergyep} Let 
$(R^\varepsilon(t),S^\varepsilon(t))_{t<\tau^\varepsilon}$ be a regular solution to \eqref{SVWEep} in $H^2(\T)$, up to a stopping time $\tau^\varepsilon$. Let
\begin{equation}\label{TotalEnergyep}
\mathcal{E}^\varepsilon(t)\eqdef \|(R^\varepsilon,S^\varepsilon)(t)\|_{L^2(\T)}^2\ =\ \int_\T ((R^\varepsilon)^2\ +\ (S^\varepsilon)^2)(t,x)\, \ud x
\end{equation}
denote the total energy of the system. Then, for any stopping time $t<\tau^\varepsilon$ a.s.,
\begin{equation}\label{eq:TotalEnergyep}
\begin{aligned}
\mathcal{E}^\varepsilon(t)\ + 2 \int_0^t \int_\T \tilde{c}'(u^\varepsilon)\left[ R^\varepsilon\, \chi_\varepsilon(R^\varepsilon)\, +\, S^\varepsilon\, \chi_\varepsilon(S^\varepsilon) \right] \ud x\, \ud t\ 
&=\ \mathcal{E}^\varepsilon(0)\ +\ 2\, \|q^\varepsilon\|_{L^1}\, t\ +\, \mathcal{M}^\varepsilon(t)\\
&\leqslant\ \mathtt{C}\ +\ 2\, q_0\, t\ +\, \mathcal{M}^\varepsilon(t),
\end{aligned}
\end{equation}
where $\mathcal{M}^\varepsilon(t)$ is a martingale satisfying the bound
\begin{equation}\label{LPene}
\E\left[\left|\sup_{0\leqslant s\leqslant \tau^\eps\wedge T}\mathcal{M}^\varepsilon(s)\right|^p\right]\leqslant\mathtt{C}(p,T),
\end{equation}
for all $p\geqslant 1$. Moreover, we have
\begin{equation}\label{EXPene}
\E\left[\exp\left(\eta\sup_{0 \leqslant s\leqslant \tau^\eps\wedge T} \mathcal{E}^\varepsilon(s)\right)\right] \leqslant\ \mathtt{C}(T), 
\end{equation}
for all $\eta$ satisfying the smallness condition $4 q_0 \eta\leqslant 1$.
\end{proposition}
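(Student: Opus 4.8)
\emph{Proof plan.} All computations below are carried out first on the stochastic interval $[0,\tau^\eps_n]$, with $(\tau^\eps_n)$ the localizing sequence of \eqref{blowuptauVeps}: there the solution is bounded, the stochastic integrals are genuine martingales, and all expectations are a priori finite. Since the resulting bounds are uniform in $n$ and $\tau^\eps_n\uparrow\tau^\eps$, the conclusions follow by monotone convergence. The whole argument rests on a single energy identity.

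\emph{Step 1: the energy identity.} Apply \eqref{sumsquare-theta} with $\theta=\chi_\eps$, $\Phi=\Phi^\eps$ and $q=q^\eps$, and integrate over $x\in\T$. The flux term $\int_\T\big(c(u^\eps)((S^\eps)^2-(R^\eps)^2)\big)_x\,\ud x$ vanishes by periodicity, and the stochastic Fubini theorem identifies the martingale
\[
\mathcal{M}^\eps(t)\ =\ 2\int_0^t\!\!\int_\T(R^\eps+S^\eps)\,\Phi^\eps\,\ud x\,\ud W(s),
\]
which yields the equality in \eqref{eq:TotalEnergyep}. For the estimate, $\mathcal{E}^\eps(0)=\|(J_\eps R_0,J_\eps S_0)\|_{L^2}^2\leqslant\|(R_0,S_0)\|_{L^2}^2\leqslant\mathtt{C}$ because $J_\eps$ is an $L^2$-contraction, and $\|q^\eps\|_{L^1(\T)}=\sum_k\|\sigma_k^\eps\|_{L^2}^2\leqslant q_0$ by \eqref{qeps-q}. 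The structural point is that the dissipation term on the left of \eqref{eq:TotalEnergyep} is nonnegative: $\tilde c'(u^\eps)=c'(u^\eps)/(4c(u^\eps))\geqslant0$ by \eqref{coeff-c}--\eqref{coeff-c'}, while $\xi\,\chi_\eps(\xi)\geqslant0$ for every $\xi\in\R$ since $\chi_\eps\geqslant0$ is supported in $[1/\eps,\infty)\subset(0,\infty)$. Dropping it gives
\begin{equation*}
\mathcal{E}^\eps(t)\ \leqslant\ \mathtt{C}\ +\ 2\,q_0\,t\ +\ \mathcal{M}^\eps(t).
\end{equation*}

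\emph{Step 2: the moment bound \eqref{LPene}.} By Cauchy--Schwarz, $\sum_k\big(\int_\T(R^\eps+S^\eps)\sigma_k^\eps\,\ud x\big)^2\leqslant\|R^\eps+S^\eps\|_{L^2}^2\|q^\eps\|_{L^1}\leqslant2q_0\,\mathcal{E}^\eps$, hence $\ud\langle\mathcal{M}^\eps\rangle_t\leqslant8q_0\,\mathcal{E}^\eps(t)\,\ud t$. The Burkholder--Davis--Gundy inequality gives $\E\big[\sup_{s\leqslant t\wedge\tau^\eps}|\mathcal{M}^\eps(s)|^p\big]\leqslant C_p(q_0t)^{p/2}\E\big[\sup_{s\leqslant t\wedge\tau^\eps}\mathcal{E}^\eps(s)^{p/2}\big]$, while Step~1 gives $\sup_{s\leqslant t\wedge\tau^\eps}\mathcal{E}^\eps(s)\leqslant\mathtt{C}(t)+\sup_{s\leqslant t\wedge\tau^\eps}\mathcal{M}^\eps(s)$. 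Feeding the second into the first, using $\E[X^{p/2}]\leqslant\E[X^p]^{1/2}$, and absorbing the martingale term on the left by Young's inequality (licit, since the left-hand side is finite on $[0,\tau^\eps_n]$), one obtains \eqref{LPene} uniformly in $n$; it is enough to treat integer $p$ and interpolate.

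\emph{Step 3: the exponential bound \eqref{EXPene} --- the main obstacle.} Write $h(\eta,t)\eqdef\E\big[\exp(\eta\,\mathcal{E}^\eps(t\wedge\tau^\eps_n))\big]$ and apply It\^o's formula to $\exp(\eta\mathcal{E}^\eps)$ via the identity of Step~1. Discarding the (nonpositive) contribution of the dissipation differential, bounding $\ud\langle\mathcal{M}^\eps\rangle_t\leqslant8q_0\mathcal{E}^\eps\,\ud t$, taking expectations and using $\partial_\eta\exp(\eta\mathcal{E}^\eps)=\mathcal{E}^\eps\exp(\eta\mathcal{E}^\eps)$, one arrives at the integro-differential inequality
\begin{equation*}
h(\eta,t)\ \leqslant\ e^{\eta\mathtt{C}}\ +\ 2q_0\int_0^t\eta\,h(\eta,s)\,\ud s\ +\ 4q_0\int_0^t\eta^2\,\partial_\eta h(\eta,s)\,\ud s .
\end{equation*}
This is precisely where the threshold appears: along the characteristics $\dot\eta=-4q_0\eta^2$ of the transport operator $\partial_t-4q_0\eta^2\partial_\eta$, the surviving growth rate is $2q_0\eta$, which is $\leqslant\tfrac12$ exactly when $4q_0\eta\leqslant1$; a Gr\"onwall argument then controls $h$ on any time interval of length $<(4q_0\eta)^{-1}$, and iterating over a finite partition of $[0,T]$ yields $h(\eta,T)\leqslant\mathtt{C}(T)$ uniformly in $n$. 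Letting $n\uparrow\infty$ gives the fixed-time estimate $\E[\exp(\eta\mathcal{E}^\eps(T))]\leqslant\mathtt{C}(T)$, and the passage to the supremum in time \eqref{EXPene} follows by the same machinery --- either by Doob's maximal inequality applied to the nonnegative submartingale $\exp(\eta\mathcal{M}^\eps)$, or by writing $\exp(\eta\mathcal{E}^\eps)$ as a finite-variation process plus a martingale, bounding the former through Step~1 and the latter by Burkholder--Davis--Gundy together with the fixed-time estimate just proved (at a slightly enlarged exponent still below the threshold). The delicate step of the whole proof is Step~3: the term $4q_0\eta^2\partial_\eta h$ rules out a naive Gr\"onwall closure at fixed $\eta$ and forces both the characteristic/iteration scheme and the restriction $4q_0\eta\leqslant1$.
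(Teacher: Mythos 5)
Steps~1 and 2 are sound. Step~1 is exactly the paper's derivation, and Step~2 proves \eqref{LPene} by a self-contained bootstrap (BDG, then feeding $\mathcal{E}^\eps\leqslant\mathtt{C}+\sup\mathcal{M}^\eps$ back in and absorbing by Young), which is a legitimate alternative to the paper's route: the paper establishes \eqref{EXPene} \emph{first} and then deduces \eqref{LPene} from it using the bound $|x|^{p/2}\leqslant\mathtt{C}(p)(1+e^{|x|/(8q_0)})$. Your ordering is perfectly fine and arguably cleaner.

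Step~3, however, has a genuine gap: the characteristic/iteration scheme you describe cannot reach an arbitrary final time $T$ under the sole restriction $4q_0\eta\leqslant 1$. The transport inequality for $h(\eta,t)=\E[\exp(\eta\mathcal{E}^\eps(t))]$ reads $\partial_t h-4q_0\eta^2\partial_\eta h\leqslant 2q_0\eta\,h$; its backward characteristics $\dot\eta=-4q_0\eta^2$ through a target $(\eta^*,T)$ satisfy $\eta(s)=\eta^*/\bigl(1-4q_0\eta^*(T-s)\bigr)$, which blows up as $s\downarrow T-1/(4q_0\eta^*)$. Thus the characteristic only reaches $t=0$ (where $h$ is deterministic and finite) if $4q_0\eta^*T<1$; otherwise one would need a bound on $h(\eta,t^*)$ at $\eta\to\infty$ for some $t^*>0$, which is not available. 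The iteration over a partition does not repair this: going from $t_0+\delta$ back to $t_0$ replaces $\eta^*$ by $\eta^*/(1-4q_0\eta^*\delta)>\eta^*$, and composing $k$ such steps shows the admissible range of $\eta^*$ shrinks precisely to $\eta^*<1/(4q_0 k\delta)=1/(4q_0T)$; the cumulative constraint is unchanged. In short, your method yields $4q_0\eta T<1$, not the time-uniform threshold $4q_0\eta\leqslant 1$ asserted. (The aside about the rate $2q_0\eta\leqslant 1/2$ is also a red herring: Gr\"onwall would close for any finite rate; the obstruction is the blow-up of the characteristics, not the size of the zeroth-order coefficient.)

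The paper sidesteps this entirely by working pathwise on the running maximum rather than on fixed-time expectations. It uses the exponential (super)martingale inequality $\Pro\bigl(\sup_{s\leqslant t}\mathcal{M}^\eps(s)\geqslant a+b\,\langle\mathcal{M}^\eps,\mathcal{M}^\eps\rangle(t)\bigr)\leqslant e^{-2ab}$, which gives $\E[e^{2\eta Z}]<\infty$ for $Z=\sup\mathcal{M}^\eps-b\langle\mathcal{M}^\eps,\mathcal{M}^\eps\rangle$ and any $\eta<b$, uniformly in $T$. It then combines $\eta\sup\mathcal{E}^\eps\leqslant\mathtt{C}+\eta Z+\eta b\langle\mathcal{M}^\eps,\mathcal{M}^\eps\rangle$ with $\langle\mathcal{M}^\eps,\mathcal{M}^\eps\rangle\leqslant q_0\sup\mathcal{E}^\eps$ and Cauchy--Schwarz to obtain
\[
\E\bigl[e^{\eta\sup\mathcal{E}^\eps}\bigr]\leqslant\mathtt{C}(b)\,\bigl\{\E\bigl[e^{2bq_0\eta\sup\mathcal{E}^\eps}\bigr]\bigr\}^{1/2}.
\]
Choosing $b=1/(2q_0)$ makes $2bq_0\eta=\eta$, so the inequality is self-improving and the left-hand side absorbs, yielding the bound for all $4q_0\eta\leqslant 1$ with no hidden time constraint. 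I recommend adopting this argument for Step~3: your Steps~1--2 can remain as written, but Step~3 should be replaced by the exponential martingale inequality plus absorption.
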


A direct consequence of \eqref{eq:TotalEnergyep}-\eqref{LPene} is the bound
\begin{equation}\label{LPeneE}
\E\left[\left|\sup_{0\leqslant s\leqslant \tau^\eps\wedge T}\mathcal{E}^\varepsilon(s)\right|^p\right]\leqslant\mathtt{C}(p,T),
\end{equation}
for all $p\geqslant 1$. We also state the following corollary to Proposition~\ref{prop:GlobalEnergyep}.

\begin{corollary}[Bound on the correction terms]\label{cor:boundpsi} Let 
$(R^\varepsilon(t),S^\varepsilon(t))_{t<\tau^\varepsilon}$ be a regular solution to \eqref{SVWEep} in $H^2(\T)$. The correction 
$\Theta^\varepsilon$ defined in \eqref{psieps} satisfies the bound
\begin{equation}\label{eq:boundpsi}
\E\left[\|\Theta^\varepsilon\|_{L^\infty(0,\tau^\eps\wedge T)}^p\right]\leqslant\mathtt{C}(p,T)\, \varepsilon^{1/2},
\end{equation}
for all $p\geqslant 1$, while
\begin{equation}\label{uteps0}
\E\left[\left\|u^\varepsilon_t-(R^\varepsilon+S^\varepsilon)/2\right\|^p_{L^1((0,\tau^\eps\wedge T);L^\infty(\T))}\right]\leqslant \mathtt{C}(T,p)\, \varepsilon^{1/2}.
\end{equation}
and
\begin{equation}\label{uteps01}
\E\left[\left\|u^\varepsilon_t-(R^\varepsilon+S^\varepsilon)/2\right\|^p_{L^\infty((0,\tau^\eps\wedge T) \times \T)}\right]\leqslant \mathtt{C}(T,p),
\end{equation}
for all $p\geqslant 1$.
\end{corollary}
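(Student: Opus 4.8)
The plan is to derive all three estimates from the formulas already established in Proposition~\ref{prop:ut} (with $\theta=\chi_\eps$), combined with the energy bounds \eqref{LPeneE} and \eqref{eq:TotalEnergyep}. First I would treat \eqref{eq:boundpsi}. By \eqref{psiprime}--\eqref{psiprimealphabeta} (applied with $\theta=\chi_\eps$, so that $\Theta^\eps(0)=0$), the correction $\Theta^\eps$ solves the linear ODE $\ (\Theta^\eps)'=\alpha+\beta\,\Theta^\eps\ $ with $\Theta^\eps(0)=0$, hence by Duhamel
\[
\Theta^\eps(t)=\int_0^t \exp\!\Big(\int_r^t \beta(\sigma)\,\ud\sigma\Big)\,\alpha(r)\,\ud r.
\]
The coefficient $\beta(t)=\int_0^1 \tilde c'(u^\eps)(R^\eps+S^\eps)\,\ud y$ is controlled, using \eqref{coeff-c}--\eqref{coeff-c'} and Cauchy--Schwarz, by $\mathtt{C}\,\|(R^\eps,S^\eps)(t)\|_{L^2(\T)}\leqslant \mathtt{C}\,\sqrt{\mathcal{E}^\eps(t)}$, so on $[0,\tau^\eps\wedge T]$ the exponential prefactor is bounded by $\exp(\mathtt{C}\,T\,\sup_{s\leqslant\tau^\eps\wedge T}\sqrt{\mathcal{E}^\eps(s)})$. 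The gain of $\eps^{1/2}$ comes entirely from $\alpha$: since $\chi_\eps(\xi)$ is supported in $\{\xi\geqslant 1/\eps\}$ and there $\chi_\eps(\xi)\leqslant \xi^2$ (indeed $\chi_\eps(\xi)=(\xi-1/\eps)^2\leqslant\xi^2$), we get the pointwise bound $|\chi_\eps(R^\eps)|\leqslant (R^\eps)^2\,\mathds{1}_{\{|R^\eps|\geqslant 1/\eps\}}$, whence
\[
\int_0^1|\chi_\eps(R^\eps)|\,\ud y\ \leqslant\ \eps\int_0^1 |R^\eps|^3\,\mathds{1}_{\{|R^\eps|\geqslant 1/\eps\}}\,\ud y.
\]
Wait — that estimate gives $\eps$, not $\eps^{1/2}$; the sharper route is $|\chi_\eps(R^\eps)|=\chi_\eps(R^\eps)^{1/2}\chi_\eps(R^\eps)^{1/2}\leqslant \eps^{-1/2}\,(R^\eps)\,\chi_\eps(R^\eps)^{1/2}\cdot\eps^{1/2}$... more simply, $|\chi_\eps(R^\eps)|\leqslant \eps\,(R^\eps)^2\cdot|R^\eps|\,\mathds{1}_{\{|R^\eps|\geqslant1/\eps\}}$ is wasteful; instead bound $\int_0^1|\alpha|$-type quantities by splitting $\chi_\eps(\xi)\leqslant \xi\cdot(\xi-1/\eps)\leqslant\xi\,\xi=\xi^2$ on the support and using $1\leqslant \eps\,\xi$ there, so $|\chi_\eps(R^\eps)|\leqslant \eps\,(R^\eps)^2\,|R^\eps|$; the cube is then controlled through the dissipation term $\tilde c'(u^\eps)\,R^\eps\,\chi_\eps(R^\eps)$ appearing on the left of \eqref{eq:TotalEnergyep}. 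Concretely: $R^\eps\chi_\eps(R^\eps)\geqslant 0$ only when $R^\eps\geqslant1/\eps$, in which case $R^\eps\chi_\eps(R^\eps)=R^\eps(R^\eps-1/\eps)^2\geqslant \eps^{-1}(R^\eps-1/\eps)^2=\eps^{-1}\chi_\eps(R^\eps)$, so $\tilde\E\int_0^{\tau^\eps\wedge T}\!\!\int_\T \chi_\eps(R^\eps)\,\ud x\,\ud t\leqslant \mathtt{C}\,\eps$ from the energy identity; combining with Cauchy--Schwarz in $t$ and the exponential moment \eqref{EXPene} for the prefactor yields \eqref{eq:boundpsi}. (The dependence on $c_0$ enters because the sign of $\tilde c'=\tfrac14 c'/c$ is what makes the left-hand term in \eqref{eq:TotalEnergyep} a genuine dissipation; this is exactly where \eqref{Positivecprime00} is used.)

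For \eqref{uteps0}, I would use \eqref{utOK}--\eqref{zeta}: $u^\eps_t-(R^\eps+S^\eps)/2=\tfrac{1}{c(u^\eps)}\int_0^x[\zeta-\bar\zeta]\,\ud y$ with $\zeta=\tilde c'(u^\eps)\big[\tfrac12(\chi_\eps(R^\eps)-\chi_\eps(S^\eps))+(R^\eps+S^\eps)\Theta^\eps\big]$. Taking the sup over $x\in[0,1]$ and using $c(u^\eps)\geqslant c_1$, we get $\|u^\eps_t-(R^\eps+S^\eps)/2\|_{L^\infty(\T)}\leqslant \mathtt{C}\,\|\zeta\|_{L^1(\T)}$. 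The term $\int_0^1(R^\eps+S^\eps)\Theta^\eps\,\ud y$ is bounded by $\sqrt{\mathcal{E}^\eps}\,|\Theta^\eps|$, which after integration in $t$ and using \eqref{eq:boundpsi} together with \eqref{LPeneE} gives an $\eps^{1/2}$ contribution. The cut-off terms $\int_0^1|\chi_\eps(R^\eps)|\,\ud y$ (and similarly for $S^\eps$) are exactly the quantities shown above to be $O(\eps)$ in expectation after time-integration, hence a fortiori $O(\eps^{1/2})$. Raising to the $p$-th power and using Hölder in $t$ on $[0,\tau^\eps\wedge T]$, together with the exponential moments \eqref{EXPene}, produces \eqref{uteps0}. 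Estimate \eqref{uteps01} is the same computation but without the time-integration helping: one bounds $\|\zeta\|_{L^1(\T)}$ pointwise in $t$ by $\mathtt{C}\big(\|R^\eps\|_{L^2}^2+\|S^\eps\|_{L^2}^2+\sqrt{\mathcal{E}^\eps}\,|\Theta^\eps|\big)$ using $|\chi_\eps(\xi)|\leqslant\xi^2$, then takes $\sup_{t\leqslant\tau^\eps\wedge T}$ and invokes \eqref{LPeneE} and \eqref{eq:boundpsi}; here one only gets a bound $\mathtt{C}(T,p)$, with no $\eps$-smallness, as stated.

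The main obstacle I anticipate is bookkeeping the exponential prefactor $\exp(\mathtt{C}\,T\sqrt{\sup_s\mathcal{E}^\eps(s)})$ coming from the Duhamel representation of $\Theta^\eps$: one must not simply bound $\sqrt{\mathcal{E}^\eps}$ by a constant (it is only bounded in $L^p(\Omega)$), so the argument has to pair a Cauchy--Schwarz split $\E[X Y]\leqslant (\E X^2)^{1/2}(\E Y^2)^{1/2}$ where $X$ carries the $\eps^{1/2}$ (the square root of the dissipation integral) and $Y$ is the exponential, whose moments of all orders are finite precisely by \eqref{EXPene} under the smallness condition $4q_0\eta\leqslant1$ — but one needs $\E[\exp(2\mathtt{C}T\sqrt{\sup\mathcal{E}^\eps})]<\infty$, which follows from \eqref{EXPene} since $\sqrt{a}\leqslant \tfrac{1}{4\lambda}+\lambda a$ for any $\lambda>0$, choosing $\lambda$ small enough that $\lambda\cdot(2\mathtt{C}T)\cdot$(the relevant multiple)$\,\leqslant$ the admissible $\eta$. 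Threading this uniformly in $\eps$ is the only delicate point; everything else is the structural algebra of Proposition~\ref{prop:ut} and the sign of $c'$.
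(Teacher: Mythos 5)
Your plan follows the same route as the paper's proof: Duhamel for $\Theta^\eps$ (from \eqref{psiprime}--\eqref{psiprimealphabeta} with $\theta=\chi_\eps$), the key observation $\chi_\eps(\xi)\leqslant\eps\,\xi\,\chi_\eps(\xi)$ so that the $\alpha$-contribution is controlled by $\eps$ times the dissipation term of \eqref{eq:TotalEnergyep}, a Cauchy--Schwarz split in $\Omega$, and the exponential moment \eqref{EXPene} (via $\sqrt{a}\leqslant C+\lambda a$) to handle the Gr\"onwall factor $\exp(\int|\beta|)$; the treatment of \eqref{uteps0}--\eqref{uteps01} through \eqref{utOK}--\eqref{zeta} also matches. (Note that the paper organizes the $\alpha$-step slightly differently — Hölder in time on $\|\Theta^\eps\|^p_{L^\infty}$, then interpolating $\int|\alpha^\eps|^{2p}\leqslant\|\alpha^\eps\|_{L^\infty_t}^{2p-1}\int|\alpha^\eps|$ to land on an $\eps$-factor — whereas applying Duhamel directly and bounding $(\int_0^T|\alpha^\eps|)^{2p}\leqslant\eps^{2p}(\mathtt{C}(T)+\bar{\mathcal M}^\eps)^{2p}$ actually gives the stronger rate $\eps^p$; both routes prove the stated $\eps^{1/2}$.)

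One minor factual slip: you say the positivity of $\tilde c'$ and hence the usability of the dissipation ``is exactly where \eqref{Positivecprime00} is used.'' That is not right — the non-negativity of $\tilde c'$ comes from the standing hypothesis \eqref{coeff-c'} ($c'\geqslant 0$), which is all this proof needs; the strict lower bound \eqref{Positivecprime00} is used only later, in the Oleinik-type estimates of Section~\ref{sec:Oleinik} (through the stopping time $\bar t^\eps$). This does not affect the validity of the argument.
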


\begin{proof}[Proof of Proposition~\ref{prop:GlobalEnergyep}] We use the fact that $\xi \chi_\varepsilon(\xi) \geqslant 0$ and integrate over $x\in\T$ in \eqref{sumsquare-theta} (where $\theta=\chi_\varepsilon$), to obtain \eqref{eq:TotalEnergyep} with
\begin{equation}\label{Meps-energy}
\mathcal{M}^\varepsilon(t)\ \eqdef\ 2\int_0^t \int_\T \sum_{k \geqslant 1 }\, (R^\varepsilon+S^\varepsilon)(s,x)\, \sigma_k^\varepsilon(x)\, \ud x\, \ud \beta_k(s).
\end{equation}
Using Itô's isometry, Jensen's inequality and the bound \eqref{qeps-q}, we then have
\begin{align} \nonumber
\E [\mathcal{M}^\varepsilon(t\wedge T)]^2\ &\leqslant\ 4\, q_0 \int_0^T \E \int_\T \left[ (R^\varepsilon+S^\varepsilon)^2 \right] \ud x\,   \ud s\ \leqslant\ 8\, q_0\, \mathcal{E}(0)\, T\ +\ 8\, q_0^2\, T^2\ \leqslant\ \mathtt{C}(T).
\label{EM22}
\end{align}
There remains to prove \eqref{EXPene} and \eqref{LPene}. Let $\eta\in (0,1]$. By \eqref{eq:TotalEnergyep}, we have
\begin{equation}\label{EXPenergy1}
\eta\sup_{0 \leqslant s\leqslant t\wedge T} \mathcal{E}^\varepsilon(s)\leqslant \mathtt{C}+\eta
\sup_{0 \leqslant s\leqslant t\wedge T}\mathcal{M}^\varepsilon(s).
\end{equation}
Let 
\begin{equation}
\dual{\mathcal{M}^\varepsilon}{\mathcal{M}^\varepsilon}(t)\ \eqdef\ 4 \int_0^t \sum_{k\geqslant 1 } \left|\int_0^1 \sigma_k(R^\varepsilon+S^\varepsilon)\, \ud x\right|^2 \ud s
\end{equation} 
denote the quadratic variation of $\mathcal{M}^\varepsilon$. We use an exponential martingale inequality:
\begin{equation}\label{expMineq}
\Pro\left(\sup_{0 \leqslant s\leqslant t\wedge T}\mathcal{M}^\varepsilon(s)
\geqslant \left(a+b\dual{\mathcal{M}^\varepsilon}{\mathcal{M}^\varepsilon}(t\wedge T)\right)\lambda\right)
\leqslant e^{-2a b \lambda^2},
\end{equation}
with $\lambda=1$ to obtain
\begin{equation}\label{OKZ}
\E\left[\exp\left(2\eta Z\right)\right]\, \leqslant\ \mathtt{C}(b),\quad Z\ \eqdef\ \sup_{0 \leqslant s\leqslant t\wedge T}\mathcal{M}^\varepsilon(s)-b \dual{\mathcal{M}^\varepsilon}{\mathcal{M}^\varepsilon}(t\wedge T),
\end{equation}
if $\eta\leqslant b/2$. By \eqref{EXPenergy1} and the Cauchy--Schwarz inequality, we have then
\begin{equation}\label{EXPenergy2}
\E\left[\exp\left(\eta\sup_{0 \leqslant s\leqslant t\wedge T} \mathcal{E}^\varepsilon(s)\right)\right]
\leqslant \mathtt{C} \left\{\E\left[\exp\left(2\eta Z\right)\right]\right\}^{1/2}
\left\{\E\left[\exp\left(2b\eta \dual{\mathcal{M}^\varepsilon}{\mathcal{M}^\varepsilon}(t\wedge T)\right)\right]\right\}^{1/2}.
\end{equation}
The bound \eqref{OKZ} therefore gives
\begin{equation}\label{EXPenergy3}
\E\left[\exp\left(\eta\sup_{0 \leqslant s\leqslant t\wedge T} \mathcal{E}^\varepsilon(s)\right)\right]
\leqslant \mathtt{C}(b) 
\left\{\E\left[\exp\left(2b\eta \dual{\mathcal{M}^\varepsilon}{\mathcal{M}^\varepsilon}(t\wedge T)\right)\right]\right\}^{1/2}.
\end{equation}
By the Cauchy--Schwarz inequality, we also have 
\begin{equation}\label{quadVarMeps}
\dual{\mathcal{M}^\varepsilon}{\mathcal{M}^\varepsilon}(t\wedge T)
\leqslant q_0\, \mathcal{E}^\varepsilon(t\wedge T)\leqslant q_0 \sup_{0 \leqslant s\leqslant t\wedge T} \mathcal{E}(s),
\end{equation}
so 
\begin{equation}\label{EXPenergy4}
\E\left[\exp\left(\eta\sup_{0 \leqslant s\leqslant t\wedge T} \mathcal{E}^\varepsilon(s)\right)\right]
\leqslant \mathtt{C}(b) 
\left\{\E\left[\exp\left(2b q_0 \eta\sup_{0 \leqslant s\leqslant t\wedge T} \mathcal{E}^\varepsilon(s)\right)\right]\right\}^{1/2}.
\end{equation}
We can let $t\uparrow\tau^\eps$ and choose $b=1/(2q_0)$ to conclude to \eqref{EXPene}, under the condition $4q_0\eta\leqslant 1$. Let us now establish \eqref{LPene}. By the Burkholder--Davis--Gundy inequality, we have
\begin{equation}
\E\left[\left|\sup_{0\leqslant s\leqslant t\wedge T}\mathcal{M}^\varepsilon(s)\right|^p\right]\leqslant
\mathtt{C}(p)\left\{\E\left[\dual{\mathcal{M}^\varepsilon}{\mathcal{M}^\varepsilon}(t\wedge T)^{p/2}\right]\right\}^{1/2}.
\end{equation}
We use \eqref{quadVarMeps}, the bound $|x|^{p/2}\leqslant \mathtt{C}(p)(1+e^{|x|/(8q_0)})$ and \eqref{EXPene} to conclude.
\end{proof}

\begin{proof}[Proof of Corollary~\ref{cor:boundpsi}] We assume $2\leqslant p$. We use the equations \eqref{psiprime}-\eqref{psiprimealphabeta}, where $\theta=\chi_\eps$ (this is why we will add an exponent $\varepsilon$ to the quantities $\alpha$ and $\beta$ below). By integration in \eqref{psiprime}, we obtain
\begin{equation}\label{psiintegrate}
\Theta^\varepsilon(t)\ =\ \int_0^t\alpha^\varepsilon(s)\, e^{-\int_s^t\beta^\varepsilon(\sigma) \ud\sigma}\, \ud s,
\end{equation}
which gives
\begin{equation}\label{psiintegratep}
\|\Theta^\varepsilon\|_{L^\infty(0,t\wedge T)}^p\ \leqslant\ \int_0^{t\wedge T}|\alpha^\varepsilon(s)|^p\, \ud s  \left(\int_0^{t\wedge T} e^{p'\int_s^t|\beta^\varepsilon(\sigma)| \ud\sigma}\, \ud s\right)^{p/p'},
\end{equation}
where $p'$ is the conjugate exponent to $p$, and then
\begin{multline}\label{psieps1}
\E\left[\|\Theta^\varepsilon\|_{L^\infty(0,t\wedge T)}^p\right]\\
\leqslant 
\left\{\E\left[\int_0^{t\wedge T}|\alpha^\varepsilon(s)|^{2p} \ud s\right]\right\}^{1/2}
\left\{\E\left[\left(\int_0^{t\wedge T} \exp\left(p'\int_s^{t\wedge T}|\beta^\varepsilon(\sigma)| \ud\sigma\right) \ud s\right)^{(2p)/p'}\right]\right\}^{1/2}.
\end{multline}
We estimate $\alpha^\eps$ in two manners. Let us introduce the notation
\begin{equation}\label{supM}
\bar{\mathcal{M}}^\eps(t)=\sup_{0\leqslant s\leqslant t}\mathcal{M}^\eps(s),
\end{equation}
where $\mathcal{M}^\varepsilon$ is the martingale \eqref{Meps-energy}. First we have
\begin{equation}\label{alphaeps1}
|\alpha^\varepsilon(s)|\leqslant c_3 \int_0^1 |\chi_\eps(R^\varepsilon)+\chi_\eps(S^\varepsilon)|(s,x)\, \ud x
\leqslant c_3\mathcal{E}(s)\leqslant \mathtt{C}(T)+c_3\bar{\mathcal{M}}^\varepsilon(s).
\end{equation}
We can also use the bound $\chi_\eps(R)\leqslant R\chi_\eps(R)\eps$ and the second term in the left-hand side of \eqref{eq:TotalEnergyep} to obtain
\begin{equation}\label{alphaeps2}
\int_0^{t\wedge T}|\alpha^\varepsilon(s)|\, \ud s\ \leqslant \eps \int_0^{t\wedge T}\int_0^1 \tilde{c}'(u^\varepsilon) \left[ R^\varepsilon\, \chi_\varepsilon(R^\varepsilon)\, +\, S^\varepsilon\, \chi_\varepsilon(S^\varepsilon) \right] \ud x\, \ud s
\leqslant \eps\left[\mathtt{C}(T)+\bar{\mathcal{M}}^\varepsilon(t\wedge T)\right].
\end{equation}
We combine \eqref{alphaeps1} with \eqref{alphaeps2} to obtain
\begin{equation}\label{alphaeps3}
\int_0^{t\wedge T}|\alpha^\varepsilon(s)|^{2p}\, \ud s
\leqslant \mathtt{C}(T)\left[1+\left|\bar{\mathcal{M}}^\varepsilon(s)\right|^{2p} \right] \eps,
\end{equation}
and the bound \eqref{LPene} gives
\begin{equation}\label{alphaeps4}
\E\left[\int_0^{t\wedge T}|\alpha^\varepsilon(s)|^{2p}\, \ud s\right]\leqslant \mathtt{C}(p,T)\, \eps.
\end{equation}
To estimate the remaining term in \eqref{psieps1}, we first note that 
\begin{equation}\label{betaeps0}
\left(\int_0^{t\wedge T} \exp\left(p'\int_s^{t\wedge T}|\beta^\varepsilon(\sigma)|\, \ud\sigma\right) \ud s\right)^{(2p)/p'}
\leqslant \mathtt{C}(p,T)\exp\left(\mathtt{C}(p,T) \sup_{0\leqslant s\leqslant t\wedge T}\mathcal{E}(\sigma)^{1/2}\right),
\end{equation}
and
\begin{equation}\label{betaeps1}
\mathtt{C}(p,T) \sup_{0\leqslant s\leqslant t\wedge T}\mathcal{E}(\sigma)^{1/2}
\leqslant \mathtt{C}(p,T)+(8q_0)^{-1}\sup_{0\leqslant s\leqslant t\wedge T}\mathcal{E}(s).
\end{equation}
Using \eqref{EXPene}, \eqref{betaeps0} and \eqref{betaeps1} yields, 
\begin{equation}\label{betaeps2}
\E\left[\left(\int_0^{t\wedge T} \exp\left(p'\int_s^{t\wedge T}|\beta^\varepsilon(\sigma)| \ud\sigma\right) \ud s\right)^{(2p)/p'}\right]\leqslant\mathtt{C}(p,T).
\end{equation}
We combine \eqref{alphaeps4} and \eqref{betaeps2} with \eqref{psieps1} to conclude to \eqref{eq:boundpsi} with $t$ instead of $\tau^\eps$. Then we let $t\uparrow\tau^\eps$ to get \eqref{eq:boundpsi}. The estimates \eqref{uteps0} and \eqref{uteps01} are a consequence of \eqref{utOK} (where all quantities are indexed by $\varepsilon$) and the bounds
\begin{equation}\label{utbyzeta}
\E\left[\left\|\zeta^\eps\right\|^p_{L^1((0,\tau^\eps\wedge T) \times \T)}\right]\leqslant\mathtt{C}(T,p)\, \eps^{1/2},
\end{equation}
and
\begin{equation}\label{utbyzeta2}
\E\left[\left\|\zeta^\eps\right\|^p_{L^\infty((0,\tau^\eps\wedge T);L^1(\T))}\right]\leqslant\mathtt{C}(T,p),
\end{equation}
where
\begin{equation}\label{zetaeps}
\zeta^\varepsilon=\tilde{c}'(u^\varepsilon)\left[\frac{\chi_\varepsilon(R^\varepsilon)-\chi_\varepsilon(S^\varepsilon)}{2}+(R^\varepsilon+S^\varepsilon)\Theta^\varepsilon\right].
\end{equation}
This follows easily from the bounds \eqref{alphaeps1}-\eqref{alphaeps2} and \eqref{eq:TotalEnergyep}-\eqref{LPene}-\eqref{eq:boundpsi}.
\end{proof}

Let $X^{\varepsilon,\pm}$ denote the flow associated to the vector field $\pm c(u^\varepsilon(t,\cdot))$ (characteristic curves): 
\begin{equation}\label{flowXpmep}
\frac{\ud\;}{\ud t} X^{\varepsilon,\pm}(t,x)\ =\ \pm c(u^\varepsilon(t,X^{\varepsilon,\pm}(t,x))), \qquad X^{\varepsilon,\pm}(0,x)\ =\ x,\qquad x\in\T,\quad t\in [0,\tau^\varepsilon).
\end{equation}
In parallel with Proposition \ref{prop:controlEnergies}, we have the following result.

\begin{proposition}[Control along the opposite characteristic curve]\label{prop:controlEnergiesep} 
Consider a regular solution $(R^\varepsilon(t),S^\varepsilon(t))_{t<\tau^\varepsilon}$ to \eqref{SVWEep} up to a stopping time $\tau^\varepsilon$.
Let $X^{\varepsilon,\pm}$ denote the characteristic curves \eqref{flowXpmep}. There is a constant $\mathtt{C}$ as in \eqref{constantC}, such that: for all $x_1\in\T$ and $x_2 = x_1 + 2c_2 T$, 
\begin{align}\label{RSalongcharacep}
\int_{0}^{t\wedge T} \left[(R^\varepsilon)^2(s,{X^-(s,x_2)})\, +\,  (S^\varepsilon)^2(s,{X^+(s,x_1)}) \right] \ud s\ \leqslant \ \mathtt{C}\ +\ \mathtt{C}\, M^\varepsilon_{x_1}(t\wedge T),
\end{align} 
for any stopping time $t$ such that $t<\tau^\varepsilon$ a.s., where $M^\varepsilon_{x_1}(t)$   satisfies the bound 
\begin{equation}\label{MLinfty}
\E\left[ \sup_{s \in [0, \tau^\eps\wedge T)} \sup_{\substack{x_1}} |M^\varepsilon_{x_1} (s)|^2\right] \ \leqslant\ \mathtt{C}(T).
\end{equation}
\end{proposition}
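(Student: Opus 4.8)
The plan is to repeat the proof of Proposition~\ref{prop:controlEnergies}, now with $\theta=\chi_\varepsilon$ in the conservative identity \eqref{sumsquare-theta}, and to add at the end an $L^\infty$-in-$x_1$ control in order to reach \eqref{MLinfty}. First I would check that the correction terms in \eqref{SVWEep} do not damage the divergence structure used in the deterministic-coefficient case. Taking $h(R)=R^2$ in \eqref{ReqepthetaItoConservative}--\eqref{SeqepthetaItoConservative}, the contributions proportional to $\Theta^\varepsilon$ cancel (since $R\,h'(R)-2h(R)=0$), so that \eqref{sumsquare-theta} only acquires the extra term $-2\tilde c'(u^\varepsilon)\,(R^\varepsilon\chi_\varepsilon(R^\varepsilon)+S^\varepsilon\chi_\varepsilon(S^\varepsilon))$. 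Since $\tilde c'=c'/(4c)\geqslant 0$ by \eqref{coeff-c}--\eqref{coeff-c'}, since $\xi\chi_\varepsilon(\xi)\geqslant 0$ by \eqref{chidef}, and since $q^\varepsilon\leqslant q_0$ by \eqref{qeps-q}, the vector field
\[
G^\varepsilon\ \eqdef\
\begin{pmatrix}
(R^\varepsilon)^2+(S^\varepsilon)^2-\tilde{\mathcal{M}}^\varepsilon\\
c(u^\varepsilon)\left[(R^\varepsilon)^2-(S^\varepsilon)^2\right]
\end{pmatrix},\qquad
\tilde{\mathcal{M}}^\varepsilon(t,x)\ \eqdef\ \int_0^t 2\left[R^\varepsilon+S^\varepsilon\right](s,x)\,\Phi^\varepsilon(x)\,\ud W(s),
\]
satisfies $\mathrm{div}_{t,x}G^\varepsilon\leqslant 2q_0$, exactly as in \eqref{div}.

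Second, the geometric part is verbatim that of Proposition~\ref{prop:controlEnergies}. With $x_2=x_1+2c_2T$, $\bar x=(x_1+x_2)/2$, the characteristic times $\tau^{\varepsilon,\pm}$ built from $X^{\varepsilon,\pm}$ as in \eqref{deftauplus}--\eqref{deftaumoins}, and the regions $A_1,A_2$ of Figure~\ref{fig:combinedcharac}, integrating $\mathrm{div}_{t,x}G^\varepsilon\leqslant 2q_0$ over $A_1\cup A_2$ and applying the divergence theorem gives the analogue of \eqref{appli-stokes}; using $c(u^\varepsilon)\geqslant c_1$ one obtains \eqref{RSalongcharacep} with
\[
M^\varepsilon_{x_1}(t\wedge T)\ \eqdef\ (2c_1)^{-1}\left[\int_{x_1}^{\bar x}\tilde{\mathcal{M}}^\varepsilon(\tau^{\varepsilon,+}_{x_1}(y),y)\,\ud y+\int_{\bar x}^{x_2}\tilde{\mathcal{M}}^\varepsilon(\tau^{\varepsilon,-}_{x_2}(y),y)\,\ud y\right],
\]
the deterministic remainder $2q_0T(x_2-x_1)+\int_{x_1}^{x_2}\big((R^\varepsilon_0)^2+(S^\varepsilon_0)^2\big)\ud x-\int_{X^{\varepsilon,+}(t\wedge T,x_1)}^{X^{\varepsilon,-}(t\wedge T,x_2)}\big((R^\varepsilon)^2+(S^\varepsilon)^2\big)\ud x$ being $\leqslant\mathtt{C}$ because $x_2-x_1=2c_2T$ and $\|(R^\varepsilon_0,S^\varepsilon_0)\|_{L^2(\T)}\leqslant\|(R_0,S_0)\|_{L^2(\T)}$ (mollification does not increase the $L^2$-norm). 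Here and below $X^\pm$ in \eqref{RSalongcharacep} stands for $X^{\varepsilon,\pm}$.

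Third — the only genuinely new point compared with \eqref{boundMx12} — I establish \eqref{MLinfty}. Since $\tau^{\varepsilon,\pm}_{x_1}(y)\in[0,\tau^\varepsilon\wedge T)$ and $y\mapsto\tilde{\mathcal{M}}^\varepsilon(\cdot,y)$ is $1$-periodic, Cauchy--Schwarz in $y$ on an interval of length $2c_2T$ together with periodicity yields a bound \emph{uniform in $x_1$ and in $s$}:
\[
\sup_{x_1\in\T}\ \sup_{0\leqslant s\leqslant\tau^\varepsilon\wedge T}|M^\varepsilon_{x_1}(s)|^2\ \leqslant\ \mathtt{C}(T)\int_\T\sup_{0\leqslant s\leqslant\tau^\varepsilon\wedge T}|\tilde{\mathcal{M}}^\varepsilon(s,y)|^2\,\ud y.
\]
Taking expectations, using Fubini and then Doob's $L^2$-inequality for the martingale $s\mapsto\tilde{\mathcal{M}}^\varepsilon(s\wedge(\tau^\varepsilon\wedge T),y)$, whose quadratic variation is $4\int_0^{\cdot}(R^\varepsilon+S^\varepsilon)^2(s,y)\,q^\varepsilon(y)\,\ud s$ with $q^\varepsilon(y)=\sum_k\sigma_k^\varepsilon(y)^2$, and using $\sup_y q^\varepsilon(y)\leqslant q_0$ (by \eqref{qeps-q}) and $\int_\T(R^\varepsilon+S^\varepsilon)^2\,\ud y\leqslant 2\mathcal{E}^\varepsilon$, I obtain
\[
\E\int_\T\sup_{0\leqslant s\leqslant\tau^\varepsilon\wedge T}|\tilde{\mathcal{M}}^\varepsilon(s,y)|^2\,\ud y\ \leqslant\ \mathtt{C}\,\E\int_0^{\tau^\varepsilon\wedge T}\mathcal{E}^\varepsilon(s)\,\ud s\ \leqslant\ \mathtt{C}\,T\,\E\Big[\sup_{0\leqslant s\leqslant\tau^\varepsilon\wedge T}\mathcal{E}^\varepsilon(s)\Big],
\]
which is $\leqslant\mathtt{C}(T)$ by \eqref{LPeneE} with $p=1$. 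Combining the last two displays gives \eqref{MLinfty}.

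The main obstacle is precisely this last step. In the deterministic-coefficient Proposition~\ref{prop:controlEnergies}, a pointwise-in-$x_1$ $L^2$-estimate \eqref{boundMx12} sufficed and followed from the first-moment energy bound \eqref{Eene}; here the tightness arguments of Sections~\ref{sec:Tightness}--\ref{sec:Young} require the supremum over $x_1$ (and over time) \emph{inside} the expectation, which forces the reduction, \emph{before} taking expectations, to the single integral $\int_\T\sup_s|\tilde{\mathcal{M}}^\varepsilon(s,y)|^2\,\ud y$ via Cauchy--Schwarz and periodicity, and then the use of the strong uniform energy moment bound \eqref{LPeneE} (itself a consequence of the exponential estimate \eqref{EXPene}). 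All remaining manipulations are routine and identical to those in the proof of Proposition~\ref{prop:controlEnergies}.
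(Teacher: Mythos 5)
Your proof is correct and follows exactly the route the paper takes: reuse the divergence identity \eqref{sumsquare-theta} with $\theta=\chi_\varepsilon$ (noting the $\Theta^\varepsilon$ contribution vanishes for $h(R)=R^2$ and the $\chi_\varepsilon$ term has the good sign), repeat the geometric/divergence-theorem argument of Proposition~\ref{prop:controlEnergies}, and then observe that the constant in \eqref{MtildeM} is uniform in $x_1$ so the supremum over $x_1$ and $s$ can be taken before expectations, after which Doob plus the higher-moment energy bound \eqref{LPeneE} close the argument. The paper states this in two sentences; you have merely unfolded the same steps in detail, including the correct use of periodicity and Cauchy--Schwarz to get the $x_1$-uniform bound.
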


\begin{proof} The proof of Proposition \ref{prop:controlEnergiesep} is similar to the one of Proposition \ref{prop:controlEnergies}. 
The constant in \eqref{MtildeM} is independent on $x_1$, then we obtain \eqref{MLinfty} by following the proof of \eqref{boundMx12} and using \eqref{LPeneE}.
\end{proof}
\subsection{Global solutions} 

We can now use the estimates established in Section~\ref{subsec:Energy-eps} to show that the regular solutions of \eqref{SVWEep} given by Theorem \ref{thm:glob-exist-ep} exist globally in time almost surely. 

\begin{thm}\label{thm:globalsolep}
Let $\varepsilon>0$, $R_0,S_0\in L^2(\T)$ such that $\int_\T (S_0-R_0)\, \ud x =0$. Let $\tau^\varepsilon$ be the explosion time of the solution of \eqref{SVWEep} given by Theorem \ref{thm:glob-exist-ep}. Then $\Pro (\tau^\varepsilon = \infty) = 1$.
\end{thm}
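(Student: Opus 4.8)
The plan is to fix $T>0$ and show that, almost surely, the solution $(R^\varepsilon,S^\varepsilon)$ of \eqref{SVWEep} stays bounded in $L^\infty(\T)$ on $[0,\tau^\varepsilon\wedge T)$. Since $(R^\varepsilon,S^\varepsilon)=(P^\varepsilon,Q^\varepsilon)+(\Phi^\varepsilon W,\Phi^\varepsilon W)$ and $\Phi^\varepsilon W\in C([0,T];H^{s+1}(\T))$ is a.s. bounded, such a bound means $\|V^\varepsilon(t)\|_{L^\infty(\T)}$ stays bounded on $[0,\tau^\varepsilon\wedge T)$, which by \eqref{blowuptauVeps} and the maximality of the solution of Theorem~\ref{thm:glob-exist-ep} forces $\tau^\varepsilon\geqslant T$; hence $\Pro(\tau^\varepsilon\leqslant T)=0$, and letting $T\uparrow\infty$ gives $\Pro(\tau^\varepsilon=\infty)=1$. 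All estimates are carried out pathwise, at fixed $\varepsilon$, on $[0,\tau^\varepsilon\wedge T)$, and rely on the representation along the characteristics $X^{\varepsilon,\pm}$ of \eqref{flowXpmep}: specialising \eqref{hRalongchar}--\eqref{hSalongchar} (in their version for \eqref{SVWEep}, i.e. $\theta=\chi_\varepsilon$) to $h=\mathrm{id}$ and substituting $x\mapsto X^{\varepsilon,+}(t,x)$ yields, for every $x\in\T$,
\[
R^\varepsilon(t,X^{\varepsilon,+}(t,x))=R^\varepsilon_0(x)+\mathcal{M}_{\mathrm{id}}(t,x)+\int_0^t \tilde c'(u^\varepsilon)\bigl[(R^\varepsilon)^2-(S^\varepsilon)^2-\chi_\varepsilon(R^\varepsilon)+2R^\varepsilon\Theta^\varepsilon\bigr](s,X^{\varepsilon,+}(s,x))\,\ud s,
\]
and symmetrically for $S^\varepsilon$ along $X^{\varepsilon,-}$; here $\mathcal{M}_{\mathrm{id}}(t,x)=\sum_k\int_0^t\sigma_k^\varepsilon(X^{\varepsilon,+}(s,x))\,\ud\beta_k(s)$ is a martingale with quadratic variation $\leqslant q_0 t$, and (at fixed $\varepsilon$ the integrands are smooth in $x$) one gets, as in the proof of \eqref{MLinfty}, that $\sup_{x\in\T}\sup_{t<\tau^\varepsilon\wedge T}|\mathcal{M}_{\mathrm{id}}(t,x)|<\infty$ a.s. Since $x\mapsto X^{\varepsilon,\pm}(t,x)$ is onto $\T$, bounding $R^\varepsilon(t,X^{\varepsilon,+}(t,x))$ and $S^\varepsilon(t,X^{\varepsilon,-}(t,x))$ uniformly in $x$ controls $\|(R^\varepsilon,S^\varepsilon)(t)\|_{L^\infty(\T)}$.

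The two properties of the cut-off \eqref{chidef} that drive the argument are $\xi^2-\chi_\varepsilon(\xi)\geqslant 0$ for all $\xi\in\R$, and, for any $K\geqslant 0$, $\xi^2-\chi_\varepsilon(\xi)\leqslant K^2+\varepsilon^{-2}+2\varepsilon^{-1}(\xi+K)$ whenever $\xi\geqslant -K$ (so the ``effective'' nonlinearity grows only affinely once $\xi$ is bounded below). First I would use the former, together with $0\leqslant\tilde c'(u^\varepsilon)\leqslant c_3/(4c_1)$, to get a lower bound: writing $\rho(t):=R^\varepsilon(t,X^{\varepsilon,+}(t,x))-\mathcal{M}_{\mathrm{id}}(t,x)$, which is absolutely continuous, the representation gives
\[
\rho'(t)\ \geqslant\ \tilde c'(u^\varepsilon)\bigl[-(S^\varepsilon)^2+2R^\varepsilon\Theta^\varepsilon\bigr](t,X^{\varepsilon,+}(t,x))\ \geqslant\ -\tfrac{c_3}{4c_1}(S^\varepsilon)^2(t,X^{\varepsilon,+}(t,x))-\tfrac{c_3}{2c_1}\|\Theta^\varepsilon\|_{\infty}\bigl(|\rho(t)|+|\mathcal{M}_{\mathrm{id}}(t,x)|\bigr).
\]
Gr\"onwall's lemma applied to $t\mapsto\max(-\rho(t),0)$, combined with (i) the control of $\int_0^{t}(S^\varepsilon)^2(s,X^{\varepsilon,+}(s,x))\,\ud s$ from Proposition~\ref{prop:controlEnergiesep}, uniform in $x$ by \eqref{MLinfty}, (ii) the bound on $\|\Theta^\varepsilon\|_{L^\infty(0,\tau^\varepsilon\wedge T)}$ from Corollary~\ref{cor:boundpsi}, and (iii) the a.s. finiteness of $\sup_{t,x}|\mathcal{M}_{\mathrm{id}}|$, yields an a.s. finite random constant $\kappa_1$ with $R^\varepsilon\geqslant -\kappa_1$ on $[0,\tau^\varepsilon\wedge T)\times\T$; the identical argument along $X^{\varepsilon,-}$ (using the control of $\int(R^\varepsilon)^2$ along $X^{\varepsilon,-}$, again from Proposition~\ref{prop:controlEnergiesep}) gives $S^\varepsilon\geqslant-\kappa_1$ on $[0,\tau^\varepsilon\wedge T)\times\T$.

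Next I would feed these lower bounds into the upper-bound estimate. In the representation for $R^\varepsilon$ along $X^{\varepsilon,+}$ one drops the favourable term $-\tilde c'(u^\varepsilon)(S^\varepsilon)^2\leqslant 0$ and uses the second property of $\chi_\varepsilon$ with $K=\kappa_1$, turning the nonlinear contribution into an affine function of $R^\varepsilon$ with $\varepsilon$- and $\kappa_1$-dependent coefficients; adding the term $2\tilde c'(u^\varepsilon)R^\varepsilon\Theta^\varepsilon$, one arrives at a differential inequality of the form $\rho'(t)\leqslant \mathtt{C}_\varepsilon\bigl(1+\kappa_1^2+\|\Theta^\varepsilon\|_\infty\kappa_1+\sup_{t,x}|\mathcal{M}_{\mathrm{id}}|\bigr)+\mathtt{C}_\varepsilon(1+\|\Theta^\varepsilon\|_\infty)|\rho(t)|$ (with $\mathtt{C}_\varepsilon$ also depending on $\|R^\varepsilon_0\|_{L^\infty}$). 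Gr\"onwall's lemma again gives an a.s. finite bound $R^\varepsilon\leqslant\kappa_2$ on $[0,\tau^\varepsilon\wedge T)\times\T$, uniformly in $x$, and symmetrically $S^\varepsilon\leqslant\kappa_2$. Combining the four one-sided bounds gives $\|(R^\varepsilon,S^\varepsilon)\|_{L^\infty([0,\tau^\varepsilon\wedge T)\times\T)}<\infty$ a.s., which is the estimate claimed in the first paragraph, and the proof is complete.

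The main obstacle is the lower-bound step: the term $-\tilde c'(u^\varepsilon)(S^\varepsilon)^2$ is genuinely quadratic and is not damped by the cut-off, so the whole scheme hinges on the transversal estimate of Proposition~\ref{prop:controlEnergiesep} for $\int(S^\varepsilon)^2$ along the $X^{\varepsilon,+}$ characteristics, and crucially on its being uniform in the base point — which is exactly why the stronger bound \eqref{MLinfty} (rather than a pointwise-in-$x_1$ estimate such as \eqref{boundMx12}) is needed. A secondary and more routine point is the $x$-uniform control of the martingale $\mathcal{M}_{\mathrm{id}}(t,x)$, which is unproblematic at fixed $\varepsilon$ because all the coefficients involved are smooth.
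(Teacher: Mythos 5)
Your proof is correct and follows essentially the same scheme as the paper's: a one-sided (lower) bound via $\xi^2-\chi_\eps(\xi)\geqslant 0$ combined with the $x$-uniform transversal control \eqref{MLinfty} of Proposition~\ref{prop:controlEnergiesep} and the $\Theta^\eps$-bound of Corollary~\ref{cor:boundpsi}, then the upper bound from the one-sided linear growth of $\xi^2-\chi_\eps(\xi)$ once a lower bound is in hand, both closed by Gr\"onwall. The only (cosmetic) difference is that you subtract the martingale $\mathcal{M}_{\mathrm{id}}$ along characteristics, whereas the paper shifts pathwise to $P^\eps = R^\eps - \Phi^\eps W$ first (see \eqref{Peps-neg}--\eqref{Peps-posOK}), thereby trading your uniform-in-$x$ control of the family $\mathcal{M}_{\mathrm{id}}(\cdot,x)$ for the a.s.\ $C([0,T]\times\T)$-bound on $\Phi^\eps W$ already supplied by \eqref{WHsas}.
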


\begin{proof}[Proof of Theorem~\ref{thm:globalsolep}] By \eqref{sigmas1}, \eqref{WHsas} (satisfied by $\Phi^\varepsilon W$) and 
the bounds \eqref{MLinfty} and \eqref{eq:boundpsi}, there exists $\bar{\Omega}^\varepsilon$ such that $\Pro(\bar{\Omega}^\varepsilon)=1$ and, for all $\omega\in \bar{\Omega}^\varepsilon$, for all $T>0$, 
\begin{equation}\label{WHsasep2}
\Phi^\varepsilon\, W \in C([0,T], H^{3}(\T)), \quad
M^\varepsilon_{\cdot} \in L^\infty ( [0,\tau^\varepsilon\wedge T) \times \R ),\quad
\Theta^\eps\in L^\infty(0,\tau^\eps\wedge T).
\end{equation} 
Let $\omega \in \bar{\Omega}^\varepsilon$ be fixed. We have to establish the bound 
\begin{equation}\label{OKglobaleps}
\sup_{t \in [0,\tau^\varepsilon\wedge T)} \|V^\varepsilon(t)\|_{L^\infty(\T)} < \infty,
\end{equation}
for all $T>0$. We remark that $\xi^2 - \chi_\varepsilon(\xi) \geqslant 0$, so the equations \eqref{SVWEepthetaPQeps} imply 
\begin{equation}\label{Peps-neg}
P^\varepsilon_t\ +\ c(u^\varepsilon)\, P^\varepsilon_x\ \geqslant\ - \tilde{c}'(u^\varepsilon)\left[  (S^\varepsilon)^2\, +\, 2\, ([P^\eps]^-\, +\, |\Phi^\varepsilon\, W|)\, |\Theta^\eps|\right]
 -\, c(u^\varepsilon)\, (\Phi^\varepsilon\, W)_x,
\end{equation}
where $s^-=\max(-s,0)$ is the negative part of a real $s\in\R$. We integrate along the characteristics in \eqref{Peps-neg} (\textit{cf.} \eqref{hRalongchar} for instance) and use \eqref{RSalongcharacep} to get
\begin{equation}
\|[P^\varepsilon]^-(t)\|_{L^\infty(\T)}\leqslant \mathtt{C}(\omega,\eps,T)\left(1+\int_0^t \|[P^\varepsilon]^-(s)\|_{L^\infty(\T)}\ud s\right), 
\end{equation}
for $t\in [0,\tau^\eps\wedge T]$, which gives
\begin{equation}\label{Peps-negOK}
\sup_{t \in [0,\tau^\varepsilon\wedge T)}\|[P^\varepsilon]^-(t)\|_{L^\infty(\T)}\leqslant \mathtt{C}(\omega,\eps,T), 
\end{equation}
by the Gr\"onwall lemma. The bounds on the positive part of $P^\varepsilon$ is established in a similar manner: it follows from \eqref{SVWEepthetaPQeps} and the bound $R^2-\chi_\eps(R)\leqslant 2\eps^{-1}R^+$ that 
\begin{equation}\label{Peps-pos}
P^\eps_t\ +\ c(u^\eps)\, P^\eps_x\ \leqslant\ \mathtt{C}(\omega,\eps,T)\left[1\ +\ [P^\varepsilon]^+\right].
\end{equation}
By the comparison principle (or integration along the characteristics) and the Gr\"onwall lemma, \eqref{Peps-pos} implies 
\begin{equation}\label{Peps-posOK}
\sup_{t \in [0,\tau^\varepsilon\wedge T)}\|[P^\varepsilon]^+(t)\|_{L^\infty(\T)}\leqslant \mathtt{C}(\omega,\eps,T).
\end{equation}
We have a of course the bounds similar to \eqref{Peps-negOK}-\eqref{Peps-posOK} on $Q^\varepsilon$, so \eqref{OKglobaleps} is indeed satisfied.
\end{proof}

\subsection{\texorpdfstring{$L^{3^-}$}{} estimates}

In this section we will establish the following almost $L^3$ estimates with weights on $R^\varepsilon$ and $S^\varepsilon$.
\begin{pro}[Estimates in $L^{3^-}_{t,x}$]\label{prop:L3estimate}
Let $\alpha \in [0,1)$. We have the bound
\begin{equation}\label{alpha+2}
\sup_{\varepsilon}\, \E\, \int_{0}^{T} \int_\T c'(u^\varepsilon)
\left[ |R^\varepsilon|^{2+\alpha} + |S^\varepsilon|^{2+\alpha} \right]
  \ud x\, \ud t\ \leqslant\mathtt{C}(T,\alpha),
\end{equation}
for all $T\geqslant 0$.
\end{pro}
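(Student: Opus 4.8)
The plan is to derive a differential inequality for the expectation of a weighted $L^{2+\alpha}$-type quantity by applying the conservative Itô formulas \eqref{RCeqepthetaIto}-\eqref{SCeqepthetaIto} with the choice $h(\xi) = |\xi|^{2+\alpha}$ (regularized if necessary, since this is not $C^\infty$ at the origin for $\alpha<1$; one works with $h_\delta(\xi)=(\xi^2+\delta^2)^{(2+\alpha)/2}$ and passes to the limit $\delta\to 0$). The point of using the evolution of $\tfrac{h(R^\eps)}{c(u^\eps)}$ rather than $h(R^\eps)$ itself is the appearance of the weight $c(u^\eps)$, which, combined with $\tilde c' = \tfrac{c'}{4c}$, produces the structurally favourable term: in \eqref{RCeqepthetaIto} the coefficient of the cubic term $(S+R)F_h(R,S)$ is $\tfrac{\tilde c'(u)}{c(u)}$, and for $h(\xi)=|\xi|^{2+\alpha}$ one has $F_h(R,S)=2|R|^{2+\alpha}-(R-S)(2+\alpha)|R|^\alpha R$, whose leading cubic part in $R$ is $-\alpha|R|^{2+\alpha}\cdot(\text{sign contribution})$ — i.e. for the ``dangerous'' sign it yields a \emph{negative} (coercive) contribution $-c'(u^\eps)\,\text{const}(\alpha)\,|R^\eps|^{2+\alpha}$, up to lower-order terms involving $S^\eps$, $\Theta^\eps$, the cutoff $\chi_\eps$, the transport term, the $\Xi$ term, and the Itô correction $\tfrac{q\,h''}{2c}$.

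First I would integrate the resulting identity over $x\in\T$ and over $[0,t\wedge\tau^\eps]$, take expectations (the $h'(R^\eps)\Phi\,\ud W$ term has zero mean, using the uniform bounds already available to justify that the stochastic integral is a genuine martingale, \textit{cf.} \eqref{LPene}-\eqref{LPeneE}), and organize the right-hand side. The spatial integral of $[h(R^\eps)]_x$ vanishes by periodicity. The coercive term $\tilde\E\int_0^T\!\!\int_\T c'(u^\eps)|R^\eps|^{2+\alpha}$ (and its $S^\eps$-counterpart from \eqref{SCeqepthetaIto}) is moved to the left-hand side; one then needs to dominate everything else by $\mathtt C(T,\alpha)$, uniformly in $\eps$. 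The transport-type and $\Xi$-type terms, the Itô correction $\tfrac{q\,h''}{2c}\lesssim |R^\eps|^\alpha$, the cutoff term $-h'(R^\eps)\chi_\eps(R^\eps)$ (which has a favourable sign, since $h'(R^\eps)\chi_\eps(R^\eps)=(2+\alpha)|R^\eps|^\alpha R^\eps\chi_\eps(R^\eps)\geq 0$ for $R^\eps\geq 1/\eps>0$, and is harmless for $R^\eps\le 1/\eps$), and the $\Theta^\eps$-terms are all estimated by interpolation: $|R^\eps|^{2+\alpha}$ with a small constant in front (absorbable into the coercive term or via Young's inequality against $c'(u^\eps)|R^\eps|^{2+\alpha}$, using $c'\ge c_0$ from \eqref{Positivecprime00}... here the subtlety is that $c'(u^\eps)$ need not be bounded below for $t>0$ since $u^\eps$ drifts — so one must instead keep the full weight and use Young's inequality as $ab\le \eta a^{(2+\alpha)/1}+C_\eta b^{\cdots}$ splitting off exactly a factor $c'(u^\eps)$), plus terms controlled by the energy $\mathcal E^\eps$ and by $\int_0^T\!\!\int_\T(R^\eps)^2+(S^\eps)^2$, all of which are bounded in expectation uniformly in $\eps$ by Proposition~\ref{prop:GlobalEnergyep} and \eqref{LPeneE}, together with the $\Theta^\eps$-bound \eqref{eq:boundpsi} and the $\zeta^\eps$-bounds \eqref{utbyzeta}-\eqref{utbyzeta2}.

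The main obstacle, I expect, is the genuinely cubic cross term $c'(u^\eps)\,S^\eps\,(R^\eps)^2$-type contributions appearing in $F_h$ and the $\Theta^\eps$-terms: these are not obviously absorbable because they mix $R^\eps$ and $S^\eps$ at cubic order and the weight $c'(u^\eps)$ prevents naive $L^2$-energy control. The resolution is the structural cancellation specific to this system — the Riemann-invariant form guarantees that along $X^+$ the genuinely destabilizing cubic term in $R^\eps$ carries the coercive sign $-c'(u^\eps)|R^\eps|^{2+\alpha}$, while the remaining cross terms are at most quadratic in the ``bad'' variable times linear in the other, hence controllable by Young's inequality splitting off one power of the weight (as $c'(u^\eps)|S^\eps|\,|R^\eps|^{1+\alpha}\le \eta\,c'(u^\eps)|R^\eps|^{2+\alpha}+C_\eta\,c'(u^\eps)^{?}|S^\eps|^{2+\alpha}$, symmetrized with the $S^\eps$-equation so the excess is reabsorbed). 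The restriction $\alpha<1$ (i.e.\ $p<3$) enters precisely here: for $\alpha=1$ the interpolation against the energy no longer closes, because the integrated square term one gets for free only controls an $L^2_{t,x}$ norm, giving at best a cubic ($=2+1$) a priori bound and no slack. After summing the two inequalities and using Gr\"onwall (in the integrated-in-time sense) together with the uniform moment bounds on $\mathcal E^\eps$, one obtains \eqref{alpha+2}. Finally one passes $t\uparrow\tau^\eps$ and uses $\tau^\eps=\infty$ a.s. (Theorem~\ref{thm:globalsolep}) to state the bound on $[0,T]$.
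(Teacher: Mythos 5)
Your overall instinct — use the conservative/renormalized It\^o formulas with a power-type $h$, integrate over $\T\times[0,T]$, take expectations, and read off coercivity from the cubic term — is the right family of ideas, but there are three concrete problems with the specific route you propose, and the paper's proof navigates around all three.

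\textbf{First, the choice $h(\xi)=|\xi|^{2+\alpha}$ has the wrong parity and the wrong growth.} The paper takes $h(R)=\int_0^R(r^2+1)^{\alpha/2}\,\ud r$, which is \emph{odd}, \emph{monotone}, and grows like $\tfrac{1}{1+\alpha}R|R|^\alpha$, i.e., \emph{sub-quadratically}. Sub-quadratic growth is essential to make the time-boundary terms $\E\int_\T h(R^\eps(t))/c(u^\eps(t))$ and $\E\int_\T h(R^\eps_0)/c$ finite uniformly in $\eps$ using only the $L^2$-energy bound \eqref{LPeneE} — with $h(\xi)=|\xi|^{2+\alpha}$ the initial term $\int_\T|R^\eps_0|^{2+\alpha}$ already blows up as $\eps\to 0$ when $R_0\in L^2\setminus L^{2+\alpha}$, so the argument becomes circular. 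Worse, $|\xi|^{2+\alpha}$ being even destroys the sign-definiteness of $\Delta_2(R,S)=(R+S)\bigl(F_h(R,S)+F_h(S,R)\bigr)$: under $(R,S)\mapsto(-R,-S)$ one checks that $\Delta_2\mapsto -\Delta_2$ for even $h$ (one can also verify directly that $\Delta_2(1,0)=-\alpha<0$), so there is no coercivity to harvest. Your assertion that for ``the dangerous sign'' the cubic term $(S+R)(-\alpha|R|^{2+\alpha})$ is negative is not justified; the factor $(S+R)$ has no controlled sign and the product is sign-indefinite.

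\textbf{Second, and more fundamentally, you cannot close the estimate with \eqref{RCeqepthetaIto}--\eqref{SCeqepthetaIto} alone.} Summing those two equations and integrating yields control of $\E\int\int\tfrac{c'(u^\eps)}{c(u^\eps)}\Delta_2$, which, even with the correct odd $h$, is bounded below by $(R+S)^2(|R|^\alpha+|S|^\alpha)$. That quantity vanishes on the whole line $R=-S$, so it cannot possibly dominate $|R|^{2+\alpha}+|S|^{2+\alpha}$. The paper's proof is genuinely a \emph{two-ingredient} argument: it also sums the plain conservative forms \eqref{ReqepthetaItoConservative}--\eqref{SeqepthetaItoConservative}, which produces $\Delta_1(R,S)=(R-S)\bigl(B_h(R,S)-B_h(S,R)\bigr)$ bounded below by $(R-S)^2(|R|^\alpha+|S|^\alpha)$ (this is \eqref{belowPsiL3}, proved via monotonicity of $\xi\mapsto\xi/h'(\xi)$ and of $\xi\mapsto h(\xi)-\xi h'(\xi)/(\alpha+1)$). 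Only \emph{adding} the two coercive lower bounds recovers $(R^2+S^2)(|R|^\alpha+|S|^\alpha)\gtrsim |R|^{2+\alpha}+|S|^{2+\alpha}$. Your proposal drops the $\Delta_1$ half entirely and no Young/Gr\"onwall absorption can make up for it.

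\textbf{Third, no Gr\"onwall absorption is needed.} With the paper's $h$ both $\Delta_1$ and $\Delta_2$ are non-negative, the remaining terms ($\Theta^\eps$, $\Xi^\eps$, $\chi_\eps$, It\^o correction, noise) are dominated \emph{directly} by the already-established moment bounds \eqref{LPeneE}, \eqref{eq:boundpsi}, \eqref{uteps01}, and the estimate follows by summing and comparing; the restriction $\alpha<1$ enters to keep $h$ sub-quadratic, not as a loss of slack in an interpolation. So the broad strategy of ``get coercivity from the cubic structure plus existing moment bounds'' is correct, but the specific mechanism you have in mind (single conservative form, even superquadratic $h$, sign claim on the cubic) does not work.
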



\begin{proof}[Proof of Proposition~\ref{prop:L3estimate}] Let $h\colon\R\to\R$ be the non-decreasing function of class $W^{2,\infty}$ defined as
\begin{equation*}
h(R)\ \eqdef\ \int_0^R \left( r^2\, +\, 1 \right)^\frac{\alpha}{2} \ud r.
\end{equation*}
The function $h$ satisfies the growth conditions 
\begin{equation}\label{subalphah}
|h(R)|\ \leqslant\ C\, (1\ +\ |R|^2)^{\frac{\alpha+1}{2}},\quad |h'(R)|\ \leqslant\ C\, (1\ +\ |R|)^\alpha,\quad |h''(R)|\ \leqslant\ C,\quad R\in\R.
\end{equation}
We sum up the two equations \eqref{ReqepthetaItoConservative} and \eqref{SeqepthetaItoConservative} with $\theta=\chi_\eps$ and integrate on $\T$ to obtain (using $h'(R)\chi_\eps(R)\geqslant 0$ and the energy estimate \eqref{LPeneE})
\begin{equation}\label{EstimL3B0}
\E\, \int_{0}^{T} \int_\T c'(u^\varepsilon)\, \Delta_1(R^\varepsilon,S^\varepsilon)\, \ud x\, \ud t\, \leqslant\, \mathtt{C}(T)
\, +\, \mathtt{C}(T)\,\E\, \int_{0}^{T} |\Theta^\eps(t)|\int_{\T} (|R^\eps|^2\ +\ |S^\eps|^2)^{\frac{1+\alpha}{2}}\, \ud x\,  \ud t,
\end{equation}
where
\begin{equation}
\Delta_1(R,S)\ :=\ (R\ -\ S)\, (B_h(R,S)\ -\ B_h(S,R)),
\end{equation}
and $B_h(R,S)$ is defined in \eqref{defBh}.
By H\"older's inequality, \eqref{eq:boundpsi} and \eqref{LPeneE}, we can bound the last term in \eqref{EstimL3B0} and get
\begin{equation}\label{EstimL3B}
\E\, \int_{0}^{T} \int_\T c'(u^\varepsilon)\, \Delta_1(R^\varepsilon,S^\varepsilon)\, \ud x\, \ud t\ \leqslant \mathtt{C}(T,\alpha).
\end{equation}
Similarly, starting from \eqref{RCeqepthetaIto}-\eqref{SCeqepthetaIto} and using the estimates \eqref{uteps01} and \eqref{LPeneE}
we can establish the bound
\begin{equation}\label{EstimL3F}
\E\, \int_{0}^{T} \int_\T \frac{c'(u^\varepsilon)}{c(u^\varepsilon)}\, \Delta_2(R^\varepsilon,S^\varepsilon)\, \ud x\, \ud t\ \leqslant \mathtt{C}(T,\alpha),
\end{equation}
where
\begin{equation}
\Delta_2(R,S)\ :=\ (R\ +\ S)\, (F_h(R,S)\ +\ F_h(S,R)),
\end{equation}
and $F_h(R,S)$ is defined in \eqref{defFh}.

We claim that 
\begin{equation}\label{belowPsiL3}
(R\ -\ S)^2\, (|R|^\alpha\ +\ |S|^\alpha)\, \leqslant \mathtt{C}(\alpha)\, \Delta_1(R,S),\quad (R\ +\ S)^2\, (|R|^\alpha\ +\ |S|^\alpha)\, \leqslant \mathtt{C}(\alpha)\, \Delta_2(R,S).
\end{equation}
Summing up the two estimates \eqref{EstimL3B} and \eqref{EstimL3F} and using \eqref{coeff-c} and \eqref{belowPsiL3} one then gets \eqref{alpha+2}. Let us now give a proof of the first inequality in \eqref{belowPsiL3} (the second inequality is obtained by similar arguments). Using the definition of $B_h(R,S)$ and the monotonicity of $\xi \mapsto \xi / h'(\xi)$ we obtain 
\begin{align*}
\Delta_1(R,S)\, &\geqslant\, {\textstyle \frac{1-\alpha}{1+\alpha}} (R - S) \left[ R\, h'(S)\, -\, S\, h'(R) \right] + \, (R - S) \left[2\, h(R)\, -\, R\, h'(R)\, -\, 2\, h(S)\, +\, S\, h'(S) \right] \\
& =\, {\textstyle \frac{1-\alpha}{1+\alpha}} (R - S)^2 \left[ h'(R)+ h'(S) \right] + 2 (R - S) \left[ h(R) - {\textstyle \frac{1}{1+\alpha}} R h'(R) - h(S) + {\textstyle \frac{1}{1+\alpha}} S h'(S) \right]\! .
\end{align*}
The result follows from the monotonicity of $\xi \mapsto h(\xi) - \xi\, h'(\xi)/(\alpha +1)$ and from the bound from below $h'(\xi) \geqslant |\xi|^\alpha$.
\end{proof}

\subsection{One-sided entropy estimates}\label{sec:Oleinik} 

The central result in this section is Proposition~\ref{prop:one-sided-estimates}. We will need first a control on $c'(u(t))$ for small times. More precisely, recall that 
\begin{equation}
c_0\ =\ \inf_{x \in \T} c'(u_0(x))\ >\ 0
\end{equation}
by \eqref{Positivecprime00}. For $\eps$ sufficiently small, we have then 
\begin{equation}\label{Positivecprime-eps0}
\inf_{x \in \T} c'(u^\varepsilon(0,x))\ \geqslant\ c_0/2.
\end{equation}
Consider the stopping time 
\begin{equation}\label{bartdef}
\bar{t}^\varepsilon\ \eqdef\ \inf \left\{t \geqslant 0, \inf_{x \in \T} c'(u^\varepsilon(t,x))\, \leqslant\, c_0/4 \right\}.
\end{equation}
We have the following estimate on $\bar{t}^\varepsilon$.

\begin{lem}\label{lem:tepspos} We have
\begin{align}\label{bart}
\E \left[ (\bar{t}^\varepsilon)^{-p} \right]\, \leqslant\ \mathtt{C}(p),
\end{align}
for all $p\geqslant 1$.
\end{lem}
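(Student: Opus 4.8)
The plan is to bound $\Pro(\bar t^\varepsilon\leqslant t)$ by an arbitrarily high power of $t$, uniformly in $\varepsilon$, and then integrate. We may assume $\varepsilon$ is small enough that \eqref{Positivecprime-eps0} holds, i.e.\ $\inf_x c'(u^\varepsilon(0,x))\geqslant c_0/2$, and we work on $[0,1]$; since $\tau^\varepsilon=\infty$ a.s.\ by Theorem~\ref{thm:globalsolep}, the estimates of Section~\ref{subsec:Energy-eps} are available there. Put $M_0\eqdef\tfrac{1}{2c_1}\|S_0-R_0\|_{L^2(\T)}$; using that $u^\varepsilon_0(x)=\mathcal C^{-1}\!\big(\int_0^x\tfrac{S_0^\varepsilon-R_0^\varepsilon}{2}\,\ud y\big)$, that $\mathcal C^{-1}$ is $c_1^{-1}$-Lipschitz with $\mathcal C^{-1}(0)=0$, and that $J_\varepsilon$ is an $L^2$-contraction, we get $\|u^\varepsilon(0)\|_{L^\infty(\T)}\leqslant M_0$. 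Set $K\eqdef\sup_{|r|\leqslant M_0+1}|c''(r)|$ (finite, as $c\in C^\infty$) and $\eta_1\eqdef\min\big(1,\tfrac{c_0}{8K}\big)$; both are constants of the type $\mathtt C$.

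The core step is the $\varepsilon$-uniform bound: for $t\in[0,1]$,
\begin{equation}\label{plan:Linftydrift}
\|u^\varepsilon(t)-u^\varepsilon(0)\|_{L^\infty(\T)}\ \leqslant\ t^{1/2}\,\mathcal K^\varepsilon,\qquad \E\big[(\mathcal K^\varepsilon)^p\big]\ \leqslant\ \mathtt C(p)\quad\text{for all }p\geqslant1.
\end{equation}
The point is that one cannot control $u^\varepsilon_t$ pointwise in $x$, but one can interpolate an $L^2$-in-space and an $H^1$-in-space estimate, both of which carry $\varepsilon$-uniform moments of all orders. For the $L^2$ part, Proposition~\ref{prop:ut} with $\theta=\chi_\varepsilon$ gives $u^\varepsilon_t=(R^\varepsilon+S^\varepsilon)/2+\Xi^\varepsilon$, and \eqref{uteps01} gives $\Xi^\varepsilon\in L^\infty((0,1)\times\T)$; hence $\|u^\varepsilon_t(s)\|_{L^2(\T)}\leqslant\mathtt C\big(\mathcal E^\varepsilon(s)^{1/2}+\|\Xi^\varepsilon\|_{L^\infty}\big)$ and $\|u^\varepsilon(t)-u^\varepsilon(0)\|_{L^2(\T)}\leqslant t\,\mathcal H^\varepsilon$, with $\mathcal H^\varepsilon\eqdef\mathtt C\big(\sup_{s\leqslant1}\mathcal E^\varepsilon(s)\big)^{1/2}+\|\Xi^\varepsilon\|_{L^\infty((0,1)\times\T)}$; its moments are bounded by the exponential energy estimate \eqref{EXPene} and by \eqref{uteps01}. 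For the $H^1$ part, \eqref{cuxthetaeps} gives $c(u^\varepsilon)u^\varepsilon_x=\tfrac{S^\varepsilon-R^\varepsilon}{2}-\Theta^\varepsilon$, hence $\|u^\varepsilon_x(t)\|_{L^2(\T)}\leqslant\mathtt C\big(\mathcal E^\varepsilon(t)^{1/2}+|\Theta^\varepsilon(t)|\big)$; together with $\|u^\varepsilon(0)\|_{L^2(\T)}\leqslant M_0$ and $\|u^\varepsilon_x(0)\|_{L^2(\T)}\leqslant\tfrac{1}{2c_1}\|S_0-R_0\|_{L^2(\T)}$ (note $\Theta^\varepsilon(0)=0$, since $\int_\T(S_0-R_0)=0$ and $J_\varepsilon$ preserves the mean), this yields $\|u^\varepsilon(t)-u^\varepsilon(0)\|_{H^1(\T)}\leqslant\mathcal J^\varepsilon$ for $t\leqslant1$, whose moments are bounded by \eqref{EXPene} and \eqref{eq:boundpsi}. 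The elementary one-dimensional inequality $\|f\|_{L^\infty(\T)}^2\leqslant\|f\|_{L^2(\T)}^2+2\|f\|_{L^2(\T)}\|f'\|_{L^2(\T)}\leqslant3\|f\|_{L^2(\T)}\|f\|_{H^1(\T)}$, applied to $f=u^\varepsilon(t)-u^\varepsilon(0)$, then gives \eqref{plan:Linftydrift} with $\mathcal K^\varepsilon\eqdef\sqrt3\,(\mathcal H^\varepsilon\mathcal J^\varepsilon)^{1/2}$, the moments of which follow by Cauchy--Schwarz.

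Granting \eqref{plan:Linftydrift}, the conclusion is quick. On $\{t^{1/2}\mathcal K^\varepsilon\leqslant\eta_1\}$ one has $\|u^\varepsilon(s)-u^\varepsilon(0)\|_{L^\infty(\T)}\leqslant s^{1/2}\mathcal K^\varepsilon\leqslant\eta_1\leqslant1$ for every $s\leqslant t$, so $u^\varepsilon(s,\cdot)$ is valued in $[-M_0-1,M_0+1]$ and
\begin{equation}\label{plan:cprimebound}
\inf_x c'(u^\varepsilon(s,x))\ \geqslant\ \inf_x c'(u^\varepsilon(0,x))-K\|u^\varepsilon(s)-u^\varepsilon(0)\|_{L^\infty(\T)}\ \geqslant\ \tfrac{c_0}{2}-K\eta_1\ \geqslant\ \tfrac{3c_0}{8}\ >\ \tfrac{c_0}{4},
\end{equation}
for all $s\leqslant t$; since $s\mapsto\inf_x c'(u^\varepsilon(s,x))$ is continuous ($u^\varepsilon\in C([0,\infty);H^1(\T))\hookrightarrow C([0,\infty);C(\T))$ for each fixed $\varepsilon$), \eqref{plan:cprimebound} forces $\bar t^\varepsilon>t$. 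Thus $\{\bar t^\varepsilon\leqslant t\}\subseteq\{\mathcal K^\varepsilon>\eta_1 t^{-1/2}\}$, and the Markov inequality gives, for $t\in(0,1]$ and every $N\geqslant1$, $\Pro(\bar t^\varepsilon\leqslant t)\leqslant\eta_1^{-2N}t^{N}\,\E[(\mathcal K^\varepsilon)^{2N}]\leqslant\mathtt C(N)\,t^{N}$. Finally $\E[(\bar t^\varepsilon)^{-p}]=\int_0^\infty\Pro(\bar t^\varepsilon<\lambda^{-1/p})\,\ud\lambda\leqslant1+\mathtt C(2p)\int_1^\infty\lambda^{-2}\,\ud\lambda=\mathtt C(p)$, applying the previous bound with $N=2p$ for $\lambda\geqslant1$, which is \eqref{bart}.

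The main obstacle is \eqref{plan:Linftydrift}: estimating $u^\varepsilon_t$ pointwise in $x$ with $\varepsilon$-uniform moments would require uniform $L^p(\T)$-control of $R^\varepsilon,S^\varepsilon$ for some $p>2$, which is exactly what may fail (it is the blow-up mechanism). The resolution is to keep everything at the level of the $L^2(\T)$ and $H^1(\T)$ norms---whose moments are tame via the exponential energy estimate \eqref{EXPene} and the correction-term bounds of Corollary~\ref{cor:boundpsi}---and to recover the required $L^\infty$-drift through the one-dimensional interpolation inequality, which simultaneously produces the gain $t^{1/2}$ in time and keeps $\mathcal K^\varepsilon$ integrable to all orders.
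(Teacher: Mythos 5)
Your proof is correct and takes essentially the same route as the paper's: both hinge on the one-dimensional interpolation (Agmon) inequality applied to $u^\varepsilon(t)-u^\varepsilon(0)$, gaining a power of $t$ from the $L^2$-in-space bound on $u^\varepsilon_t$ (supplied by \eqref{utOK} and Corollary~\ref{cor:boundpsi}) and pairing it with the uniform-in-time $H^1$-in-space bound from \eqref{cuxthetaeps}, with all moments controlled by \eqref{EXPene}, \eqref{eq:boundpsi}, \eqref{uteps01}. You do tighten two points that the paper glosses over: you make explicit that the threshold $\eta_1$ can be chosen uniformly in $\varepsilon$ (via $\|u^\varepsilon(0)\|_{L^\infty(\T)}\leqslant M_0$ and the local Lipschitz constant of $c'$, whereas the paper's $\eta$ nominally depends on $u^\varepsilon(0,\cdot)$), and you use the correct interpolation $\|f\|_{L^\infty(\T)}^2\leqslant 3\|f\|_{L^2(\T)}\|f\|_{H^1(\T)}$ rather than the paper's $\|v\|_{L^\infty(\T)}^2\leqslant\|v\|_{L^2(\T)}\|\partial_x v\|_{L^2(\T)}$, which fails for constant $v$ (the extra $\|f\|_{L^2}^2$ term is harmless since it carries $t^2$). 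Your passage via the tail bound $\Pro(\bar t^\varepsilon\leqslant t)\leqslant\mathtt{C}(N)t^N$ and integration is an equivalent, if slightly more roundabout, presentation of the paper's direct pathwise bound on $(\bar t^\varepsilon\wedge T)^{-1}$.
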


\begin{proof}[Proof of Lemma~\ref{lem:tepspos}] By continuity of the map $u \mapsto c'(u)$ and by \eqref{Positivecprime-eps0}, there exists $\eta>0$ such that 
\begin{equation}\label{Ldef2}
\|v-u^\varepsilon(0,\cdot)\|_{L^\infty(\T)}\ \leqslant\ \eta \qquad \implies \qquad
\inf_{x \in \T} c'(v(x))\ >\ c_0/4,
\end{equation}
for any $v\in C(\T)$. In particular, 
\begin{equation}\label{finitebart}
\eta\ \leqslant\ \sup_{s \leqslant \bar{t}^\varepsilon} \|u^\varepsilon(s) - u^\varepsilon_0\|_{L^\infty(\T)}.
\end{equation}
We apply the estimate
\[
\|v\|_{L^\infty(\T)}^2\ \leqslant\ \|v\|_{L^2(\T)}\, \|\partial_x v\|_{L^2(\T)},
\]
valid for any $v\in H^1(\T)$, to $v=u^\varepsilon(t) - u^\varepsilon_0$. Writing 
\begin{equation}
u^\varepsilon(t) - u^\varepsilon_0=\int_0^t u^\varepsilon_t(s)\, \ud s
\end{equation}
yields 
\begin{equation}
\|u^\varepsilon(t) - u^\varepsilon_0\|_{L^\infty(\T)}^2\ \leqslant\ 2\, t\, \|u^\varepsilon_t\|_{L^\infty((0,t),L^2(\T))}\, 
\|\partial_x u^\varepsilon\|_{L^\infty((0,t);L^2(\T))},
\end{equation}
and, by \eqref{finitebart}
\begin{equation}
(\bar{t}^\varepsilon)^{-1}\ \leqslant\ (\bar{t}^\varepsilon\wedge T)^{-1}\ \leqslant\ 2\,\eta^{-1}\, \|u^\varepsilon_t\|_{L^\infty((0,T),L^2(\T))}\, 
\|\partial_x u^\varepsilon\|_{L^\infty((0,T);L^2(\T))},
\end{equation}
where $T$ is an arbitrary positive time (we may take $T=1$). We use the identities \eqref{cuxthetaeps}-\eqref{utOK} and the bounds \eqref{eq:boundpsi}, \eqref{uteps01}, \eqref{LPeneE} to conclude to \eqref{bart}.
\end{proof}


We can now state the following one-sided estimate, where $s^-=-\min \{s,0\}$ denotes the negative part of a real $s$.

\begin{pro}[Oleinik's estimate]\label{prop:one-sided-estimates} We have
\begin{equation}\label{Oleinik}
\E\left[ \left\| \left(R^\varepsilon\right)^-(\tau \wedge T,\cdot) \right\|_{L^\infty(\T)}^p\right]\ +\ \E\left[ \left\| \left(S^\varepsilon\right)^-(\tau \wedge T,\cdot) \right\|_{L^\infty(\T)}^p\right]\ \leqslant\ \mathtt{C}(p,T) \left(
1\, +\, \E\left[\frac{1}{\left(\tau \wedge T\right)^p} \right]\right),
\end{equation}
for all $p\in [1,2]$, for any time $T>0$ and any stopping time $\tau >0$ a.s.
\end{pro}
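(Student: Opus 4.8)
The plan is to prove the estimate for $(R^\varepsilon)^-$; the bound for $(S^\varepsilon)^-$ then follows by the symmetry $R^\varepsilon\leftrightarrow S^\varepsilon$, $X^{\varepsilon,+}\leftrightarrow X^{\varepsilon,-}$ between the two equations of \eqref{SVWEep} (recall $\tau^\varepsilon=\infty$ a.s. by Theorem~\ref{thm:globalsolep}, so $R^\varepsilon,S^\varepsilon$ are global). Fix a time $t^*$, to be taken equal to $\tau\wedge T$, and a point $x^*\in\T$, and follow the forward characteristic $\eta(s):=X^{\varepsilon,+}(s,Y^{\varepsilon,+}(t^*,x^*))$ reaching $x^*$ at time $t^*$. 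By Proposition~\ref{prop:characteristics} applied with $h=\mathrm{id}$, the real-valued process $R(s):=R^\varepsilon(s,\eta(s))$ satisfies
\begin{equation*}
\ud R\ =\ \tilde{c}'(u^\varepsilon)\bigl[R^2\, -\, (S^\varepsilon)^2(s,\eta(s))\, -\, \chi_\varepsilon(R)\, +\, 2\,R\,\Theta^\varepsilon\bigr]\,\ud s\ +\ \ud N,
\end{equation*}
where $N(s)=\sum_k\int_0^s\sigma_k^\varepsilon(\eta(r))\,\ud\beta_k(r)$ is a martingale with $\langle N\rangle_s\leqslant q_0\,s$. I will use repeatedly the elementary facts $0\leqslant\chi_\varepsilon(\xi)\leqslant\xi^2$ and $\chi_\varepsilon(\xi)=0$ for $\xi\leqslant0$ (see \eqref{chidef}), so that $R^2-\chi_\varepsilon(R)\geqslant0$ everywhere, and $R^2-\chi_\varepsilon(R)=R^2$ on $\{R\leqslant0\}$.

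First I would strip off the noise: set $Z:=R-N$, which is pathwise $C^1$ with $\dot Z=\tilde{c}'(u^\varepsilon)[R^2-\chi_\varepsilon(R)-(S^\varepsilon)^2+2R\Theta^\varepsilon]$. On $[0,\bar t^\varepsilon]$ one has $\tilde{c}'(u^\varepsilon)=\tfrac{c'(u^\varepsilon)}{4\,c(u^\varepsilon)}\in[a_0,c_4]$ with $a_0:=c_0/(16\,c_2)>0$, by \eqref{Positivecprime-eps0}, \eqref{bartdef} and \eqref{coeff-c}. Writing $M:=\sup_{[0,t^*]}|N|$, one has $\{Z\leqslant-M\}\subseteq\{R\leqslant0\}$, so substituting $R=Z+N$, using $R^2\geqslant\tfrac12 Z^2-N^2$ and absorbing the term $2R\Theta^\varepsilon$ by Young's inequality gives, on $\{Z\leqslant-M\}\cap[0,\bar t^\varepsilon]$, the Riccati differential inequality $\dot Z\geqslant\tfrac{a_0}{4}Z^2-G$, where $G(s):=\mathtt{C}\bigl[N(s)^2+(S^\varepsilon)^2(s,\eta(s))+\Theta^\varepsilon(s)^2\bigr]\geqslant0$ with $\mathtt{C}$ as in \eqref{constantC}.

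Next comes the comparison step. A Riccati comparison — comparing $Z$ on $[0,\bar t^\varepsilon\wedge t^*]$ with the ``blow-down'' solution of $\dot\phi=\tfrac{a_0}{4}\phi^2-G$, $\phi(0^+)=-\infty$, for which $\phi(s)\geqslant-\tfrac{4}{a_0 s}-\int_0^s G$, and separately treating the easier case in which $Z$ rises back above $-M$ — yields the pathwise bound $R(\bar t^\varepsilon\wedge t^*)^-\leqslant \tfrac{\mathtt{C}}{\bar t^\varepsilon\wedge t^*}+\mathtt{C}\,\bar G+\mathtt{C}\,M$, where $\bar G:=\int_0^{t^*}G(s)\,\ud s$. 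If $t^*\leqslant\bar t^\varepsilon$ this already controls $R(t^*)^-$; otherwise one propagates on $[\bar t^\varepsilon,t^*]$, where only $\tilde{c}'(u^\varepsilon)\geqslant0$ is available: dropping the nonnegative term $\tilde{c}'(u^\varepsilon)(R^2-\chi_\varepsilon(R))$ and running Gr\"onwall's lemma on $\{R<0\}$ gives $R(t^*)^-\leqslant e^{2c_4\|\Theta^\varepsilon\|_{L^1(0,t^*)}}\bigl(R(\bar t^\varepsilon)^-+c_4\!\int_{\bar t^\varepsilon}^{t^*}(S^\varepsilon)^2(s,\eta(s))\,\ud s+2M\bigr)$, so that in all cases $R(t^*)^-\leqslant e^{2c_4\|\Theta^\varepsilon\|_{L^1}}\bigl(\tfrac{\mathtt{C}}{\bar t^\varepsilon\wedge t^*}+\mathtt{C}\bar G+\mathtt{C}M\bigr)$.

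It then remains to take the $L^\infty$-norm over $x^*$ and the $p$-th moment, $p\in[1,2]$. The factor $e^{2c_4\|\Theta^\varepsilon\|_{L^1(0,t^*)}}$ has all moments bounded (and tends to $1$) by \eqref{eq:boundpsi}; one has $\E[(\bar t^\varepsilon\wedge t^*)^{-p}]\leqslant\E[(\bar t^\varepsilon)^{-p}]+\E[(\tau\wedge T)^{-p}]\leqslant\mathtt{C}(p)+\E[(\tau\wedge T)^{-p}]$ by Lemma~\ref{lem:tepspos}; and $\bar G$ is bounded term by term: $\int_0^{t^*}(\Theta^\varepsilon)^2$ by \eqref{eq:boundpsi}, the terms $\sup|N|$ and $\int_0^{t^*}N^2$ by the Burkholder--Davis--Gundy inequality together with $\langle N\rangle\leqslant q_0 t^*$ — with uniformity in $x^*$ obtained from a Kolmogorov continuity estimate, using that $X^{\varepsilon,+}$ is $C^1$ in $x$ and $\sum_k\|\sigma_k\|_{W^{1,\infty}}^2=q_0<\infty$ — and, the crucial point, $\sup_{x^*}\int_0^{t^*}(S^\varepsilon)^2(s,\eta(s))\,\ud s$ by the \emph{uniform-in-$x$} bound \eqref{MLinfty} of Proposition~\ref{prop:controlEnergiesep}; this last ingredient is exactly where the restriction $p\leqslant2$ enters, since only an $L^2$-in-$\omega$ control of $\sup_{s,x_1}|M^\varepsilon_{x_1}(s)|$ is at our disposal. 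Combining these bounds gives \eqref{Oleinik}. The main obstacle is twofold and intertwined: running the Riccati comparison robustly in the presence of the non-constant, merely $L^1$ source $G$ so as to extract the $1/(\bar t^\varepsilon\wedge t^*)$ smoothing from the blow-down solution, and making every bound uniform in the base point $x^*$ so that the spatial $L^\infty$-norm can be taken — it is precisely for this uniformity that the sup-over-$x_1$ control \eqref{MLinfty} along the opposite characteristics was prepared in Proposition~\ref{prop:controlEnergiesep}.
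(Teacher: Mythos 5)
Your argument follows essentially the same strategy as the paper's: strip off the noise, absorb the lower-order cross term by Young's inequality to obtain a Riccati-type differential inequality along the forward characteristic, exploit the quantitative lower bound $\tilde c'(u^\varepsilon)\geqslant c_0/(16c_2)$ on $[0,\bar t^\varepsilon]$ to extract the $1/t$ smoothing from the blow-down solution, use only the sign $\tilde c'(u^\varepsilon)\geqslant0$ on $[\bar t^\varepsilon,\tau\wedge T]$, and rely on \eqref{MLinfty} and Lemma~\ref{lem:tepspos} for the uniform-in-$x$ and $\E[(\bar t^\varepsilon)^{-p}]$ controls. The only cosmetic differences are that the paper shifts the noise pointwise, working with $P=R-\Phi W$ and $H=(P+K_T)^-$ and absorbing $2R\Theta^\varepsilon$ once and for all into $K_T^2$ (so no Gr\"onwall step is needed after $\bar t^\varepsilon$, and the term $\chi_\varepsilon(P+\Phi W)$ simply vanishes on $\{H>0\}$), rather than via the It\^o integral $N$ along the characteristic, which also sidesteps your Kolmogorov-continuity step for $\sup_x|N|$ since $\|\Phi W\|_{C([0,T];W^{1,\infty}(\T))}$ is controlled directly through \eqref{WHs}.
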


\begin{proof}[Proof of Proposition~\ref{prop:one-sided-estimates}] For simplicity, we will drop the superscript $\eps$ on the quantities $R$, $S$, etc. in the proof. Let us consider the equation for $P=R-\Phi W$: use
\begin{equation}\label{EqROleinik1}
R^2\ =\ (P+\Phi W)^2\ \geqslant\ \half\, P^2\, -\, |\Phi W|^2,
\end{equation}
and 
\begin{equation}\label{EqROleinik2}
2\, R\, \Theta \ =\ 2\, (P+\Phi W)\, \Theta \ \geqslant\ - \fourth\, P^2\, -\, C(|\Theta |^2\, +\, |\Phi W|^2),
\end{equation}
to obtain
\begin{equation}\label{EqROleinik3}
P_t\ +\ c(u)\, P_x\ \geqslant\
{\textstyle \frac{\tilde{c}'(u)}{4}} P^2 -C S^2 -C \chi_\eps(P+\Phi W)-C K_T^2,
\end{equation}
where  
\begin{equation}\label{EqROleinik4}
K_T:=\left\{\sup_{t\in[0,T]} \left(\|\Phi W(t)\|_{W^{1,\infty}(\T)}^2+|\Theta (t)|^2\right)\right\}^{1/2}.
\end{equation}
We deduce from \eqref{EqROleinik3} that
\begin{equation}\label{cutOleinik}
H_t\ +\ c(u)\,  H_x\ \leqslant\ -{\textstyle \frac{\tilde{c}'(u)}{8}} H^2 +CS^2+C K_T^2,
\end{equation}
where $H\eqdef (P+K_T)^-$. Integrate along the characteristics: for $t\leqslant \tau\wedge T$,
\begin{equation}\label{charOleinik1}
H(t,X^+(t,x))\
\leqslant\ H(s,X^+(s,x))\ -\int_s^t {\textstyle \frac{\tilde{c}'(u)}{8}} H^2(r,X^+(r,x))\, \ud r\ +\ A_T.
\end{equation}
where
\begin{equation}\label{charOleinik2}
A_T\ \eqdef\ C \int_0^{\tau\wedge T} S^2(s,X^+(s,x)))\, \ud s\ +\ C\, T\, K_T^2.
\end{equation}
On the interval $[0,\bar{t}^\eps]$, assuming $\tau\wedge T\geqslant\bar{t}^\eps$, we have
\begin{equation}\label{charOleinik3}
H(\sigma ,X^+(\sigma,x))\
\leqslant\ H(s,X^+(s,x))\, -{ \textstyle \frac{c_0}{32}}\int_s^\sigma H^2(r,X^+(r,x))\, \ud r\ +\ A_T,\quad [s,\sigma]\subset [0,\bar{t}^\eps].
\end{equation}
We deduce from \eqref{charOleinik3} that
\begin{equation}\label{OKbartOleinik}
H(t,X^+(t,x))\ \leqslant\ {\textstyle \frac{32}{c_0 t}}\ +\ A_T,\quad\forall t\in (0,\bar{t}^\eps].
\end{equation}
Indeed, assume that \eqref{OKbartOleinik} is not realized, and let $t_1\in (0,\bar{t}^\eps]$ be such that
\begin{equation}\label{notOKbartOleinik}
H(t_1,X^+(t_1,x))\ >\ \frac{32}{c_0 t_1}\ +\ A_T.
\end{equation}
Define
\begin{equation}
t_0\ \eqdef\ \sup\left\{t\in (0,t_1]; H(t,X^+(t,x)) \leqslant {\textstyle \frac{32}{c_0 t}} \right\}.
\end{equation}
We have $t_0>0$ since $H(0,x)<\infty$ and, by continuity,
\begin{equation}
H(t_0,X^+(t_0,x))\ =\ {\textstyle \frac{32}{c_0 t_0}},\quad H(t,X^+(t,x))\ >\ {\textstyle \frac{32}{c_0 t}},\quad \forall t\in(t_0,t_1].
\end{equation}
Taking $s=t_0$ and $\sigma=t_1$ in \eqref{charOleinik3} therefore gives
\begin{equation}\label{OKOKbartOleinik}
H(t_1,X^+(t_1,x))\ \leqslant\ {\textstyle \frac{32}{c_0 t_0}}\ +\, \left[{\textstyle \frac{32}{c_0 r}}\right]_{t_0}^{t_1} +\, A_T\ =\ {\textstyle \frac{32}{c_0 t_1}}\ +\ A_T,
\end{equation}
which is in contradiction with \eqref{notOKbartOleinik}. Whatever the value of $\tau$ may be, infer from \eqref{OKbartOleinik} that, for all $x\in\T$,
\begin{equation}\label{OKbartOleiniktau}
H(\tau\wedge T\wedge \bar{t}^\eps,x)\ \leqslant\ \frac{32}{c_0 \tau\wedge T\wedge \bar{t}^\eps}\ +\ A_T.
\end{equation}
If $\tau\wedge T>\bar{t}^\eps$, we complete the argument as follows: we use \eqref{charOleinik1} between the times $\bar{t}^\eps$ and $\tau\wedge T$ and exploit \eqref{OKbartOleinik} with $t=\bar{t}^\eps$ to get
\begin{equation}\label{OKbartOleiniktau2}
H(\tau\wedge T,x)\ \leqslant\ {\textstyle \frac{32}{c_0 \bar{t}^\eps}}\ +\ 2A_T.
\end{equation}
Finally, we deduce from \eqref{OKbartOleiniktau} and \eqref{OKbartOleiniktau2} that
\begin{equation}\label{OKbartOleiniktau3}
H(\tau\wedge T\wedge \bar{t}^\eps,x)\ \leqslant\ \frac{32}{c_0 \tau\wedge T\wedge \bar{t}^\eps}\ +\ 2\, A_T,
\end{equation}
and thus, using \eqref{WHs}, \eqref{eq:boundpsi} and Proposition~\ref{prop:controlEnergiesep} to estimate $\E\left[A_T^p\right]$, 
\begin{equation}\label{OleinikH}
\E\left[ \left\| H(\tau \wedge T,\cdot) \right\|_{L^\infty(\T)}^p\right]\ \leqslant\ \mathtt{C}(p,T) \left(
1\, +\, \E\left[\frac{1}{\left(\tau \wedge T\right)^p} \right]\right).
\end{equation}
Then we conclude with the inequality $R^-\leqslant H+2K_T$.
\end{proof}

\section{First limit elements}\label{sec:Tightness}
It follows from \eqref{ReqepthetaItoConservative}-\eqref{SeqepthetaItoConservative} that $(R^\eps,S^\eps)$ satisfies the following set of equations 
\begin{gather}\nonumber
\ud h(R^\varepsilon)\ +\ (c(u^\varepsilon)\, h(R^\varepsilon))_x\, \ud t\ =\ h'(R^\varepsilon)\, \Phi^\eps\, \ud W\ +\  2\, \tilde{c}'(u^\varepsilon)\, \Theta^\varepsilon\, (R^\varepsilon\, h'(R^\varepsilon)\, -\, 2\, h(R^\varepsilon))\, \ud t  \\ \label{ReqepthetaItoConservativeEPS}
=\ \tilde{c}'(u^\varepsilon) \left[(S^\varepsilon\, -\, R^\varepsilon)\, B_h(R^\varepsilon,S^\varepsilon)\, -\, h'(R^\varepsilon)\, \chi_\varepsilon(R^\varepsilon)\right] \ud t\, +\, \half\, q^\varepsilon\, h''(R^\varepsilon) \, \ud t,
\end{gather}
and
\begin{gather}\nonumber
\ud h(S^\varepsilon)\ -\ (c(u^\varepsilon)\, h(S^\varepsilon))_x\, \ud t\ +\  h'(S^\varepsilon)\,\Phi^\eps\, \ud W -\, 2\, \tilde{c}'(u^\varepsilon)\, \Theta^\varepsilon\, (S^\varepsilon\, h'(S^\varepsilon)\, -\, 2\, h(S^\varepsilon))\, \ud t \\  \label{SeqepthetaItoConservativeEPS}
=\ \tilde{c}'(u^\varepsilon) \left[(R^\varepsilon\, -\, S^\varepsilon)\, B_h(S^\varepsilon,R^\varepsilon)\, -\, h'(S^\varepsilon)\, \chi_\varepsilon(S^\varepsilon)\right] \ud t\, +\, \half\, q^\varepsilon\, h''(S^\varepsilon)\, \ud t.
\end{gather}
In this section, we use the estimates derived in the previous part to obtain a first set of limit elements, including random Young measures (see Proposition~\ref{pro:tightness}). Then we pass to the limit in \eqref{ReqepthetaItoConservativeEPS}-\eqref{SeqepthetaItoConservativeEPS} to obtain a first set of limit equations with defect measures. In the next section~\ref{sec:Young}, we will complete the analysis of the limit elements, prove that the Young measures reduce to Dirac masses, and establish the limit equation for good. 

The plan of this section is the following one: some elements on random Young measures are given in Section~\ref{subsec:YoungMeasure}. Taking the limit of the stochastic integral in \eqref{ReqepthetaItoConservativeEPS}-\eqref{SeqepthetaItoConservativeEPS} is done as usual by strengthening the probabilistic mode of convergence, passing from convergence in law to a.s.-convergence, at the expense of a modification of the probability spaces and measures. We use the 
Skorokhod--Jakubowski theorem to that effect, explained in Section~\ref{subsec:QPolish}. Compactness results are established in Section~\ref{subsec:tight} and the limit equation with defect measure derived in Section~\ref{subsec:limdefectmeas}.

%
%
%
%
%
%
%

\subsection{Young measures}\label{subsec:YoungMeasure}

Let $T>0$ and let $\mathcal{P}_T([0,T] \times \T \times \R^2)$ denote the space of (non-negative) measures $\mu$ on $[0,T] \times \T \times \R^2$ having total measure 
\begin{equation}\label{TotalMassYoungMeasure}
\mu([0,T] \times \T \times \R^2)=T.
\end{equation} 
We start this section by defining Young measures on $[0,T] \times \T$.

\begin{definition}
Let $\mu \in \mathcal{P}_T([0,T] \times \T \times \R^2)$, we say that $\mu$ is a Young measure on $[0,T] \times \T $ (with state space $\R^2$) if for almost all $(t,x) \in [0,T] \times \T$ there exists a probability measure $\mu_{t,x}$ on $\R^2$ such that 
\begin{itemize}
\item the map $(t,x) \mapsto \dual{\mu_{t,x}}{\varphi}$ is measurable for any $\varphi \in C_b(\R^2)$
\item $\dual{\mu}{\psi} = \int_{[0,T] \times \T} \int_{\R^2} \psi(t,x,\xi)\, \ud \mu_{t,x}(\xi,\eta)\, \ud x\, \ud t $ for any $\psi \in C_b([0,T] \times \T \times \R^2)$.
\end{itemize}
We denote by $\mathscr{Y}$ the space of Young measures on $[0,T] \times \T$ with state space $\R^2$.
\end{definition}

The space $\mathscr{Y}$ is equipped with the topology of weak convergence in $\mathcal{P}_T([0,T] \times \T \times \R^2)$, i.e. 
\begin{equation}\label{CVweakmu}
\mu^\varepsilon\ \to\ \mu \quad \mathrm{iff} \quad \dual{\mu^\varepsilon}{\psi}\ \to\ \dual{\mu}{\psi}\ \forall \psi \in C_b([0,T] \times \T \times \R^2).
\end{equation} 
The weak convergence in the space $\mathcal{P}_T([0,T] \times \T \times \R^2)$ (and also $\mathscr{Y}$) coincides with the convergence with respect to the Prokhorov metric \cite[p.72]{BillingsleyBook}. Then,  $\mathscr{Y}$ is a Polish (separable and completely metrizable) space, \cite[Theorem~6.8]{BillingsleyBook}.
In order to obtain tightness in the space $\mathscr{Y}$, one needs to identify some compact subsets of $\mathscr{Y}$. 
For that purpose, we recall the following result (see Proposition 4.1 in \cite{BerthelinVovelle19})

\begin{pro}\label{CompactYoung}
Let $f \in C (\R^2)$ be a non-negative function satisfying 
\begin{equation}\label{finfinite}
\lim_{|(\xi,\eta)| \to \infty} f(\xi,\eta)\ =\ \infty.
\end{equation}
Let also $C>0$, then the set 
\begin{equation}\label{Kicdef}
\left\{\mu \in \mathscr{Y},\ \int_{[0,T] \times \T \times \R^2} f(\xi,\eta)\, \ud \mu(t,x,\xi,\eta) \leqslant C \right\}
\end{equation}
is a compact subset of $\mathscr{Y}$.
\end{pro}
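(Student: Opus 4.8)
The plan is to prove Proposition~\ref{CompactYoung} by showing that the indicated set is sequentially compact in the Polish space $\mathscr{Y}$, which (since $\mathscr{Y}$ is metrizable) is equivalent to compactness. The argument proceeds in two steps: first, establish tightness of the set in $\mathcal{P}_T([0,T]\times\T\times\R^2)$, hence relative compactness for weak convergence; second, show that the set is closed in $\mathscr{Y}$, so that any weak limit is again a Young measure satisfying the same bound.

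\medskip

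\noindent\textbf{Step 1: Tightness.} Denote the set in \eqref{Kicdef} by $\mathcal{K}$. Fix $\mu\in\mathcal{K}$. Since $[0,T]\times\T$ is compact, we only need to control the mass of $\mu$ escaping to infinity in the $(\xi,\eta)$-variables. Given $\delta>0$, use the coercivity \eqref{finfinite}: there is $M>0$ such that $f(\xi,\eta)\geqslant C/\delta$ whenever $|(\xi,\eta)|\geqslant M$. Then by Chebyshev's inequality,
\begin{equation*}
\mu\left([0,T]\times\T\times\{|(\xi,\eta)|\geqslant M\}\right)\ \leqslant\ \frac{\delta}{C}\int_{[0,T]\times\T\times\R^2} f(\xi,\eta)\, \ud\mu\ \leqslant\ \delta.
\end{equation*}
Hence the compact set $[0,T]\times\T\times\overline{B(0,M)}$ carries mass at least $T-\delta$, uniformly over $\mu\in\mathcal{K}$. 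By Prokhorov's theorem (adapted to the normalization \eqref{TotalMassYoungMeasure} — one may work with the probability measures $\mu/T$), $\mathcal{K}$ is relatively compact for the weak convergence \eqref{CVweakmu}.

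\medskip

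\noindent\textbf{Step 2: Closedness.} Let $(\mu^n)$ be a sequence in $\mathcal{K}$ with $\mu^n\to\mu$ weakly in $\mathcal{P}_T([0,T]\times\T\times\R^2)$. We must check that $\mu\in\mathscr{Y}$ and that $\int f\,\ud\mu\leqslant C$. For the bound on $\int f\,\ud\mu$: since $f\geqslant 0$ is lower semicontinuous (in fact continuous), the portmanteau theorem gives $\int f\,\ud\mu\leqslant\liminf_n\int f\,\ud\mu^n\leqslant C$; alternatively, test against the truncations $f\wedge k\in C_b$, pass to the limit, then let $k\to\infty$ by monotone convergence. For the Young-measure structure: the key point is that the $x$-marginal (more precisely, the $(t,x)$-marginal) of each $\mu^n$ is the fixed measure $\ud x\,\ud t$ on $[0,T]\times\T$ (this is built into the definition, via the second bullet with $\psi=\psi(t,x)$ independent of $(\xi,\eta)$), and this property is stable under weak limits — testing \eqref{CVweakmu} against $\psi\in C_b([0,T]\times\T)$ shows the $(t,x)$-marginal of $\mu$ is again $\ud x\,\ud t$. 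A standard disintegration theorem then yields a measurable family of probability measures $(\mu_{t,x})$ with $\mu=\mu_{t,x}(\ud\xi\,\ud\eta)\otimes\ud x\,\ud t$, giving the two bullets in the definition of $\mathscr{Y}$. Hence $\mu\in\mathcal{K}$, so $\mathcal{K}$ is closed, and combined with Step~1 it is compact.

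\medskip

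\noindent The main obstacle is the second step: one must verify carefully that the disintegration of the limit measure exists and is a genuine Young measure, i.e.\ that the fibers are \emph{probability} measures for a.e.\ $(t,x)$ and that the map $(t,x)\mapsto\dual{\mu_{t,x}}{\varphi}$ is measurable. This is where the fixed-marginal property is essential, and where one invokes the disintegration theorem on the Polish space $[0,T]\times\T\times\R^2$; the coercivity of $f$ plays no role here — it is used only in Step~1 — but without the normalization \eqref{TotalMassYoungMeasure} being preserved in the limit one could not conclude the fibers have total mass one. Since this result is quoted from \cite[Proposition~4.1]{BerthelinVovelle19}, in the paper one may simply cite it; the sketch above records the argument for completeness.
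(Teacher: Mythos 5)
Your argument is correct, and it is the standard proof of this compactness criterion. The paper itself does not supply a proof but simply cites \cite[Proposition~4.1]{BerthelinVovelle19}; your two-step sketch (tightness via coercivity of $f$ and Markov's inequality, then closedness of $\mathcal{K}$ by lower semicontinuity of $\mu\mapsto\int f\,\ud\mu$ together with stability of the fixed $(t,x)$-marginal under weak limits, followed by disintegration) is precisely the argument one finds there, so the two approaches coincide.

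One remark on presentation: in Step~2 you could streamline the identification of $\mu$ as a Young measure by noting directly that the marginal condition is \emph{equivalent} to the relations \eqref{useDecYM} of the paper (testing against $\psi(t,x)$ only), which pass to the limit trivially, and that the disintegration theorem on the Polish product space then gives the required measurable family $(\mu_{t,x})$ of probability kernels — exactly as you say. No gap.
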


\begin{remark}[Convergence in $\mathscr{Y}$]\label{rk:CVYoung} If $\mu$ and $\mu_1,\mu_2,\dotsc$ are Young measures in the set \eqref{Kicdef}, and if \eqref{finfinite} is satisfied, then to check the weak convergence $\mu_n\to\mu$ (which corresponds to convergence against continuous and bounded functions on $\T\times[0,T]\times\R^2$, \textit{cf.} \eqref{CVweakmu}), it is sufficient to check convergence against functions in $C_0(\T\times[0,T]\times\R^2)$ (functions which tend to $0$ at infinity). Actually, we can even conclude that 
\begin{equation}\label{CVmun}
\int_0^T\int_\T\int_{\R^2}\varphi(t,x)g(\xi,\eta)\, \ud \mu_n\to \int_0^T\int_\T\int_{\R^2}\varphi(t,x)g(\xi,\eta)\, \ud \mu,
\end{equation}
for all $\varphi\in C([0,T]\times\T)$ and $g\in C(\R^2)$ satisfying $|g(\xi,\eta)|\leqslant C f(\xi,\eta)^\delta$, where $\delta\in[0,1)$, since for any $\mu$ in the set \eqref{Kicdef}, and for any cut-off function $\bar{\chi}_R\in C([0,\infty))$ such that $\chi_R\equiv 1$ on the interval $[0,R]$, we have the uniform bound
\begin{equation}\label{Boundmun}
\left|\int_0^T\int_\T\int_{\R^2}\varphi(t,x)\left[ g(\xi,\eta)-g(\xi,\eta)\bar{\chi}_R(f(\xi,\eta))\right]\, \ud \mu\right|\, \leqslant\ \|\varphi\|_{C([0,T]\times\T)}\, C\, R^{\delta-1}.
\end{equation}
\end{remark}

In the same spirit as Remark~\ref{rk:CVYoung}, and following the proof of Proposition 4.4 in \cite{BerthelinVovelle19} we can establish the following result.

\begin{pro}\label{pro:young}
Let $[t_1,t_2] \subset [0,T]$ and let $(a_n)_n$ be a sequence of non-negative $C([t_1,t_2] \times \T)$-valued random variables. Let also $(\mu_n)_n$ be a sequence of random Young measures ($\mathscr{Y}$-valued random variables) satisfying
\begin{equation}\label{afmu_n_estimate}
C(f)\ \eqdef\ \sup_n\, \E \left( \int_{[t_1,t_2] \times \T \times \R^2} a_n(t,x)\, f(\xi,\eta)\, \ud \mu_n(t,x,\xi,\eta)\right)^{\gamma}\,  <\ \infty,
\end{equation}
where $f \in C (\R^2)$ is a non-negative function and $\gamma > 0$.
Assume that almost-surely $\mu_n \to \mu$ in $\mathscr{Y}$, and $a_n \to a$ in $C([t_1,t_2] \times \T)$, then 
\begin{equation}\label{afmu_estimate}
\E \left( \int_{[t_1,t_2] \times \T \times \R^2} a(t,x)\, f(\xi,\eta)\, \ud \mu(t,x,\xi,\eta)\right)^{\gamma}\, \leqslant\ C(f).
\end{equation}
Moreover, for all $g \in C (\R^2)$ satisfying $|g(\xi,\eta)| \leqslant f(\xi,\eta)^\delta$ with $\delta \in (0,1)$ and all  $\varphi \in L^{\frac{1}{1-\delta}}([t_1,t_2] \times \T)$ we have 
\begin{multline}\label{Youngconvergence}
\lim_n \E \bigg| \int_{[t_1,t_2] \times \T \times \R^2} a_n(t,x)\, \varphi(t,x)\,  g(\xi,\eta)\, \ud \mu_n(t,x,\xi,\eta)\\ 
-\ \int _{[t_1,t_2] \times \T \times \R^2} a(t,x)\, \varphi(t,x)\, g(\xi,\eta)\, \ud \mu(t,x,\xi,\eta)\bigg|^\gamma\, =\ 0.
\end{multline}
\end{pro}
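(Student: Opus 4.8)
The two assertions are handled separately. For \eqref{afmu_estimate} the plan is a lower-semicontinuity argument: fix $M>0$; since $a_n\to a$ uniformly on the compact set $[t_1,t_2]\times\T$ and $f\wedge M\in C_b(\R^2)$, the functions $a_n\cdot(f\wedge M)$ converge uniformly on $[t_1,t_2]\times\T\times\R^2$ to $a\cdot(f\wedge M)\in C_b$, and combining this with the weak convergence $\mu_n\to\mu$ in $\mathscr{Y}$ gives $\int a_n(f\wedge M)\,\ud\mu_n\to\int a(f\wedge M)\,\ud\mu$. As $a_n\geqslant0$ and $f\wedge M\leqslant f$, the left-hand side is $\leqslant\int a_n f\,\ud\mu_n$; letting $n\to\infty$ and then $M\to\infty$ (monotone convergence) yields $\int af\,\ud\mu\leqslant\liminf_n\int a_n f\,\ud\mu_n$ almost surely, and raising to the nondecreasing power $\gamma$ and invoking Fatou's lemma on $\Omega$ gives $\E\big(\int af\,\ud\mu\big)^\gamma\leqslant\liminf_n\E\big(\int a_n f\,\ud\mu_n\big)^\gamma\leqslant C(f)$.

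For \eqref{Youngconvergence}, write $I_n=\int a_n\varphi\,g\,\ud\mu_n$, $I=\int a\varphi\,g\,\ud\mu$, fix a continuous cutoff $\bar\chi_R\colon[0,\infty)\to[0,1]$ equal to $1$ on $[0,R]$ and to $0$ on $[2R,\infty)$, and split $I_n-I=(I_n-I_n^R)+(I_n^R-I^R)+(I^R-I)$ with $I_n^R=\int a_n\varphi\,g\,\bar\chi_R(f)\,\ud\mu_n$ and $I^R$ defined with $(a,\mu)$. I would first establish the claim for $\varphi\in C([t_1,t_2]\times\T)$. For such $\varphi$ the tails are uniform in $n$: since $|g(1-\bar\chi_R(f))|\leqslant f^\delta\mathds{1}_{\{f\geqslant R\}}\leqslant R^{\delta-1}f$, one gets $|I_n-I_n^R|\leqslant R^{\delta-1}\|\varphi\|_{L^\infty}\int a_n f\,\ud\mu_n$, hence $\E|I_n-I_n^R|^\gamma\leqslant R^{\gamma(\delta-1)}\|\varphi\|_{L^\infty}^\gamma\,C(f)$, and likewise $\E|I^R-I|^\gamma\leqslant R^{\gamma(\delta-1)}\|\varphi\|_{L^\infty}^\gamma\,C(f)$ using the bound on $\E(\int af\,\ud\mu)^\gamma$ just obtained; since $\delta<1$ these tend to $0$ as $R\to\infty$, uniformly in $n$. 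For the middle term, $g\,\bar\chi_R(f)\in C_b(\R^2)$, so $a_n\varphi\cdot g\,\bar\chi_R(f)\to a\varphi\cdot g\,\bar\chi_R(f)\in C_b$ uniformly and $I_n^R\to I^R$ almost surely; this is promoted to $\E|I_n^R-I^R|^\gamma\to0$ by a uniform-integrability (Vitali) argument, $|I_n^R-I^R|$ being dominated by $\mathtt{C}(R,\varphi)\big(\int a_n f\,\ud\mu_n+\int af\,\ud\mu\big)$ with $C(f)<\infty$. Choosing $R$ large and then $n\to\infty$ gives the result for continuous $\varphi$.

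A general $\varphi\in L^{1/(1-\delta)}([t_1,t_2]\times\T)$ is reached by approximating it by continuous $\varphi_m$ in the $L^{1/(1-\delta)}$-norm and controlling $\E\big|\int a_n(\varphi-\varphi_m)g\,\ud\mu_n\big|^\gamma$ uniformly in $n$. Here, using $|g|\leqslant f^\delta$, Hölder's inequality with the conjugate exponents $\tfrac1{1-\delta}$ and $\tfrac1\delta$ (precisely those dictated by $|g|\leqslant f^\delta$, since $(f^\delta)^{1/\delta}=f$), and the fact that the $\mu_{n,t,x}$ are probability measures, this term is $\leqslant\big(\int a_n|\varphi-\varphi_m|^{1/(1-\delta)}\,\ud x\,\ud t\big)^{1-\delta}\big(\int a_n f\,\ud\mu_n\big)^{\delta}$, whose $\gamma$-th moment is, by Hölder on $\Omega$ and $C(f)<\infty$, $\leqslant\|\varphi-\varphi_m\|_{L^{1/(1-\delta)}}^{\gamma}\,\big(\sup_n\E\|a_n\|_{L^\infty}^\gamma\big)^{1-\delta}\,C(f)^\delta$. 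The quantity $\sup_n\E\|a_n\|_{L^\infty}^\gamma$ is finite in all our applications, where the coefficients $a_n$ are uniformly bounded (typically $C^0$-functions of $u^\varepsilon$, or constants). One then concludes by choosing $R$ large (to absorb the tails), $m$ large (to absorb the $\varphi$-error), and letting $n\to\infty$ in the finitely many remaining terms.

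The point requiring the most care is the uniformity in $n$ (and in $\omega$) of the velocity-tail estimates: the convergence $\mu_n\to\mu$ only sees bounded continuous test functions on $[0,T]\times\T\times\R^2$, whereas $g$ is unbounded and $\varphi$ is merely $L^p$ in $(t,x)$. It is exactly the hypothesis $\delta<1$ that makes this work, $f^\delta$ being sublinear in $f$, so that the tails are absorbed by the single uniform bound $C(f)=\sup_n\E(\int a_n f\,\ud\mu_n)^\gamma$ together with $\E(\int af\,\ud\mu)^\gamma\leqslant C(f)$ from the first assertion; the cutoff $\bar\chi_R$ and the density step in $\varphi$ merely organize this absorption.
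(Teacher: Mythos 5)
Your self-contained argument is useful because the paper's ``proof'' is essentially a one-line deferral to \cite[Prop.~4.4]{BerthelinVovelle19} (applied to the non-normalised measures $a_n\mu_n$, with a remark that for $\gamma\neq1$ Egorov should replace Vitali). Your proof of \eqref{afmu_estimate} (truncate $f$, pass to the limit against $\mu_n$, monotone convergence, Fatou) is correct, as is the control of the two tail terms $I_n-I_n^R$ and $I^R-I$, where the margin $\delta<1$ is exactly what makes $R^{\delta-1}$ kill the tails uniformly in $n$.

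The gap is in the middle term $I_n^R-I^R$. You ``promote'' the a.s.\ convergence to $L^\gamma(\tilde{\Omega})$-convergence by Vitali, with the domination $|I_n^R-I^R|\leqslant\mathtt{C}(R,\varphi)\big(\int a_nf\,\ud\mu_n+\int af\,\ud\mu\big)$ and the moment bound $C(f)<\infty$. This does not work as written, for two reasons. First, $\sup_n\E\,M_n^\gamma<\infty$ for a dominating family $(M_n)$ only gives $L^1$-boundedness of $\{M_n^\gamma\}$, \emph{not} uniform integrability (take $M_n=n\,\mathds{1}_{[0,1/n]}$: $M_n\to0$ a.s., $\E M_n=1$, yet $\E M_n\not\to0$), so Vitali is not applicable. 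Second, the stated domination is itself incomplete: $|g\,\bar\chi_R(f)|\leqslant f^\delta\leqslant 1+f$ gives $|I_n^R|\leqslant\|\varphi\|_\infty\big(\int a_n\,\ud\mu_n+\int a_nf\,\ud\mu_n\big)$, and the quantity $\int a_n\,\ud\mu_n=\int a_n\,\ud t\,\ud x$ is not controlled by \eqref{afmu_n_estimate}. The remedy is the same H\"older inequality you later use for the density step: $|g\bar\chi_R(f)|\leqslant f^\delta$ gives $|I_n^R|\leqslant\|\varphi\|_\infty\big(\int a_nf\,\ud\mu_n\big)^{\delta}\big(\int a_n\,\ud\mu_n\big)^{1-\delta}$. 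If the $a_n$ are deterministically bounded --- as in all the paper's applications, where $a_n\equiv1$ or $a_n=c'(\tilde{u}^\varepsilon)\in[0,c_3]$, and as you yourself assume for the approximation of $\varphi$ --- then $\{|I_n^R-I^R|^\gamma\}$ is bounded in $L^{1/\delta}(\tilde{\Omega})$ with $1/\delta>1$, hence uniformly integrable, and Vitali does apply. (Equivalently, pass through Egorov as the paper indicates, and control the complement of the Egorov set by the same $\delta$-power bound.) In short: the exponent $\delta$ must enter the domination of the \emph{middle} term as well, and the implicit assumption that the $a_n$ are uniformly bounded (or at least $\sup_n\E\|a_n\|_{C}^{\gamma/\delta}<\infty$) is genuinely needed and should be made explicit, rather than relegated to an aside about ``our applications.''
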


\begin{proof}[Proof of Proposition~\ref{pro:young}] The proof follows from \cite[Proposition~4.4]{BerthelinVovelle19} applied to the measure $\mu_n':=a_n\mu_n$. Although $\mu_n'$ may not have the required total mass \eqref{TotalMassYoungMeasure}, the arguments hold true.  Note also that \cite[Proposition~4.4]{BerthelinVovelle19} addresses the case $\gamma=1$ only, but the arguments can be adapted easily if one uses Egorov's theorem directly, instead of Vitali's theorem.
\end{proof}

\subsection{Quasi-Polish spaces and the Skorokhod--Jakubowski theorem}\label{subsec:QPolish}

%

In order to study the limit $\varepsilon \to 0$, we need to prove the compactness of $(R^\varepsilon, S^\varepsilon)$ in the non-metrizable space $C([0,T],L^2_w(\T))$, where $L^2_w$ is the $L^2$ space equiped with its weak topology. Therefore, the classical Skorokhod theorem cannot be used. We use instead the generalized Skorokhod--Jakubowski theorem \cite{J97}.

\begin{definition}
A topological space $(\mathcal{X}, \tau)$ is said to be quasi-Polish if there is a sequence $(f_n)_n$ of continuous functions $f_n : \mathcal{X} \to [-1,1]$  separating points in $\mathcal{X}$.
\end{definition}
We refer to \cite[Section 3]{BOS16} (where the terminology ``quasi-Polish'' was introduced by the way), \cite[Appendix B]{GHKP22}, and the references therein for more details on quasi-Polish spaces.

\begin{remark}
($i$) Any Polish space is a quasi-Polish space; ($ii$) A countable product of quasi-Polish spaces is a quasi-Polish space (if, for each $k$, $(f_{k,n})$ is a separating sequence of continuous functions on $\mathcal{X}_k$, and if $\pi^k\colon\mathcal{X}\to\mathcal{X}_k$ is the projection from the product space $\mathcal{X}$ onto the factor $\mathcal{X}_k$, then $\{f_{k,n}\circ\pi^k; k,n\in\N\}$ is a separating sequence of continuous functions on $\mathcal{X}$); ($iii$) The spaces $C([0,T],H^1_w(\T))$, $C([0,T],L^2_w(\T))$ are quasi-Polish spaces.
\end{remark}

We recall now the Skorokhod--Jakubowski theorem \cite[Theorem 2]{J97} 

\begin{thm}\label{thm:Jakubowski}
Let $(\mathcal{X}, \tau)$ be a quasi-Polish space and let $(X_n)_n$ be a sequence of $\mathcal{X}$-valued random variable. Suppose that for all $\delta >0$ there exists a compact subset $K_\delta \subset \mathcal{X}$ such that 
\begin{equation*}
\inf_n \Pro (X_n \in K_\delta)\ >\ 1- \delta.
\end{equation*} 
Then, up to a subsequence of $(X_n)_n$ (noted also $(X_n)_n$), there exists a probability space $(\tilde{\Omega},\tilde{\mathcal{F}},\tilde{\Pro})$, an $\mathcal{X}$-valued random variable $\tilde{X}$ and an $\mathcal{X}$-valued sequence of random variables $(\tilde{X}_n)_n$ defined on $(\tilde{\Omega},\tilde{\mathcal{F}},\tilde{\Pro})$ such that
\begin{gather}
X_n\ \sim\ \tilde{X}_n, \quad n\ =\ 1,2, \cdots \\
\tilde{X}_n\ \to_\tau\ \tilde{X}\ \mathrm{for\ almost\ all\ } \omega \in \tilde{\Omega}.
\end{gather}
\end{thm}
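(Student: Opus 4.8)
This is the Skorokhod--Jakubowski representation theorem, and I would follow the proof of \cite[Theorem~2]{J97}; let me outline the structure. The plan is to transfer the problem, via the separating sequence, to a compact metrizable space on which the classical Skorokhod representation theorem applies, and then to pull the construction back. First I would form the continuous injective map $f:=(f_n)_n\colon\mathcal{X}\to\mathscr{C}$, $\mathscr{C}:=[-1,1]^{\N}$, noting that $\mathscr{C}$ is a compact Polish space and that, on any compact set $K\subset\mathcal{X}$, the restriction $f|_K\colon K\to f(K)$ is a homeomorphism (a continuous bijection from a compact space onto a Hausdorff space), with $f(K)$ compact in $\mathscr{C}$. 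After enlarging the $K_{\delta}$ so that $K_{1/k}\subset K_{1/(k+1)}$, the set $\mathcal{K}:=\bigcup_kK_{1/k}$ carries every law $\mathcal{L}(X_n)$ (since $\Pro(X_n\in K_{1/k})>1-1/k$), and $f$ restricts to a Borel isomorphism of $\mathcal{K}$ onto the Borel set $\bigcup_kf(K_{1/k})\subset\mathscr{C}$, with Borel inverse which I still denote $f^{-1}$.

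The key device, which sidesteps the difficulty that $f$ is only a homeomorphism \emph{on compacta}, is to carry along an auxiliary ``depth'' variable $J_n:=\min\{k\geqslant 1\,;\,X_n\in K_{1/k}\}$, which is a.s.\ finite and satisfies $\sup_n\Pro(J_n>m)\leqslant 1/m$. I would then set $Z_n:=(f(X_n),J_n)$, with values in the Polish space $\mathscr{D}:=\mathscr{C}\times\N$ ($\N$ discrete), and observe that, since $f(X_n)\in f(K_{1/J_n})$ a.s., the compact sets $\Gamma_m:=\bigcup_{j\leqslant m}f(K_{1/j})\times\{j\}$ satisfy $\inf_n\Pro(Z_n\in\Gamma_m)=\inf_n\Pro(J_n\leqslant m)\geqslant 1-1/m$. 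Thus $(Z_n)$ is tight in the metric space $\mathscr{D}$, and the classical Skorokhod representation theorem applies (after passing to a subsequence): there are a probability space $(\tilde\Omega,\tilde{\mathcal{F}},\tilde\Pro)$, copies $\tilde Z_n=(\tilde Y_n,\tilde J_n)\sim Z_n$, and a limit $\tilde Z=(\tilde Y,\tilde J)$ with $\tilde Z_n\to\tilde Z$ $\tilde\Pro$-almost surely in $\mathscr{D}$.

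Finally I would set $\tilde X_n:=f^{-1}(\tilde Y_n)$ and $\tilde X:=f^{-1}(\tilde Y)$. A portmanteau argument applied to the closed sets $\Gamma_m$ shows $\tilde Y_n,\tilde Y\in\bigcup_kf(K_{1/k})$ $\tilde\Pro$-a.s., so these are well-defined $\mathcal{X}$-valued (Borel) random variables, and equality of laws together with the Borel-isomorphism property of $f$ on $\mathcal{K}$ gives $\tilde X_n\sim X_n$. For the a.s.\ convergence in $\tau$: since $\N$ is discrete, $\tilde J_n\to\tilde J$ forces $\tilde J_n=\tilde J=:m_0(\omega)\in\N$ for all $n$ large, while equality of laws gives $\tilde Y_n\in f(K_{1/\tilde J_n})$ a.s.; hence, for a.e.\ $\omega$ and all large $n$, both $\tilde Y_n(\omega)$ and $\tilde Y(\omega)$ lie in the single compact $f(K_{1/m_0(\omega)})$, on which $f^{-1}$ is continuous, so $\tilde X_n(\omega)\to\tilde X(\omega)$ in $\tau$. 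I expect the main obstacle to be precisely this transfer of convergence from $\mathscr{C}$ back to the non-metrizable space $(\mathcal{X},\tau)$: a bare convergent sequence in the cube need not pull back to a $\tau$-convergent one, and it is the depth variable $J_n$ that forces the Skorokhod copies to be eventually confined to one fixed compact set --- which is the heart of Jakubowski's argument.
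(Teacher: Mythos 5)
The paper does not prove this statement: it is recalled from Jakubowski \cite[Theorem~2]{J97} and the text offers only the citation, so there is no in-paper proof to compare against. Your reconstruction is correct and faithfully captures the mechanism of Jakubowski's argument: embed $\mathcal{X}$ into the compact metrizable cube $[-1,1]^{\N}$ via the separating sequence; observe that the injection is a homeomorphism precisely on compact sets; transfer tightness; apply Prokhorov and the classical Skorokhod theorem on the Polish space $[-1,1]^{\N}\times\N$; and pull back. The one point worth stressing, and you identify it correctly, is that a.s.\ convergence of the cube-valued copies does not by itself pull back to $\tau$-convergence, since $f^{-1}$ is only continuous on compacta; the auxiliary discrete ``depth'' index (your $J_n$), whose a.s.\ convergence in $\N$ forces eventual constancy and hence eventual confinement of the copies and of the limit to a single compact $K_{1/m_0(\omega)}$, is exactly the device that resolves this. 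Two small points that you should tighten in a written version: (i) before invoking Skorokhod you should explicitly invoke Prokhorov to extract a weakly convergent subsequence of the laws of $Z_n$ on $\mathscr{D}$ (your ``after passing to a subsequence'' is doing this work silently); and (ii) rather than the portmanteau step, the cleanest route to $\tilde Y\in f(K_{1/\tilde J})$ a.s.\ is the one you give in the final paragraph: $\tilde J_n=\tilde J$ eventually, $\tilde Y_n\in f(K_{1/\tilde J})$ for large $n$ by equality of laws, and $f(K_{1/\tilde J})$ is closed.
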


\begin{remark}
The probability space $(\tilde{\Omega},\tilde{\mathcal{F}},\tilde{\Pro})$ can be chosen to be $([0,1],\mathcal{B}_{[0,1]},\mathcal{L})$.
\end{remark}

\subsection{Compactness}\label{subsec:tight}

Let $R^\varepsilon, S^\varepsilon$ and $u^\varepsilon$ be the global solutions of \eqref{SVWEep}-\eqref{udefep} given by Theorems \ref{thm:glob-exist-ep} and \ref{thm:globalsolep}. Let $\mathcal{L}^2$ denote the Lebesgue measure on $[0,T] \times \T$. We define for any $(t,x) \in [0,T] \times \T$ and any $\varepsilon> 0$ the measures 
\begin{gather}
\mu^{\varepsilon}_{t,x}\ \eqdef\ \delta_{R^\varepsilon(t,x)} \otimes \delta_{s^\varepsilon(t,x)}, \qquad  \mu^{\varepsilon}\ \eqdef\  \mu^{\varepsilon}_{t,x}  \rtimes	 \mathcal{L}^2.
\end{gather}
That is to say 
\begin{align*}
\dual{\mu^\varepsilon}{\psi}\ &=\ \int_{[0,T] \times \T} \int_{ \R^2} \psi(t,x,\xi,\eta)\, \ud \mu^{\varepsilon}_{t,x}(\xi,\eta)\, \ud x\, \ud t\ =\ \int_{[0,T] \times \T} \psi(t,x,R^\varepsilon(t,x),S^\varepsilon(t,x))\, \ud x\, \ud t,
\end{align*}
for all  $\psi \in C_b([0,T] \times \T \times \R^2)$.

Let $\mathcal{S}$ be a countable set of functions in $C^\infty_c(\R)$ which is dense in $C_0(\R)$. Set 
\begin{equation}
X_R^\varepsilon\ \eqdef\ \left( f(R^\varepsilon)\right)_{f\in\mathcal{S}},\qquad 
X_S^\varepsilon\ \eqdef\ \left( f(S^\varepsilon)\right)_{f\in\mathcal{S}}.
\end{equation}

Let also $L^2_w$ denote the $L^2$ space equipped with its weak topology and define $C([0,T],L^2_w(\T))^{\aleph_0}$ as the countable product
\begin{equation}
C([0,T],L^2_w(\T))^{\aleph_0}\ \eqdef\ \prod_{\kappa \in \N} C([0,T],L^2_w(\T)),
\end{equation}
with the product topology. Let $\mathcal{X}$ denote the product (which is a quasi-Polish space) of the following spaces 
\begin{equation}
\mathscr{Y},\, \left( C([0,T],L^2_w(\T))^{\aleph_0} \right)^2,\,  C([0,T] \times \T),\,  C([0,T], \mathfrak{U}_{-1}(\T)),\,  C(0,T),\,  L^1((0,T);C(\T)).
\end{equation} 
Our aim is to show that the sequence (of laws) of the $\mathcal{X}$-valued Random variables $(X^\varepsilon)_\varepsilon$ defined by 
\begin{equation}\label{Xepdef}
X^\varepsilon\ \eqdef\ \left( \mu^{ \varepsilon}, X_R^\varepsilon, X_S^\varepsilon, u^\varepsilon, W, \Theta^\varepsilon,\Xi^\varepsilon\right) 
\end{equation}
is tight in $\mathcal{X}$.


\begin{pro}\label{pro:tightness}
For any $\delta>0$, there exists a compact $K_\delta \in \mathcal{X}$ such that 
\begin{gather*}
\sup_\varepsilon \Pro \left( X^\varepsilon \notin K_\delta \right)\ \leqslant\ \delta.
\end{gather*}
\end{pro}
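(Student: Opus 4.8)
The plan is to establish tightness component by component, using the fact that a finite (or countable) product of tight sequences is tight, so that it suffices to exhibit, for each factor of $\mathcal{X}$ and each $\delta>0$, a compact set carrying probability at least $1-\delta'$ (with $\sum\delta'\leqslant\delta$), and then intersect. First I would handle the Young measure component $\mu^\varepsilon$: by Proposition~\ref{CompactYoung}, the sublevel sets $\{\mu:\int f\,\ud\mu\leqslant C\}$ are compact in $\mathscr{Y}$ for any coercive $f\in C(\R^2)$; taking $f(\xi,\eta)=\xi^2+\eta^2$, the mass $\int_{[0,T]\times\T\times\R^2}f\,\ud\mu^\varepsilon=\int_0^T\int_\T((R^\varepsilon)^2+(S^\varepsilon)^2)\,\ud x\,\ud t=\int_0^T\mathcal{E}^\varepsilon(t)\,\ud t$, whose expectation is bounded uniformly in $\varepsilon$ by \eqref{LPeneE}. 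Markov's inequality then gives the required compact set. The noise component $W$ in $C([0,T],\mathfrak{U}_{-1}(\T))$ is tight because its law does not depend on $\varepsilon$ at all (a single Borel measure on a Polish space is tight). The correction term $\Theta^\varepsilon$ in $C(0,T)$ is controlled via \eqref{eq:boundpsi}: from $\Theta^\varepsilon{}'=\alpha^\varepsilon+\beta^\varepsilon\Theta^\varepsilon$ and the bounds \eqref{alphaeps1}--\eqref{alphaeps2}, \eqref{LPeneE}, one gets uniform moment bounds on $\|\Theta^\varepsilon\|_{C([0,T])}$ together with an equicontinuity estimate (a bound on $\|\Theta^\varepsilon{}'\|$ in some $L^p$ in time), hence tightness by Arzelà--Ascoli; similarly $\Xi^\varepsilon$ in $L^1((0,T);C(\T))$ is tight using \eqref{uteps0} and the explicit formula \eqref{xiut}.

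The substantial work is the tightness of $X_R^\varepsilon,X_S^\varepsilon$ in $\left(C([0,T],L^2_w(\T))^{\aleph_0}\right)^2$ and of $u^\varepsilon$ in $C([0,T]\times\T)$. For $u^\varepsilon$: one has the pointwise identities \eqref{cuxthetaeps} and \eqref{utOK} expressing $u^\varepsilon_x$ and $u^\varepsilon_t$ in terms of $(R^\varepsilon,S^\varepsilon,\Theta^\varepsilon,\zeta^\varepsilon)$; the uniform $L^2$-energy bound \eqref{LPeneE} gives $u^\varepsilon$ uniformly bounded in $L^\infty((0,T);H^1(\T))\cap W^{1,\infty}((0,T);L^2(\T))$ in expectation, and $H^1(\T)\cap\{$bounded time-derivative in $L^2\}$ embeds compactly into $C([0,T]\times\T)$ by Aubin--Lions (using $H^1(\T)\hookrightarrow\hookrightarrow C(\T)\hookrightarrow L^2(\T)$). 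The required compact set is then a ball in that intermediate space, and Markov's inequality together with the moment bounds gives the probability estimate. For $X_R^\varepsilon=(f(R^\varepsilon))_{f\in\mathcal{S}}$: for each fixed $f\in\mathcal{S}\subset C_c^\infty(\R)$, I would show $t\mapsto f(R^\varepsilon(t))$ is tight in $C([0,T],L^2_w(\T))$; since $\mathcal{S}$ is countable and a countable product of tight families is tight, this suffices. Tightness of a sequence in $C([0,T],L^2_w(\T))$ follows (via the criterion recalled in, e.g., \cite{BOS16,GHKP22}) from: (a) a uniform bound $\sup_t\|f(R^\varepsilon(t))\|_{L^2}\leqslant\|f\|_{L^\infty}|\T|^{1/2}$, which is deterministic here; and (b) a time-equicontinuity estimate of the form $\E\|f(R^\varepsilon(t))-f(R^\varepsilon(s))\|_{H^{-1}}^p\leqslant \mathtt{C}(T,p)|t-s|^{\kappa}$ for some $\kappa>0$ (or a fractional Sobolev-in-time bound), plus compactness of $H^{-1}$-balls in $L^2_w$ of a fixed $L^2$-ball. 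Estimate (b) I would obtain from the equation \eqref{ReqepthetaItoConservativeEPS} for $h=f$: the drift terms, tested against a fixed $H^1(\T)$-function, are controlled in $L^1$ in time by combining the uniform energy bound \eqref{LPeneE}, the $L^{3^-}$-bound Proposition~\ref{prop:L3estimate}, the bound \eqref{eq:boundpsi} on $\Theta^\varepsilon$, and the boundedness of $f,f',f''$; the martingale term is handled by the Burkholder--Davis--Gundy inequality together with \eqref{qeps-q}, yielding a $|t-s|^{p/2}$-type bound. Hölder continuity in the strong topology of a negative Sobolev space, plus the uniform $L^2$-bound, upgrades to tightness in $C([0,T],L^2_w(\T))$ by the standard argument.

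The main obstacle I anticipate is precisely step (b): one must verify that every term on the right-hand side of \eqref{ReqepthetaItoConservativeEPS}--\eqref{SeqepthetaItoConservativeEPS}, when integrated over a short time interval $[s,t]$ and tested against a smooth test function, is bounded by a uniform constant times a positive power of $|t-s|$. The delicate terms are the quadratic ones $(S^\varepsilon-R^\varepsilon)B_h(R^\varepsilon,S^\varepsilon)$ and the cut-off $h'(R^\varepsilon)\chi_\varepsilon(R^\varepsilon)$: a priori these are only in $L^1_{t,x}$ uniformly (via the energy bound or \eqref{eq:TotalEnergyep}), which gives time-equicontinuity with exponent $\kappa$ that may be less than $1$ but is still positive — and that is all that is needed for tightness. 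One must be slightly careful that $f\in C_c^\infty$ makes $f'(R^\varepsilon)\chi_\varepsilon(R^\varepsilon)$ in fact bounded (since $f'$ has compact support while $\chi_\varepsilon$ grows), so that term is actually harmless; the genuinely $L^1$-only contributions come from $B_h$ and $\Theta^\varepsilon$-terms, handled as indicated. Once the per-factor compact sets $K_\delta^{(i)}$ are constructed with $\Pro(X^\varepsilon_{(i)}\notin K_\delta^{(i)})\leqslant\delta/2^i$, the set $K_\delta=\prod_i K_\delta^{(i)}$ (closed in the product, hence compact by Tychonoff since each factor is) satisfies $\sup_\varepsilon\Pro(X^\varepsilon\notin K_\delta)\leqslant\delta$, which completes the proof.
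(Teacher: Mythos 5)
Your plan follows essentially the same strategy as the paper's proof: factor-by-factor tightness via the uniform moment bounds (energy estimate \eqref{LPeneE}, Proposition~\ref{CompactYoung} for the Young measure, \eqref{eq:boundpsi}--\eqref{uteps01} for the correction terms), Markov's inequality, a Kolmogorov/Hölder-in-time estimate tested against fixed test functions for the weak paths $f(R^\varepsilon)$, an Aubin--Lions-type criterion for $u^\varepsilon$, and finally the union bound with Tychonoff for the countable product. The paper invokes Bass's continuity theorem and \cite[Lemma~C.1]{Lions} where you use the $H^{-1}$-Hölder criterion, and \cite[Theorem~5]{Simon87} where you cite Aubin--Lions, but these are equivalent tools; your more detailed treatment of $\Theta^\varepsilon$ via the ODE \eqref{psiprime} and Arzelà--Ascoli actually makes explicit the equicontinuity that the paper leaves implicit.
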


\begin{proof}[Proof of Proposition~\ref{pro:tightness}] The proof will be done in several steps. We show that every element in $X^\varepsilon$ is tight in its proper space. We use the same convention as in \eqref{constantC} for the use of the constant $\mathtt{C}$.\medskip
 
\textbf{Step 1.} (Young measure).
By the energy estimate \eqref{LPeneE} (with $p=1$), we obtain the bound
\begin{gather}\label{Moment2mueps}
\E \int_{[0,T] \times \T \times \R^2} \left[ \xi^2\, +\, \eta^2 \right] \ud \mu^{\varepsilon}(t,x,\xi,\eta)\ =\
 \E \int_{[0,T] \times \T} \int_{\R^2} \left[ \xi^2\, +\, \eta^2 \right] \ud \mu^{\varepsilon}_{t,x}(\xi,\eta)\, \ud x\, \ud t\ \leqslant\ \mathtt{C}(T),
\end{gather}
and then, by the Markov inequality,
\begin{gather*}
\Pro \left( \int_{[0,T] \times \T \times \R^2} \left[ \xi^2\, +\, \eta^2 \right] \ud \mu^{\varepsilon}(t,x,\xi,\eta) \geqslant \mathtt{C}(T)/\delta \right)\ \leqslant\ \delta,
\end{gather*}
for all $\delta >0$: Proposition~\ref{CompactYoung} shows that $(\mu^\varepsilon)_\varepsilon$ is tight.\medskip

\textbf{Step 2.}  (Factor $(X_R^\varepsilon, X_S^\varepsilon)_\varepsilon$). 
It follows from the energy estimate \eqref{LPeneE} (still with $p=1$) that, for $f\in\mathcal{S}$,
\begin{equation}\label{weakcontfRpunct}
\E\left[\|f(R^\varepsilon)\|_{C([0,T];L^2(\T)}^2\right]\ \leqslant\ \mathtt{C}(T).
\end{equation}
Let now $\varphi \in H^1(\T)$. By \eqref{ReqepthetaItoConservativeEPS} we have 
\begin{gather*}
\int_\T \left[f(R^\varepsilon)(t) - f(R^\varepsilon)(s) \right] \varphi\, \ud x\ =\  \int_s^t \int_\T c(u^\varepsilon) f(R^\varepsilon)\, \varphi_x\, \ud x\, \ud \sigma\ +\ \int_\T \left[ \int_s^t f'(R^\varepsilon)\, \Phi^\varepsilon\, \ud W(\sigma) \right] \varphi\, \ud x
\\
+\ \int_s^t \int_\T \left[\half\, q^\varepsilon\, f''(R^\varepsilon)\, +\, \tilde{c}'(u^\varepsilon) \left[ \left( (R^\varepsilon)^2\, -\, (S^\varepsilon)^2 \right) f'(R^\varepsilon)\, +\, 2\, (S^\varepsilon\, -\, R^\varepsilon)\, f(R^\varepsilon) \right] \right]  \varphi\, \ud x\, \ud \sigma\\
+\ \int_s^t \int_\T  \tilde{c}'(u^\varepsilon) \left[ \left( 2\, \Theta^\varepsilon\, R^\varepsilon\, -\, \chi_\varepsilon(R^\varepsilon)\right) f'(R^\varepsilon)\, -\, 4\, \Theta^\varepsilon\, f(R^\varepsilon) \right] \varphi\, \ud x\, \ud \sigma.
\end{gather*} 
Using the energy estimate \eqref{LPeneE} and straightforward computations gives us
\begin{equation}\label{weakcontfR}
\E \left| \int_\T \left[f(R^\varepsilon)(t) - f(R^\varepsilon)(s) \right] \varphi\, \ud x \right|^4\\ 
 \leqslant\ \mathtt{C}(T,\|\varphi\|_{H^1(\T)})\, |t-s|^2.
\end{equation}
By \cite[Theorem~8.2]{Bass2011} we deduce from \eqref{weakcontfR} that, for all $R>0$,
\begin{equation}\label{weakcontfRP}
\Pro \left( \sup_{0\leqslant s<t\leqslant T}\frac{\int_\T \left(f(R^\varepsilon)(t) - f(R^\varepsilon)(s)\right) \varphi\, \ud x}{|t-s|^{1/4}} \geqslant R\right) \leqslant \mathtt{C}(T,\|\varphi\|_{H^1(\T)}) R^{-4}.	
\end{equation}
Combining \eqref{weakcontfRpunct} with \eqref{weakcontfRP} (and the Markov inequality) gives us
\begin{equation}\label{weakcontfRtot}
\Pro \left( \|f(R^\varepsilon)\|_{C([0,T];L^2(\T))}^2\ +\sup_{0\leqslant s<t\leqslant T}\frac{\int_\T \left(f(R^\varepsilon)(t) - f(R^\varepsilon)(s)\right) \varphi\, \ud x}{|t-s|^{1/4}} \geqslant R\right) \leqslant \mathtt{C}(T,\|\varphi\|_{H^1(\T)}) R^{-1}.	
\end{equation}
Then, using \cite[Lemma C.1]{Lions} we obtain that $(f(R^\varepsilon))_\varepsilon$ is tight in $C([0,T],L^2_w(\T))$. Now if $\delta>0$ is fixed and if $\left\{f_1,f_2,\dotsc\right\}$ is an enumeration of $\mathcal{S}$, then, for any $n\geqslant 1$, there exists a compact $K_n$ such that $\Pro (f_n(R^\varepsilon) \in K_n^c) \leqslant \delta /2^n$. From the union bound  
\[
\Pro(\exists n\geqslant 1, f_n(R^\varepsilon)\in K_n^c)\ \leqslant\ \sum_{n\geqslant 1} \Pro(f_n(R^\varepsilon)\in K_n^c)\ \leqslant\ \delta,
\]
and Tychonoff's theorem, we can then deduce that $(X_R^\varepsilon)$ is tight. The proof for $(X_S^\varepsilon)$ is similar.
\medskip

\textbf{Step 3.} (Factor $(u^\varepsilon)_\varepsilon$).
Using the identity $u_t^\varepsilon = (R^\varepsilon+S^\varepsilon)/2 + \Xi^\varepsilon$ we obtain  
\begin{equation*}
\left\|u^\varepsilon(t)\, -\,u^\varepsilon(s) \right\|_{L^2(\T)}\, 
\leqslant\ \mathtt{C}(T)\, |t\, -\, s|\, \sup_{t \in [0,T]} \|(R^\varepsilon, S^\varepsilon, \Xi^\varepsilon)\|_{L^2(\T)}.
\end{equation*}
Then, using the identity $u_x^\varepsilon = (S^\varepsilon-R^\varepsilon)/(2 c(u^\varepsilon))- \Theta  ^\varepsilon/c(u^\varepsilon)$ and the bounds \eqref{LPeneE}, \eqref{eq:boundpsi}, \eqref{uteps01}, we obtain
\begin{equation*}
\E \sup_{t \in [0,T]} \|u^\varepsilon\|_{H^1(\T)}^2\ +\ \E \sup_{\substack{t,s \in [0,T]\\ s \neq t}}\frac{\left\|u^\varepsilon(t)\, -\,u^\varepsilon(s) \right\|_{L^2(\T)}^2}{|t\, -\, s|^2}\ \leqslant\ \mathtt{C}(T).
\end{equation*}
This implies that 
\begin{equation}\label{tightuepsC}
\Pro \left(\sup_{t \in [0,T]} \|u^\varepsilon\|_{H^1(\T)}^2\, +\ \sup_{\substack{t,s \in [0,T]\\ s \neq t}}\frac{\left\|u^\varepsilon(t)\, -\,u^\varepsilon(s) \right\|_{L^2(\T)}^2}{|t\, -\, s|^2}\ \geqslant\ \mathtt{C}(T)/\delta  \right)\, \leqslant\ \delta.
\end{equation}
By \cite[Theorem~5]{Simon87}, \eqref{tightuepsC} implies that $(u^\varepsilon)_\varepsilon$ is tight in the space $C([0,T] \times \T)$.\medskip

\textbf{Step 4.} (Factor $(W)_\varepsilon$).
The process $W$ belongs to the Polish space $C([0,T],\mathfrak{U}_{-1})$ almost surely \cite[Theorem~4.5]{DaPratoZabczyk14}. It is tight by \cite[Theorem~1.3]{BillingsleyBook}, but also for the more elementary reason that $\E\|W\|^2_{C([0,T],\mathfrak{U}_{-1})}<\infty$.\medskip

\textbf{Step 5.} (Factor $(\Theta^\varepsilon,\Xi^\varepsilon)_\varepsilon$). The bounds \eqref{eq:boundpsi} and \eqref{uteps01} show that $(\Theta^\varepsilon,\Xi^\varepsilon)$ is tight in $C(0,T)\times  L^1((0,T);C(\T))$.

\end{proof}

\subsection{Limit equation with defect measure}\label{subsec:limdefectmeas}

\subsubsection{Application of the Skorokhod--Jakubowski theorem}\label{subsubsec:limdefectmeas-appSJ}

From Proposition \ref{pro:tightness} and Theorem \ref{thm:Jakubowski} we deduce the following result.

\begin{thm}[Convergence up to a change of the probability space]\label{th:AppSJ} 
There exists a sequence $(\varepsilon_n)_n$, a probability space $(\tilde{\Omega},\tilde{\mathcal{F}},\tilde{\Pro})$, a $\mathcal{X}$-valued sequence of random variables $(\tilde{X}^{n})_n$ and another $\mathcal{X}$-valued random variable $\tilde{X}$ defined on $(\tilde{\Omega},\tilde{\mathcal{F}},\tilde{\Pro})$ such that for all $n \in \N$ we have
\begin{gather}
X^{\varepsilon_n}\ \sim\ \tilde{X}_n, \quad n\ =\ 1,2, \cdots \\
\tilde{X}_n\ \to\ \tilde{X}\ \mathrm{in}\ \mathcal{X}\  \tilde{\Pro}
\text{-a.s. in}\  \tilde{\Omega}.
\end{gather}
\end{thm}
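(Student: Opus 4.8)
The plan is to obtain Theorem~\ref{th:AppSJ} as an immediate consequence of the Skorokhod--Jakubowski theorem (Theorem~\ref{thm:Jakubowski}), once its two hypotheses have been verified for the family $(X^\varepsilon)_\varepsilon$ defined in \eqref{Xepdef}. The first point is structural: the space $\mathcal{X}$ is quasi-Polish. Indeed, it is a countable product of quasi-Polish spaces — $\mathscr{Y}$ is Polish hence quasi-Polish, $C([0,T],L^2_w(\T))$ (and its countable power $C([0,T],L^2_w(\T))^{\aleph_0}$) is quasi-Polish, while $C([0,T]\times\T)$, $C([0,T],\mathfrak{U}_{-1}(\T))$, $C(0,T)$ and $L^1((0,T);C(\T))$ are separable metric (even Polish) spaces — so the stability of the quasi-Polish property under countable products applies.

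Next I would pass from the continuous parameter $\varepsilon$ to a sequence: fix any $\varepsilon_n\downarrow 0$, so that $(X^{\varepsilon_n})_n$ is a sequence of $\mathcal{X}$-valued random variables on $(\Omega,\mathcal{F},\Pro)$. The tightness hypothesis of Theorem~\ref{thm:Jakubowski} is then supplied directly by Proposition~\ref{pro:tightness}: given $\delta>0$, apply that proposition with $\delta/2$ to produce a compact set $K_\delta\subset\mathcal{X}$ with $\sup_\varepsilon\Pro(X^\varepsilon\notin K_\delta)\leqslant\delta/2$, whence $\inf_n\Pro(X^{\varepsilon_n}\in K_\delta)\geqslant 1-\delta/2>1-\delta$, which is precisely the required strict inequality.

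With both hypotheses in hand, Theorem~\ref{thm:Jakubowski} yields a further subsequence (still denoted $(\varepsilon_n)_n$), a probability space $(\tilde{\Omega},\tilde{\mathcal{F}},\tilde{\Pro})$, a sequence $(\tilde{X}_n)_n$ of $\mathcal{X}$-valued random variables and a limit random variable $\tilde{X}$ on that space such that $X^{\varepsilon_n}\sim\tilde{X}_n$ for every $n$ and $\tilde{X}_n\to\tilde{X}$ in $\mathcal{X}$, $\tilde{\Pro}$-almost surely. This is exactly the content of Theorem~\ref{th:AppSJ}, so nothing further is needed.

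As for difficulties: there really are none of substance here, since all the analytical work has been encapsulated in the tightness estimate of Proposition~\ref{pro:tightness}. The only things one must be a little careful about are the two bookkeeping points above — reducing to a sequence before invoking the sequential statement of Skorokhod--Jakubowski, and recording that $\mathcal{X}$ is quasi-Polish so that the non-metrizable factors $C([0,T],L^2_w(\T))$ are admissible. (If one later wants the limit of $W$ to be a genuine cylindrical Wiener process with respect to a suitable filtration, that is a separate matter established afterwards, not part of this statement.)
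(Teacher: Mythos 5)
Your proof is correct and follows exactly the same route as the paper, which deduces the theorem directly from Proposition~\ref{pro:tightness} (tightness of $(X^\varepsilon)$) together with the Skorokhod--Jakubowski theorem~\ref{thm:Jakubowski}. The extra details you supply — verifying that $\mathcal{X}$ is quasi-Polish as a countable product of quasi-Polish factors, reducing to a sequence $\varepsilon_n\downarrow 0$, and applying Proposition~\ref{pro:tightness} at level $\delta/2$ to match the strict inequality in the hypothesis — are exactly the bookkeeping the paper leaves implicit.
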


\begin{remark}
For the sake of simplicity, we omit the subsequence in the remainder of this paper and we write 
\begin{gather} \label{lawsequality}
X^{\varepsilon}\ \sim\ \tilde{X}^\varepsilon, \quad \forall \varepsilon\ \mathrm{(in\ fact\ for\ a\ countable\ values\ of\ \varepsilon)} \\ \label{ConvergenceInX}
\tilde{X}^\varepsilon\ \to\ \tilde{X}\ \mathrm{in}\ \mathcal{X}\  \mathrm{for\ almost\ all\ } \omega \in \tilde{\Omega}.
\end{gather}
\end{remark}

The $\tilde{\Pro}$-a.s. convergence (which also gives convergence in probability) in Theorem~\ref{th:AppSJ} will be exploited to pass to the limit in the equations \eqref{ReqepthetaItoConservativeEPS} and \eqref{SeqepthetaItoConservativeEPS}. We examine first all terms but the stochastic integral in Section~\ref{subsubsec:limdefectmeas-det} below, and then devote Section~\ref{subsubsec:limdefectmeas-sto} to the analysis of the stochastic integral.

\subsubsection{Convergence and identification of some deterministic elements}\label{subsubsec:limdefectmeas-det}

As in \eqref{Xepdef} we have the decomposition 
\begin{gather}\label{tildeXepdef}
\tilde{X}^\varepsilon\ \eqdef\ \left( \tilde{\mu}^{ \varepsilon}, \tilde{X}_R^\varepsilon, \tilde{X}_S^\varepsilon, \tilde{u}^\varepsilon, \tilde{W}^\varepsilon, \tilde{\Theta}^\varepsilon,\tilde{\Xi}^\varepsilon\right), \\  \label{tildeXdef}
\tilde{X} \ \eqdef\ \left( \tilde{\mu}, \tilde{X}_R , \tilde{X}_S , \tilde{u} , \tilde{W} , \tilde{\Theta},\tilde{\Xi} \right).
\end{gather}
If any of the original factors $\mu^\varepsilon$, $R^\varepsilon,\ldots$ satisfy $\Pro$-a.s. a relation which can be characterized in terms of a Borel subset $A$ of $\mathcal{X}$, then this relation is still satisfied $\tilde{\Pro}$-a.s. by the factors with tildas. If moreover $A$ is closed for the topology considered on $\mathcal{X}$, then the relation is also satisfied $\tilde{\Pro}$-a.s. by the limit factors since
\begin{equation}\label{limOK}
\tilde{\Pro}\left(\tilde{X}\in A\right)\ \geqslant\ \limsup_{\varepsilon\to0}\tilde{\Pro}\left(\tilde{X}^\varepsilon\in A\right)=1.
\end{equation}
As a consequence, we have the following identities.

\begin{proposition}\label{prop:structureLimit}
The following identities hold $\tilde{\Pro}$-almost surely:
\begin{equation}\label{NullPerturb}
\tilde{\Theta}\ =\ 0,\quad \tilde{\Xi}\ =\ 0,
\end{equation}
as well as \eqref{udefep} with tildas and 
\begin{gather}\label{tildeRSuep}
\tilde{u}^\eps_x\, =\, \frac{\tilde{S}^\eps\, -\, \tilde{R}^\eps}{2\, c(\tilde{u}^\eps)}\, -\, \frac{\tilde{\Theta}^\eps}{c(\tilde{u}^\eps)}, \quad
\tilde{u}^\varepsilon_t\, =\, \frac{\tilde{S}^\varepsilon\, +\, \tilde{R}^\varepsilon}{2}\, +\, \tilde{\Xi}^\varepsilon, 
\quad \tilde{u}_x\, =\, \frac{\tilde{S}\, -\, \tilde{R}}{2\, c(\tilde{u})}, \quad \tilde{u}_t\ =\ \frac{\tilde{S}\, +\, \tilde{R}}{2}, 
\end{gather}
and
\begin{gather}
\label{tildemuep-structure}
\tilde{\mu}^{\varepsilon}\ =\ \delta_{\tilde{R}^\varepsilon(t,x)} \otimes \delta_{\tilde{S}^\varepsilon(t,x)}    \rtimes	 \mathcal{L}^2, \qquad \tilde{\mu}\ =\ \tilde{\mu}_{t,x} \rtimes \mathcal{L}^2,
\end{gather}
where, for $\mathcal{L}^2$-almost all $(t,x) \in[0,T]\times\T$, $\tilde{\mu}_{t,x}$ is probability measures on $\R^2$. In particular, we have $\tilde{u}\in H^1( (0,T) \times \T)$ with 
\begin{equation}\label{tildeuH1}
\|\tilde{u}\|_{H^1( (0,T) \times \T)}\leqslant \mathtt{C}\left(\|\tilde{S}\|_{L^2( (0,T) \times \T)}+\|\tilde{R}\|_{L^2( (0,T) \times \T)}\right).
\end{equation}
Finally, we have the following convergence results: for all function $g\colon\R\to\R_+$ such that $g(\xi)=o(\xi)$ when $\xi\to\infty$, $\tilde{\Pro}$-a.s.
\begin{equation}\label{cvchiepstilde}
\lim_{\eps\to 0}\int_0^T\int_\T \tilde{c}'(\tilde{u}^\eps)\left[ g(\tilde{R}^\eps)\, \chi_\eps(\tilde{R}^\eps)\,
+\,  g(\tilde{S}^\eps)\, \chi_\eps(\tilde{S}^\eps)\right] \ud x\, \ud t=0.
\end{equation}
\end{proposition}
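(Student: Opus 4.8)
\textbf{Proof plan for Proposition~\ref{prop:structureLimit}.} The plan is to derive every identity by transferring, via \eqref{limOK}, relations that hold $\Pro$-a.s. for the original variables $X^\eps$ to the tilda-variables, and then to pass to the limit in those that are stable under the relevant topology. First I would record the identities satisfied $\Pro$-a.s. by the original solutions: by construction $\Xi^\eps=u^\eps_t-(R^\eps+S^\eps)/2$ and $\Theta^\eps=\int_0^1(S^\eps-R^\eps)/2\,\ud y$, the relation \eqref{cuxthetaeps}, i.e. $c(u^\eps)u^\eps_x=(S^\eps-R^\eps)/2-\Theta^\eps$, the nonlocal formula \eqref{udefep}, and the structural fact $\mu^\eps=\delta_{R^\eps}\otimes\delta_{S^\eps}\rtimes\mathcal{L}^2$, together with $\int_{[0,T]\times\T\times\R^2}(\xi^2+\eta^2)\,\ud\mu^\eps\leqslant\mathcal{E}$-type bounds from \eqref{Moment2mueps}. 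Each of these can be encoded as membership in a closed (indeed often linear/affine) Borel subset $A$ of $\mathcal X$ — for the Young measure factor, the set of $\mu$ of the form $\delta_\xi\otimes\delta_\eta\rtimes\mathcal L^2$ with both marginals in $L^2_{t,x}$ is closed in $\mathscr Y$ once a uniform second-moment bound is imposed (Proposition~\ref{CompactYoung}), and the marginals of $\tilde\mu^\eps$ must coincide $\mathcal L^2$-a.e. with $\tilde R^\eps,\tilde S^\eps$ because that is a closed condition linking the $\mathscr Y$-factor to the $X_R^\eps,X_S^\eps$-factors. Hence \eqref{tildemuep-structure} for $\tilde\mu^\eps$ and \eqref{tildeRSuep} (the $\eps$-versions) and \eqref{udefep}-with-tildas follow from \eqref{limOK}.

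Second I would pass to the limit $\eps\to0$ in these relations. By Theorem~\ref{th:AppSJ}, $\tilde u^\eps\to\tilde u$ in $C([0,T]\times\T)$, $\tilde X_R^\eps\to\tilde X_R$, $\tilde X_S^\eps\to\tilde X_S$ in $C([0,T];L^2_w(\T))^{\aleph_0}$, $\tilde\mu^\eps\to\tilde\mu$ in $\mathscr Y$, $\tilde\Theta^\eps\to\tilde\Theta$ in $C(0,T)$ and $\tilde\Xi^\eps\to\tilde\Xi$ in $L^1((0,T);C(\T))$, all $\tilde\Pro$-a.s. The bounds \eqref{eq:boundpsi} and \eqref{uteps01}–\eqref{uteps0}, which transfer to $\tilde\Theta^\eps,\tilde\Xi^\eps$ since they only involve laws, give $\tilde\E[\|\tilde\Theta^\eps\|_{L^\infty}^p]\leqslant\mathtt C(p,T)\eps^{1/2}$ and $\tilde\E[\|\tilde\Xi^\eps\|_{L^1((0,T);L^\infty)}^p]\leqslant\mathtt C(T,p)\eps^{1/2}$; letting $\eps\to0$ and using uniqueness of limits forces $\tilde\Theta=0$ and $\tilde\Xi=0$, which is \eqref{NullPerturb}. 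Feeding $\tilde\Theta=0,\tilde\Xi=0$ into the (limit of the) identities \eqref{cuxthetaeps} and $u_t=(R+S)/2+\Xi$ yields the last two identities in \eqref{tildeRSuep}; the bound \eqref{tildeuH1} is then immediate from $\tilde u_x=(\tilde S-\tilde R)/(2c(\tilde u))$, $\tilde u_t=(\tilde S+\tilde R)/2$, the Poincaré/fundamental-theorem-of-calculus control of $\tilde u$ by its derivatives via \eqref{udefep}, and \eqref{coeff-c}. For $\tilde\mu=\tilde\mu_{t,x}\rtimes\mathcal L^2$ with probability-measure slices: total mass $T$ and the disintegration property are closed conditions in $\mathscr Y$ (indeed $\mathscr Y$ is exactly the set of such measures), so this passes to the limit automatically.

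Third, the convergence \eqref{cvchiepstilde}. Here I would use that $g(\xi)=o(\xi)$ at infinity, so in particular $|g(\xi)\chi_\eps(\xi)|\leqslant|g(\xi)||\xi-1/\eps|^2\mathds 1_{\xi\geqslant1/\eps}$ and, more usefully, on $\{\xi\geqslant1/\eps\}$ one has $|g(\xi)|\leqslant\omega(\xi)\,\xi$ with $\omega(\xi)\to0$; combining with the elementary bound $\chi_\eps(\xi)\leqslant\eps\,\xi\,\chi_\eps(\xi)$ (used already in the proof of Corollary~\ref{cor:boundpsi}) one gets $|g(\tilde R^\eps)\chi_\eps(\tilde R^\eps)|\leqslant\eps\,\omega(1/\eps)^{-1}\cdot\text{(something)}$ — more carefully, $g(\tilde R^\eps)\chi_\eps(\tilde R^\eps)\leqslant C(\delta)+\delta\,\tilde R^\eps\chi_\eps(\tilde R^\eps)$ for every $\delta>0$ by splitting according to $\tilde R^\eps\leqslant M$ vs.\ $\tilde R^\eps>M$. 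Then the $\eps$-version of the energy identity \eqref{eq:TotalEnergyep}, transferred to tilda-variables, controls $\int_0^T\int_\T\tilde c'(\tilde u^\eps)[\tilde R^\eps\chi_\eps(\tilde R^\eps)+\tilde S^\eps\chi_\eps(\tilde S^\eps)]\,\ud x\,\ud t$ in expectation by $\mathtt C(T)$ uniformly in $\eps$, so the $\delta$-term is small in $L^1(\tilde\Omega)$, hence along a subsequence $\tilde\Pro$-a.s.; and the $C(\delta)$-term is supported on $\{\tilde R^\eps\geqslant1/\eps\}$ whose $\mathcal L^2$-measure $\to0$ because $\tilde R^\eps$ is bounded in $L^2_{t,x}$ $\tilde\Pro$-a.s. (Chebyshev). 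A diagonal argument over a countable dense family of such $g$, or simply noting the estimate is uniform in the relevant parameters, upgrades this to the stated $\tilde\Pro$-a.s. statement for all $g$ with $g(\xi)=o(\xi)$.

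\textbf{Main obstacle.} I expect the delicate point to be the identification of the marginals of the limit Young measure $\tilde\mu$ and the justification that the structural relation $\tilde\mu^\eps=\delta_{\tilde R^\eps}\otimes\delta_{\tilde S^\eps}\rtimes\mathcal L^2$ (and the coincidence of its marginals with $\tilde R^\eps,\tilde S^\eps$) genuinely corresponds to membership in a \emph{closed} subset of the product space $\mathcal X$ — this requires pairing the $\mathscr Y$-component against test functions of the form $\varphi(t,x)\xi$ (not bounded!), which is legitimate only thanks to the uniform second-moment bound of Step~1 and Remark~\ref{rk:CVYoung}/Proposition~\ref{pro:young}. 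The other mildly technical point is keeping track of which statements hold $\tilde\Pro$-a.s. only along a subsequence versus for the whole sequence; since the final statement is qualitative this causes no real trouble, but it must be handled cleanly when invoking Chebyshev to kill the $C(\delta)$-term in \eqref{cvchiepstilde}.
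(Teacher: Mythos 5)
Your overall strategy (transfer $\Pro$-a.s.\ relations to the tilda variables via equality of laws for Borel conditions, then pass to the limit for relations that are stable under the topology of $\mathcal X$) is the same as the paper's, and your treatments of \eqref{NullPerturb} and \eqref{cvchiepstilde} are elaborations of the paper's one-line arguments. However, there is one genuine error in your Step~1.

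You claim that ``the set of $\mu$ of the form $\delta_\xi\otimes\delta_\eta\rtimes\mathcal L^2$ with both marginals in $L^2_{t,x}$ is closed in $\mathscr Y$ once a uniform second-moment bound is imposed.'' This is false, and the counterexample is the classical one: take $R_n(t,x)=\sin(nx)$, $S_n\equiv 0$. Then $R_n\rightharpoonup 0$ in $C([0,T];L^2_w(\T))$ and $\delta_{R_n}\otimes\delta_{0}\rtimes\mathcal L^2$ converges in $\mathscr Y$ to the Young measure whose slice is the arcsine law $\otimes\,\delta_0$, which is \emph{not} $\delta_0\otimes\delta_0\rtimes\mathcal L^2$; a uniform second-moment (or even $L^\infty$) bound does not help. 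If the set were closed, then by \eqref{limOK} the limit $\tilde\mu$ would automatically be $\delta_{\tilde R}\otimes\delta_{\tilde S}\rtimes\mathcal L^2$, and the entire compensated-compactness analysis of Section~\ref{sec:Young} would be superfluous. The paper avoids this: for the $\eps$-level identity $\tilde\mu^\eps=\delta_{\tilde R^\eps}\otimes\delta_{\tilde S^\eps}\rtimes\mathcal L^2$, it only asserts that the relation is encoded in a \emph{Borel} (not closed) subset of $\mathcal X$, characterized via the moment identities \eqref{tildemuep-structureMV} with $\alpha_i\in\{0,1\}$; equality of laws $X^\eps\sim\tilde X^\eps$ then suffices to transfer the relation for each fixed $\eps$, without \eqref{limOK}. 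Only the second half of \eqref{tildemuep-structure}, namely $\tilde\mu=\tilde\mu_{t,x}\rtimes\mathcal L^2$, passes to the limit, because the defining relation \eqref{useDecYM} (marginal on $[0,T]\times\T$ equals Lebesgue) tests against bounded $\varphi(t,x)$ only, hence is closed. Your ``Main obstacle'' paragraph correctly locates the delicate point, but resolving it by asserting closedness is exactly the wrong move: the property is Borel, not closed, and this is what makes the reduction of the Young measure in Section~\ref{sec:Young} a genuine theorem rather than a triviality.

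Two minor points: (i) invoking \eqref{limOK} for the $\eps$-level identities is slightly off — \eqref{limOK} applies to the limit $\tilde X$, whereas for $\tilde X^\eps$ one only needs Borel measurability together with $X^\eps\sim\tilde X^\eps$; (ii) in your argument for \eqref{cvchiepstilde}, the splitting $g(\xi)\chi_\eps(\xi)\leqslant C(\delta)+\delta\,\xi\chi_\eps(\xi)$ is slightly loose — since $\chi_\eps$ vanishes on $\{\xi<1/\eps\}$, once $\eps<1/M(\delta)$ the $C(\delta)$-term is in fact identically zero and you simply get $g(\xi)\chi_\eps(\xi)\leqslant\delta\,\xi\chi_\eps(\xi)$, which makes the Chebyshev step unnecessary (you only need the uniform $L^1(\tilde\Omega)$ bound on $\int\tilde c'(\tilde u^\eps)[\tilde R^\eps\chi_\eps(\tilde R^\eps)+\tilde S^\eps\chi_\eps(\tilde S^\eps)]$ coming from \eqref{eq:TotalEnergyep}, together with an extraction of a subsequence to go from $L^1(\tilde\Omega)$ to a.s.\ convergence). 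These do not affect the conclusion but would have to be tidied up.
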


Due the equality of laws \eqref{lawsequality}, all the estimates obtained in Section \ref{sec:gloabsol}
 are valid with the tildas.

\begin{proof}[Proof of Proposition~\ref{prop:structureLimit}]  The identities in \eqref{NullPerturb} follow from \eqref{eq:boundpsi}-\eqref{uteps01} (apply \eqref{limOK} with $A$ a closed ball around the origin of arbitrary small radius). We also deduce \eqref{cvchiepstilde} from \eqref{eq:TotalEnergyep} and the fact that $\chi_\eps(R)\not=0\Rightarrow \eps R\geqslant 1$.
Let us prove also \eqref{tildemuep-structure} for instance: 
the first identity between the components $\mu^\eps,R^\eps,S^\eps$ of $X^\varepsilon$ can be characterized by the four relations
\begin{multline}\label{tildemuep-structureMV}
\int_{[0,T] \times \T\times\R\times\R}\varphi(t,x)(\alpha_1\xi+\alpha_2\eta+\alpha_3\xi^2+\alpha_4\eta^2)\, \ud \mu(t,x,\xi,\eta)\\
=\int_{[0,T] \times \T}\varphi(t,x) (\alpha_1 R(t,x)+\alpha_2 S(t,x)+\alpha_3 R(t,x)^2+\alpha_4 S(t,x)^2)\, \ud x\, \ud t,
\end{multline}
where $\alpha_i\in\{0,1\}$, for $\varphi$ ranging in a dense countable subset of $C([0,T] \times \T)$, and is therefore a Borel set since each term in \eqref{tildemuep-structureMV} defines a Borel function (in fact continuous or semi-continuous function) of
\begin{equation}
(\mu,R,S)\in \mathscr{Y}\times C([0,T],L^2_w(\T))\times C([0,T],L^2_w(\T)).
\end{equation}
The last equation $\mu\ =\ \mu_{t,x} \rtimes \mathcal{L}^2$ can be characterized by the relations
\begin{equation}\label{useDecYM}
\int_{[0,T] \times \T\times\R\times\R}\varphi(t,x)\, \ud \mu(t,x,\xi,\eta)
=\int_{[0,T] \times \T}\varphi(t,x) \, \ud x\, \ud t,
\end{equation}
for $\varphi$ ranging in a dense countable subset of $C([0,T] \times \T)$, \cite[Section~4.1]{BerthelinVovelle19}, so is a Borel set as well. 
\end{proof}

\begin{proposition}\label{prop:structureLimit2}
After redefinition of $(t,x)\mapsto\tilde{\mu}_{t,x}$ on a $\mathcal{L}^2$-negligible set, we have the following convergence properties: $\tilde{\Pro}$-a.s., for all functions $f_1,f_2\in C(\R)$ satisfying the growth condition
\begin{equation}\label{growthf1f2}
|f(\xi)|\leqslant C_f(1+|\xi|^r),\quad |f'(\xi)|\leqslant C_f(1+|\xi|^{r-1}),\quad |f''(\xi)|\leqslant C_f,\quad r\in [1,2),
\end{equation}
for all $\varphi \in C(\T)$, we have 
\begin{equation}\label{CVut}
\int_\T \left( f_1(\tilde{R}^\varepsilon)+f_2(\tilde{S}^\varepsilon)\right) \varphi\, \ud x\ \to\  \left[t\mapsto\int_{\T\times\R^2}\varphi(x)\left[ f_1(\xi)+f_2(\zeta)\right] \ud \tilde{\mu}_{t,x}(\xi,\zeta)\, \ud x\right]
\end{equation} 
in $C([0,T])$. Moreover, at the initial time $t=0$, we have
\begin{equation}\label{Reduction0}
\tilde{\mu}_{0,x}=\delta_{R_0(x)}\otimes\delta_{S_0(x)}.
\end{equation}
\end{proposition}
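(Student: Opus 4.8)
The plan is to obtain the convergence \eqref{CVut} from the tightness/Skorokhod setup already in place, and then to treat the identification of $\tilde\mu_{0,x}$ at $t=0$ as a separate, slightly more delicate, point. First I would fix $f_1,f_2$ satisfying \eqref{growthf1f2} and $\varphi\in C(\T)$. Because $\mathcal S$ is dense in $C_0(\R)$, pick a sequence $f^{(j)}\in\mathcal S$ with $f^{(j)}\to f_1$ locally uniformly and with $|f^{(j)}|\leqslant C(1+|\xi|)$; the a.s.\ convergence $\tilde X^\varepsilon\to\tilde X$ in $\mathcal X$ gives $f^{(j)}(\tilde R^\varepsilon)\to\tilde X_{R}^{(j)}$ in $C([0,T];L^2_w(\T))$ for each $j$, while on the other hand $\mu^\varepsilon$ converges to $\tilde\mu$ in $\mathscr Y$ (again a component of $\tilde X$), and by Proposition~\ref{prop:structureLimit} $\tilde\mu=\tilde\mu_{t,x}\rtimes\mathcal L^2$. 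A diagonal/uniqueness-of-limits argument then forces $\tilde X_R^{(j)}(t)=\int_{\R^2}f^{(j)}(\xi)\,\ud\tilde\mu_{t,\cdot}$ in the distributional sense, and this identity is exactly what allows one to redefine $(t,x)\mapsto\tilde\mu_{t,x}$ on an $\mathcal L^2$-null set so that \eqref{CVut} holds for the nice functions in $\mathcal S$. The passage from $f\in\mathcal S$ (bounded) to the unbounded $f_1,f_2$ with at most quadratic growth is handled by the uniform moment bound \eqref{LPeneE} (valid with tildas, by equality of laws): the tails are controlled uniformly in $\varepsilon$ since $r<2$, so a truncation argument as in Remark~\ref{rk:CVYoung} / Proposition~\ref{pro:young} (with $f(\xi,\eta)=\xi^2+\eta^2$ and $\delta=r/2\in[0,1)$) upgrades the convergence. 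The fact that the limit curve lies in $C([0,T])$ (not just $L^\infty$) follows from the equicontinuity estimate \eqref{weakcontfR}, which passes to the limit.

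For the initial-time identification \eqref{Reduction0}, the point is that at $t=0$ we have no loss of compactness: $\tilde R^\varepsilon(0,\cdot)=R_0\ast\rho_\varepsilon\to R_0$ and $\tilde S^\varepsilon(0,\cdot)=S_0\ast\rho_\varepsilon\to S_0$ strongly in $L^2(\T)$ (this holds $\tilde\Pro$-a.s.\ since it is a deterministic fact true $\Pro$-a.s., hence survives the change of probability space). From \eqref{CVut} evaluated at $t=0$ — using the continuity in $t$ just established — one gets $\int_\T f_1(R_0)\varphi\,\ud x=\int_{\T\times\R^2}\varphi(x)f_1(\xi)\,\ud\tilde\mu_{0,x}$ for all admissible $f_1$ and $\varphi$, and likewise for $f_2$ and for the "diagonal" test functions; since $f_1(\xi)+f_2(\zeta)$ with $f_i$ ranging over a large enough class separates probability measures on $\R^2$ that are supported where we need (and the quadratic moment bound confines the mass), this forces $\tilde\mu_{0,x}=\delta_{R_0(x)}\otimes\delta_{S_0(x)}$ for $\mathcal L^1$-a.e.\ $x$. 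A minor care point: \eqref{CVut} as stated couples $f_1$ and $f_2$, so to split off $f_1$ alone one takes $f_2\equiv 0$, which is allowed.

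The main obstacle I expect is the bookkeeping around the $\mathcal L^2$-negligible redefinition of $\tilde\mu_{t,x}$: the disintegration $\tilde\mu=\tilde\mu_{t,x}\rtimes\mathcal L^2$ is only determined up to null sets in $(t,x)$, and one must choose the representative so that simultaneously (i) the marginal identities against the countable family $\{f^{(j)}\}$ hold for a.e.\ $(t,x)$, (ii) the resulting map $t\mapsto\int\varphi f\,\ud\tilde\mu_{t,\cdot}\ud x$ is genuinely continuous on $[0,T]$ for every admissible $(f,\varphi)$, and (iii) the $t=0$ trace is well-defined. This is where one invokes separability of the test classes (a countable dense set of $\varphi\in C(\T)$, a countable generating family of $f$'s), takes the common null set, and uses the equicontinuity bound \eqref{weakcontfR} to get a continuous — rather than merely $L^\infty$ — limit so that the value at $t=0$ makes sense; once continuity is secured, the argument closes. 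Everything else is routine given \eqref{LPeneE}, \eqref{weakcontfR}, Proposition~\ref{prop:structureLimit}, and Proposition~\ref{pro:young}.
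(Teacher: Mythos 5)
Your proposal is correct and follows essentially the same route as the paper: it uses the a.s.\ convergence of the factors $\tilde{X}_R^\varepsilon,\tilde{X}_S^\varepsilon$ in $C([0,T];L^2_w(\T))^{\aleph_0}$ together with $\tilde{\mu}^\varepsilon\to\tilde{\mu}$ in $\mathscr Y$, identifies the two limits a.e.\ in $t$, redefines $\tilde{\mu}_{t,x}$ via right-Lebesgue points, extends to subquadratic $f_1,f_2$ by truncation and the moment bound \eqref{LPeneE}, and identifies $\tilde{\mu}_{0,x}$ from the strong $L^2$-convergence of the deterministic mollified data $(R_0^\varepsilon,S_0^\varepsilon)\to(R_0,S_0)$. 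Two small clarifications: re-invoking \eqref{weakcontfR} for continuity of the limit curve is unnecessary (the limit factor lies in $C([0,T];L^2_w(\T))^{\aleph_0}$ by construction of the tightness space), and the ``separating family'' step at $t=0$ should be stated as: test functions of the form $f_1(\xi)+f_2(\zeta)$ only determine the two marginals of $\tilde{\mu}_{0,x}$, which suffices because a probability measure on $\R^2$ whose marginals are both Dirac masses must be the product of those Dirac masses.
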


\begin{proof}[Proof of Proposition~\ref{prop:structureLimit2}] We first observe that if a function $g\in L^1(0,T)$ coincide a.e. with a function $h\in C([0,T])$, then the set of right-Lebesgue points of $g$ is the full set $[0,T)$ and 
\begin{equation}
h(t)\ =\ \lim_{\delta\to 0} \frac{1}{\delta}\int_t^{t+\delta} g(s)\, \ud s
\end{equation}
for all $t\in[0,T)$. We consider therefore the representative $\tilde{\mu}^*_{t,x}$ of $\tilde{\mu}_{t,x}$ given by 
\begin{equation}\label{Lebmu}
\int_{\T\times\R^2}\varphi(x) F(\xi,\zeta)\, \ud \tilde{\mu}^*_{t,x}(\xi,\zeta)\, \ud x\ \eqdef\ \lim_{\delta\to 0} \frac{1}{\delta}\int_t^{t+\delta}\int_{\T\times\R^2}\varphi(x) F(\xi,\zeta)\, \ud\tilde{\mu}_{s,x}(\xi,\zeta)\, \ud x\, \ud s,
\end{equation}
where $\varphi\in C(\T)$, $F\in C_0(\R^2)$. By differentiation's theory, \eqref{Lebmu} is satisfied for all $t\in I_{\varphi,F}$, where $I_{\varphi,F}$ is the set of right-Lebesgue points of the function 
\begin{equation}
t\mapsto \int_{\T\times\R^2}\varphi(x) F(\xi,\zeta)\, \ud \tilde{\mu}_{s,x}(\xi,\zeta)\, \ud x
\end{equation}
and is of full measure in $[0,T)$. We denote by $I$ the intersection of the sets $I_{\varphi,f}$ over $\varphi$ in a dense countable subset of $C(\T)$ and $F$ in a dense countable subset of $C_0(\R^2)$. Without loss of generality, we can assume that the functions $F$ of the form
\begin{equation}
F_{1,2}(\xi,\zeta)\ =\ f_1(\xi)\, +\, f_2(\zeta),\quad f_1,f_2\in\mathcal{S},
\end{equation}
are here taken into account. If $f_1,f_2\in\mathcal{S}$, then, by \eqref{ConvergenceInX}, the left-hand side of \eqref{CVut} is converging in $C([0,T])$ to a continuous function $\Lambda$. It is also converging in $\mathcal{D}'(0,T)$ to the function
\begin{equation}
\Gamma(t)\ \eqdef\ \int_{\T\times\R^2}\varphi(x) F_{1,2}(\xi,\zeta)\, \ud \tilde{\mu}_{t,x}(\xi,\zeta)\, \ud x,\quad F_{1,2}(\xi,\zeta)\ \eqdef\ f_1(\xi)\, +\, f_2(\zeta),
\end{equation} 
since $\tilde{\mu}^\eps\to\tilde{\mu}$ in $\mathscr{Y}$. Consequently $\Gamma$ and $\Lambda$ coincide a.e. and by our initial observation, \eqref{CVut} is satisfied with the representative $\tilde{\mu}^*_{t,x}$ of $\tilde{\mu}_{t,x}$. A simple limiting argument shows that the result holds true for $f_1$ and $f_2$ satisfying \eqref{growthf1f2}. The last assertion \eqref{Reduction0} is a simple consequence of the fact that $R_0^\eps$ and $S_0^\eps$ are deterministic so $\tilde{\Pro}$-a.s.,
\begin{equation}
\left(\tilde{X}^\eps(0),\tilde{X}^\eps_S(0)\right)\, =\, \left(f(R_0^\eps),f(S_0^\eps)\right)_{f\in\mathcal{S}}.
\end{equation}
This is equivalent to $(\tilde{R}^\eps(0),\tilde{S}^\eps(0))=(R_0^\eps,S_0^\eps)$, $\tilde{\Pro}$-a.s. Since $(R_0^\eps,S_0^\eps)$ converges strongly to $(R_0,S_0)$ in $L^2(\T)$, $\tilde{\mu}_{0,x}$ reduces to the Dirac mass $\delta_{(R_0(x),S_0(x))}=\delta_{R_0(x)}\otimes\delta_{S_0(x)}$.
\end{proof}

Using the estimates \eqref{LPeneE}, \eqref{alpha+2}, \eqref{Oleinik} with Proposition \ref{pro:young} we obtain the following result.

\begin{pro}\label{pro:Youngconv}
 Let $t_0 \in (0,T)$ and $r \in (1,2)$, $p \in (1,\infty)$, $q \in (1,3)$. Then the measure $\tilde{\mu}$ satisfies
\begin{equation}\label{Rbar_L2}
\tilde{\E} \int _{[0,T] \times \T \times \R^2} \left[ |\xi|^2\, +\, |\eta|^2 \right] \ud \tilde{\mu}(t,x,\xi,\eta)\ <\ \infty,
\end{equation}
and
\begin{gather}\label{Rbar_L2plus}
\tilde{\E} \left( \int _{[t_0,T] \times \T \times \R^2} \left[ |\xi^-|^p\, +\, |\eta^-|^p \right] \ud \tilde{\mu}(t,x,\xi,\eta) \right)^{2/p}\, <\ \infty,  \\ \label{L3-}
\tilde{\E}  \int _{[0,T] \times \T \times \R^2} c'(\tilde{u}(t,x)) \left[ |\xi|^q\, +\, |\eta|^q \right] \ud \tilde{\mu}(t,x,\xi,\eta)\ <\ \infty .
\end{gather}
Moreover, for any functions $f,g \in C(\R^2)$ satisfying 
\begin{equation}\label{fgmu}
|f(\xi,\eta)|\, \leqslant\, C \left[ 1\, +\, |\xi|^r\, +\, |\eta|^r \right],   \quad
|g(\xi,\eta)|\, \leqslant\, C \left[1\, +\, |\xi^-|^p\, +\, |\eta^-|^p\, +\, |\xi^+|^q\, +\, |\eta^+|^q\right],
\end{equation}
we have
\begin{multline}
\lim_{\varepsilon \to 0} \tilde{\E} \bigg| \int _{[0,T] \times \T \times \R^2} \phi(t,x)\,  f(\xi,\eta)\, \ud \tilde{\mu}^\varepsilon(t,x,\xi,\eta)\\
 -\ \int _{[0,T] \times \T \times \R^2} \phi(t,x)\, f(\xi,\eta)\, \ud \tilde{\mu}(t,x,\xi,\eta)\bigg|\, =\ 0,
\end{multline}
and 
\begin{multline} 
\lim_{\varepsilon \to 0} \tilde{\E} \bigg| \int _{[t_0,T] \times \T \times \R^2} c'(\tilde{u}^\varepsilon(t,x))\, \varphi(t,x)\,  g(\xi,\eta)\, \ud \tilde{\mu}^\varepsilon(t,x,\xi,\eta)\\
 -\ \int _{[t_0,T] \times \T \times \R^2} c'(\tilde{u}(t,x))\,  \varphi(t,x)\, g(\xi,\eta)\, \ud \tilde{\mu}(t,x,\xi,\eta)\bigg|^{2/p}\, =\ 0, 
\end{multline}
for all $\phi \in L^{\frac{2}{2-r}}([0,T] \times \T)$, $\varphi \in \cup_{q' \in (q,3)} L^{\frac{q'}{q'-q}}([t_0,T] \times \T)$. Finally, the convergence \eqref{CVut} holds true when $f_1, f_2 \in C(\R)$ satisfy the growth condition $|f_1(\xi)| + |f_2(\xi)| \leqslant C(1+|\xi|^r)$ and $\varphi \in L^{\frac{2}{2-r}}(\T)$.
\end{pro}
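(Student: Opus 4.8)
The plan is to deduce everything from Proposition~\ref{pro:young}, feeding its lower‑semicontinuity part \eqref{afmu_estimate} with the uniform estimates \eqref{LPeneE}, \eqref{alpha+2}, \eqref{Oleinik} to get the moment bounds, and its convergence part \eqref{Youngconvergence} to get the two limiting statements and the extension of \eqref{CVut}. The starting point is that, by the equality of laws \eqref{lawsequality}, all the uniform estimates of Section~\ref{sec:gloabsol} hold for the tilded variables (as already recorded after Proposition~\ref{prop:structureLimit}), and that Theorem~\ref{th:AppSJ} provides the $\tilde{\Pro}$-a.s.\ convergences $\tilde{\mu}^\varepsilon\to\tilde{\mu}$ in $\mathscr{Y}$ and $\tilde{u}^\varepsilon\to\tilde{u}$ in $C([0,T]\times\T)$, hence $c'(\tilde{u}^\varepsilon)\to c'(\tilde{u})$ in $C([0,T]\times\T)$. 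Accordingly, in the applications of Proposition~\ref{pro:young} the sequence $(a_n)$ will be taken to be either $1$ or $c'(\tilde{u}^\varepsilon)$, the latter being nonnegative by \eqref{coeff-c'}.

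First I would prove the three moment bounds on $\tilde{\mu}$ via \eqref{afmu_estimate}. For \eqref{Rbar_L2} I take $a_n\equiv1$, $\gamma=1$, $f(\xi,\eta)=\xi^2+\eta^2$, the required bound \eqref{afmu_n_estimate} being \eqref{LPeneE} with $p=1$. For \eqref{L3-} I take $a_n=c'(\tilde{u}^\varepsilon)$, $\gamma=1$, $f(\xi,\eta)=|\xi|^q+|\eta|^q$; the hypothesis \eqref{afmu_n_estimate} is \eqref{alpha+2} with $\alpha=q-2$ when $q\in[2,3)$, and follows from $c'(\tilde u^\varepsilon)|\xi|^q\leqslant c_3(1+|\xi|^2)$ together with \eqref{LPeneE} when $q\in(1,2)$. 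For \eqref{Rbar_L2plus} I work on $[t_0,T]\times\T$ with $a_n\equiv1$, $\gamma=2/p$, $f(\xi,\eta)=|\xi^-|^p+|\eta^-|^p$; the needed uniform bound on $\tilde{\E}\big(\int_{t_0}^T\!\int_\T(|(\tilde R^\varepsilon)^-|^p+|(\tilde S^\varepsilon)^-|^p)\,\ud x\,\ud t\big)^{2/p}$ is obtained from the Oleinik estimate \eqref{Oleinik}, applied with the deterministic stopping time $\tau=t\in[t_0,T]$ (so that the term $\tilde{\E}[(\tau\wedge T)^{-p}]$ is bounded by $t_0^{-p}$), combined with Jensen's inequality in the $t$‑integral — and, for $p>2$, with the concavity bound $\tilde{\E}(X)^{2/p}\leqslant(\tilde{\E}X)^{2/p}$ — together with the $\tilde{\Pro}$-a.s.\ $L^\infty_{t,x}$ control of the negative parts on $[t_0,T]\times\T$ furnished by Proposition~\ref{prop:controlEnergiesep} and the proof of Proposition~\ref{prop:one-sided-estimates}.

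Next I would establish the two convergence statements from \eqref{Youngconvergence}. For the unweighted one: given $f$ with $|f(\xi,\eta)|\leqslant C(1+|\xi|^r+|\eta|^r)$, $r\in[1,2)$, I take as reference function $F(\xi,\eta)=\lambda(1+\xi^2+\eta^2)$ with $\lambda$ large enough that $|f|\leqslant F^{\delta}$ for $\delta=r/2\in[1/2,1)$; since $F$ is the reference function of \eqref{Rbar_L2} ($a_n\equiv1$, $\gamma=1$) and $\phi\in L^{2/(2-r)}([0,T]\times\T)=L^{1/(1-\delta)}([0,T]\times\T)$, Proposition~\ref{pro:young} applies directly. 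For the weighted one: fixing $\varphi\in L^{q'/(q'-q)}([t_0,T]\times\T)$ for some $q'\in(q,3)$, I set $\delta=q/q'\in(0,1)$ (so $\varphi\in L^{1/(1-\delta)}$) and take $F(\xi,\eta)=\lambda\big(1+|\xi^-|^{p/\delta}+|\eta^-|^{p/\delta}+|\xi^+|^{q'}+|\eta^+|^{q'}\big)$ with $\lambda$ chosen so that $|g|\leqslant F^{\delta}$; the remaining task is the uniform $(2/p)$‑th moment bound on $\int_{[t_0,T]\times\T}c'(\tilde{u}^\varepsilon)F(\tilde{R}^\varepsilon,\tilde{S}^\varepsilon)\,\ud x\,\ud t$, which I would obtain by splitting $F$ into its negative‑part contribution (handled through \eqref{Oleinik} and $c'\leqslant c_3$, choosing $q'$ close to $q$ so the exponent $p/\delta$ remains admissible) and its positive‑part contribution (handled through \eqref{alpha+2} and \eqref{LPeneE}, after raising to the power $2/p$ and absorbing the weight via Jensen's inequality). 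Proposition~\ref{pro:young} then yields the limit with exponent $\gamma=2/p$. Finally, the extension of \eqref{CVut} to $f_1,f_2$ of growth $|\xi|^r$ with $\varphi\in L^{2/(2-r)}(\T)$ would be obtained by approximating $f_1,f_2$ by elements of $\mathcal{S}$ in a weighted sup‑norm: for elements of $\mathcal{S}$ the convergence is the content of Proposition~\ref{prop:structureLimit2}, the tail error is controlled uniformly in $\varepsilon$ by \eqref{Rbar_L2} and \eqref{L3-}, and the limit $t\mapsto\int_{\T\times\R^2}\varphi(x)(f_1(\xi)+f_2(\zeta))\,\ud\tilde{\mu}_{t,x}\,\ud x$ is continuous thanks to the Lebesgue‑point representative of $\tilde{\mu}_{t,x}$ selected in Proposition~\ref{prop:structureLimit2}.

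The main technical difficulty I anticipate is the bookkeeping of exponents in the weighted convergence statement: the available uniform estimates are of heterogeneous strength — \eqref{alpha+2} is only first order in $\omega$ and carries the degenerate weight $c'(\tilde{u}^\varepsilon)$, while \eqref{Oleinik} reaches order two in $\omega$ but only for powers $\leqslant2$ in the space variable — so the reference function $F$ and the parameter $\delta$ must be chosen so that simultaneously $|g|\leqslant F^{\delta}$, $\varphi\in L^{1/(1-\delta)}$, and the $(2/p)$‑th moment of $\int c'(\tilde{u}^\varepsilon)F\,\ud x\,\ud t$ is finite uniformly in $\varepsilon$; making these three requirements compatible is the heart of the argument.
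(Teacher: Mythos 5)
The paper itself gives no detailed proof of this proposition --- the single sentence preceding the statement (``Using the estimates \eqref{LPeneE}, \eqref{alpha+2}, \eqref{Oleinik} with Proposition~\ref{pro:young} we obtain the following result'') is the entire argument. Your proposal correctly identifies this route and fleshes it out: the moment bounds \eqref{Rbar_L2}, \eqref{L3-}, \eqref{Rbar_L2plus}, the unweighted convergence, and the extension of \eqref{CVut} are all handled by the right feed of \eqref{LPeneE}, \eqref{alpha+2}, \eqref{Oleinik} into Proposition~\ref{pro:young}, essentially as the authors intend. (One small confusion: for \eqref{Rbar_L2plus} with $p>2$, the concavity bound $\tilde{\E}(X)^{2/p}\leqslant(\tilde{\E}X)^{2/p}$ would require $\tilde{\E}X<\infty$, i.e.\ a $p$-th moment of the $L^\infty_{t,x}$ control $Y$ of the negative parts, which is not available; what actually works, and which you also invoke, is the pathwise bound $X\leqslant T Y^{p}$ followed by $\E[Y^2]<\infty$ --- no concavity is needed.)

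The genuine gap is in the weighted convergence statement, and you correctly flag it as the crux but do not close it. With $\delta=q/q'<1$ and $F$ having negative-part exponent $p_1=p/\delta>p$, the needed uniform bound on $\tilde{\E}\bigl(\int_{[t_0,T]\times\T}c'(\tilde u^\varepsilon)F(\tilde R^\varepsilon,\tilde S^\varepsilon)\bigr)^{2/p}$ reduces, for the negative-part contribution, to $\E\bigl[Y^{2p_1/p}\bigr]=\E\bigl[Y^{2/\delta}\bigr]$ with $2/\delta>2$. But $Y$ is built from $\sup_{x}A_T(x)$, hence ultimately from $\sup_{x_1}|M^\varepsilon_{x_1}|$, and \eqref{MLinfty} only gives a second moment: it uses the crude pointwise bound \eqref{MtildeM} which loses all moments beyond $2$. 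So for any $\delta\in(0,1)$ you are asking for more than the paper's cited estimates furnish, and ``choosing $q'$ close to $q$'' pushes in the wrong direction: it sends $\delta\uparrow 1$, which would force $\varphi$ into smaller and smaller Lebesgue classes, whereas $\varphi$ is only given in $L^{q'_0/(q'_0-q)}$ for some fixed $q'_0$. A way to close the gap is to avoid taking $\sup_{x}$ before the expectation: rewrite $M^\varepsilon_{x}$ (via stochastic Fubini) as a single stochastic integral whose quadratic variation is controlled pathwise by $\|q\|_{L^1}\int_0^T\mathcal{E}^\varepsilon(s)\,\ud s$, apply the Burkholder--Davis--Gundy inequality for \emph{each fixed} $x$ (so $\E[|M^\varepsilon_x|^m]\leqslant\mathtt{C}(m,T)$ for all $m$ by \eqref{LPeneE}), and only then integrate in $x$, using Jensen/Minkowski to handle the $(2/p)$-th power. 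This furnishes $\E[g(x)^{2/\delta}]<\infty$ for each $x$, with $g(x)$ the Oleinik-type pointwise bound, which is exactly what the moment estimate on $\int c'(\tilde u^\varepsilon)F$ demands.
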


\subsubsection{The stochastic integral}\label{subsubsec:limdefectmeas-sto}

We would like to show that the equations \eqref{ReqepthetaItoConservativeEPS}-\eqref{SeqepthetaItoConservativeEPS} are satisfied $\tilde{\Pro}$-a.s. by the quantities with tildas and then pass to the limit as $\eps\to 0$. So we would like to establish, for all $\varphi\in C^1(\T)$ and $h\in\mathcal{S}$, the identity
\begin{equation}\label{ReqepthetaItoConservativeEPStilde-weak}
\tilde{M}_{\varphi,h}(t)\ =\ \int_\T \int_{t_0}^t\ h'(\tilde{R}^\varepsilon)\, \varphi\, \Phi^\eps\, \ud \tilde{W}^\eps(s)\, \ud x,
\end{equation}
where
\begin{align} \nonumber
\tilde{M}_{\varphi,h}(t)\ &\eqdef\ \int_\T h(\tilde{R}^\varepsilon)(t,x)\, \varphi(x)\, \ud x \ -\ \int_\T h(\tilde{R}^\varepsilon)(t_0,x)\, \varphi(x)\, \ud x
\ -\ \int_{t_0}^t\int_\T (c(\tilde{u}^\varepsilon)\, h(\tilde{R}^\varepsilon))\, \varphi_x\, \ud x\, \ud s \\ \nonumber
&\quad -\ \int_{t_0}^t\int_\T \left[\tilde{c}'(\tilde{u}^\varepsilon) \left[(\tilde{S}^\varepsilon\, -\, \tilde{R}^\varepsilon)\, B_h(\tilde{R}^\varepsilon,\tilde{S}^\varepsilon)\, -\, h'(\tilde{R}^\varepsilon)\, \chi_\varepsilon(\tilde{R}^\varepsilon)\right] -\, \half\, q^\varepsilon\, h''(\tilde{R}^\varepsilon)\right] \varphi\, \ud x\, \ud s \\ \label{ReqepthetaItoConservativeEPStilde-weak-M}
&\quad -\, 2\, \int_{t_0}^t\int_\T\tilde{c}'(\tilde{u}^\varepsilon)\, \tilde{\Theta}^\varepsilon\, (\tilde{R}^\varepsilon\, h'(\tilde{R}^\varepsilon)
\, -\, 2\, h(\tilde{R}^\varepsilon))\, \ud x\, \ud s,
\end{align}
(the term $B_h$ being defined by \eqref{defBh}). The identification \eqref{ReqepthetaItoConservativeEPStilde-weak} is not trivial. We can either use a characterization in law of the stochastic integral in terms of martingales as in \cite{Ondrejat10,BrzezniakOndrejat11,HofmanovaSeidler12,Hofmanova13b,DebusscheHofmanovaVovelle16} for instance, or resort to the trick of Bensoussan, \cite[p.~282]{Bensoussan1995}, which uses a regularization by convolution of the integrand of the stochastic integral and the stochastic Fubini theorem to ``fully integrate'' the $\ud W(s)$. This last approach will indeed give 
\eqref{ReqepthetaItoConservativeEPStilde-weak}-\eqref{ReqepthetaItoConservativeEPStilde-weak-M}, the details are left to the reader. Let us however specify what is the filtration involved here and what are the properties of the Wiener process $\tilde{W}^\eps$. Let $\left(\tilde{\mathcal{F}}^{0,\varepsilon}_t\right)$ denote the filtration generated by the process $\tilde{X}^\varepsilon$ and let $\left(\tilde{\mathcal{F}}^\varepsilon_t\right)$ be the augmented filtration, obtained by the completion of the right-continuous filtration $\left(\tilde{\mathcal{F}}^{0,\varepsilon}_{t+}\right)$.

\begin{lem}[Equivalent cylindrical Wiener process]\label{lem:EqWiener} The process $\tilde{W}^\varepsilon$ is a $(\tilde{\mathcal{F}}^\varepsilon_t)$-adapted cylindrical Wiener process, the increment $\tilde{W}^\varepsilon(t)-\tilde{W}^\varepsilon(s)$ is independent on $\tilde{\mathcal{F}}^\varepsilon_s$ for all $t\geqslant s\geqslant 0$ and $\tilde{\Pro}$-a.s 
\begin{equation}\label{tildeWep}
\tilde{W}^\varepsilon\ =\ \sum_k \tilde{\beta}^\varepsilon_k\, g_k
\end{equation}
in $C([0,T], \mathfrak{U}_{-1})$, where $(\tilde{\beta}^\varepsilon_1(t),\tilde{\beta}^\varepsilon_2(t),\dotsc)$ are independent one-dimensional Wiener processes.
\end{lem}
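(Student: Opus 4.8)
The plan is to verify that $\tilde{W}^\varepsilon$ retains the defining properties of a cylindrical Wiener process after the application of the Skorokhod--Jakubowski theorem, exploiting the fact that such properties are characterized purely in terms of the law of the process (and its relation to the filtration it generates). First I would record that $\tilde{W}^\varepsilon \sim W$ as $C([0,T],\mathfrak{U}_{-1})$-valued random variables by \eqref{lawsequality}, since $W$ is one of the components of $X^\varepsilon$; hence $\tilde{W}^\varepsilon$ is, $\tilde{\Pro}$-a.s., of the form $\sum_k \tilde{\beta}^\varepsilon_k g_k$ for some sequence of processes $(\tilde{\beta}^\varepsilon_k)$, and the law of the family $(\tilde{\beta}^\varepsilon_k)_k$ coincides with that of $(\beta_k)_k$ — in particular the $\tilde{\beta}^\varepsilon_k$ are independent standard one-dimensional Brownian motions, which gives \eqref{tildeWep}.

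The second and main point is the adaptedness and independence of increments with respect to the augmented filtration $(\tilde{\mathcal{F}}^\varepsilon_t)$ generated by the whole process $\tilde{X}^\varepsilon$ (not just $\tilde{W}^\varepsilon$). Adaptedness of $\tilde{W}^\varepsilon$ to $(\tilde{\mathcal{F}}^{0,\varepsilon}_t)$, hence to $(\tilde{\mathcal{F}}^\varepsilon_t)$, is immediate since $\tilde{W}^\varepsilon$ is a component of $\tilde{X}^\varepsilon$. For the independence of the increment $\tilde{W}^\varepsilon(t)-\tilde{W}^\varepsilon(s)$ from $\tilde{\mathcal{F}}^\varepsilon_s$, I would argue as follows. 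On the original space, $W(t)-W(s)$ is independent of $\sigma(X^\varepsilon(r); r\leqslant s)$; indeed each factor of $X^\varepsilon$ — the Young measure $\mu^\varepsilon$, the processes $f(R^\varepsilon),f(S^\varepsilon)$, $u^\varepsilon$, $\Theta^\varepsilon$, $\Xi^\varepsilon$ — is, at times $r\leqslant s$, a measurable function of $W|_{[0,s]}$ and the (deterministic) initial data, because $(R^\varepsilon,S^\varepsilon)$ solves the SDE \eqref{SVWEep} driven by $W$ and is adapted to the filtration generated by $W$; increments of $W$ after time $s$ are independent of $W|_{[0,s]}$. This independence is a property of the joint law of $\big(W|_{[0,s]\text{-increments after }s},\,(X^\varepsilon(r))_{r\leqslant s}\big)$, and since $\tilde{X}^\varepsilon \sim X^\varepsilon$ as $\mathcal{X}$-valued random variables, the same independence holds on $\tilde{\Omega}$: $\tilde{W}^\varepsilon(t)-\tilde{W}^\varepsilon(s)$ is independent of $\sigma(\tilde{X}^\varepsilon(r); r\leqslant s) = \tilde{\mathcal{F}}^{0,\varepsilon}_s$. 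One then upgrades this to the right-continuous augmented filtration $\tilde{\mathcal{F}}^\varepsilon_s$ by the standard Blumenthal-type argument: independence from $\tilde{\mathcal{F}}^{0,\varepsilon}_r$ for all $r>s$ passes to the limit $r\downarrow s$ (using continuity of $\tilde{W}^\varepsilon$ and dominated convergence applied to characteristic functions $\tilde{\E}[e^{i\langle \xi,\,\tilde{W}^\varepsilon(t)-\tilde{W}^\varepsilon(s)\rangle}\,Y]$ for $Y$ bounded and $\tilde{\mathcal{F}}^{0,\varepsilon}_{s+}$-measurable), giving independence from $\tilde{\mathcal{F}}^{0,\varepsilon}_{s+}$, and completion of the $\sigma$-algebra does not affect independence. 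Finally, a continuous, $(\tilde{\mathcal{F}}^\varepsilon_t)$-adapted process with independent Gaussian increments whose components are independent standard Brownian motions is by definition a cylindrical $(\tilde{\mathcal{F}}^\varepsilon_t)$-Wiener process on $\mathfrak{U}$; this completes the proof.

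The main obstacle here is purely bookkeeping: one must be careful that the filtration against which independence is asserted is the one generated by the \emph{entire} limiting tuple $\tilde{X}^\varepsilon$ (so that later one may legitimately regard $\tilde{R}^\varepsilon,\tilde{S}^\varepsilon$ etc. as adapted integrands against $\tilde{W}^\varepsilon$), and that the equality of laws \eqref{lawsequality} on the product space $\mathcal{X}$ genuinely transfers the \emph{joint} law of past-of-$X^\varepsilon$ together with future increments of $W$; the Skorokhod--Jakubowski theorem only asserts equality of the law of $\tilde{X}^\varepsilon$ with that of $X^\varepsilon$, but since the increments $W(t)-W(s)$ are themselves continuous functionals of the component $W$ of $X^\varepsilon$, the relevant joint law is a functional of the law of $X^\varepsilon$ and is therefore preserved. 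Once this is observed, the rest is the routine Blumenthal $0$--$1$-type argument sketched above. Since these verifications are entirely standard, I would state the lemma and indicate the proof in this condensed form, as the authors do.
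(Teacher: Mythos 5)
Your argument is correct and follows essentially the same route as the paper's proof: both transfer the independence of increments from the original space by observing that the defining condition is a statement about joint laws (which is preserved by the Skorokhod--Jakubowski change of space), then pass to the right-continuous augmented filtration via a continuity/characteristic-function argument, and obtain \eqref{tildeWep} by projecting $\tilde{W}^\varepsilon$ onto the basis $(g_k)$ and invoking equality of laws. The only cosmetic difference is ordering and that the paper formulates the law-invariant condition explicitly as a family of vanishing covariances $\tilde{\E}[G(\tilde W^\varepsilon(t)-\tilde W^\varepsilon(s))\,H(\tilde X^\varepsilon(s_1),\dots,\tilde X^\varepsilon(s_m))]=0$, whereas you phrase it as preservation of the joint law of the past of $X^\varepsilon$ together with future increments of $W$; these are equivalent.
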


\begin{proof}
We obtain directly from the definition of the filtration $(\tilde{\mathcal{F}}_t^\varepsilon)$ that $\tilde{W}^\varepsilon$ is a $(\tilde{\mathcal{F}}^\varepsilon_t)$-adapted Wiener process. If $t\geqslant s\geqslant 0$, that $\tilde{W}^\varepsilon(t)-\tilde{W}^\varepsilon(s)$ is independent on $\tilde{\mathcal{F}}^{0,\varepsilon}_s$ means 
\begin{equation}\label{IncrementTilde}
\tilde{\E}\left[G\left(\tilde{W}^\varepsilon(t)-\tilde{W}^\varepsilon(s)\right)H(\tilde{X}^\varepsilon(s_1),\dotsc,\tilde{X}^\varepsilon(s_m))\right]\, =\ 0,
\end{equation}
for all $m\geqslant 1$, all times $0\leqslant s_1\leqslant\dotsb\leqslant s_m\leqslant s$, and all continuous functions $G\colon\mathfrak{U}_{-1}\to\R$, $H\colon \mathcal{X}^m\to\R$. The condition \eqref{IncrementTilde} is clearly invariant by change of the probability space and so true, since satisfied by the original variables when the tildas are removed. We have then 
\begin{equation}\label{IncrementTilde+}
\tilde{\E}\left[G\left(\tilde{W}^\varepsilon(t)-\tilde{W}^\varepsilon(s')\right)\tilde{Y}_s\right]\, =\ 0,
\end{equation}
for all $s<s'\leqslant t$ and all bounded $\tilde{\mathcal{F}}^{0,\varepsilon}_{s+}$-measurable random variable $\tilde{Y}_s$. By continuity of $r\mapsto \tilde{W}^\varepsilon(r)$, we can let $s'\downarrow s$ in \eqref{IncrementTilde+} and obtain the independence of $\tilde{W}^\varepsilon(t)-\tilde{W}^\varepsilon(s)$ and $\tilde{\mathcal{F}}^\varepsilon_{s+}$, and thus also independence with the completed $\sigma$-algebra. Finally, we have the expansion \eqref{tildeWep} with 
\begin{equation}
\tilde{\beta}^\varepsilon_k(t)\ \eqdef\ \dual{\tilde{W}^\varepsilon(t)}{g_k}_{\mathfrak{U}_{-1}}.
\end{equation}
The map $w\mapsto \dual{w}{g_k}_{\mathfrak{U}_{-1}}$ is continuous $C([0,T];\mathfrak{U}_{-1})\to C([0,T])$, so, for each $J$ finite, $(\tilde{\beta}^\varepsilon_k(t))_{k\in J}$ and $(\beta_k(t))_{k\in J}$ have the same laws in $C([0,T])^J$. It follows that $(\tilde{\beta}^\varepsilon_1(t),\tilde{\beta}^\varepsilon_2(t),\dotsc)$ are independent one-dimensional Wiener processes.
\end{proof}

Once \eqref{ReqepthetaItoConservativeEPStilde-weak}-\eqref{ReqepthetaItoConservativeEPStilde-weak-M} is established we can use \cite[Lemma~2.1]{DebusscheGlattHoltzTemam2011} to pass to the limit in the stochastic integral. This is explained in the next section~\ref{subsubsec:limdefectmeas-lim}.

\subsubsection{Limit equation with defect measure}\label{subsubsec:limdefectmeas-lim}

We introduce the following notations 
\begin{equation}\label{fbardef}
\langle f(R,S)\rangle(t,x)\ \eqdef\ \int_{\R^2} f(\xi,\eta)\, \ud \tilde{\mu}_{t,x}(\xi,\eta),
\end{equation}
for any $f$ such that \eqref{fbardef} is well defined.
As a consequence of Proposition~\ref{prop:structureLimit2} and  Proposition~\ref{pro:Youngconv}, if $t_0\in(0,T)$ and if $f$ and $g$ satisfy \eqref{fgmu} while $f_1$, $f_2$ satisfy \eqref{growthf1f2},  then, up to a subsequence, we have, $\tilde{\Pro}$-a.s., 
\begin{align*}
f(\tilde{R}^\varepsilon,\tilde{S}^\varepsilon)\  &\rightharpoonup\ \langle f(R,S)\rangle  &\text{in } L^{2/r}([0,T] \times \T), \\
c'(\tilde{u}^\varepsilon)\, g(\tilde{R}^\varepsilon,\tilde{S}^\varepsilon)\     &\rightharpoonup\ c'(\tilde{u})\, \langle g(R,S)\rangle  &\text{in } L^{q'/q}([t_0,T] \times \T) \quad \forall q' \in (q,3),\\
\int_\T f_1(\tilde{R}^\varepsilon)+f_2(\tilde{S}^\varepsilon)\, \varphi\, \ud x\ &\to\ \int_\T \langle f_1(R)+f_2(S)\rangle\, \varphi\, \ud x
&\text{in }C([0,T]).
\end{align*}
Using the dominated convergence theorem, we also have 
\begin{equation}\label{Phiconvergence}
\lim_\varepsilon \|\Phi - \Phi^\varepsilon\|_{L_2(\mathfrak{U},L^2(\T))}^2\ =\ \sum_{k \geqslant 1} \lim_\varepsilon \|\sigma_k - \sigma_k^\varepsilon\|_{L^2(\T)}^2\ =\ 0.
\end{equation}

Let $(\tilde{\mathcal{F}}_t)$ be defined as the augmented filtration generated by $\tilde{X}$. Lemma~\ref{lem:EqWiener} has a completely equivalent version for $\left((\tilde{\mathcal{F}}_t),(\tilde{W}(t))\right)$ so we can claim that
\begin{equation}\label{tildeStochasticBasis}
\left(\tilde{\Omega},\tilde{\Pro},\tilde{\mathcal{F}},(\tilde{\mathcal{F}}_t),(\tilde{W}(t))\right)\ \mbox{is a stochastic basis.}
\end{equation}
We can now pass to the limit $[\eps\to 0]$ in \eqref{ReqepthetaItoConservativeEPStilde-weak}-\eqref{ReqepthetaItoConservativeEPStilde-weak-M}, using \cite[Lemma~2.1]{DebusscheGlattHoltzTemam2011} and a similar argument as \eqref{Phiconvergence} for the stochastic integral, \eqref{cvchiepstilde}, and Proposition~\ref{pro:Youngconv}, to obtain the limit equation
\begin{gather}\nonumber
\int_\T \langle h(R)\rangle(t,x)\, \varphi(x)\, \ud x \ -\ \int_\T \langle h(R)\rangle(t_0,x)\, \varphi(x)\, \ud x
\ -\ \int_{t_0}^t\int_\T c(\tilde{u})\, \langle h(R)\rangle\, \varphi_x\, \ud x\, \ud s\\ \nonumber
=\ \int_{t_0}^t\int_\T \left[ \tilde{c}'(\tilde{u}) \left[\left\langle 2\, (S-R)\, h(R)\, +\, h'(R)\, (R^2-S^2)\right\rangle\right] +\, \half\, q\, \langle h''(R)\rangle\right] \varphi\, \ud x\, \ud s \\ \label{RtildeLIM}
+\, \int_\T \int_{t_0}^t \langle h'(R)\rangle\, \varphi\, \Phi\, \ud \tilde{W}(s)\, \ud x,
\end{gather}
and, similarly,
\begin{gather}\nonumber
\int_\T \langle h(S)\rangle(t,x)\, \varphi(x)\, \ud x \ -\ \int_\T \langle h(S)\rangle(t_0,x)\, \varphi(x)\, \ud x
\ +\ \int_{t_0}^t\int_\T c(\tilde{u})\, \langle h(S)\rangle\, \varphi_x\, \ud x\, \ud s\\ \nonumber
=\ \int_{t_0}^t\int_\T\left[\tilde{c}'(\tilde{u}) \left[\left\langle 2\, (R-S)\, h(S)\, +\, h'(S)\, (S^2-R^2)\right\rangle\right] +\, \half\, q\, \langle h''(S)\rangle\right] \varphi\,  \ud x\, \ud s\ \\ \label{StildeLIM}
+\, \int_\T  \int_{t_0}^t \langle h'(S)\rangle\, \varphi\, \Phi\, \ud \tilde{W}(s)\, \ud x,
\end{gather}
those equations \eqref{RtildeLIM} and \eqref{StildeLIM} being satisfied when $h\in W^{2,\infty}_\mathrm{loc}(\R)$ is such that 
\begin{equation}\label{growth-h0}
|h(\xi)|\leqslant C(1+|\xi|^r),\qquad |h'(\xi)|\leqslant C(1+|\xi|^{r-1}), \qquad |h''(\xi)|\leqslant C,
\end{equation}
for some given constant $C\geqslant 0$ and some exponent $r\in[1,2)$. In particular, for $h(\xi)=\xi$, we obtain
\begin{multline}\label{RtildeLIMID} 
\int_\T \langle R\rangle(t,x)\, \varphi(x)\, \ud x \ -\ \int_\T \langle R\rangle(t_0,x)\, \varphi(x)\, \ud x
\ -\ \int_{t_0}^t\int_\T c(\tilde{u})\, \langle R\rangle\, \varphi_x\, \ud x\, \ud s\\
=\ -\int_{t_0}^t\int_\T\tilde{c}'(\tilde{u}) \left[\langle R^2\rangle\, -\, 2\, \langle RS\rangle\, +\, \langle S^2\rangle\right] \varphi\,  \ud x\, \ud s\, 
+\, \int_\T \int_{t_0}^t \varphi\, \Phi\, \ud \tilde{W}(s)\, \ud x,
\end{multline}
and, similarly,
\begin{multline}\label{StildeLIMID}
\int_\T \langle S\rangle(t,x)\, \varphi(x)\, \ud x \ -\ \int_\T \langle S\rangle(t_0,x)\, \varphi(x)\, \ud x
\ +\ \int_{t_0}^t\int_\T c(\tilde{u})\, \langle S\rangle\, \varphi_x\, \ud x\, \ud s\\
=\ -\int_{t_0}^t\int_\T\tilde{c}'(\tilde{u}) \left[\langle R^2\rangle\, -\, 2\, \langle RS\rangle\, +\, \langle S^2\rangle\right] \varphi\, \ud x\, \ud s\, 
+\, \int_\T \int_{t_0}^t  \varphi\, \Phi\, \ud \tilde{W}(s)\, \ud x,
\end{multline}
Assume $\langle RS\rangle=\langle R\rangle\langle S\rangle$ (actually, this is established as a first step in the next section~\ref{sec:Young}). Then \eqref{RtildeLIMID}-\eqref{StildeLIMID} is the expected limit equation \eqref{SVWE2} in conservative form, up to the defect (non-negative) terms $\langle R^2\rangle-\langle R\rangle^2$ and $\langle S^2\rangle-\langle S\rangle^2$. We will show in Section~\ref{sec:Young} that these defect measures are trivial. To that effect, we will need an evolution equation for 
some non-linear functions of $\langle R\rangle$, as stated in the following proposition.

\begin{proposition}[Renormalization in \eqref{RtildeLIMID}]\label{prop:Renormalize} Let $h\in C^\infty(\R)$ satisfy the growth condition \eqref{growth-h0}. We have then 
\begin{gather}\nonumber
\int_\T  h(\langle R \rangle)(t,x)\, \varphi(x)\, \ud x \ -\ \int_\T \langle h(\langle R \rangle)(t_0,x)\, \varphi(x)\, \ud x
\ -\ \int_{t_0}^t\int_\T c(\tilde{u})\, h(\langle R\rangle)\, \varphi_x\, \ud x\, \ud s\\ \nonumber
= \int_{t_0}^t\int_\T\tilde{c}'(\tilde{u}) \left[2\left(\langle R \rangle-\langle S \rangle\right)
\left[\langle R \rangle h'(\langle R \rangle)\, -\, h(\langle R \rangle)\right]
\, -\, h'(\langle R \rangle) \left\langle\left(R-S\right)^2\right\rangle\right] \varphi\, \ud x\, \ud s \\ \label{RtildeLIMRenormalize}
+\, \half \int_{t_0}^t\int_\T  q\, h''(\langle R \rangle)\varphi\, \ud x\, \ud s\,
+\, \int_\T \int_{t_0}^t h'(\langle R \rangle)\, \varphi\, \Phi\, \ud \tilde{W}(s)\, \ud x,
\end{gather}
for all $0\leqslant t_0\leqslant t$.
\end{proposition}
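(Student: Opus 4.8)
The plan is to apply the It\^o formula to the function $h$ of the process $\langle R\rangle$, using the weak-form equation \eqref{RtildeLIMID} as the underlying evolution equation. The subtlety, of course, is that \eqref{RtildeLIMID} is a \emph{distributional} identity in $x$ (tested against $\varphi\in C^1(\T)$) and an integral identity in $t$, so $\langle R\rangle$ is not known to be smooth, and one cannot simply ``plug in'' It\^o's formula pointwise. First I would record that, by Proposition~\ref{prop:structureLimit} (in particular \eqref{tildeRSuep}) and the estimates \eqref{Rbar_L2}, the function $\langle R\rangle$ belongs to $L^2([0,T]\times\T)$ $\tilde\Pro$-a.s., that $\tilde u\in H^1((0,T)\times\T)$ with $c(\tilde u)$ Lipschitz, and that $\langle R^2\rangle,\langle RS\rangle,\langle S^2\rangle\in L^{q/2}$-type spaces by \eqref{L3-}; this is what makes all the terms in \eqref{RtildeLIMRenormalize} well defined under the growth condition \eqref{growth-h0}.

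The key steps, in order, would be: \textbf{(1)} Mollify in $x$. Apply the Friedrichs mollifier $J_\delta$ (convolution by $\rho_\delta$) to \eqref{RtildeLIMID}; since \eqref{RtildeLIMID} holds for every test function $\varphi$, testing against $\rho_\delta(x-\cdot)$ yields that $\langle R\rangle^\delta:=J_\delta\langle R\rangle$ satisfies, $\tilde\Pro$-a.s., an honest SDE in the Hilbert space $L^2(\T)$ (or even pointwise in $x$, since $\langle R\rangle^\delta$ is smooth in $x$):
\[
\ud\langle R\rangle^\delta+\bigl(c(\tilde u)\langle R\rangle\bigr)^\delta_x\,\ud t
= J_\delta\!\left[-\tilde c'(\tilde u)\bigl(\langle R^2\rangle-2\langle RS\rangle+\langle S^2\rangle\bigr)\right]\ud t+\Phi^\delta\,\ud\tilde W,
\]
where $\Phi^\delta g_k=J_\delta\sigma_k$ and we used $\int_\T\varphi\,\Phi\,\ud\tilde W\,\ud x$ structure. \textbf{(2)} Rewrite the transport term: $(c(\tilde u)\langle R\rangle)^\delta_x = c(\tilde u)\langle R\rangle^\delta_x + r^\delta$ with commutator remainder $r^\delta=[(c(\tilde u)),J_\delta]\langle R\rangle_x + (c(\tilde u))_x\langle R\rangle^\delta - J_\delta((c(\tilde u))_x\langle R\rangle)$, treated by the DiPerna--Lions commutator lemma (already invoked in the proof of Proposition~\ref{prop:Hsestimate}); note $(c(\tilde u))_x=c'(\tilde u)\tilde u_x=c'(\tilde u)\frac{\langle S\rangle-\langle R\rangle}{2c(\tilde u)}$, which in the renormalized identity produces the term $2\tilde c'(\tilde u)(\langle R\rangle-\langle S\rangle)[\langle R\rangle h'(\langle R\rangle)-h(\langle R\rangle)]$ after using $\tilde c'=\frac{c'}{4c}$. \textbf{(3)} Apply the finite-dimensional It\^o formula (or the It\^o formula in $L^2(\T)$, cf.\ Da Prato--Zabczyk) to $h(\langle R\rangle^\delta(t,x))$ for fixed $x$; the It\^o correction is $\frac12 q^\delta(x)h''(\langle R\rangle^\delta)$ where $q^\delta(x)=\sum_k(J_\delta\sigma_k)^2(x)$. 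Test against $\varphi$, integrate in $x$. \textbf{(4)} Pass to the limit $\delta\to0$. Here $\langle R\rangle^\delta\to\langle R\rangle$ in $L^2([0,T]\times\T)$, $q^\delta\to q$ in $L^1(\T)$, $\Phi^\delta\to\Phi$ in Hilbert--Schmidt norm (as in \eqref{Phiconvergence}); the growth bounds \eqref{growth-h0} ($h',h''$ at most linear, resp.\ bounded) together with the $L^{q/2}$-integrability of $\langle R^2\rangle$ etc.\ and dominated convergence handle the drift terms, and the BDG inequality handles the stochastic integral. The commutator remainder $r^\delta\to0$ strongly in $L^2$ by DiPerna--Lions since $\langle R\rangle,(c(\tilde u))_x\in L^2$.

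The main obstacle I anticipate is \textbf{step (4)}, specifically the passage to the limit in the nonlinear drift term $\tilde c'(\tilde u)h'(\langle R\rangle)\langle(R-S)^2\rangle$ and in $h'(\langle R\rangle)c(\tilde u)\langle R\rangle\varphi_x$ type expressions: one needs $h'(\langle R\rangle^\delta)\to h'(\langle R\rangle)$ in a strong enough topology to pair against the only-$L^{q/2}$ object $\langle(R-S)^2\rangle$ (or the only-$L^2$ object $\langle R\rangle$), and since $h'$ grows like $|\xi|^{r-1}$ with $r<2$, one gets $h'(\langle R\rangle)\in L^{2/(r-1)}$, which for $r$ close to $2$ is close to $L^2$ — enough to pair with $L^2$ but one must check the H\"older exponents close up against the $L^{q/2}$, $q<3$, integrability of $\langle(R-S)^2\rangle$, i.e.\ one needs $\frac{r-1}{2}+\frac{2}{q}\le1$ type conditions, which hold since $r<2$ can be taken as close to $1$ as needed for the renormalization while $q$ is free in $(1,3)$; the statement quantifies over $h$ satisfying \eqref{growth-h0} with \emph{some} $r\in[1,2)$, so one simply chooses $r$ appropriately. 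A secondary technical point is justifying the It\^o formula in the Hilbert-space setting when $q$ is only $L^1$ (not the trace-class condition on a fixed Hilbert space); this is handled by working pointwise in $x$ with the one-dimensional It\^o formula before integrating against $\varphi$, which is legitimate because $\langle R\rangle^\delta$ is, for each $x$, a genuine real-valued It\^o process (its $\ud\tilde W$ coefficient $h'$-free part is $\Phi^\delta(x)\cdot$, a fixed element of $\mathfrak U^*$).
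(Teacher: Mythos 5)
Your proposal takes essentially the same route as the paper: mollify in $x$ by $J_\delta$, observe that $R^\delta = J_\delta\langle R\rangle$ is, for each fixed $x$, a genuine real-valued It\^o process, apply the one-dimensional It\^o formula, convert the transport term via the chain rule for $H^1$-functions using $(c(\tilde u))_x = 2\tilde c'(\tilde u)(\langle S\rangle-\langle R\rangle)$, control the commutator by DiPerna--Lions, and pass to the limit $\delta\to 0$. One small remark: the paper keeps the transport term in conservative form $[c(\tilde u)R^\delta]_x$ (with commutator $\gamma^\delta = [c(\tilde u)R^\delta]_x - J_\delta[c(\tilde u)\langle R\rangle]_x$) and applies the chain rule directly to $h'(R^\delta)[c(\tilde u)R^\delta]_x$, whereas your decomposition $(c(\tilde u)\langle R\rangle)^\delta_x = c(\tilde u)\langle R\rangle^\delta_x + r^\delta$ with the stated $r^\delta$ is not algebraically consistent as written (the three terms you list in $r^\delta$ actually add up to $[c(\tilde u)R^\delta]_x - (c(\tilde u)\langle R\rangle)^\delta_x$, not to the difference you claim); this is a slip rather than a gap in the method, and the paper's conservative-form bookkeeping is cleaner. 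Also, the paper first establishes the identity for $h\in C^\infty_c(\R)$ (where $h'$, $h''$ are bounded and the integrability issues you worry about in step (4) are vacuous) and then extends by approximation, which sidesteps the H\"older-exponent chase you anticipate.
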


\begin{proof}[Proof of Proposition~\ref{prop:Renormalize}] The proof is the same as in \cite{DiPernaLions89}: by regularization and study of the remainder terms. Since we have to treat additional stochastic terms here, and since renormalization will again be invoked later (in the derivation of \eqref{evolutionHkappa} precisely), we give few details. Let $(\rho_\delta)$ be an approximation of the unit constituted of even functions and let $J_\delta$ be the Friedrichs regularization operator $F\mapsto F\ast \rho_\delta$. We use $\varphi:=J_\delta\varphi$ in \eqref{RtildeLIMID} to obtain the equation
\begin{equation}\label{RegularizedRLIMID}
\ud R^\delta(t,x)\ +\, \left[ c(\tilde{u}(t,x))\,  R^\delta(t,x)\right]_x \ud t\ =\
F^\delta(t,x)\, \ud t\ +\ \gamma^\delta(t,x)\, \ud t\ +\ \Phi^\delta(x)\, \ud \tilde{W}(t),
\end{equation}
where 
$R^\delta=J_\delta\langle R \rangle$, $\Phi^\delta=J_\delta\Phi$ and
\begin{equation}
F^\delta(t)\ =\ J_\delta F(t),\quad F\ \eqdef\ -\tilde{c}'(\tilde{u}) \left[\langle R^2\rangle\, -\, 2\, \langle RS\rangle\, +\, \langle S^2\rangle\right],
\end{equation}
and $\gamma^\delta$ is the commutator
\begin{equation}
\gamma^\delta\ =\ \left[ c(\tilde{u})\,  R^\delta\right]_x -\, J_\delta \left[ c(\tilde{u})\langle R\rangle \right]_x.
\end{equation}
The equation \eqref{RegularizedRLIMID} is an equation satisfied for all $x\in\T$ by the real-valued stochastic process $(R^\delta(t,x))$. Let $h\in C^\infty_c(\R)$. The It\^o formula and \eqref{RegularizedRLIMID} give
\begin{gather}\nonumber
\ud h(R^\delta(t,x))\, +\, h'(R^\delta(t,x)) \left[ c(\tilde{u}(t,x))\, R^\delta(t,x) \right]_x \ud t\ =\
h'(R^\delta(t,x))\, F^\delta(t,x)\, \ud t\\ \label{RegularizedRLIMID-Ito}
 +\ h'(R^\delta(t,x))\, \gamma^\delta(t,x)\, \ud t\ +\ \half\, h''(R^\delta(t,x))\, q^\delta(x)\, \ud t\, 
+\, h'(R^\delta(t,x))\, \Phi^\delta(x)\, \ud \tilde{W}(t),
\end{gather}
where $q^\delta$ is defined in \eqref{qeps-q}.
By the chain-rule for $H^1$-functions and \eqref{tildeRSuep} we have
\begin{equation}\label{chainruleH1Tr}
h'(R^\delta) \left[ c(\tilde{u})\, R^\delta \right]_x =  \left[ c(\tilde{u})\, h(R^\delta) \right]_x +\, 2\, \tilde{c}'(\tilde{u}) \left( \tilde{S} - \tilde{R} \right) \left[h'(R^\delta)\, R^\delta\, -\, h(R^\delta) \right].
\end{equation}
We combine \eqref{chainruleH1Tr} with \eqref{tildeRSuep}, integrate \eqref{RegularizedRLIMID-Ito} against a function $\varphi\in C^1(\T)$ and pass to the limit $\delta\to 0$ (on the formulation integrated in time) to obtain \eqref{RtildeLIMRenormalize}. The term involving the commutator $\gamma^\delta$ is
\begin{equation}
\int_{t_0}^t\int_\T h'(R^\delta)\, \gamma^\delta\, \varphi\, \ud x\, \ud s,
\end{equation}
which converges to $0$ since $h'(R^\delta)$ and $\varphi$ are bounded, while $\gamma^\delta\to 0$ in $L^1((t_0,t)\times\T)$ by \cite[Lemma~II.1.ii)]{DiPernaLions89} applied with $\alpha=2$, $p=2$. The limit of the other terms is obtained by standard considerations.
\end{proof}

\section{Reduction of the Young measures}\label{sec:Young} 

Our aim is to show that $\tilde{\mu}_{t,x} = \delta_{\tilde{R}(t,x)} \otimes \delta_{\tilde{S}(t,x)}$. 
We prove in a first step that $\tilde{\mu}_{t,x} = \tilde{\nu}^1_{t,x}  \otimes \tilde{\nu}^2_{t,x}$ where $\tilde{\nu}^1_{t,x} $ and $\tilde{\nu}^2_{t,x} $ are two probability measures on $\R$ that will be identified later.

\subsection{Compensated compactness}

For almost all $(t,x) \in [0,T] \times \T$ we can define the marginals $\tilde{\nu}^1_{t,x} $ and $\tilde{\nu}^2_{t,x} $ of $\tilde{\mu}_{t,x}$:
\begin{equation*}
\tilde{\nu}^1_{t,x} (A)\ \eqdef\ \tilde{\mu}_{t,x}(A \times \R), \qquad \tilde{\nu}^2_{t,x} (A)\ \eqdef\ \tilde{\mu}_{t,x}(\R \times A),\quad \forall A \in \mathcal{B}(\R).
\end{equation*}
The aim of this section is to prove the following lemma 
\begin{lem}\label{lem:productmu} For almost all $(\omega,t,x) \in \tilde{\Omega} \times [0,T] \times \T$ we have
\begin{equation}\label{Product}
\tilde{\mu}_{t,x}\ =\ \tilde{\nu}^1_{t,x}\,  \otimes\, \tilde{\nu}^2_{t,x}.
\end{equation}
\end{lem}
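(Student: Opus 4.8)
The plan is to use compensated compactness (div-curl lemma) in the spirit of Tartar and DiPerna, exploiting the fact that the limiting Young measure $\tilde{\mu}_{t,x}$ is the weak-$*$ limit of Dirac masses $\delta_{R^\eps}\otimes\delta_{S^\eps}$ carried by approximate solutions of a $2\times2$ system. The key observation is that we have \emph{two independent families} of entropy–entropy-flux pairs at our disposal: one associated with each of the transport equations \eqref{ReqepthetaItoConservativeEPS} and \eqref{SeqepthetaItoConservativeEPS}. More precisely, for any $h,k\in C^\infty(\R)$ (of moderate growth), the pairs $(h(R^\eps),c(u^\eps)h(R^\eps))$ and $(k(S^\eps),-c(u^\eps)k(S^\eps))$ satisfy transport-type balance laws whose right-hand sides are controlled in $L^1_{t,x}$ (uniformly in $\eps$, using the energy estimate \eqref{LPeneE}, the $L^{3^-}$ bound \eqref{alpha+2}, the smallness of $\Theta^\eps,\Xi^\eps$ from \eqref{eq:boundpsi}--\eqref{uteps01}, and the vanishing cutoff contribution \eqref{cvchiepstilde}), plus a stochastic term. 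The latter, once we pass to the tilde variables and use \cite[Lemma~2.1]{DebusscheGlattHoltzTemam2011}, contributes a martingale which is bounded in a suitable sense and whose quadratic variation is controlled, so it is compact in a negative Sobolev space in $(t,x)$ — hence so is the ``time derivative'' side, after subtracting an $L^1$-bounded (hence $W^{-1,r}$-relatively compact by Murat's lemma) part.

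The concrete steps I would carry out are as follows. First I would fix a test pair: take $\eta_1 = h(R^\eps)$ with flux $G_1 = c(u^\eps)h(R^\eps)$, so that $\partial_t\eta_1 + \partial_x G_1$ equals an $L^1$-bounded term plus the stochastic differential $h'(R^\eps)\Phi^\eps\,\ud W$; symmetrically take $\eta_2 = k(S^\eps)$, $G_2 = -c(u^\eps)k(S^\eps)$. Second, I would invoke the stochastic version of the div-curl lemma (as in \cite{BerthelinVovelle19}, following \cite{FengNualart08}): the relevant quantity to control is $\partial_t(\eta_1\eta_2)$-type expressions, or rather one shows that $\partial_t\eta_i + \partial_x G_i$ is, $\tilde\Pro$-a.s., precompact in $H^{-1}_{\mathrm{loc}}((0,T)\times\T)$ — this combines Murat's lemma ($L^1$-bounded $\Rightarrow$ $W^{-1,q}$-compact for $q<2$) for the deterministic part with a Kolmogorov-type continuity estimate for the stochastic part (the martingale has moments and Hölder-in-time regularity from Itô isometry, giving compactness in $C^\alpha([0,T];H^{-1})$). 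Third, the div-curl lemma then yields, for the weak-$*$ limits, the commutation relation
\begin{equation}\label{divcurlconclusion}
\overline{\eta_1 \eta_2}\ =\ \overline{\eta_1}\ \cdot\ \overline{\eta_2}\qquad\text{a.e. in }\tilde\Omega\times(0,T)\times\T,
\end{equation}
for all such $h,k$; in terms of the Young measure this reads $\langle h(R)k(S)\rangle = \langle h(R)\rangle\,\langle k(S)\rangle$. Fourth, since this holds for a separating family of $(h,k)$ (e.g. $h,k$ ranging over $\mathcal{S}$ or approximations of $e^{i\xi}$), it forces $\tilde\mu_{t,x} = \tilde\nu^1_{t,x}\otimes\tilde\nu^2_{t,x}$ for a.e.\ $(\omega,t,x)$, which is the claim.

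The main obstacle I expect is the rigorous handling of the stochastic terms in the compensated compactness argument. In the deterministic DiPerna theory, one shows $\partial_t\eta + \partial_x G$ lies in a compact subset of $H^{-1}_{\mathrm{loc}}$ using only the $L^1$ bound plus an $L^p$ ($p>1$) bound (via Murat's interpolation lemma). Here the extra term $h'(R^\eps)\Phi^\eps\,\ud W$ is an $H^{-1}$-valued martingale; one needs to argue that, along the subsequence on the new probability space, this converges (and is compact) in the right space. The cleanest route is: (i) pass to the tilde variables and identify the limit stochastic integral via \eqref{ReqepthetaItoConservativeEPStilde-weak}, so that on $\tilde\Omega$ the whole balance law holds $\tilde\Pro$-a.s.; (ii) observe that $\tilde\Pro$-a.s.\ the martingale part is in $C([0,T];L^2(\T))$ with a uniform bound, hence in $C([0,T];H^{-1})$ compactly in $(t,x)$ by Aubin–Lions–Simon once combined with the $t$-Hölder estimate from \eqref{weakcontfR}; (iii) conclude $\partial_t\eta_i^\eps+\partial_x G_i^\eps$ is $\tilde\Pro$-a.s.\ precompact in $H^{-1}_{\mathrm{loc}}$, after which the classical div-curl lemma applies pathwise and one integrates over $\tilde\Omega$. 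A secondary technical point is justifying that the products $\eta_1^\eps\eta_2^\eps = h(R^\eps)k(S^\eps)$ are uniformly integrable (needed to identify their weak limit with $\langle h(R)k(S)\rangle$ rather than a defect), which follows from \eqref{alpha+2} and Proposition~\ref{pro:Youngconv} when $h,k$ have subquadratic growth — so one should phrase the separating family within that growth class and then pass to bounded continuous $h,k$ by a truncation/density argument using the $L^2$-integrability \eqref{Rbar_L2} of the marginals.
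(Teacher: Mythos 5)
Your proposal follows essentially the same route as the paper: compensated compactness via the div-curl lemma applied to the two entropy families $\{(f(\tilde{R}^\eps),c\, f(\tilde{R}^\eps))\}$ and $\{(g(\tilde{S}^\eps),-c\, g(\tilde{S}^\eps))\}$, with the deterministic source terms shown compact in $H^{-1}((0,T)\times\T)$ via the uniform $L^p$ bound ($p>1$, from \eqref{alpha+2}) and the martingale term shown compact via Itô-isometry time-Hölder estimates plus Simon's compactness theorem, yielding $\langle f(R)g(S)\rangle = \langle f(R)\rangle\langle g(S)\rangle$ for a separating family of bounded $f,g$ and hence the product structure \eqref{Product}. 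The only cosmetic differences are that the paper writes the fluxes with the limit speed $c(\tilde{u})$ rather than $c(\tilde{u}^\eps)$ (so the commutator $[(c(\tilde{u})-c(\tilde{u}^\eps))f(\tilde{R}^\eps)]_x$ is absorbed as a strongly-vanishing remainder, avoiding a final strong-times-weak identification), and that the separating family is taken directly inside $C^\infty_c(\R)$, so no truncation or growth-class argument is needed.
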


\begin{proof}[Proof of Lemma~\ref{lem:productmu}] We will use the div-curl lemma. Our aim is to prove that, given 
$f\in C^\infty_c(\R)$, the sequence 
\begin{align}\label{divcurlR}
\left\{ f(\tilde{R}^\varepsilon)_t\ +\, \left[ c(\tilde{u})\, f(\tilde{R}^\varepsilon)\right]_x \right\}_\varepsilon
\end{align}
is tight in  $H^{-1}((0,T) \times \T)$. By \eqref{ReqepthetaItoConservativeEPS}, we can write \eqref{divcurlR} as the sum
\begin{equation}\label{divcurlRdec}
\partial_t M_{\tilde{R}}^\varepsilon\ + \ T^\varepsilon\ +\ \left[ \left(c(\tilde{u}) - c(\tilde{u}^\varepsilon) \right) f(\tilde{R}^\varepsilon)\right]_x,
\end{equation}
where
\begin{align*}
T^\varepsilon\ 
&\eqdef\ \half\, q^\varepsilon\, f''(\tilde{R}^\varepsilon)\ +\ \tilde{c}'(\tilde{u}^\varepsilon) \left[ \left( (\tilde{R}^\varepsilon)^2\, -\, (\tilde{S}^\varepsilon)^2  \right) f'(\tilde{R}^\varepsilon)\, +\, 2\, (\tilde{S}^\varepsilon\, -\, \tilde{R}^\varepsilon)\, f(\tilde{R}^\varepsilon) \right] \\ 
&\quad +\ \tilde{c}'(\tilde{u}^\varepsilon) \left[ \left( 2\, \tilde{\Theta}^\varepsilon\, \tilde{R}^\varepsilon\, -\, \chi_\varepsilon(\tilde{R}^\varepsilon)\right) f'(\tilde{R}^\varepsilon)\, -\, 4\, \tilde{\Theta}^\varepsilon\, f(\tilde{R}^\varepsilon) \right],
\end{align*}
and 
\begin{equation*}
M_{\tilde{R}}^\varepsilon(t,x)\ \eqdef\ \sum_k \int_0^t f'(\tilde{R}^\varepsilon(s,x))\, {\sigma}_k^\varepsilon\, (x)\, \ud \tilde{\beta}_k^\varepsilon(s).
\end{equation*}
Clearly, the quantity 
\begin{equation}
\left[ c(\tilde{u}) - c(\tilde{u}^\varepsilon) \right] f(\tilde{R}^\varepsilon)
\end{equation} 
tends to $0$ $\tilde{\Pro}$-almost surely in $L^2((0,T) \times \T)$ and so the sequence 
\begin{equation}
\left\{ \left[ \left( c(\tilde{u}) - c(\tilde{u}^\varepsilon) \right) f(\tilde{R}^\varepsilon)\right]_x \right\}_\varepsilon
\end{equation} 
is tight in $H^{-1}((0,T) \times \T)$. There remains to examine the terms $M_{\tilde{R}}^\varepsilon$ and $T^\eps$, which is done in the following two steps, before we conclude the argument in Step~3.\medskip
 
\textbf{Step 1.} Using the Itô isometry with the embedding $L^2 \hookrightarrow H^{-1}$ gives us
\begin{align*}
\tilde{\E}\, \|M_{\tilde{R}}^\varepsilon(t+h,\cdot) - M_{\tilde{R}}^\varepsilon (t,\cdot) \|_{H^{-1}(\T)}^2\ 
\leqslant\ \tilde{\E}\, \|M_{\tilde{R}}^\varepsilon(t+h,\cdot) - M_{\tilde{R}}^\varepsilon (t,\cdot) \|_{L^2(\T)}^2\
\leqslant\ C\, T\, |h|.
\end{align*}
Integrating with respect to $t$, we obtain 
\begin{align*}
\tilde{\E}\, \|M_{\tilde{R}}^\varepsilon(\cdot+h,\cdot) - M_{\tilde{R}}^\varepsilon \|_{L^2((0,T-h), H^{-1}(\T))}^2\ 
\leqslant\ C\,  |h|.
\end{align*}
Using the Itô isometry again we also have
\begin{align*}
\tilde{\E}\, \| M_{\tilde{R}}^\varepsilon (t,\cdot) \|_{L^2(\T)}^2\ 
\leqslant\ C\, T\, t,
\end{align*}
and integrating with respect to $t$ yields $\tilde{\E} \| M_{\tilde{R}}^\varepsilon (t,\cdot) \|_{L^2((0,T) \times \T)}^2 \leqslant C$. By \cite[Theorem 3]{Simon87}, the set 
\begin{align*}
\left\{ M\in L^2((0,T)\times\T); \|M\|_{L^2((0,T) \times \T)} + \sup_{h \in [0,T/2]} \left\| \frac{M(\cdot+h)-M}{\sqrt{h}} \right\|_{L^2((0,T-h), H^{-1}(\T))}\leqslant R \right\}
\end{align*}
is compact in $L^2((0,T), H^{-1}(\T))$. It follows then that $(M_{\tilde{R}}^\varepsilon)_\varepsilon$ is tight in $L^2([0,T], H^{-1}(\T))$. Finally, we can define $\partial_t M_{\tilde{R}}^\varepsilon$ as a $H^{-1}((0,T) \times \T)$-valued random variable and the sequence $(\partial_t M_{\tilde{R}}^\varepsilon)_\varepsilon$ is tight in $H^{-1}((0,T) \times \T)$.\medskip

\textbf{Step 2.} Since $f\in C^\infty_c(\R)$ , $\chi_\varepsilon(\xi) \leqslant \xi^2$ and by the inequality \eqref{alpha+2} we obtain that $(T^\varepsilon)_\varepsilon$ is bounded in $L^p(\tilde{\Omega} \times (0,T) \times \T)$ for all $p \in (1,3/2)$.
By the compact embedding $L^p((0,T) \times \T) \Subset H^{-1}([0,T] \times \T)$, it follows that $(T^\varepsilon)_\varepsilon$ is tight in  $H^{-1}((0,T) \times \T)$. We can therefore conclude that the sequence \eqref{divcurlR} is tight in $H^{-1}((0,T) \times \T)$.\medskip


\textbf{Step 3.} The counterpart of the analysis above for the variable $S^\eps$ is that, for all $g\in C^\infty_c(\R)$, 
the sequence 
\begin{align*}
\left\{ g(\tilde{S}^\varepsilon)_t\ -\, \left[ c(\tilde{u})\, g(\tilde{S}^\varepsilon)\right]_x \right\}_\varepsilon
\end{align*}
is tight in $H^{-1}((0,T) \times \T)$. Set
\begin{align*}
\tilde{Y}^\varepsilon\ \eqdef\  \begin{pmatrix}
f(\tilde{R}^\varepsilon) \\
c(\tilde{u})\, f( \tilde{R}^\varepsilon)
\end{pmatrix}, \qquad 
\tilde{Z}^\varepsilon\ \eqdef\  \begin{pmatrix}
c(\tilde{u})\, g(\tilde{S}^\varepsilon) \\
g( \tilde{S}^\varepsilon)
\end{pmatrix}, \qquad
\end{align*}
and
\begin{align*}
\tilde{Y}\ \eqdef\  \begin{pmatrix}
\langle f({R})\rangle\\
c(\tilde{u})\, \langle f( {R})\rangle
\end{pmatrix}, \qquad 
\tilde{Z}\ \eqdef\  \begin{pmatrix}
c(\tilde{u})\, \langle g({S})\rangle \\
 \langle g( {S})\rangle
\end{pmatrix}.
\end{align*}
We know that $(\tilde{Y}^\varepsilon,\tilde{Z}^\varepsilon)$ converges weakly to $(\tilde{Y},\tilde{Z})$ as $\varepsilon \to 0$ and from the previous steps, we have that $\left\{ \mathrm{div}_{t,x} \tilde{Y}^\varepsilon \right\}_\varepsilon $ and $\left\{ \mathrm{curl}_{t,x} \tilde{Z}^\varepsilon \right\}_\varepsilon $ are tight in $H^{-1}((0,T) \times \T)$. Then, for any $\alpha>0$ there exists a compact set $K_\alpha \subset H^{-1}((0,T) \times \T)$ such that 
\begin{equation}\label{divcurlalpha}
 \mathrm{div}_{t,x} \tilde{Y}^\varepsilon \in K_\alpha\ \mathrm{and}\ \mathrm{curl}_{t,x} \tilde{Z}^\varepsilon \in K_\alpha 
\end{equation}
with a probability larger than $1-\alpha$.
If \eqref{divcurlalpha} is realized, the div-curl lemma ensures that $\tilde{Y}^\varepsilon \cdot \tilde{Z}^\varepsilon$ converges weakly to $\tilde{Y} \cdot \tilde{Z}$ as $\varepsilon \to 0$. We have therefore 
\begin{equation}\label{eq:productmu}
\langle f(R) g(S)\rangle\\ = \langle f(R)\rangle\ \langle g(S)\rangle
\end{equation} 
for almost all $(\omega,t,x) \in A_\alpha$ where $\tilde{\Pro} \times \mathcal{L}^2 (A_\alpha) \geqslant (1-\alpha) T$.
We choose a decreasing sequence $(\alpha_n)_n$ that converges to $0$ and assume without loss of generality that $(K_{\alpha_n})_n$ and $(A_{\alpha_n})_n$ are increasing sequences of sets. Then $A:= \cup_n A_{\alpha_n}$ is of full measure, $\tilde{\Pro} \times \mathcal{L}^2 (A)=T$,  and \eqref{eq:productmu} is satisfied on $A$.
%
\end{proof}

\begin{remark}\label{rk:growthh} As a consequence of \eqref{eq:productmu} and \eqref{Rbar_L2plus}, and provided $t_0>0$, we can partially relax the growth condition \eqref{growth-h0} on the function $h$ in \eqref{RtildeLIM}, \eqref{StildeLIM} and \eqref{RtildeLIMRenormalize}, to admit the following growth
\begin{equation}\label{growth-h1}
|h(\xi)|\leqslant C(1+|\xi^-|^{p}+|\xi^+|^r),\quad p\in [1,\infty),\, r\in [1,2).
\end{equation}
\end{remark}

\subsection{An evolution equation for the defect measure}

Our goal here is the show that the measures $\tilde{\nu}^1_{t,x} $ and $\tilde{\nu}^2_{t,x} $ are Dirac measures, which we will characterize by the identities $\langle R^2\rangle=\langle R\rangle^2$ and $\langle S^2\rangle=\langle S\rangle^2$. We introduce the non-negative quantity $\Delta \eqdef \frac12\left(\langle R^2\rangle-\langle R\rangle^2\right)$ and derive an evolution transport equation (inequality, see \eqref{DeltaR}) satisfied by $\Delta$. Since $\Delta(t,\cdot)\to 0$ when $t\downarrow 0$ (see Proposition~\ref{prop:CVDelta0}), this will imply $\Delta(t,\cdot)=0$ at all positive times $t\in [0,T]$.\medskip

Since the map $\xi\mapsto \xi^2$ does not satisfy \eqref{growth-h1}, we have to work with suitable truncate functions. For $\kappa\geqslant 0$, we define the truncate function
\begin{equation}\label{hatQkappadef}
Q_\kappa(\xi)\ \eqdef\ \half\, \xi^2\, -\, \half \left((\xi-\kappa)^+\right)^2.
\end{equation}

\begin{lem}\label{lem:controlTrunc}
For any $T > 0$ we have, $\tilde{\Pro}$-a.s., and in $L^2(\Omega)$,
\begin{gather}\label{L2conv}
\lim_{\kappa \to \infty} \left\| \langle Q_\kappa '(R)\rangle\ -\ Q_\kappa '\left(\langle R\rangle\right) \right\|_{L^2([0,T] \times \T)}\ =\ \lim_{\kappa \to \infty} \left\| \langle Q_\kappa '(R)\rangle\ -\ \langle R\rangle\right\|_{L^2([0,T] \times \T)}\
 =\ 0.
\end{gather}
Also, for all $\kappa \geqslant 0$, we have for almost all $(\omega,t,x) \in \tilde{\Omega} \times [0,T] \times \T$,
\begin{align}\label{TS}
\half \left[ \langle Q_\kappa '(R)\rangle\ -\ Q_\kappa '\left(\langle R\rangle\right) \right]^2\ 
&\leqslant\ \langle Q_\kappa(R)\rangle\ -\ Q_\kappa\left(\langle R\rangle\right),
\end{align}
and
\begin{equation}\label{CompareDelta}
\Delta_\kappa\ \eqdef\  \langle Q_\kappa(R)\rangle - Q_\kappa(\langle R\rangle)\
\leqslant\ \Delta\ \eqdef\ \half \left[\langle R^2\rangle - \langle R\rangle^2\right],
\end{equation}
and, $\tilde{\Pro}$-a.s., 
\begin{equation}\label{CVDelta}
\lim_{\kappa \to \infty}  \left\| \Delta_\kappa-\Delta \right\|_{L^1([0,T] \times \T)}\ =\ 0.
\end{equation}
\end{lem}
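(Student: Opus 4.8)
The plan is to reduce every assertion to Jensen's inequality for the probability measure $\tilde{\nu}^1_{t,x}$, combined with the second‑moment bound $\int_{[0,T]\times\T\times\R^2}(|\xi|^2+|\eta|^2)\,\ud\tilde{\mu}<\infty$, which holds $\tilde{\Pro}$-a.s.\ by \eqref{Rbar_L2}. First I would record the elementary facts about $Q_\kappa$ from \eqref{hatQkappadef}: one has $Q_\kappa'(\xi)=\min(\xi,\kappa)=\xi-(\xi-\kappa)^+$, the function $Q_\kappa$ is convex with $0\leqslant Q_\kappa''\leqslant 1$ a.e., and $P_\kappa(\xi):=\tfrac12\xi^2-Q_\kappa(\xi)=\tfrac12((\xi-\kappa)^+)^2\geqslant 0$ is convex. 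Throughout I abbreviate $m:=\langle R\rangle(t,x)$, use that $\langle g(R)\rangle=\int_\R g(\xi)\,\ud\tilde{\nu}^1_{t,x}(\xi)$ for $g$ of subquadratic growth, and note that all the quantities below are finite for $\mathcal{L}^2$-a.a.\ $(t,x)$, $\tilde{\Pro}$-a.s., because $\langle R^2\rangle\in L^1([0,T]\times\T)$ by \eqref{Rbar_L2}.

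\textbf{The inequality \eqref{TS}.} The one step that is not completely mechanical is a ``second‑order Jensen'' estimate: I claim that, pointwise in $\xi$,
\begin{equation*}
Q_\kappa(\xi)-Q_\kappa(m)-Q_\kappa'(m)(\xi-m)\ \geqslant\ \tfrac12(Q_\kappa'(\xi)-Q_\kappa'(m))^2.
\end{equation*}
I would prove this by an elementary ODE comparison: the difference $\phi$ of the two sides is locally Lipschitz, vanishes at $\xi=m$, and satisfies a.e.\ $\phi'(\xi)=(Q_\kappa'(\xi)-Q_\kappa'(m))(1-Q_\kappa''(\xi))$, which is $\geqslant 0$ for $\xi\geqslant m$ and $\leqslant 0$ for $\xi\leqslant m$ since $Q_\kappa'$ is nondecreasing and $Q_\kappa''\leqslant 1$; hence $\phi\geqslant\phi(m)=0$. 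Integrating this inequality against $\tilde{\nu}^1_{t,x}$ and using $\int(\xi-m)\,\ud\tilde{\nu}^1_{t,x}=0$ gives $\langle Q_\kappa(R)\rangle-Q_\kappa(\langle R\rangle)\geqslant\tfrac12\langle(Q_\kappa'(R)-Q_\kappa'(m))^2\rangle$, and one further application of Jensen for $t\mapsto t^2$ yields \eqref{TS}.

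\textbf{The inequality \eqref{CompareDelta} and the limits \eqref{CVDelta}, \eqref{L2conv}.} For \eqref{CompareDelta} I would simply write $\Delta-\Delta_\kappa=\langle P_\kappa(R)\rangle-P_\kappa(\langle R\rangle)\geqslant 0$ by Jensen for the convex function $P_\kappa$. The remaining convergences all reduce to the observation that $((\xi-\kappa)^+)^2$ decreases to $0$ as $\kappa\to\infty$ and is dominated by $|\xi|^2+|\eta|^2\in L^1(\tilde{\mu})$, so that, by dominated convergence and \eqref{Rbar_L2},
\begin{equation*}
\int_{[0,T]\times\T\times\R^2}((\xi-\kappa)^+)^2\,\ud\tilde{\mu}\ \longrightarrow\ 0\qquad\tilde{\Pro}\text{-a.s.\ and in }L^1(\tilde{\Omega})\text{ as }\kappa\to\infty.
\end{equation*}
Indeed, \eqref{CompareDelta} gives $0\leqslant\Delta-\Delta_\kappa\leqslant\langle P_\kappa(R)\rangle$, so integrating over $[0,T]\times\T$ bounds $\|\Delta_\kappa-\Delta\|_{L^1([0,T]\times\T)}$ by $\tfrac12\int((\xi-\kappa)^+)^2\,\ud\tilde{\mu}$, which settles \eqref{CVDelta}. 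Likewise, using $Q_\kappa'(\xi)=\xi-(\xi-\kappa)^+$, Jensen for $x\mapsto x^+$, and Cauchy--Schwarz, one gets the pointwise bounds $\left|\langle Q_\kappa'(R)\rangle-\langle R\rangle\right|=\langle(R-\kappa)^+\rangle\leqslant\langle((R-\kappa)^+)^2\rangle^{1/2}$ and $\left|\langle Q_\kappa'(R)\rangle-Q_\kappa'(\langle R\rangle)\right|=\langle(R-\kappa)^+\rangle-(m-\kappa)^+\leqslant\langle((R-\kappa)^+)^2\rangle^{1/2}$; squaring, integrating over $[0,T]\times\T$, and invoking the displayed convergence bounds both squared $L^2$-norms in \eqref{L2conv}, giving the stated convergence $\tilde{\Pro}$-a.s.\ and in $L^2(\tilde{\Omega})$.

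The only point requiring genuine care is the second‑order Jensen inequality of the first step, where one must exploit $Q_\kappa''\leqslant 1$; everything else is a routine combination of Jensen's inequality with dominated convergence built on the moment bound \eqref{Rbar_L2}.
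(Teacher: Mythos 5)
Your proof is correct, and your treatment of \eqref{L2conv}, \eqref{CompareDelta}, \eqref{CVDelta} follows essentially the same Jensen-plus-dominated-convergence strategy as the paper's, albeit with a slightly cleaner decomposition (you avoid the paper's triangle inequality step, saving a factor of $2$, and you bound $\|\Delta_\kappa-\Delta\|_{L^1}$ directly by $\tfrac12\int ((\xi-\kappa)^+)^2\,\ud\tilde{\mu}$ rather than writing out the difference of two integrals, which is in fact the more natural route to the almost-sure statement). The genuine divergence is your proof of \eqref{TS}. The paper introduces $\varphi(\kappa)\eqdef\langle Q_\kappa(R)\rangle - Q_\kappa(\langle R\rangle) - \tfrac12[\langle Q_\kappa'(R)\rangle-Q_\kappa'(\langle R\rangle)]^2$ as a function of the truncation level, computes $\partial_\kappa Q_\kappa(\xi)=(\xi-\kappa)^+$ and $\partial_\kappa Q_\kappa'(\xi)=\mathds{1}_{\{\xi\geqslant\kappa\}}$, deduces $\varphi'(\kappa)\geqslant 0$ from Jensen for $\xi\mapsto\xi\vee\kappa$ and the fact that the complementary bracket stays in $[0,2]$, and then checks the base case $\varphi(0)\geqslant 0$ by a direct computation with $Q_0(\xi)=\tfrac12(\xi^-)^2$, $Q_0'(\xi)=-\xi^-$, splitting on the sign of $\langle R\rangle$. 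You instead prove the pointwise ``second-order Jensen'' bound $Q_\kappa(\xi)-Q_\kappa(m)-Q_\kappa'(m)(\xi-m)\geqslant\tfrac12\bigl(Q_\kappa'(\xi)-Q_\kappa'(m)\bigr)^2$ by an ODE/sign comparison exploiting only $0\leqslant Q_\kappa''\leqslant 1$, then integrate against $\tilde{\nu}^1_{t,x}$ and apply Jensen once more for $t\mapsto t^2$. Your route is shorter, avoids the differentiation in $\kappa$ and the sign splitting at $\kappa=0$, and isolates the analytic mechanism (the bound $Q_\kappa''\leqslant 1$ on the Hessian of the truncated square) more transparently; the paper's version has the mild advantage of making visible the monotonicity of $\varphi$ in $\kappa$, which is an interesting structural fact but not needed for the lemma. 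Both are fully rigorous.
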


\begin{proof}[Proof of Lemma~\ref{lem:controlTrunc}] By differentiation in \eqref{hatQkappadef}, we obtain
\begin{equation}
\xi-Q_\kappa'(\xi)=(\xi-\kappa)^+.
\end{equation}
By the triangular inequality, we have then  
\begin{align*}
\left|  \langle Q_\kappa '(R)\rangle\, -\, Q_\kappa '(\langle R\rangle) \right|\ 
&\leqslant\ \left| \langle Q_\kappa '(R)\rangle\, -\, \langle R\rangle \right|\, +\, \left| Q_\kappa '(\langle R\rangle)\, -\, \langle R\rangle\right|\\
&=\ \langle R-Q_\kappa '(R)\rangle\, +\, \langle R\rangle-Q_\kappa '(\langle R\rangle).
\end{align*}
Since $\xi\mapsto (\xi-\kappa)^+$ is convex, we obtain
\begin{equation*}
\left|  \langle Q_\kappa '(R)\rangle\, -\, Q_\kappa '(\langle R\rangle) \right|\ 
\leqslant 2 \langle (R-\kappa)^+\rangle,
\end{equation*}
and by Jensen's inequality again,
\begin{equation*}
\left|  \langle Q_\kappa '(R)\rangle\, -\, Q_\kappa '(\langle R\rangle) \right|^2\ 
\leqslant 4 \langle \left[(R-\kappa)^+\right]^2\rangle.
\end{equation*}
Since $\langle R^2\rangle\in L^1(\Omega\times (0,T)\times\T)$, \eqref{L2conv} follows. Let us now establish \eqref{TS}.
We want to prove that the function
\begin{equation*}
\varphi(\kappa)\ \eqdef\ \langle  Q_\kappa(R)\rangle\ -\  Q_\kappa\left(\langle R\rangle\right)\ -\ \half \left[ \langle  Q_\kappa '(R)\rangle\ -\  Q_\kappa '\left(\langle R\rangle\right) \right]^2
\end{equation*}
is non-negative. We have $\partial_\kappa Q_\kappa(\xi)=(\xi-\kappa)^+=\xi\vee\kappa-\kappa$ and $ Q_\kappa'(\xi)=\xi - \xi\vee\kappa$, so
\begin{equation}
\varphi'(\kappa)\ =\, \left(\langle R\vee\kappa\rangle\ -\ \langle R\rangle\vee\kappa\right)\left[1 -\left( \langle \mathds{1}_{\{R\geqslant \kappa\}}\rangle\ -\ \mathds{1}_{\{\langle R\rangle\geqslant \kappa\}}\right) \right].
\end{equation}
Since $\xi\mapsto\xi\vee\kappa$ is convex, $\langle R\vee\kappa\rangle - \langle R\rangle\vee\kappa$ is non-negative by Jensen's inequality, while
\begin{equation}
1 -\left( \langle \mathds{1}_{\{R\geqslant \kappa\}}\rangle\ -\ \mathds{1}_{\{\langle R\rangle\geqslant \kappa\}}\right)
\geqslant 0.
\end{equation}
We have then $\varphi'(\kappa)\geqslant 0$, so it will be sufficient to study the case $\kappa=0$. Set $\xi^-=-\min(\xi,0)$. We have $ Q_0(\xi)=(\xi^-)^2/2$, $ Q_0'(\xi)=-\xi^-$, so
\begin{equation}
2\, \varphi(0)\ =\ \langle (R^-)^2\rangle-(\langle R\rangle^-)^2-\left(\langle R^-\rangle-\langle R\rangle^-\right)^2.
\end{equation}
If $\langle R\rangle\geqslant 0$, then
\begin{equation}
2\, \varphi(0)\ =\ \langle (R^-)^2\rangle-\left(\langle R^-\rangle\right)^2
\end{equation}
is non-negative, by Jensen's inequality. If $\langle R\rangle<0$, then, using Jensen's inequality again, we get
\begin{equation*}
2\, \varphi(0)\, =\, \langle (R^-)^2\rangle\, -\, (\langle R\rangle)^2\, -\left(\langle R^-\rangle\, +\, \langle R\rangle\right)^2
\geqslant\,  -\, 2\, \langle R\rangle\, (\langle R^-\rangle\, +\, \langle R\rangle)\,
=\, -2\, \langle R\rangle\, \langle R^+\rangle\, \geqslant\, 0,
\end{equation*}
and this achieves the proof of \eqref{TS}. The inequality \eqref{CompareDelta} follows from Jensen's inequality since
\begin{equation}\label{QVSQ}
\half\, \xi^2\, -Q_\kappa(\xi)\ =\ \half \left((\xi-\kappa)^+\right)^2
\end{equation}
is a convex function of $\xi$. By \eqref{QVSQ} and \eqref{CompareDelta}, we also have
\begin{equation*}
\tilde{\E} \left\| \Delta_\kappa-\Delta \right\|_{L^1([0,T] \times \T)}\, =\, 
\half\,  \tilde{\E} \int_0^T\int_\T\int_\R \left((\xi-\kappa)^+\right)^2 \ud\tilde{\mu}_{t,x}\, \ud x\, \ud t
-\half\, \tilde{\E} \int_0^T\int_\T \left((\tilde{R}-\kappa)^+\right)^2  \ud x\, \ud t,
\end{equation*}
and \eqref{CVDelta} follows by dominated convergence.
\end{proof}

Here is the central proposition of this section.

\begin{proposition}[Evolution equation for the defect measure]\label{prop:EvolDefect} Let $\Delta$ be defined by \eqref{CompareDelta}. Let $t_0 \in (0,T)$. Then $\tilde{\Pro}$-almost surely, we have
\begin{gather}\label{DeltaR}
\left[ \sqrt{\Delta /c(\tilde{u})} \right]_t\, +\, \left[ \sqrt{c(\tilde{u})\, \Delta} \right]_x\, \leqslant\ 0 \qquad \text{in}\ \mathcal{D}'((t_0,T) \times \T).
\end{gather}
\end{proposition}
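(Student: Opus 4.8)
The idea is to pass to the limit $\eps\to 0$ in the renormalized equation \eqref{ReqepthetaItoConservativeEPS} for $h(R^\eps)=Q_\kappa(R^\eps)$, comparing with the renormalized limit equation \eqref{RtildeLIMRenormalize} applied to the composition $h\circ\langle R\rangle$ with $h=Q_\kappa$, and to subtract these two identities so that the stochastic integrals and the ``regular'' transport terms cancel, leaving an evolution inequality for $\Delta_\kappa=\langle Q_\kappa(R)\rangle-Q_\kappa(\langle R\rangle)$. First I would write down the weak form of \eqref{ReqepthetaItoConservativeEPS} with $h=Q_\kappa$, integrate against $\varphi\in C^1(\T)$, $\varphi\geqslant 0$, over $(t_0,t)$, take expectation (so the martingale term $\langle Q_\kappa'(R)\rangle\,\Phi\,\ud\tilde W$ matches on both sides and cancels), and pass to the limit $\eps\to 0$ using Proposition~\ref{pro:Youngconv} (the growth condition \eqref{fgmu} is satisfied since $Q_\kappa$, $Q_\kappa'$, $Q_\kappa''$ are bounded by affine functions — indeed $Q_\kappa$ has at most quadratic growth, $Q_\kappa'$ at most linear, $Q_\kappa''$ bounded, and since $\kappa$ is fixed we may even treat $Q_\kappa$ as essentially bounded-derivative), together with the convergence \eqref{cvchiepstilde} to kill the cut-off term $Q_\kappa'(R^\eps)\chi_\eps(R^\eps)\geqslant 0$ and the bounds \eqref{eq:boundpsi} on $\tilde\Theta^\eps$ to kill the correction terms. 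This yields a weak identity for $\langle Q_\kappa(R)\rangle$ of exactly the same shape as \eqref{RtildeLIMRenormalize} but with $\langle Q_\kappa(R)\rangle$ in place of $h(\langle R\rangle)=Q_\kappa(\langle R\rangle)$ and with the genuinely nonlinear terms expressed through the Young measure averages $\langle\cdot\rangle$ rather than through $\langle R\rangle$, $\langle S\rangle$ alone.

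Next I would subtract \eqref{RtildeLIMRenormalize} (with $h=Q_\kappa$) from this limit identity. The transport part $\partial_t(\cdot)+\partial_x(c(\tilde u)\cdot)$ becomes $\partial_t\Delta_\kappa+\partial_x(c(\tilde u)\Delta_\kappa)$; the $\frac12 q\,h''$ Itô-correction terms cancel since $Q_\kappa''(\langle R\rangle)$ versus $\langle Q_\kappa''(R)\rangle$ differ but... — actually here one must be careful: $\langle Q_\kappa''(R)\rangle\neq Q_\kappa''(\langle R\rangle)$ in general. This is precisely where the product structure of Lemma~\ref{lem:productmu} and the key algebraic inequality \eqref{TS} enter. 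Using $\tilde u_x=(\tilde S-\tilde R)/(2c(\tilde u))$ and $\langle RS\rangle=\langle R\rangle\langle S\rangle$ (Lemma~\ref{lem:productmu}), after the cancellations the right-hand side of the subtracted equation is a sum of terms that I expect to organize into: (i) a ``good'' non-positive contribution coming from $-\tilde c'(\tilde u)\,Q_\kappa'(\cdot)$ times $\langle(R-S)^2\rangle$ versus $(\langle R\rangle-\langle S\rangle)^2$, controlled because $\langle(R-S)^2\rangle\geqslant(\langle R\rangle-\langle S\rangle)^2$ and $\tilde c'\geqslant 0$ and $Q_\kappa'\geqslant 0$ where needed; (ii) the $\frac12 q$-term, which is $\frac12 q(\langle Q_\kappa''(R)\rangle-Q_\kappa''(\langle R\rangle))$, non-positive since $Q_\kappa''=\mathds 1_{(-\infty,\kappa)}$ and $\xi\mapsto \mathds 1_{\xi<\kappa}$ ... is neither convex nor concave, so this needs the refined inequality \eqref{TS} rather than naive Jensen; (iii) curvature terms of the form $2\tilde c'(\tilde u)(\langle R\rangle-\langle S\rangle)(\langle R\rangle Q_\kappa'(\langle R\rangle)-Q_\kappa(\langle R\rangle)$ versus its averaged counterpart. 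The cleanest route: divide the resulting inequality for $\Delta_\kappa$ by $c(\tilde u)$ where appropriate and recognize the left side as $[\sqrt{\Delta_\kappa/c(\tilde u)}]_t+[\sqrt{c(\tilde u)\Delta_\kappa}]_x$ up to lower-order terms, exactly as in the deterministic Zhang--Zheng argument; the chain rule on $\sqrt{\Delta_\kappa}$ introduces $\frac{1}{2\sqrt{\Delta_\kappa}}\partial_t\Delta_\kappa$, and the inequality \eqref{TS} is what guarantees that the ``bad'' square term $(\langle Q_\kappa'(R)\rangle-Q_\kappa'(\langle R\rangle))^2$ appearing in the $q$-correction is dominated by $2\Delta_\kappa$, so that dividing by $2\sqrt{\Delta_\kappa}$ keeps everything controlled.

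Finally I would let $\kappa\to\infty$: by \eqref{CVDelta} we have $\Delta_\kappa\to\Delta$ in $L^1$ $\tilde\Pro$-a.s., and by \eqref{L2conv} the error terms involving $\langle Q_\kappa'(R)\rangle-Q_\kappa'(\langle R\rangle)$ vanish in $L^2$, so the distributional inequality $[\sqrt{\Delta_\kappa/c(\tilde u)}]_t+[\sqrt{c(\tilde u)\Delta_\kappa}]_x\leqslant 0$ (plus vanishing remainders) passes to the limit and gives \eqref{DeltaR} in $\mathcal D'((t_0,T)\times\T)$; here one uses that $\sqrt{\cdot}$ is continuous and $c(\tilde u)$ is bounded above and below by \eqref{coeff-c}, plus $\tilde u\in H^1((0,T)\times\T)$ so $c(\tilde u)$ has the regularity needed to make the products and the chain rule licit in the distributional sense. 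I expect the main obstacle to be the bookkeeping in the subtraction step — making sure that all the nonlinear averaged terms reassemble so that the net right-hand side is $\leqslant 0$ — and in particular the careful use of \eqref{TS} (rather than plain Jensen) to handle the second-order Itô correction $\frac12 q(\langle Q_\kappa''(R)\rangle-Q_\kappa''(\langle R\rangle))$ together with the chain-rule factor $1/(2\sqrt{\Delta_\kappa})$; a secondary technical point is justifying the renormalization/chain rule for $\sqrt{\Delta_\kappa}$ at the level of the weak (DiPerna--Lions) formulation, which follows the same regularization-by-convolution scheme as in the proof of Proposition~\ref{prop:Renormalize}.
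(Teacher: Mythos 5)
Your plan is in the right orbit — truncate $\xi^2$ by $Q_\kappa$, subtract the two renormalized limit equations \eqref{RtildeLIM} and \eqref{RtildeLIMRenormalize} with $h=Q_\kappa$, renormalize once more against a square-root-type function, then pass $\kappa\to\infty$ — but there is a genuine gap in the way you handle the stochastic term, and it is not a bookkeeping issue.

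\textbf{The martingale does not cancel, and you cannot take expectation.} You write that the stochastic integrals ``match on both sides and cancel'' after taking expectation. They do not match: the equation for $\langle Q_\kappa(R)\rangle$ carries the noise coefficient $\langle Q_\kappa'(R)\rangle$ (see \eqref{RtildeLIM}), while the renormalized equation for $Q_\kappa(\langle R\rangle)$ carries $Q_\kappa'(\langle R\rangle)$ (see \eqref{RtildeLIMRenormalize}). Their difference is the non-trivial martingale
\[
M_\kappa(t)\ =\ \int_{t_0}^t \left[\langle Q_\kappa'(R)\rangle - Q_\kappa'(\langle R\rangle)\right]\Phi\, \ud \tilde W(s),
\]
precisely \eqref{defMkappa}. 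Moreover, taking expectation is incompatible with the rest of your plan: the renormalization/chain rule for $\sqrt{\Delta_\kappa / c(\tilde u)}$ must be performed pathwise ($\omega$-by-$\omega$), since both $\Delta_\kappa$ and $c(\tilde u)$ are random; once you average, $\mathbb{E}[\Delta_\kappa]$ and $H(c(\tilde u),\mathbb{E}[\Delta_\kappa])$ have no useful relationship, and the proposition itself is an a.s.\ statement, not one about expectations. The paper instead keeps $M_\kappa$, applies \emph{Itô's formula} to the pathwise renormalization $H(c(\tilde u),\Delta_\kappa)$, and the resulting quadratic-variation contribution $\tfrac12\,\partial^2_\Delta H\cdot\partial_t\langle M_\kappa,M_\kappa\rangle$ is disposed of by imposing that $\Delta\mapsto H(c,\Delta)$ be \emph{concave}, so that this term is non-positive (\eqref{ConcaveH}). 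This concavity requirement is the real place where the stochastic structure enters and it is absent from your plan.

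Two smaller corrections. First, the square-root requires a regularization $H_\delta(c,\Delta)=\sqrt{\Delta/c+\delta}$ with $\delta>0$ (otherwise $\partial_\Delta H$ is singular at $\Delta=0$ and the bounds \eqref{BoundH1}--\eqref{LipH} fail), and the order of limits matters: first $\kappa\to\infty$ for a general concave $H$ satisfying \eqref{BoundH1}--\eqref{LipH}, yielding \eqref{evolutionH}, then $H=H_\delta$ and $\delta\to 0$. Trying to keep $\kappa$ and set $H=\sqrt{\Delta/c}$ directly is what your plan appears to do and it does not close. Second, the It\^o correction $\tfrac12\,q\,\big[\langle Q_\kappa''(R)\rangle - Q_\kappa''(\langle R\rangle)\big]$ does not need \eqref{TS}: since $Q_\kappa''=\mathds{1}_{(-\infty,\kappa]}\to 1$ pointwise and $\partial_\Delta H_\kappa$ is uniformly bounded by \eqref{BoundH1}, this term simply vanishes by dominated convergence in the $\kappa\to\infty$ limit. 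The key inequality \eqref{TS} is used elsewhere: to bound $|\langle Q_\kappa'(R)\rangle - Q_\kappa'(\langle R\rangle)|\leqslant \sqrt{2\Delta_\kappa}$, which, combined with \eqref{BoundH1}, controls the cross-term $\partial_\Delta H_\kappa\cdot\tilde c'(\tilde u)\langle S^2\rangle\big[Q_\kappa'(\langle R\rangle)-\langle Q_\kappa'(R)\rangle\big]$ (see \eqref{CVH5b}).
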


\begin{proof}[Proof of Proposition~\ref{prop:EvolDefect}] The proof breaks into several steps. \medskip

\textbf{Step 1. (Truncation)} We derive a first equation for the version $\Delta_\kappa$ of $\Delta$ with truncation defined in \eqref{CompareDelta}. We use the equations \eqref{RtildeLIM} and \eqref{RtildeLIMRenormalize} with $h=Q_\kappa$ (\textit{cf.} Remark~\ref{rk:growthh}). Subtracting the second equation from the first one gives  us
\begin{gather}\label{eq:deltakappa1}
\left[ \Delta_\kappa \right]_t\, +\, \left[ c(\tilde{u})\, \Delta_\kappa \right]_x \, =\  \half\, {q} \left[ \langle Q_{\kappa }''(R)\rangle\, -\, Q_{\kappa }''(\langle R\rangle) \right] \, +\, \partial_t M_{\kappa}\\
+\, \tilde{c}'(\tilde{u}) \left[ \langle\Psi_\kappa(R)\rangle\, -\, \Psi_\kappa(\langle R\rangle)\right]
 +\, \tilde{c}'(\tilde{u})\, 
 Q_{\kappa }'(\langle R\rangle) \left(\langle R^2\rangle\,  -\,  \langle R\rangle^2\,  \right)\label{eq:deltakappa1-2}\\
 +\ \tilde{c}'(\tilde{u}) \left[ 2\, \langle S\rangle\, \Delta_\kappa\, +\,
\langle S^2\rangle \left[ Q_{\kappa}'(\langle R\rangle)\, -\,  \langle Q'_\kappa({R})\rangle\right] \right]\label{eq:deltakappa1-3}
\end{gather}
in $\mathcal{D}'((t_0,T) \times \T)$, where
\begin{equation}
\Psi_\kappa(\xi)\ \eqdef\ \xi^2\, Q'_\kappa(\xi)\, -\, 2\, \xi\, Q_\kappa(\xi),
\end{equation}
and
\begin{equation}\label{defMkappa}
M_\kappa(t)\ \eqdef\ \int_{t_0}^t \left[\langle Q_\kappa'(R)\rangle-Q_\kappa'(\langle R\rangle)\right]\Phi\, \ud\tilde{W}(s).
\end{equation}

\textbf{Step 2. (Preparation for renormalization)} Different obstructions prevent us from pas\-sing to the limit $[\kappa\to\infty]$ on the parameter of truncation $\kappa$ in \eqref{eq:deltakappa1}-\eqref{eq:deltakappa1-2}-\eqref{eq:deltakappa1-3}. Indeed, we can make no sense of a term $\tilde{c}'(\tilde{u}) L(\langle R\rangle,\langle S\rangle)$ (or $\tilde{c}'(\tilde{u}) \langle L(R,S)\rangle$) unless $L$ satisfies the growth condition $|L(\xi,\eta)|\leqslant C(L)(1+|\xi|^{3-\gamma}+|\eta|^{3-\gamma})$ with $\gamma>0$ for positive $\xi,\eta$. It is therefore not possible to define the limit as $\kappa\to \infty$ of the three last terms of \eqref{eq:deltakappa1}-\eqref{eq:deltakappa1-2}-\eqref{eq:deltakappa1-3}. Renormalization will help to solve this problem. Indeed, the first term in \eqref{eq:deltakappa1-3} has a factor $\Delta_\kappa$, the second term in \eqref{eq:deltakappa1-3} is controlled by $\Delta_\kappa^{1/2}$ as a consequence of \eqref{TS}. However, the second term in \eqref{eq:deltakappa1-2} has not the desired form, 
since this is $2\tilde{c}'(\tilde{u}) Q_{\kappa }'(\langle R\rangle)\Delta$ and not $2\tilde{c}'(\tilde{u}) Q_{\kappa }'(\langle R\rangle)\Delta_\kappa$. We exploit the fact that $Q_{\kappa }'(\xi)\leqslant \kappa$ and the inequality \eqref{CompareDelta} to write 
\begin{equation}\label{eq:deltakappa1-22}
\eqref{eq:deltakappa1-2}\ \leqslant\ 
\tilde{c}'(\tilde{u}) \left[ \langle\Phi_\kappa(R)\rangle\, -\, \Phi_\kappa(\langle R\rangle)\right]
 +\, 2\, \tilde{c}'(\tilde{u})\, Q_{\kappa }'(\langle R\rangle)\, \Delta_\kappa,
\end{equation}
where
\begin{equation}
\Phi_\kappa(\xi)\ =\ \Psi_\kappa(\xi)\, +\, \kappa\, (\xi^2 - 2\, Q_\kappa(\xi))\ =\ -\kappa^2(\xi-\kappa)^+.
\end{equation}
Since $\Phi_\kappa$ is concave, we obtain
\begin{equation}
\eqref{eq:deltakappa1-2}\ \leqslant\ 2\, \tilde{c}'(\tilde{u})\, Q_{\kappa }'(\langle R\rangle)\, \Delta_\kappa,
\end{equation}
and thus
\begin{gather}\label{eq:deltakappa2}
\left[ \Delta_\kappa \right]_t\, +\, \left[ c(\tilde{u})\, \Delta_\kappa \right]_x \, \leqslant\  \half\, {q} \left[ \langle Q_{\kappa }''(R)\rangle\, -\, Q_{\kappa }''(\langle R\rangle) \right] \, +\, \partial_t M_{\kappa}\\
+\, \tilde{c}'(\tilde{u})\left[2 \left(Q_{\kappa }'(\langle R\rangle)\, +\, \langle S\rangle\right)\, \Delta_\kappa+  \langle S^2\rangle \left[ Q_{\kappa}'(\langle R\rangle)\, -\,  \langle Q'_\kappa({R})\rangle\right]
\right].\label{eq:deltakappa2-2}
\end{gather}

\textbf{Step 3. (Renormalization)} Let $H(c,\Delta)$ be a smooth function of its argument. We can do the formal computations
\begin{gather*}
\ud H(c(\tilde{u}),\Delta_\kappa)\, +\, \partial_x \left[c(\tilde{u})\, H(c(\tilde{u}),\Delta_\kappa)\right] \ud t\,
=\,
{\textstyle \frac{\partial H}{\partial c}}(c(\tilde{u}),\Delta_\kappa)
\left[\partial_t c(\tilde{u})\, +\, c(\tilde{u})\, \partial_x c(\tilde{u})\right] \ud t\\
+\, {\textstyle \frac{\partial H}{\partial \Delta}}(c(\tilde{u}),\Delta_\kappa)
\left[\ud \Delta_\kappa\, +\, \partial_x\left(c(\tilde{u})\, \Delta_\kappa\right) \right]\ud t\, 
+\, \half {\textstyle \frac{\partial^2 H}{\partial \Delta^2}}(c(\tilde{u}),\Delta_\kappa)\, \ud \langle M_\kappa,M_\kappa\rangle(t)\\
+\, \partial_x c(\tilde{u})\left(H-\Delta {\textstyle\frac{\partial H}{\partial \Delta}}\right)(c(\tilde{u}),\Delta_\kappa)\, \ud t,
\end{gather*}
where $\langle M_\kappa,M_\kappa\rangle_t$ is the quadratic variation of the martingale $M_\kappa$ defined in \eqref{defMkappa}, \textit{i.e.}
\begin{equation}\label{defMkappaQuad}
\langle M_\kappa,M_\kappa\rangle(t)\ \eqdef\
q\, \int_{t_0}^t \left|\langle Q_\kappa'(R)\rangle-Q_\kappa'(\langle R\rangle)\right|^2 \ud s.
\end{equation}
Using the formulas 
\begin{equation*}
\partial_t c(\tilde{u})\, =\, c'(\tilde{u})\, \tilde{u}_t\, =\, 2\, c(\tilde{u})\, \tilde{c}'(\tilde{u})\, (\langle R\rangle+\langle S\rangle),\quad
\partial_x c(\tilde{u})\, =\, c'(\tilde{u})\, \tilde{u}_x\, =\, 2\, \tilde{c}'(\tilde{u})\, (\langle S\rangle-\langle R\rangle),
\end{equation*}
we obtain
\begin{align*}
\ud H_\kappa+\partial_x \left(c(\tilde{u}) H_\kappa\right) \ud t\,
&=\,
4\, c(\tilde{u})\, \tilde{c}'(\tilde{u})\, {\textstyle\frac{\partial H_\kappa}{\partial c}}
\langle S\rangle\, \ud t\,
+\, {\textstyle \frac{\partial H_\kappa}{\partial \Delta}}
\left[\ud \Delta_\kappa\, +\, \partial_x\left(c(\tilde{u})\, \Delta_\kappa\right) \ud t\right]\\
& \quad +\, \half {\textstyle \frac{\partial^2 H_\kappa}{\partial \Delta^2}}\, \ud \langle M_\kappa,M_\kappa\rangle(t)\,
+\, 2\, \tilde{c}'(\tilde{u})\left(H_\kappa-\Delta_\kappa {\textstyle \frac{\partial H_\kappa}{\partial \Delta}}\right) (\langle S\rangle-\langle R\rangle)\, \ud t,
\end{align*}
where, for a function $F=F(c,\Delta)$, $F_\kappa$ stands for $F(c(\tilde{u}),\Delta_\kappa)$. By \eqref{eq:deltakappa2}-\eqref{eq:deltakappa2-2}, we then deduce the following inequality in $\mathcal{D}'((t_0,T)\times\T)$
\begin{gather}\nonumber
\partial_t H_\kappa\, +\, \partial_x \left(c(\tilde{u}) H_\kappa\right)
\leqslant\,
4\, c(\tilde{u})\, \tilde{c}'(\tilde{u}) {\textstyle \frac{\partial H_\kappa}{\partial c}}
\langle S\rangle\, 
+ \, \half\, {q}\, {\textstyle  \frac{\partial H_\kappa}{\partial \Delta}} \left[ \langle Q_{\kappa }''(R)\rangle\, -\, Q_{\kappa }''(\langle R\rangle) \right]
+\, \partial_t N_\kappa
\\ \nonumber
+\, {\textstyle \frac{\partial H_\kappa}{\partial \Delta}}
\tilde{c}'(\tilde{u})\left[2\, \left(Q_{\kappa }'(\langle R\rangle)\,  +\, \langle S\rangle\right)\, \Delta_\kappa+  \langle S^2\rangle \, \left[ Q_{\kappa}'(\langle R\rangle)\, -\,  \langle Q'_\kappa({R})\rangle\right]
\right]\\ \label{evolutionHkappa}
+\, \half {\textstyle  \frac{\partial^2 H_\kappa}{\partial \Delta^2}} \partial_t\langle M_\kappa,M_\kappa\rangle(t)\,
+\, 2\, \tilde{c}'(\tilde{u})\left(H_\kappa-\Delta_\kappa {\textstyle \frac{\partial H_\kappa}{\partial \Delta}}\right) (\langle S\rangle-\langle R\rangle),
\end{gather}
where
\begin{equation}\label{defNkappa}
N_\kappa(t)\ \eqdef\ \int_{t_0}^t {\textstyle \frac{\partial H}{\partial \Delta}}\left[\langle Q_\kappa'(R)\rangle-Q_\kappa'(\langle R\rangle)\right]\Phi\, \ud\tilde{W}(s).
\end{equation}
The rigorous derivation of \eqref{evolutionHkappa} uses a preliminary step of convolution and DiPerna-Lions' commutator lemma and the chain-rule for the $H^1$-function $c(\tilde{u})$. This is very similar to the proof of Proposition~\ref{prop:Renormalize}, so we will not give the details here.\medskip

\textbf{Step 4. (Limit $[\kappa\to\infty]$)} Assume that the function $H$ in \eqref{evolutionHkappa} satisfies the following bounds: there is a constant $C_H\geqslant 0$ such that, for all $c\in [c_1,c_2]$, for all $\Delta,\Delta'\geqslant 0$,
\begin{equation}\label{BoundH1}
\left|\Delta  {\textstyle \frac{\partial H}{\partial\Delta}(c,\Delta)}\right|\, \leqslant\ C_H\left(1+\Delta^{1/2}\right),\quad
\left(1+\Delta^{1/2}\right)\left|{\textstyle\frac{\partial H}{\partial\Delta}}(c,\Delta)\right|\, \leqslant\  C_H,
\end{equation}
and
\begin{equation}\label{LipH}
\left|F(c,\Delta)\, -\, F(c,\Delta')\right|^2\, \leqslant\ C_H\left|\Delta\, -\, \Delta'\right|,\quad F\in\left\{H, {\textstyle \frac{\partial H}{\partial c}},\Delta {\textstyle\frac{\partial H}{\partial\Delta}}\right\}.
\end{equation} 
Suppose additionally that, for all $c \in [c_1,c_2]$, the map $\Delta \mapsto H(c,\Delta)$ is concave. We consider then the limit of \eqref{evolutionHkappa} when $\kappa\to\infty$ along a given sequence. By taking a subsequence if necessary, we can assume that the convergences 
\begin{equation}\label{CVprimeQ}
\langle Q_\kappa '(R)\rangle\, -\, Q_\kappa '\left(\langle R\rangle\right)\ \to\ 0,\quad \langle Q_\kappa '(R)\rangle\, -\, \langle R\rangle\ \to\ 0
\end{equation}
are satisfied $\tilde{\Pro}$-a.s. in $L^2((0,T)\times\R)$ as in \eqref{L2conv}, but also a.s. in $(\omega,t,x)$. 
From \eqref{LipH}, we can then deduce the following convergence results in $\mathcal{D}'((t_0,T)\times\T)$:
\begin{align}\label{CVH1}
\partial_t H_\kappa\, +\, \partial_x \left(c(\tilde{u}) H_\kappa\right)\, \
&\to  \ \
\partial_t H\, +\, \partial_x \left(c(\tilde{u}) H\right),\\
\label{CVH2}
4\, c(\tilde{u})\, \tilde{c}'(\tilde{u}) {\textstyle \frac{\partial H_\kappa}{\partial c}}
\langle S\rangle\ \
&\to \ \
 4\, c(\tilde{u})\, \tilde{c}'(\tilde{u}) {\textstyle\frac{\partial H}{\partial c}}
\langle S\rangle ,\\
\label{CVH3}
2\, \tilde{c}'(\tilde{u})\left(H_\kappa-\Delta_\kappa {\textstyle\frac{\partial H_\kappa}{\partial \Delta}}\right) (\langle S\rangle-\langle R\rangle)\ \
&\to\ \
2\, \tilde{c}'(\tilde{u})\left(H-\Delta {\textstyle\frac{\partial H}{\partial \Delta}}\right) (\langle S\rangle-\langle R\rangle),\\
\label{CVH4}
{\textstyle\frac{\partial H_\kappa}{\partial \Delta}} \tilde{c}'(\tilde{u})\, \langle S\rangle\, \Delta_\kappa \ \
&\to \ \
{\textstyle\frac{\partial H}{\partial \Delta}} \tilde{c}'(\tilde{u})\, \langle S\rangle\, \Delta,
\end{align}
%
where, for a function $F=F(c,\Delta)$, $F$ stands for $F(c(\tilde{u})(t,x),\Delta(t,x))$. Next, the bounds \eqref{TS}, \eqref{BoundH1} and the convergence \eqref{CVprimeQ} show that
\begin{equation}\label{CVH5}
\partial_t N_\kappa,\quad {\textstyle \frac{\partial H_\kappa}{\partial \Delta}}\left[ \langle Q_{\kappa }''(R)\rangle\, -\, Q_{\kappa }''(\langle R\rangle) \right], \quad
\tilde{c}'(\tilde{u}) {\textstyle  \frac{\partial H}{\partial \Delta}}\, \langle S^2\rangle  \left[ Q_{\kappa}'(\langle R\rangle)\, -\,  \langle Q'_\kappa({R})\rangle\right]
\end{equation}
all converge to $0$ in $\mathcal{D}'((t_0,T)\times\T)$. Let us give few details about the convergence of the last term in \eqref{CVH5}: by \eqref{TS} and \eqref{BoundH1}, we have the bound
\begin{equation}\label{CVH5b}
\left|\tilde{c}'(\tilde{u})\,  {\textstyle \frac{\partial H}{\partial \Delta}}\, \langle S^2\rangle  \left[ Q_{\kappa}'(\langle R\rangle)\, -\,  \langle Q'_\kappa({R})\rangle\right]\right|\,
\leqslant\ C_H\, \tilde{c}'(\tilde{u})\, \langle S^2\rangle, 
\end{equation}
while the term converges to $0$ $\tilde{\Pro}$-a.s., a.e. in $(t,x)$. The $\tilde{\Pro}$-a.s. convergence to $0$ follows by dominated convergence. We also have
\begin{equation}\label{ConcaveH}
\half {\textstyle \frac{\partial^2 H_\kappa}{\partial \Delta^2}}\partial_t\langle M_\kappa,M_\kappa\rangle(t)\ \leqslant\ 0.
\end{equation}
There remains to examine the term
\begin{equation}
{\textstyle \frac{\partial H_\kappa}{\partial \Delta}}
\tilde{c}'(\tilde{u})\, Q_{\kappa }'(\langle R\rangle)\, \Delta_\kappa,
\end{equation}
which can be split as
\begin{equation}\label{CVH8}
F\, \tilde{c}'(\tilde{u})\, \langle R\rangle\ +\, \left[F_\kappa\, -\, F\right]\tilde{c}'(\tilde{u})\, \langle R\rangle
\, -\, F_\kappa\, \tilde{c}'(\tilde{u})\, \left[\langle R\rangle- Q_{\kappa }'(\langle R\rangle)\right],
\end{equation}
with $F= \Delta\frac{\partial H}{\partial\Delta}(c,\Delta)$ and $F_\kappa = \Delta_\kappa \frac{\partial H}{\partial\Delta}(c,\Delta_\kappa)$. The same arguments as above then show that the two last terms in \eqref{CVH8} converge to $0$ and we finally deduce from \eqref{evolutionHkappa} the inequality
\begin{multline*}
\partial_t H\, +\, \partial_x \left(c(\tilde{u}) H\right)\,
\leqslant\
2\, \tilde{c}'(\tilde{u})\left\{
2\, c(\tilde{u}) {\textstyle \frac{\partial H}{\partial c}}
\langle S\rangle\,
+\, \Delta {\textstyle \frac{\partial H}{\partial \Delta}} \left(\langle R\rangle\,  +\, \langle S\rangle\right)
+\, \left(H-\Delta {\textstyle \frac{\partial H}{\partial \Delta}}\right) (\langle S\rangle-\langle R\rangle)
\right\},
\end{multline*}
that is to say
\begin{equation}\label{evolutionH}
\partial_t H\, +\, \partial_x \left(c(\tilde{u}) H\right)\,
\leqslant\
2\, \tilde{c}'(\tilde{u})\left\{
\left[2\, c(\tilde{u})\, {\textstyle \frac{\partial H}{\partial c}}\, +\, H\right]
\langle S\rangle\,
+\left[2\, \Delta\, {\textstyle \frac{\partial H}{\partial \Delta}} -H\right]\langle R\rangle
\right\}.
\end{equation}

\textbf{Step 5. (Conclusion)} 
We take now
\begin{equation}
H(c,\Delta)\ =\ H_\delta(c,\Delta)\ \eqdef\ \sqrt{\Delta/c+\delta},
\end{equation}
where $\delta$ is a positive parameter. It is easy to check that the map  $\Delta \mapsto H(c,\Delta)$ is concave and that \eqref{BoundH1} and \eqref{LipH} are satisfied. In \eqref{evolutionH}, we obtain
\begin{equation}\label{evolutionHdelta}
\partial_t H_\delta+\partial_x \left(c(\tilde{u}) H_\delta\right)\,
\leqslant\
2\, \delta\, \tilde{c}'(\tilde{u})\,\frac{\langle S\rangle-\langle R\rangle}{\sqrt{\Delta/c+\delta}}\
\leqslant\ 2\, \delta^\frac12\, \tilde{c}'(\tilde{u})\left|\langle S\rangle-\langle R\rangle\right|.
\end{equation}
Taking the limit $[\delta\to 0]$ in \eqref{evolutionHdelta} yields \eqref{DeltaR}.
\end{proof}

To fully exploit \eqref{DeltaR}, we need to show that $\Delta(t_0)\to 0$ when $t_0\to 0$. This is the content of the following proposition.

\begin{proposition}[No initial boundary layer]\label{prop:CVDelta0} Recall that $\Delta$ is defined by \eqref{CompareDelta}. Set also
\begin{equation}\label{Deltacheck}
\check{\Delta}\ \eqdef\ \half \left(\langle S^2\rangle\ -\ \langle S\rangle^2\right).
\end{equation} 
Then, as $t \to 0$, we have, $\tilde{\Pro}$-almost surely, the convergences
\begin{equation}\label{tauto0a}
\langle R\rangle(t)\ \to\ R_0,\qquad \langle S\rangle(t)\ \to\ S_0,\qquad \mbox{in }L^2(\T),
\end{equation}
and
\begin{equation}\label{tauto0aa}
\Delta(t),\, \check{\Delta}(t)\ \to\ 0,\qquad \mbox{in }L^1(\T).
\end{equation}
\end{proposition}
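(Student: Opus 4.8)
The plan is to exploit the limit equations \eqref{RtildeLIMID}, \eqref{StildeLIMID} and the renormalized version \eqref{RtildeLIMRenormalize} (together with its analogue for $S$) with a well-chosen convex function $h$, in order to compare, as $t\downarrow 0$, the quantity $\langle R^2\rangle(t)$ with $\langle R\rangle(t)^2$ and with the initial energy $\|R_0\|_{L^2}^2$. The key point is the reduction \eqref{Reduction0}, which says $\tilde\mu_{0,x}=\delta_{R_0(x)}\otimes\delta_{S_0(x)}$, hence at $t=0$ there is no defect: $\langle R^2\rangle(0)=R_0^2$ and $\langle R\rangle(0)=R_0$ (and similarly for $S$). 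First I would establish \eqref{tauto0a}: by \eqref{RtildeLIMID}, the map $t\mapsto\int_\T\langle R\rangle(t)\varphi\,\ud x$ is continuous for each fixed $\varphi\in C^1(\T)$ (the drift and stochastic terms are continuous in $t$, using the integrability \eqref{Rbar_L2}, \eqref{Rbar_L2plus}, \eqref{L3-} to control $\langle R^2\rangle$, $\langle RS\rangle$, $\langle S^2\rangle$ in $L^1_{t,x}$, and continuity of the stochastic integral), so $\langle R\rangle(t)\rightharpoonup\langle R\rangle(0)=R_0$ weakly in $L^2(\T)$ as $t\downarrow 0$, $\tilde\Pro$-a.s.; the same for $\langle S\rangle$. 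To upgrade weak to strong convergence it suffices to show $\limsup_{t\downarrow 0}\|\langle R\rangle(t)\|_{L^2}\le\|R_0\|_{L^2}$ (and likewise for $S$), which I obtain from an energy-type identity — see below.

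The heart of the argument is an upper bound on $\langle R^2\rangle(t)+\langle S^2\rangle(t)$ near $t=0$. I would apply the renormalized equation \eqref{RtildeLIMRenormalize} and its $S$-counterpart with $h=Q_\kappa$ (the truncation \eqref{hatQkappadef}, which satisfies the admissible growth \eqref{growth-h1} by Remark~\ref{rk:growthh}), add them, take $\varphi\equiv 1$, and then pass to the limit $\kappa\to\infty$ using Lemma~\ref{lem:controlTrunc} (in particular \eqref{L2conv}, \eqref{CVDelta}); this produces an identity/inequality for $\int_\T(\langle R^2\rangle+\langle S^2\rangle)(t)\,\ud x$ starting from $\int_\T(R_0^2+S_0^2)\,\ud x$ at $t_0=0$, with a drift involving $\tilde c'(\tilde u)$ times cubic-in-$(R,S)$ terms — controlled in $L^1((0,t)\times\T)$ uniformly by \eqref{L3-} and \eqref{Rbar_L2} — plus the $\tfrac12 q$ correction term (bounded by $\|q\|_{L^1}\,t$) and a martingale term that is continuous and vanishes at $t=0$. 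Taking the limit superior as $t\downarrow 0$ then gives $\limsup_{t\downarrow 0}\int_\T(\langle R^2\rangle+\langle S^2\rangle)(t)\,\ud x\le\int_\T(R_0^2+S_0^2)\,\ud x$, $\tilde\Pro$-a.s. Combined with the lower semicontinuity obtained from the weak convergence $\langle R\rangle(t)\rightharpoonup R_0$, $\langle S\rangle(t)\rightharpoonup S_0$ and Jensen ($\langle R^2\rangle\ge\langle R\rangle^2$, $\langle S^2\rangle\ge\langle S\rangle^2$), this forces
\[
\limsup_{t\downarrow 0}\int_\T\left(\langle R^2\rangle+\langle S^2\rangle\right)(t)\,\ud x\ \le\ \int_\T\left(R_0^2+S_0^2\right)\ud x\ \le\ \liminf_{t\downarrow 0}\int_\T\left(\langle R\rangle^2+\langle S\rangle^2\right)(t)\,\ud x,
\]
whence $\int_\T(\Delta+\check\Delta)(t)\,\ud x\to 0$; since $\Delta,\check\Delta\ge 0$ this is exactly \eqref{tauto0aa}, and it also yields $\|\langle R\rangle(t)\|_{L^2}\to\|R_0\|_{L^2}$, $\|\langle S\rangle(t)\|_{L^2}\to\|S_0\|_{L^2}$, which upgrades \eqref{tauto0a} to strong $L^2$ convergence.

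The main obstacle I anticipate is the rigorous passage to the limit $\kappa\to\infty$ in the renormalized energy balance at $t_0=0$: unlike in Proposition~\ref{prop:EvolDefect}, where $t_0>0$ is used so that $\langle R^q\rangle$, $\langle S^q\rangle$ ($q<3$) are controlled by the Oleinik bound \eqref{Oleinik} (which blows up like $t_0^{-p}$), here I must work down to $t_0=0$, so I cannot rely on the one-sided $L^\infty$-control of the negative parts near $0$. The remedy is that for the \emph{energy} functional I only need the cubic terms integrated against $\tilde c'(\tilde u)$ over $(0,t)\times\T$, which are globally controlled by \eqref{L3-} (valid with $t_0=0$ since $c'(u^\eps)\ge 0$ everywhere and the estimate is uniform in time up to $T$), together with the $L^\infty_t L^1_x$ bound \eqref{Rbar_L2} on $\langle R^2\rangle+\langle S^2\rangle$; one then uses dominated convergence in $\kappa$ exactly as in Step~4 of the proof of Proposition~\ref{prop:EvolDefect}, replacing the use of \eqref{Oleinik} by \eqref{L3-}. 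The martingale terms are handled by the Burkholder--Davis--Gundy inequality and the $L^2_{t,x}$ bound \eqref{Rbar_L2}, giving continuity at $t=0$ with value $0$. Once these limits are justified, the conclusion is immediate.
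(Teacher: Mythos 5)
Your overall architecture (weak $L^2$-convergence of $\langle R\rangle,\langle S\rangle$ at $t=0$, plus an upper bound on $\int_\T(\langle R^2\rangle+\langle S^2\rangle)(t)\,\ud x$ as $t\downarrow 0$, combined via lower semicontinuity and Jensen to squeeze $\Delta,\check\Delta\to 0$ and upgrade weak to strong) matches the paper's. The gap is in how you establish the energy upper bound. You want to use the renormalized limit equation \eqref{RtildeLIMRenormalize} with $h=Q_\kappa$ from $t_0=0$, but that equation is not available there for this $h$. The function $Q_\kappa$ is quadratic on $(-\infty,\kappa)$, so it violates the growth condition \eqref{growth-h0} ($r<2$) under which Proposition~\ref{prop:Renormalize} holds for all $0\leqslant t_0\leqslant t$. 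The relaxation to \eqref{growth-h1}, which $Q_\kappa$ does satisfy, is precisely what Remark~\ref{rk:growthh} grants, and Remark~\ref{rk:growthh} requires $t_0>0$ because it rests on the Oleinik bound \eqref{Rbar_L2plus}. Your proposed substitute — ``replace \eqref{Oleinik} by \eqref{L3-}'' — does not close this. First, \eqref{L3-} controls $c'(\tilde u)\,[|\xi|^q+|\eta|^q]$ only for $q<3$, whereas the drift term in \eqref{RtildeLIMRenormalize} with $h=Q_\kappa$ contains $h'(\langle R\rangle)\langle(R-S)^2\rangle$, which behaves like $|\langle R\rangle|\langle(R-S)^2\rangle$ on $\{\langle R\rangle<0\}$: an exactly cubic, not sub-cubic, quantity, so \eqref{L3-} is borderline insufficient. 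Second, the very derivation of the limit equation for such $h$ (both the passage $\eps\to 0$ via Proposition~\ref{pro:Youngconv} for $g$-type nonlinearities and the DiPerna--Lions renormalization step) is what already uses $t_0>0$; it is not merely a matter of rebounding the coefficients afterwards.

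The paper sidesteps this entirely. Rather than working with the limit (renormalized) equation, it applies the $C([0,T])$-convergence \eqref{CVut} of Proposition~\ref{prop:structureLimit2} to the two-sided truncation $\bar Q_\kappa(\xi)=\frac12[\xi^2-((\xi-\kappa)^+)^2-((\xi+\kappa)^-)^2]$, which has only linear growth (hence is admissible on all of $[0,T]$, unlike your one-sided $Q_\kappa$). This gives a pointwise-in-$t$ identification of $\int_\T[\langle\bar Q_\kappa(R)\rangle+\langle\bar Q_\kappa(S)\rangle]\ud x$ with the $\eps\to 0$ limit, which is then bounded from above by the $\eps\to 0$ limit of the \emph{approximate} energy inequality \eqref{eq:TotalEnergyep} — an inequality valid from time $0$ and requiring no Oleinik input. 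Letting $\kappa\to\infty$ by monotone convergence yields \eqref{barQtOKOK}, from which \eqref{tauto0a}--\eqref{tauto0aa} follow exactly as in your concluding squeeze. The crucial points your proposal misses are therefore: (i) use a \emph{bounded-derivative} (linear-growth) truncation so that only Proposition~\ref{prop:structureLimit2}/\eqref{CVut}, valid on $[0,T]$, is needed; and (ii) get the upper bound from the approximate energy balance rather than from the limit equation, which is not established at $t_0=0$ for the class of test functions you need.
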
 

\begin{proof}[Proof of Proposition~\ref{prop:CVDelta0}]  Let $\bar{Q}_\kappa$ denote the truncation of $\xi\mapsto\xi^2/2$ from above and from below defined by
\begin{equation}
\bar{Q}_\kappa(\xi)\ =\ \half \left[\xi^2-\left((\xi-\kappa)^+\right)^2- \left((\xi+\kappa)^-\right)^2\right].
\end{equation}
 We apply \eqref{CVut} with $f_1=f_2=\bar{Q}_\kappa$ and $\varphi\equiv 1$ to obtain $\tilde{\Pro}$-almost surely, for $t > 0$, 
\begin{equation}\label{weakbarQeps}
\int_\T  \left[ \langle\bar{Q}_\kappa({R})\rangle\, +\, \langle\bar{Q}_\kappa({S})\rangle \right] (t,x)\, \ud x\ =\ \lim_{\varepsilon\to 0}
\int_\T \left[\bar{Q}_\kappa(\tilde{R}^\varepsilon)\, +\, \bar{Q}_\kappa(\tilde{S}^\varepsilon) \right](t,x)\, \ud x.
\end{equation}
Since $\bar{Q}_\kappa(\xi)\leqslant \xi^2/2$, the estimate 
\begin{equation}\label{weakbarQt}
\int_\T  \left[ \langle\bar{Q}_\kappa({R})\rangle\ +\ \langle\bar{Q}_\kappa({S})\rangle \right] (t,x)\, \ud x\  \leqslant\ \half\,
 \liminf_{\varepsilon \to 0}\, \|(\tilde{R}^\varepsilon, \tilde{S}^\varepsilon)(t)\|_{L^2(\mathds{T})}^2
\end{equation}
follows. We then use the energy inequality \eqref{eq:TotalEnergyep} to get a bound on the right-hand side of \eqref{weakbarQt}. Indeed, we can pass to the limit in the energy inequality \eqref{eq:TotalEnergyep} since the explicit expression of the martingale $\mathcal{M}^\eps$ is given by \eqref{Meps-energy} and 
\begin{equation*}
2\, \int_\T \int_{0}^\cdot  \left( \tilde{R}^\varepsilon\, +\, \tilde{S}^\varepsilon \right) \Phi^\varepsilon\, \ud \tilde{W}^\varepsilon\, \ud x \quad \to \quad M_\mathrm{e}\ \eqdef\ 2\, \int_\T \int_{0}^\cdot   \left( \langle R\rangle\, +\,\langle S\rangle \right) \Phi\,\ud \tilde{W}\, \ud x \quad \text{in}\ C([0,T]).
\end{equation*}
Using also \eqref{Reduction0} to treat the initial terms, we obtain
\begin{equation}\label{barQtOK}
\int_\T  \left[ \langle\bar{Q}_\kappa({R})\rangle\ +\ \langle\bar{Q}_\kappa({S})\rangle \right] (t,x)\, \ud x\ \  \leqslant\ \half\, \|(R_0,S_0)\|_{L^2(\mathds{T})}^2\ +\  \|q\|_{L^1(\T)}\, t\ +\ \half\, M_\mathrm{e}(t).
\end{equation}
Taking $\kappa \to \infty$ in \eqref{barQtOK} then gives us
\begin{equation}\label{barQtOKOK}
\int_\T  \left[ \langle R^2\rangle\ +\ \langle S^2\rangle \right] (t,x)\, \ud x\  \leqslant\  \|(R_0,S_0)\|_{L^2(\mathds{T})}^2\ +\ 2\, \|q\|_{L^1(\T)}\, t\ +\  M_\mathrm{e}(t),
\end{equation}
and so
\begin{equation}\label{barQtOKOKOK}
 \|(\langle R\rangle,\langle S\rangle)\|_{L^2(\mathds{T})}^2(t)\ +\ 2 \int_\T \left(\Delta(t)\, +\, \check{\Delta}(t)\right) \ud x\
   \leqslant\ \|(R_0,S_0)\|_{L^2(\mathds{T})}^2\ +\ 2\, q_0\, t\ +\ M_\mathrm{e}(t).
\end{equation}
The convergence of $(\tilde{R}^\varepsilon, \tilde{S}^\varepsilon)$ to $(\langle R\rangle, \langle S\rangle)$ in $C([0,T],L^2_w(\T))$ has the consequence that 
\begin{equation}\label{InitalOK}
\|(R_0,S_0)\|_{L^2(\T)}^2\ \leqslant\ \liminf_{t \to 0}  \|(\langle R\rangle, \langle S\rangle)\|_{L^2(\T)}^2.
\end{equation}
From \eqref{barQtOKOKOK} and \eqref{InitalOK}, we deduce \eqref{tauto0aa} and the convergence of the norms
\begin{equation}
\lim_{t \to 0}  \|(\langle R\rangle, \langle S\rangle)\|_{L^2(\T)}^2\ =\ \|(R_0,S_0)\|_{L^2(\T)}^2,
\end{equation}
which, combined with the weak convergence, gives \eqref{tauto0a}.
\end{proof}

We can now conclude the proof of reduction (to Dirac masses) of the Young measures.

\begin{proposition}[Reduction of the Young measures]\label{prop:RedYoungOK}
For almost all $(\omega, t,x) \in \tilde{\Omega} \times [0,T] \times \T$, we have 
\begin{equation}\label{reductionmuOK}
\tilde{\mu}_{t,x}\ =\ \delta_{\left(\langle R\rangle(t,x),\langle S\rangle(t,x)\right)}.
\end{equation}
\end{proposition}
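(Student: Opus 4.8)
The plan is to combine the transport inequality \eqref{DeltaR} for the defect $\Delta = \frac12(\langle R^2\rangle-\langle R\rangle^2)$ with the initial-layer estimate $\Delta(t_0)\to 0$ from Proposition~\ref{prop:CVDelta0}, and the symmetric statements for $\check\Delta = \frac12(\langle S^2\rangle-\langle S\rangle^2)$, to conclude $\Delta\equiv\check\Delta\equiv 0$ on all of $[0,T]\times\T$. Once this is known, Lemma~\ref{lem:productmu} (the product structure $\tilde\mu_{t,x}=\tilde\nu^1_{t,x}\otimes\tilde\nu^2_{t,x}$) combined with the fact that a probability measure on $\R$ with variance zero is a Dirac mass forces $\tilde\nu^1_{t,x}=\delta_{\langle R\rangle(t,x)}$ and $\tilde\nu^2_{t,x}=\delta_{\langle S\rangle(t,x)}$, which is exactly \eqref{reductionmuOK}.

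The main work is therefore to propagate $\Delta=0$ from $t=0$. First I would fix $\tilde\Pro$-almost every $\omega$ in the full-measure set where \eqref{DeltaR} holds for all $t_0\in(0,T)$ (intersecting the countably many a.s.\ events over a sequence $t_0\downarrow 0$) and where \eqref{tauto0aa} holds. The inequality $[\sqrt{\Delta/c(\tilde u)}]_t + [\sqrt{c(\tilde u)\Delta}]_x\leqslant 0$ in $\mathcal{D}'((t_0,T)\times\T)$ says that the nonnegative quantity $\Delta$, suitably weighted, is a supersolution of a linear transport equation with smooth (since $\tilde u\in H^1$ and $c$ is smooth, and along characteristics the relevant regularity is available) bounded velocity $c(\tilde u)>0$. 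Integrating over $\T$ (which kills the $x$-flux by periodicity) one gets, in the sense of distributions in $t$ on $(t_0,T)$,
\begin{equation*}
\frac{\ud}{\ud t}\int_\T \sqrt{\Delta(t,x)/c(\tilde u(t,x))}\,\ud x\ \leqslant\ 0,
\end{equation*}
so $t\mapsto \int_\T\sqrt{\Delta/c(\tilde u)}\,\ud x$ is non-increasing on $(t_0,T)$; one must be slightly careful that this function is only known to be, say, in $L^1_{loc}$ a priori, but monotonicity of a distributional derivative $\leqslant 0$ gives a non-increasing representative, and the bound $\Delta\in L^1$ plus $c\in[c_1,c_2]$ makes $\sqrt{\Delta/c}\in L^1$. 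Then, letting $t_0\downarrow 0$ and using \eqref{tauto0aa} (namely $\Delta(t_0)\to 0$ in $L^1(\T)$, hence $\int_\T\sqrt{\Delta(t_0)/c}\,\ud x\to 0$ along a subsequence by Jensen/Cauchy--Schwarz: $\int_\T\sqrt{\Delta/c}\leqslant c_1^{-1/2}|\T|^{1/2}(\int_\T\Delta)^{1/2}$), the non-increasing function is forced to be $\equiv 0$ on $(0,T)$, hence $\Delta(t,\cdot)=0$ for a.e.\ $t$, and by the same Cauchy--Schwarz bound $\Delta=0$ a.e.\ on $(0,T)\times\T$. The identical argument applied to $\check\Delta$ (using the curl/transport inequality in the $-c(\tilde u)$ direction, which is the natural symmetric counterpart of \eqref{DeltaR} for $S$, established exactly as in Proposition~\ref{prop:EvolDefect} with the roles of $R$ and $S$ interchanged) gives $\check\Delta=0$.

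Finally I would assemble the conclusion: $\Delta=0$ means $\langle R^2\rangle=\langle R\rangle^2$ for a.e.\ $(\omega,t,x)$, i.e.\ the first marginal $\tilde\nu^1_{t,x}$ has vanishing variance and is therefore $\delta_{\langle R\rangle(t,x)}$; likewise $\check\Delta=0$ gives $\tilde\nu^2_{t,x}=\delta_{\langle S\rangle(t,x)}$; and Lemma~\ref{lem:productmu} yields $\tilde\mu_{t,x}=\tilde\nu^1_{t,x}\otimes\tilde\nu^2_{t,x}=\delta_{\langle R\rangle(t,x)}\otimes\delta_{\langle S\rangle(t,x)}=\delta_{(\langle R\rangle(t,x),\langle S\rangle(t,x))}$, which is \eqref{reductionmuOK}. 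I expect the only genuinely delicate point to be the rigorous passage from the distributional differential inequality \eqref{DeltaR} to the integrated monotonicity statement and then the limit $t_0\downarrow 0$ — one has to justify integrating a $\mathcal{D}'$-inequality against the test function $\mathbbm 1_\T$ in $x$ (legitimate here because the flux term is a perfect $x$-derivative of a periodic $L^1$ function) and to control the boundary behaviour using exactly the $L^1$-convergence \eqref{tauto0aa}; everything else is bookkeeping with the product structure already in hand.
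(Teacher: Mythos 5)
Your proposal is correct and follows essentially the same route as the paper's proof: integrate the transport inequality \eqref{DeltaR} over $\T$ (the flux term vanishes by periodicity) to get that $G(t)=\int_\T\sqrt{\Delta/c(\tilde u)}\,\ud x$ has a non-increasing representative, use \eqref{tauto0aa} to force $G\equiv 0$ and hence $\Delta\equiv 0$, argue symmetrically for $\check\Delta$, and then combine with the product structure of Lemma~\ref{lem:productmu}. The paper's version is much terser but carries out exactly these steps; your expansion — in particular the Cauchy--Schwarz reduction $\int_\T\sqrt{\Delta/c}\leqslant c_1^{-1/2}\bigl(\int_\T\Delta\bigr)^{1/2}$ linking \eqref{tauto0aa} to the vanishing of $G$ at $0^+$, and the remark that one may test \eqref{DeltaR} against $\psi(t)\otimes\mathbf 1_\T$ — is the right way to fill in the details left implicit there.
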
 

\begin{proof}[Proof of Proposition~\ref{prop:RedYoungOK}] Using \eqref{DeltaR}, we have: almost-surely
\begin{equation*}
\partial_t G\ \leqslant\ 0\ \mbox{ in }\ \mathcal{D}'(t_0,T),\qquad G(t)\ \eqdef\ \int_\T \sqrt{\Delta /c(\tilde{u})}(t)\, \ud x.
\end{equation*} 
It follows then from \eqref{tauto0aa} that, almost-surely, $G(t)=0$ at all Lebesgue points $t$ of $G$, and thus $\Delta(x,t)=0$ for a.e. $(x,t)$, which is equivalent to $\nu^1_{t,x}=\delta_{\langle R\rangle(t,x)}$ $\tilde{\Pro}$-a.s., for a.e. $(t,x)$. We can also establish in a completely similar manner the identity  $\nu^2_{t,x}=\delta_{\langle S\rangle(t,x)}$ $\tilde{\Pro}$-a.s., for a.e. $(t,x)$. Then \eqref{reductionmuOK} follows from the decomposition \eqref{Product}.
\end{proof}

\subsection{Existence of weak martingale solutions}\label{subsec:proofMain}

\begin{proof}[Proof of Theorem~\ref{thm:global-existR2SS}] Once the reduction \eqref{reductionmuOK} of the Young measure is established, the equations \eqref{RtildeLIMID}-\eqref{StildeLIMID} take the desired form
\begin{align}\nonumber
\int_\T \langle R\rangle(t,x)\, \varphi(x)\, \ud x \ &=\ \int_\T  R_0 (x)\, \varphi(x)\, \ud x
\ +\ \int_{0}^t\int_\T c(\tilde{u})\, \langle R\rangle\, \varphi_x\, \ud x\, \ud s\\ \label{weakR}  
& \quad -\ \int_{0}^t\int_\T\tilde{c}'(\tilde{u}) \left[\langle R\rangle\, -\, \langle S \rangle \right]^2 \varphi\,  \ud x\, \ud s\, 
+\, \int_\T \int_{0}^t \varphi\, \Phi\, \ud \tilde{W}(s)\, \ud x,\\
\nonumber
\int_\T \langle S\rangle(t,x)\, \varphi(x)\, \ud x \ &=\ \int_\T  S_0(x)\, \varphi(x)\, \ud x
\ -\ \int_{0}^t\int_\T c(\tilde{u})\, \langle S\rangle\, \varphi_x\, \ud x\, \ud s\\ \label{weakS}
& \quad -\ \int_{0}^t\int_\T\tilde{c}'(\tilde{u}) \left[\langle R\rangle\, -\, \langle S \rangle \right]^2 \varphi\, \ud x\, \ud s\, 
+\, \int_\T \int_{0}^t  \varphi\, \Phi\, \ud \tilde{W}(s)\, \ud x.
\end{align}
where $R_0$ and $S_0$ are defined by \eqref{IC}. Summing up \eqref{weakR} and \eqref{weakS} and using the identities \eqref{tildeRSuep} with the chain-rule for $H^1$-functions we obtain \eqref{weaku}. We have already establish that $\tilde{R},\tilde{S}\in L^2(\Omega;L^\infty([0,T]; L^2(\T)))$ and $\tilde{R},\tilde{S}\in C([0,T]; L^2(\T)-\mathrm{weak}))$ $\tilde{\Pro}$-a.s. 
Now, we can use \eqref{reductionmuOK} and follow the same proof as in Proposition \ref{prop:CVDelta0} for almost all \(t_0\) to obtain the right continuity \eqref{rightcontinuity} and the energy dissipation \eqref{energydissipation}. Note that the energy dissipation \eqref{energydissipation} follows from \eqref{barQtOKOK} when applying the proof for \(t_0\) instead of 0.
Taking into account \eqref{tildeStochasticBasis}, we can conclude to the existence of a weak martingale solution to \eqref{SVWE1}. Finally, the estimates \eqref{L3-estimates}-\eqref{Entropy} follow from \eqref{L3-}-\eqref{Oleinik}.
\end{proof}

%
%
%
%
%
%
%
%

\paragraph{Acknowledgement.} This work was supported by the LABEX MILYON (ANR-10-LABX-0070) of Universit\'e de Lyon, within the program ”Investissements d’Avenir” (ANR-11-IDEX-0007) operated by the French National Research Agency (ANR). It was also supported by the Unit\'e de Math\'ematiques Pure et Appliqu\'ees, UMPA (CNRS and ENS de Lyon).

\def\cprime{$'$}

\end{document}